\newtheorem{theo}{Theorem}
\newtheorem{lemma}[theo]{Lemma}
\newtheorem{cor}[theo]{Corollary}
\newtheorem{prop}[theo]{Proposition}
\theoremstyle{definition}\newtheorem{defn}[theo]{Definition}
\theoremstyle{remark}\newtheorem{remark}[theo]{Remark}
\numberwithin{theo}{section}
\numberwithin{equation}{section}
\newcommand{\R}{\mathbb{R}}
\newcommand{\N}{\mathbb{N}}
\newcommand{\C}{\mathbb{C}}
\newcommand{\Q}{\mathbb{Q}}
\newcommand{\BP}{\mathbf{P}}
\newcommand{\MCL}{\mathcal{L}}
\newcommand{\MCC}{\mathcal{C}}
\newcommand{\MCG}{\mathcal{G}}
\newcommand{\MCF}{\mathcal{F}}
\newcommand{\MCI}{\mathcal{I}}
\newcommand{\MCB}{\mathcal{B}}
\newcommand{\MCM}{\mathcal{M}}
\newcommand{\A}{\mathcal{A}}
\newcommand{\var}{\mathrm{var}}
\newcommand{\Var}{\mathrm{Var}}
\newcommand{\diff}{\mathop{}\!\mathrm{d}}
\newcommand{\htop}{h_{\text{top}}}
\newcommand{\GL}{\mathrm{GL}}
\newcommand{\SL}{\mathrm{SL}}
\newcommand{\innerproduct}[2]{\langle #1, #2 \rangle}
\newcommand{\Lt}{\MCL_t}
\newcommand{\rt}{\rho_t}
\newcommand{\uou}{\frac{u}{\|u\|} } 
\newcommand{\vov}{\frac{v}{\|v\|} } 
\newcommand{\Ptop}{P_{\mathrm{top}}}
\newcommand{\Pmeas}{P_{\mathrm{meas}}}
\title{On the superadditive pressure for 1-typical, one-step, matrix-cocycle potentials}
\author{Tom Rush}
\date{}
\begin{document}
\date{}
\maketitle
\begin{abstract}
Let \((\Sigma_T,\sigma)\) be a subshift of finite type with primitive adjacency matrix \(T\), \(\psi:\Sigma_T \rightarrow \R\) a H\"older continuous potential, and \(\A:\Sigma_T \rightarrow \GL_d(\R)\) a 1-typical, one-step cocycle. For \(t \in \R\) consider the sequences of potentials \(\Phi_t=(\varphi_{t,n})_{n \in \N}\) defined by
\[\varphi_{t,n}(x):=S_n \psi(x) + t\log \|\A^n(x)\|, \: \forall n \in \N.\]

Using the family of transfer operators defined in this setting by Park and Piraino, for all \(t<0\) sufficiently close to 0 we prove the existence of Gibbs-type measures for the superadditive sequences of potentials \(\Phi_t\). This extends the results of the well-understood subadditive case where \(t \geq 0\). Prior to this, Gibbs-type measures were only known to exist for \(t<0\) in the conformal, the reducible, the positive, or the dominated, planar settings, in which case they are Gibbs measures in the classical sense. We further prove that the topological pressure function \(t \mapsto \Ptop(\Phi_t,\sigma)\) is analytic in an open neighbourhood of 0 and has derivative given by the Lyapunov exponents of these Gibbs-type measures. 
\end{abstract}

\section{Introduction}\label{sec:intro}
Gibbs measures have been studied in dynamical systems since the work of Sinai \cite{Sin72} and, later, Bowen \cite{Bow75} in the 1970s. Roughly speaking, these are measures on subshifts of finite type for which cylinders have measure given by the Birkhoff sum of points in the cylinder. In recent years the theory has been generalised into the setting of random products of matrices, where a scalar times the logarithm of the norm of matrix products replaces the role of the Birkhoff sum. When the scalar is positive, this behaves subadditively. Thermodynamic formalism in the subadditive matrix setting is now very well understood: Gibbs-type measures were shown to exist under an irreducibility condition in \cite{FL02} and this was generalised in \cite{FK11} to matrices which are non-trivial, in the sense that the matrix products are not eventually identically equal to the 0 matrix.  

On the other hand, when the scalar is negative there is instead superadditive behaviour. Until now, Gibbs-type measures were only known to exist when the scalar is negative in the conformal setting, reducible settings (e.g. diagonal matrices), the positive setting \cite{FL02}, or the dominated, planar setting \cite{BR18,BKM20}. In each of these cases the measures are Gibbs measures for H\"older continuous potentials. The purpose of this paper is to construct Gibbs-type measures in the superadditive setting under a much more general condition on the matrices, namely the 1-typicality condition defined in \cite{BV04}. Our method is based on Bowen's proof, although there are significant differences. We use the transfer operators introduced in \cite{PPi22} which act on the space of functions on the product of the shift space and the projective space. Furstenberg-like measures on the projective space will play an important role.

For now and the rest of the paper, let \((\Sigma_T,\sigma)\) be a one-sided subshift of finite type on alphabet \(\{1,\ldots,q\}\) with primitive adjacency matrix \(T\). For details on the results in additive thermodynamic formalism for subshifts of finite type, we refer the reader to \cite{PPo90}.  A sequence of potentials \(\varphi_n:\Sigma_T \rightarrow \R\) is said to be \textit{subadditive} if each \(\varphi_n\) is continuous and 
\begin{equation}\label{eqn:subadditvedef}
    \varphi_{n+m}(x) \leq \varphi_n(x)+\varphi_m \circ \sigma^n(x), \: \forall n,m \in \N, \: \forall x \in \Sigma_T.
\end{equation}
A sequence of potentials \(\Phi=(\varphi_n)_{n \in \N}\) is \textit{superadditive} if \((-\varphi_n)_{n \in \N}\) is subadditive. Let us first discuss the results in subadditive thermodynamic formalism and the main difficulties that arise in the superadditive setting.

We define the set of admissible words of length \(n\) to be
\[\MCC_n := \{(x_0,\ldots,x_{n-1})\in \{1,\ldots,q\}^n: T_{i,i+1}=1, \forall 0\leq i \leq n-2\}\]
and we let \(\MCC_*:=\cup_{n \in \N} \MCC_n\). For an admissible word \(I=(i_0,\ldots, i_{n-1}) \in \MCC_*\), the \(n\)th level cylinder is defined to be 
\[[I]:=\{x \in \Sigma_T:x_j=i_j, \forall j \in \{0,\ldots, n-1\}\}. \]
Given a sequence of subadditive potentials \(\Phi=(\varphi_n)_{n \in \N}\), we can define the topological pressure 
\[\Ptop(\Phi,\sigma):=\lim_{n \rightarrow \infty}\frac{1}{n} \log \sum_{I \in \MCC_n} e^{\sup_{x \in [I]} \varphi_n(x)}.\]
The limit can be seen to exist by subadditivity and may be equal to \(-\infty\). 

Denoting by \(\MCM(\Sigma_T,\sigma)\) the set of \(\sigma\)-invariant probability measures on \(\Sigma_T\), for \(\mu \in \MCM(\Sigma_T,\sigma)\) we define
\[\Phi_*(\mu):=\lim_{n \rightarrow \infty} \frac{1}{n} \int \varphi_n \diff \mu.\]
The limit again exists by subadditivity,  may be \(-\infty\), and is equal to \(\inf_{n \in \N}  \frac{1}{n} \int \varphi_n \diff \mu\). The \textit{measure-theoretic pressure} is then defined to be 
\[P_{\text{meas}}(\Phi,\sigma)=\sup\{h(\mu,\sigma)+\Phi_*(\mu):\mu \in \MCM(\Sigma_T,\sigma) \},\]
where \(h(\mu,\sigma)\) is the entropy of \(\mu\) with respect to \(\sigma\). The main result of \cite{CFH08}, which was proved in a more general setting, says that
\[\Ptop(\Phi,\sigma)=P_{\text{meas}}(\Phi,\sigma).\]
This is known as a variational principle, and their work generalises the classical result proved for additive sequences; see \cite[\S 9]{Wal81}. 

A measure attaining the supremum in the measure-theoretic pressure is known as an \textit{equilibrium state} for \(\Phi\). Using the equality \(\Phi_*(\mu)=\inf_{n \in \N} \frac{1}{n} \int \varphi_n \diff \mu\), it is easy to show that \(\mu \mapsto \Phi_*(\mu)\) is upper semi-continuous in the weak* topology (see \cite[Proposition A.1]{FH10}). Hence, as the same also holds for the entropy map \(\mu \mapsto h(\mu,\sigma)\) \cite[Theorem 8.2]{Wal81}, it follows that there exists an equilibrium state for any subadditive sequence of potentials \(\Phi\). We say a measure \(\mu \in \MCM(\Sigma_T,\sigma)\) is a Gibbs-type measure for \(\Phi\) if there exists \(C_1,C_2>0\) such that for all \(n \in \N\) and \(x=(x_0,x_1,\ldots) \in \Sigma_T\)
\[ C_1 \leq \frac{\mu([x_0,\ldots,x_{n-1}])}{e^{-n \Ptop(\Phi,\sigma)+\varphi_n(x)} } \leq C_2.\]
When \(\Phi\) is additive, that is when there is equality in (\ref{eqn:subadditvedef}), this coincides with the classical definition of Gibbs measures defined in \cite{Bow75}. If a measure \(\mu\) is an ergodic Gibbs-type measure for a subadditive sequence of potentials \(\Phi\), then it is the unique equilibrium state for \(\Phi\) (see the proof of \cite[Theorem 5.5]{Fen11}).

We say a subadditive sequence \(\Phi=(\varphi_n)_{n \in \N}\) is \textit{locally constant} if each \(\varphi_n\) is locally constant on cylinders in \(\MCC_n\). For such sequences we will write \(\Phi(I)=\varphi_{n}(x)\) where \(I \in \MCC_n\) and \(x\) is any element of \([I]\). Ergodic Gibbs-type states are known to exist for locally constant subadditive sequences which are quasi-additive \cite[Theorem 5.5]{Fen11}. \textit{Quasi-additivity} means that there exists \(C \in \R\) and \(k \in \N\) such that for all \(I, J \in \MCC_*\) there is a \(K \in \cup_{0 \leq i \leq k} \MCC_i\) such that
\[\Phi(IKJ) \geq \Phi(I)+\Phi(J)+C.\]
It is easy to see that their result generalises to subadditive sequences with bounded variations (Definition \ref{defn:boundedvariations}).

Likewise to the subadditive case, for superadditive sequences of potentials one can define \(\Ptop(\Phi,\sigma)\) (with `\(\inf\)' replacing `\(\sup\)'), \(\Phi_*(\mu)\), \(\Pmeas(\Phi,\sigma)\), and Gibbs-type measures; see Section \ref{subsec:supadditivethermodynamicformalism} for details. Using a result proved in \cite{CPZ19}, we show in Proposition \ref{prop:supadditivevarprinciple} that the variational principle \(\Ptop(\Phi,\sigma)=\Pmeas(\Phi,\sigma)\) holds for any superadditive sequence of potentials \(\Phi\). This result was proved previously in some general matrix settings in \cite{Wu21}. Even with the variational principle, many of the techniques used in the subadditive case no longer work in
the superadditive case. For example, \(\mu \mapsto \Phi_*(\mu)\) is instead \textit{lower} semi-continuous which causes the standard proof of the existence of an equilibrium state to fail. Moreover, the proof of the existence of a Gibbs-type measure using quasi-additivity does not generalise to the superadditive case (there is no analogue of inequality (5.2) in \cite{Fen11}, for instance).

Let \(\A: \Sigma_T \rightarrow \GL_d(\R)\), \(d \in \N\), be a cocycle and let \(\psi:\Sigma_T \rightarrow \R\) be a H\"older continuous function.  For \(t \in \R\) we define the sequences of potentials \(\Phi_t=(\varphi_{t,n})_{n \in \N}\) by
\begin{equation}\label{eqn:Phitdefinition}
    \varphi_{t,n}:=S_n \psi(x)+t \log \|\A^n(x)\|,
\end{equation} 
where \(\A^n(x):=\A(\sigma^{n-1}x) \ldots \A(x)\), \(S_n \psi(x):=\sum_{i=0}^{n-1} \psi \circ \sigma^i(x)\) is the \(n\)th Birkhoff sum, and \(\|\cdot\|\) is the spectral norm. Notice that \(\Phi_t\) is subadditive when \(t>0\), additive when \(t=0\), and superadditive when \(t<0\). For \(\mu \in \MCM(\Sigma_T,\sigma)\) and \(1 \leq i \leq d\) we define the \(i\)th \textit{Lyapunov exponent} to be 
\begin{equation}\label{eqn:Lyapunovexponentdef}
    \lambda_i(\A,\mu):= \lim _{n \rightarrow \infty} \frac{1}{n} \int \log \sigma_i(\A^n(x)) \diff \mu,
\end{equation}
where \(\sigma_i(A)\) denotes the \(i\)th singular value of a matrix \(A\). The limits can be shown to exist by the fact that \(x \mapsto \log(\sigma_1(\A^n(x)) \ldots \sigma_i(\A^n(x))\) is subadditive for each \(1 \leq i \leq  d\). By the subadditive ergodic theorem, for any ergodic measure \(\mu \in \MCM(\Sigma_T,\sigma)\) and \(\mu\)-almost every \(x \in \Sigma_T\),
\[\lim_{n \rightarrow \infty} \frac{1}{n} \log \sigma_i(\A^n(x))=\lambda_i(\A,\mu).\]
By subadditivity, 
\[\lambda_1(\A,\mu)=\inf_{n \in \N} \frac{1}{n} \int \log \|\A^n(x)\| \diff \mu(x). \]
We also observe that
\[\Pmeas(\Phi_t,\sigma)=\sup_{\mu \in \MCM(\Sigma_T,\sigma)} \left\{h(\mu,\sigma)+\int \psi \diff \mu+t \lambda_1(\A,\mu) \right\}, \: \forall t \in \R.\] 

We refer to cocycles depending only on the first digit as \textit{one-step cocycles}. For one-step, full-shift cocycles, quasi-additivity of \(\Phi_t\) when \(t \geq 0\) was proved in \cite{Fen09} assuming only \textit{irreducibility}, that is that the matrices do not simultaneously fix a proper subspace. Hence, for such \(\Phi_t\) there exists an ergodic Gibbs-type measure which is the unique equilibrium state.

A setting which is particularly well understood is for irreducible, one-step, full-shift \(\GL_2(\R)\)-cocycles which satisfy the \textit{dominated condition}: a non-empty compact subset \(S\) of \(\GL_2(\R)\) is said to be dominated if there exists constants \(C>0\) and \(0<\tau<1\) such that
\begin{equation*}\label{eqn:domcondition}
    \frac{\sigma_2(A_1 \ldots A_n)}{\sigma_1(A_1 \ldots A_n)}<C \tau^n
\end{equation*}
for all \(A_1, \ldots A_n \in S\). 
Domination can also be formulated in terms of strongly invariant multi-cones; see \cite{BG09}. In this setting, existence of Gibbs-type measures for \(\Phi_t\) is known to hold for all \(t \in \R\). Remarkably, for each \(t \in \R\) the Gibbs-type measure for \(\Phi_t\) is in fact a Gibbs measure for a H\"older continuous potential \cite[Theorem 2.9]{BKM20}. However, this is not general. Theorem 2.9 in \cite{BKM20} further says that for irreducible, one-step, full-shift \(\GL_2(\R)\)-cocycles, this holds only when the matrices are dominated or \textit{strongly conformal}, meaning that the set of matrices \(\{\A^n(x): x \in \Sigma_T, n \in \N\}\) are all conformal with respect to the same conjugation matrix. 

Existence of Gibbs-type measures is intimately related to the smoothness of the pressure function \(t \mapsto \Ptop(\Phi_t,\sigma)\). In \cite{FL02} they prove that for one-step, full-shift cocycles taking values in the set of positive matrices, the pressure function \(t \mapsto \Ptop(\Phi_t,\sigma)\) is differentiable with derivative at \(t\) given by the Lyapunov exponent of the Gibbs-type measure for \(\Phi_t\). This further holds in the dominated planar-matrix setting by the results proved in \cite{BKM20}. For irreducible, one-step, full-shift cocycles, this is also known to hold when \(t>0\) \cite[Proposition 1.2]{FK11}. Moreover, in the strongly irreducible, proximal, i.i.d. setting, the analogous pressure function is known to be analytic on \(t>0\) when the distribution is compactly supported (see \cite[Theorem 8.8]{GP04} or \cite[Theorem A]{GP16}; we note their theorems considers distributions satisfying the more general finite exponential moment condition). 

\subsection{Statement of main theorems}

In this paper we will consider a 1-typical, one-step cocycle \(\A: \Sigma_T \rightarrow \GL_d(\R)\) (we defer the definition of 1-typicality until Section \ref{subsec:cocycles}). Under the more general fiber-bunching condition, 1-typicality was shown in \cite{BV04} to be a sufficient condition for the uniqueness of the top Lyapunov exponent of measures with continuous product structure, which includes Gibbs measures for H\"older continuous potentials \cite{Lep00}. They further showed that 1-typical, fiber-bunched cocycles form an open and dense subset in the set of fiber-bunched cocycles.  K. Park has also proved under these conditions that \(\Phi_t\) is quasi-additive when \(t > 0\) and hence admits a Gibbs-type measure which is the unique equilibrium state \cite{Par20} (see also \cite{BP21}). This setting has further been studied in related contexts in papers including \cite{AV06,KS13,BG19,GS19, CP21,Moh22,PPi22,CCZ23,Moh23} and \cite{Par23}. 

We now state our first theorem. The main novelty of our result is that we can prove the existence of Gibbs-type states for some superadditive matrix-cocycle potentials beyond the strongly conformal, reducible, and dominated settings.

\begin{theo}\label{theo:gibbsmeasuresexist}
    Let \((\Sigma_T,\sigma)\) be a subshift of finite type with primitive adjacency matrix \(T\), and let \(\psi:\Sigma_T \rightarrow \R\) be a H\"older continuous function. Also, let \(\A: \Sigma_T \rightarrow \GL_d(\R)\), \(d \in \N\), be a 1-typical, one-step cocycle. With \(\Phi_t\) as defined in (\ref{eqn:Phitdefinition}), for all \(t<0\) sufficiently close to 0 there exists an ergodic Gibbs-type measure \(\mu_t \in \MCM(\Sigma_T,\sigma)\) for \(\Phi_t\). Moreover, \(\mu_t\) is the unique equilibrium state for \(\Phi_t\).
\end{theo}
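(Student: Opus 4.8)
The strategy is Bowen's method adapted to the transfer operators of Park–Piraino acting on functions on $\Sigma_T \times \RPD$ (or a suitable flag bundle), with the key new ingredient being control of a Furstenberg-type measure on the projective factor. For $t \geq 0$ the result is classical (quasi-additivity, \cite{Par20, Fen09}), so I would fix attention on $t < 0$ and treat $t = 0$ as the anchor. Write $\MCL_t$ for the transfer operator associated to $\Phi_t$. At $t = 0$ the operator is the usual Ruelle operator for the Hölder potential $\psi$ on $\Sigma_T$ (lifted trivially to the projective factor), which has a spectral gap on the appropriate Hölder space: a simple leading eigenvalue $e^{\Ptop(\Phi_0,\sigma)}$ with positive eigenfunction $h_0$ and eigenmeasure $\nu_0$, and the rest of the spectrum strictly inside a smaller disc. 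The plan is to perturb: show $t \mapsto \MCL_t$ is (real-)analytic as a family of bounded operators on a fixed Banach space of Hölder/$C^\alpha$ functions on $\Sigma_T \times \RPD$, invoke analytic perturbation theory (Kato) to get, for $|t|$ small, a simple leading eigenvalue $\lambda_t$ with eigenfunction $h_t > 0$ and eigenfunctional $\nu_t$ depending analytically on $t$, and then identify $\log \lambda_t = \Ptop(\Phi_t,\sigma)$ and manufacture the Gibbs measure as $\mu_t = $ the $\sigma$-invariant measure built from $h_t$ and $\nu_t$ (projected down to $\Sigma_T$, after integrating out the projective coordinate against the conditional Furstenberg measures).

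**Key steps in order.** (1) Set up the Banach space and verify $\MCL_t$ is well-defined, bounded, and analytic in $t$ near $0$; here the one-step, $\GL_d(\R)$-valued, 1-typical structure is used so that $\log\|\A^n(x)\|$ and its derivatives in $t$ interact well with the projective action — crucially $\|\A^n(x)\| = \|\A^n(x) u\|$ up to bounded multiplicative error for $u$ near the top singular direction, which is exactly what lifting to $\RPD$ buys us. (2) Establish the spectral gap at $t=0$ and transfer it to small $t$ by Kato; deduce analyticity of $\lambda_t$, $h_t$, $\nu_t$. (3) Show $\nu_t$ and $h_t$ are bounded above and below by positive constants (uniformly for $x$, and — after conditioning — uniformly enough in the projective variable, using 1-typicality and primitivity of $T$ to get the cone/contraction estimates), so that the measure $\mu_t$ they define satisfies the two-sided Gibbs bounds $C_1 \le \mu_t([x_0,\dots,x_{n-1}])/e^{-n\Ptop(\Phi_t,\sigma)+\varphi_{t,n}(x)} \le C_2$. (4) Identify $\log\lambda_t$ with $\Ptop(\Phi_t,\sigma)$ via the variational principle (Proposition \ref{prop:supadditivevarprinciple}) together with the partition-function estimate that the Gibbs bounds force. (5) Conclude $\mu_t$ is ergodic (spectral simplicity of $\lambda_t$) and then invoke the superadditive analogue of the uniqueness argument — an ergodic Gibbs-type measure for a superadditive sequence is the unique equilibrium state; since the excerpt states this only in the subadditive case, I would need a short independent argument here (see below).

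**Main obstacle.** The hard part is Step (3), and secondarily the uniqueness claim in Step (5). In Bowen's additive picture, the eigenfunction $h$ is automatically bounded away from $0$ and $\infty$ by compactness plus irreducibility of the Ruelle operator; here the fiber variable lives on $\RPD$ and the "potential" $t\log\|\A^n\|$ is only almost the fiberwise quantity $t\log\|\A^n(x)u\|$ — the discrepancy is where superadditivity (for $t<0$) bites, because we are now looking at the \emph{largest} singular value from the point of view of a \emph{typical} direction, and a priori the eigenfunction $h_t(x,u)$ could degenerate as $u$ ranges over $\RPD$. The resolution should come from 1-typicality: the dominant periodic orbit has a simple top eigenvalue with eigendirection in "general position" relative to the other invariant directions, which forces the transfer operator to push mass toward that direction and yields the needed uniform lower bound on $h_t$ on a neighborhood carrying full Furstenberg measure — but making this quantitative, and uniform in $t$, is the delicate technical core. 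For uniqueness in the superadditive setting I expect to argue directly: any equilibrium state $\mu$ must, by the Gibbs bounds on $\mu_t$ and a subadditivity/convexity comparison of free energies, be absolutely continuous with respect to $\mu_t$ with bounded density, and ergodicity of $\mu_t$ then forces $\mu = \mu_t$ — the absence of an analogue of inequality (5.2) in \cite{Fen11} noted in the introduction is precisely what makes this step require care rather than a citation.
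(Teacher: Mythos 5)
Your overall architecture (Park--Piraino transfer operators on \(\Sigma_T\times\BP\), spectral gap at \(t=0\), Kato perturbation, eigenfunction/eigenmeasure construction of \(\mu_t\)) matches the paper's, but you have misidentified the core difficulty, and the step that actually carries the theorem is absent. The eigenfunction \(h_t\) is \emph{not} the problem: since \(t\mapsto h_t=P_t1\) is analytic and \(h_0\equiv 1\), a soft compactness argument gives uniform two-sided bounds on \(h_t\) for small \(t\) (the paper's Lemma \ref{lem:htbound}); no cone or general-position argument is needed there. The genuine obstruction for \(t<0\) is the \emph{upper} Gibbs bound. Writing \(\mu_t([I])=\rho_t^{-n}\int g^{(n)}(Ix)\,\|\A^{[n]}(I)\tfrac{u}{\|u\|}\|^{t}\,\cdots\,\diff\widetilde{\nu}_t\), the lower bound is trivial for \(t<0\) (since \(\|\A^{[n]}(I)u\|\le\|\A^{[n]}(I)\|\,\|u\|\) and \(t<0\) reverses the inequality), but the upper bound requires
\[
\int \Bigl\|\A^{[n]}(I)\tfrac{u}{\|u\|}\Bigr\|^{t}\diff\eta_t(\overline{u})\;\le\;C\,\|\A^{n}(I)\|^{t},
\]
i.e.\ the projective marginal \(\eta_t\) of the eigenmeasure must not concentrate near the hyperplane where \(\|\A^{[n]}(I)u\|\) collapses. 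Quantitatively this is the condition \(\sup_{\overline{v}}\int\delta_{\BP}(\overline{u},\overline{v})^{-s}\diff\eta_t<\infty\) for some \(s>-t\), that is, \(\dim\eta_t>-t\). Establishing this \emph{uniformly} in \(t\) near \(0\) is the technical heart of the paper: it needs large deviation principles for \(\nu_t\) (derived from analyticity of \(\rho_t\)), the identification of \(\nu_t\) with the graph of the slowest Oseledets direction \(\overline{\xi_*}\) of \(\A_*^{-1}\), comparison estimates \(\mu_t([I])\le C_tR_t^n\mu_0([I])\) to import LDPs from the Gibbs measure \(\mu_0\), and an adaptation of Benoist--Quint's dimension argument. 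None of this is in your proposal, and your proposed fix (a lower bound on \(h_t\)) would not supply it.

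Your uniqueness sketch is also not workable as stated: for a superadditive sequence, the Gibbs property of \(\mu_t\) does not give absolute continuity of an arbitrary equilibrium state with bounded density (this is exactly the failure of the subadditive argument the introduction warns about). The paper instead first shows every equilibrium state near \(t=0\) has a simple top Lyapunov exponent (upper semicontinuity of \(\lambda_1+\lambda_2\) plus Bonatti--Viana at \(t=0\)), then builds a \(g\)-function \(g_t\) from \(h_t\) and \(\xi_*\), applies Ledrappier's theorem to show any ergodic equilibrium state is fixed by \(L_{\log g_t}^*\), extracts from this a one-sided Gibbs bound for that state, and concludes via Feng's entropy lemma (Lemma \ref{lem:otherequil}). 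You would need to supply an argument of comparable strength; a ``convexity comparison of free energies'' does not obviously produce the required density bound.
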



For \(t \geq 0\) we also denote by \(\mu_t\) the unique Gibbs-type measures for \(\Phi_t\) which are guaranteed by \cite{Par20}. Our second theorem again extends previous results to the superadditive case beyond the dominated, reducible, and strongly conformal settings. 


\begin{theo}\label{theo:Ptopanalytic}
In the setting of Theorem \ref{theo:gibbsmeasuresexist},  there exists an open neighbourhood of 0 such that the pressure function \(t \mapsto \Ptop(t):=\Ptop(\Phi_t,\sigma)\) is analytic and satisfies \(\Ptop'(t)=\lambda_1(\A,\mu_t)\). 
\end{theo}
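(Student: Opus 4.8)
The plan is to obtain analyticity of $t \mapsto \Ptop(t)$ from a perturbation argument on the transfer operators $\MCL_t$ of Park--Piraino, and then to compute the derivative via the Gibbs-type measures $\mu_t$ from Theorem \ref{theo:gibbsmeasuresexist}. First I would recall that in a neighbourhood of $0$ each $\MCL_t$ acts on a suitable Banach space of H\"older (or H\"older-on-fibers) functions on $\Sigma_T \times \PP^{d-1}$, and that — by the construction underlying Theorem \ref{theo:gibbsmeasuresexist} — it has a simple leading eigenvalue $e^{\Ptop(t)}$ isolated from the rest of the spectrum, with a spectral gap, the leading eigenfunction being strictly positive and the leading eigenfunctional being (essentially) the Furstenberg-type measure that builds $\mu_t$. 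The key point is that $t \mapsto \MCL_t$ is analytic as a map into the bounded operators on this Banach space: the $t$-dependence enters only through the factor $\|\A^n(x)\|^t$, equivalently through multiplication by $e^{t \log(\|A_{x_0} u\|/\|u\|)}$ at the level of the one-step operator, and $t \mapsto e^{t\,\omega}$ is an entire function of $t$ with values in the relevant function space, uniformly on compacta, because the cocycle is bounded (finitely many matrices in $\GL_d(\R)$) so the relevant $\log$-ratios are bounded. Analytic perturbation theory for isolated simple eigenvalues (Kato) then yields that $t \mapsto e^{\Ptop(t)}$, hence $t \mapsto \Ptop(t)$, is analytic on an open neighbourhood of $0$.

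The second half is the derivative formula. I would first establish the variational characterization $\Ptop(t) = \Pmeas(\Phi_t,\sigma) = \sup_\mu\{h(\mu,\sigma) + \int\psi\,d\mu + t\lambda_1(\A,\mu)\}$ (already recorded in the excerpt via Proposition \ref{prop:supadditivevarprinciple}) together with the fact that $\mu_t$ attains this supremum (Theorem \ref{theo:gibbsmeasuresexist}). From the Gibbs property of $\mu_t$ one gets $h(\mu_t,\sigma) + \int\psi\,d\mu_t + t\lambda_1(\A,\mu_t) = \Ptop(t)$ exactly. For $s$ near $t$, using $\mu_s$ as a competitor in the variational principle at parameter $t$ gives $\Ptop(t) \ge h(\mu_s,\sigma) + \int\psi\,d\mu_s + t\lambda_1(\A,\mu_s) = \Ptop(s) + (t-s)\lambda_1(\A,\mu_s)$; symmetrically $\Ptop(s) \ge \Ptop(t) + (s-t)\lambda_1(\A,\mu_t)$. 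Rearranging these two inequalities pinches the difference quotient $(\Ptop(s)-\Ptop(t))/(s-t)$ between $\lambda_1(\A,\mu_s)$ and $\lambda_1(\A,\mu_t)$ (with the correct orientation depending on the sign of $s-t$), so it suffices to show $s \mapsto \lambda_1(\A,\mu_s)$ is continuous at $t$; but since $\Ptop$ is already known to be differentiable wherever $\lambda_1(\A,\mu_s)$ is continuous and analytic by the perturbation argument, I can instead simply identify $\Ptop'(t)$ with the analytic derivative of the leading eigenvalue. Concretely, differentiating $\MCL_t h_t = e^{\Ptop(t)} h_t$ and pairing against the leading eigenfunctional $\nu_t$ (normalized so $\nu_t(h_t)=1$) kills the $h_t'$ term by the eigenfunctional relation, leaving $\Ptop'(t) = \nu_t\big((\partial_t \MCL_t) h_t\big)$; since $\partial_t\MCL_t$ is multiplication by the relevant log-norm integrand, this pairing is exactly $\int \log\|\A\| $ integrated against the $\sigma$-invariant measure $\mu_t$ built from $\nu_t$ and $h_t$, i.e. $\lambda_1(\A,\mu_t)$ after passing to the asymptotic (Birkhoff) average. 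The identification of this eigenfunctional pairing with the dynamical quantity $\lambda_1(\A,\mu_t)$ is where one must be careful: one needs that the invariant measure reconstructed from $(h_t,\nu_t)$ on $\Sigma_T$ is precisely the $\mu_t$ of Theorem \ref{theo:gibbsmeasuresexist}, and that the fiber-integration of $\log(\|A_{x_0}u\|/\|u\|)$ against the conditional (Furstenberg) measures reproduces the top Lyapunov exponent — this is exactly where 1-typicality, via the Furstenberg formula and uniqueness of the leading exponent \cite{BV04}, is used.

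The main obstacle I anticipate is not the abstract perturbation theory but verifying its hypotheses uniformly near $t=0$ for $t<0$: for $t<0$ the potential $\Phi_t$ is superadditive, so the transfer operator $\MCL_t$ is not the classical Ruelle operator of a H\"older potential, and one must know (from the machinery behind Theorem \ref{theo:gibbsmeasuresexist}) that $\MCL_t$ still acts boundedly with a spectral gap on a fixed Banach space for all $t$ in a two-sided neighbourhood of $0$, with the leading eigendata depending continuously — hence, by Kato, analytically — on $t$. In particular one needs the quasi-compactness / spectral-gap estimates to be locally uniform in $t$ across $0$, which is plausible because at $t=0$ the operator is the genuine Ruelle operator for the H\"older potential $\psi$ (with trivial projective dependence) and has a gap, and the perturbation $t \mapsto \MCL_t$ is norm-continuous. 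Once the gap persists on a neighbourhood, analyticity of $\Ptop$ and the eigenvalue-derivative formula are immediate, and matching the derivative with $\lambda_1(\A,\mu_t)$ is then a bookkeeping step using the variational principle and the Furstenberg description of $\mu_t$.
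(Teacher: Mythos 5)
Your overall strategy coincides with the paper's: identify $\Ptop(\Phi_t,\sigma)$ with $\log\rho_t+\Ptop(\psi,\sigma)$, where $\rho_t$ is the leading eigenvalue of $\MCL_t$, and invoke analyticity of $t\mapsto\rho_t$. The perturbation-theoretic worry in your last paragraph is not the issue — the spectral gap and analyticity of $\rho_t$, $P_t$, $S_t$ on a two-sided neighbourhood of $0$ are exactly \cite[Proposition 5.4]{PPi22}, quoted as Proposition \ref{prop:LtintermsofPandS}. The genuine gap is elsewhere. The identification ``leading eigenvalue $=e^{\Ptop(t)}$'' (up to the constant), which you attribute wholesale to ``the construction underlying Theorem \ref{theo:gibbsmeasuresexist}'', is delivered by that construction only for $t<0$: it is equation (\ref{eqn:pressurerelationinproof}), a byproduct of the Gibbs-type property via Lemmas \ref{lem:sufficientconditionformut} and \ref{lem:PequalsPtop}. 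Since the theorem asserts analyticity on an \emph{open} neighbourhood of $0$, you must also prove the identity for small $t>0$, and this is not contained in Theorem \ref{theo:gibbsmeasuresexist}; the paper devotes Lemma \ref{lem:mutgibbst>0} to it, showing $\mu_t$ is Gibbs-type for small $t>0$ by a different mechanism (the projective marginals $\eta_t^{(i)}$ are not supported on projective subspaces, following \cite{Pir20}) than the dimension bound used for $t<0$. Your proposal is silent on the positive side.

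Second, your route for the derivative via differentiating $\MCL_t h_t=\rho_t h_t$ and pairing with the eigenfunctional correctly yields $\rho_t'/\rho_t=\int -\frac{1}{n}\log\left\|\A_*^{-n}(x)\uou\right\|\diff\nu_t$ (Lemma \ref{lem:rhotderivative}), but the step you call bookkeeping — that this pairing equals the \emph{top} exponent $\lambda_1(\A,\mu_t)$ — is the hard part, and the appeal to ``the Furstenberg formula and \cite{BV04}'' does not close it. The Furstenberg-type formula for an invariant measure of the projective skew product produces \emph{some} Lyapunov exponent of $\A_*^{-1}$ (or a mixture thereof), determined by which Oseledets direction $\nu_t$ charges; \cite{BV04} gives the gap $\lambda_1>\lambda_2$ only for the Gibbs measure $\mu_0$, not for $\mu_t$. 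The paper needs the LDP for $\nu_t$ (Proposition \ref{prop:LDPfornut}) together with an upper-semicontinuity argument transferring the gap from $\mu_0$ to $\mu_t$ (Lemma \ref{lem:lambda1geqlambda2}) to rule out $\rho_t'/\rho_t$ landing at $\lambda_2(\A,\mu_t)$ or below (Proposition \ref{prop:derivativeislyapunov}). That said, your variational route can be completed without any of this: once $\Ptop$ is differentiable at $t$ and $\mu_t$ is an equilibrium state (Lemma \ref{lem:gibbsstatesareequilib}), the single inequality $\Ptop(s)\ge\Ptop(t)+(s-t)\lambda_1(\A,\mu_t)$ exhibits $\lambda_1(\A,\mu_t)$ as a subgradient of the convex function $\Ptop$ at $t$, hence as $\Ptop'(t)$; no continuity of $s\mapsto\lambda_1(\A,\mu_s)$ is needed. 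Finishing that way would actually be simpler than the paper's argument for this particular step.
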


It follows from the work of D\'iaz, Gelfert, and Rams \cite{DGR19,DGR22} that neither Theorem \ref{theo:gibbsmeasuresexist} nor Theorem \ref{theo:Ptopanalytic} can be extended to all of \(\R\) as there can be a phase transition at some point \(t<0\). We will discuss this in detail in Section \ref{sec:proofoftheocounter}.

\subsection{Key ideas}
We denote by \(\BP\) the \((d-1)\)-dimensional real projective space. In \cite{PPi22}, for 1-typical, fiber-bunched cocycles (which includes those that are one-step) they define the transfer operator 
\begin{equation*}\label{eqn:transferoperatorpsi}
    \MCL_t f(x,\overline{u})=\sum_{y \in \sigma^{-1} x} e^{ \psi(y)} \left\|\A(y)^*\uou \right\|^t f(y,\overline{\A(y)^*u}), \: t \in \R
\end{equation*}
acting on the space of \(\alpha\)-H\"older continuous functions on \(\Sigma_T \times \BP\) and prove that it is analytic for all \(\alpha>0\) sufficiently small (see \cite[Lemma 5.3]{PPi22}). This means that \(t \mapsto \int \MCL_t f \diff \nu\) is analytic for every Borel measure \(\nu\) on \(\Sigma_T \times \BP\) and every \(f \in C^{\alpha}(\Sigma_T \times \BP)\). In an open neighbourhood of 0, they further show that \(\MCL_t\) has a spectral gap and that the spectral radius is analytic \cite[Proposition 5.4]{PPi22}. 

Considering the classical setting \cite{Bow75}, a natural approach to prove Theorem \ref{theo:gibbsmeasuresexist} is to construct probability measures from the eigenmeasures and eigenfunctions given by the transfer operator. This defines measures, \(\nu_t\) say, on \(\Sigma_T \times \BP\). We can then project the measures \(\nu_t\) onto measures \(\mu_t\) on \(\Sigma_T\) and these are \(\sigma\)-invariant and ergodic (see \S \ref{sec:definingmut}). Moreover, \(\mu_0\) is the Gibbs measure corresponding to H\"older continuous potential \(\psi\). Note that we can also project the measures \(\nu_t\) onto \(\BP\) to define measures \(\eta_t\). In the case where \(t> 0\) and \(\Sigma_T\) is the full shift, to prove that the measure \(\mu_t\) is Gibbs-type requires proving that the projective measure \(\eta_t\) is not supported on a projective subspace. This argument was used previously in \cite[Proposition 3.4]{Pir20} in the strongly irreducible, proximal, full-shift, one-step setting. For subshifts of finite type and 1-typical, one-step cocycles, the argument holds with only minor modifications (see Lemma \ref{lem:mutgibbst>0}).  

When \(t<0\) the problem is more subtle. We define the dimension of a measure \(\eta\) on \(\BP\) to be equal to
\begin{equation}\label{eqn:dimensiondef}
    \dim \eta := \liminf_{r \rightarrow 0} \inf_{U \subset \BP} \frac{\log \eta(B(U,r))}{\log r},
\end{equation}
where the infimum is over all \((d-2)\)-dimensional projective subspaces (i.e. the projections of \((d-1)\)-dimensional subspaces of \(\R^d\)) and 
\[B(U,r):=\{\overline{u} \in \BP: \inf_{\overline{v} \in U} d_{\BP}(\overline{u},\overline{v})<r\}. \]
Here \(d_{\BP}\) is the natural metric on \(\BP\) (see (\ref{eqn:dpmetricdefinition})). Observe that \(\dim \eta\) is a lower bound for the lower local dimension of \(\eta\) at every point in \(\BP\). It turns out that, in contrast to the case when \(t > 0\), one can show that \(\mu_t\) is Gibbs-type when \(t<0\) by showing that \(\dim \eta_t>-t\). We remark that the property of not being supported on a projective subspace is well behaved under small perturbations, whereas the dimension of a measure may not be. Moreover, \(\dim \eta_t>0\) implies that \(\eta_t\) is not supported on a projective subspace, so the condition appearing for \(t<0\) is strictly stronger than that for \(t> 0\).

In \cite[Proposition 3.10]{PPi22} they prove that \((x,\overline{u})=(x,\overline{\xi_*}(x))\) for \(\nu_0\)-a.e. \((x,\overline{u}) \in \Sigma_T \times \BP\), where \(\overline{\xi_*}(x)\) is the element in \(\BP\) corresponding to the 0-dimensional slowest Oseledets' subspace of the inverse adjoint cocycle \(\A_*^{-1}\) (see Section \ref{subsec:cocycles}). When \(\overline{\xi_*}(x)\) is well defined and 0-dimensional, \(\overline{\xi_*}(x)\) is equivalently the density point of the transpose of \(\A^n(x)\); that is, \(\A^n(x)^*/\|\A^n(x)^*\|\) converges to a projection onto the 1-dimensional subspace of \(\R^d\) corresponding to \(\overline{\xi_*}(x)\). Thus, the measure \(\eta_0\) is a natural analogue of the Furstenberg measure. In the strongly irreducible, proximal, i.i.d. setting, it is proved in \cite[Theorem 14.1]{BQ16} that the dimension defined in (\ref{eqn:dimensiondef}) of the Furstenberg measure is strictly positive (see also \cite{Gui90}). The proof relies essentially entirely upon large deviation principles. By the LDPs recently proved for Gibbs measures under the 1-typicality condition assumed in this paper (see \cite[Theorem 1.5]{GS19} and \cite[Theorem B]{PPi22}), one would expect that the proof of Theorem 14.1 in \cite{BQ16} can be adapted to show that \(\dim \eta_0>0\). The main difficulty in proving Theorem \ref{theo:gibbsmeasuresexist} is extending this to a lower bound for \(\dim \eta_t\), which we require to be uniform for all \(t<0\) sufficiently close to 0.

Overcoming this difficulty requires two key elements. The first is a large deviation principle for the measures \(\nu_t\) which is proved using the analyticity of the transfer operator and its spectral radius. This allows us to show, amongst other things, that \((x,\overline{u})=(x,\overline{\xi_*}(x))\) for \(\nu_t\)-a.e. \((x,\overline{u})\), where \(\overline{\xi_*}(x)\) is as defined as before. Secondly, we show that there exists \(R_t\) with \(R_t \rightarrow 1\) as \(t \rightarrow 0\) and \(C_t>0\) such that \(\mu_t([I]) \leq C_t R_t^n \mu_0([I])\) for all \(I \in \MCC_n\) and \(n \in \N\). This allows us to prove large deviation estimates for the measures \(\mu_t\) from those recently proved for the Gibbs measure \(\mu_{0}\) (for the precise statement, see Lemma \ref{lem:LDPSformut}). Hence, combining these we are able to adapt the proof of \cite[Theorem 14.1]{BQ16} to get a lower bound for \(\dim \eta_t\), which we show can be taken to be uniform in \(t\) in a neighbourhood of 0. It will thus follow that \(\dim \nu_t>-t\) for all \(t<0\) sufficiently close to 0, and from this we can prove Theorem \ref{theo:gibbsmeasuresexist}. Theorem \ref{theo:Ptopanalytic} is then a straightforward consequence of our proof of Theorem \ref{theo:gibbsmeasuresexist} (which we prove for all \(t\) in an open neighbourhood of 0) and the analyticity of the spectral radius which was proved in \cite{PPi22}. In particular, we relate \(\Ptop(\Phi_t,\sigma)\) to the logarithm of the spectral radius of \(\Lt\) for those \(t\) for which we construct the Gibbs-type measures.

\subsection{Organisation}
The layout of the paper is as follows. The next section is devoted to introducing the main definitions and proving some preliminary lemmas. In Section \ref{sec:uniformldp} we modify the proof of Theorem B in \cite{PPi22} to prove a slightly stronger, uniform large deviation principle for Gibbs measures as we require this for our dimension lower bound. In Section \ref{sec:definingmut} we define the measures \(\nu_t\) using the operator \(\Lt\) and prove that their projections \(\mu_t\) onto \(\Sigma_T\) are ergodic, \(\sigma\)-invariant, and converge weak* to the Gibbs measure \(\mu_0\) as \(t \rightarrow 0\). Using an upper semi-continuity argument and the result proved in \cite{BV04} for Gibbs measures, we further show that they have unique top Lyapunov exponent for all \(t\) in a neighbourhood of 0. In Section \ref{sec:LDPfornut} we prove LDPs for the measures \(\nu_t\) and deduce from this that \((x,\overline{u})=(x,\overline{\xi_*}(x))\) for \(\nu_t\)-a.e. \((x,\overline{u})\). In Section \ref{sec:analysisofprojmeasures} we then show that \(\dim \eta_t>-t\) holds for all \(t<0\) sufficiently close to 0 and that this is sufficient to show that the measures \(\mu_t\) are Gibbs-type. We finish the proof of Theorem \ref{theo:gibbsmeasuresexist} in Section \ref{sec:proofoftheogibbs}, with most of the section devoted to showing uniqueness. Theorem \ref{theo:Ptopanalytic} is proved in Section \ref{sec:proofofanalytictheorem} by relating \(\Ptop(\Phi_t,\sigma)\) to the logarithm of the spectral radius of \(\Lt\) for all \(t\) in a neighbourhood of 0. In Section \ref{sec:convexity} we prove a theorem regarding strict convexity of the pressure function; in particular, we use Theorem \ref{theo:Ptopanalytic} to show that either \(t \mapsto \Ptop(\Phi_t,\sigma)\) is strictly convex in a neighbourhood of 0 or it is linear on all of \(\R\) (see Theorem \ref{theo:convexity}). We finish in Section \ref{sec:proofoftheocounter} by showing that Theorems \ref{theo:gibbsmeasuresexist} and \ref{theo:Ptopanalytic} cannot be extended to all \(t<0\), using the results proved in \cite{DGR19,DGR22}.

\section{Preliminaries}

\subsection{Subshifts of finite type}\label{subsec:subshiftsoffinitetype}
Throughout this paper \(T\) is a primitive \(q \times q \) matrix with entries in \(\{0,1\}\). We denote by \(\Sigma_T \subseteq \{1,\ldots, q\}^{\N_0}\) the one-sided subshift of finite type with adjacency matrix \(T\). We use \(\sigma\) to denote the left shift map on \(\Sigma_T\). For \(x \in \Sigma_T\) the cylinder of length \(n\) around \(x\) is defined by
\[[x]_n:=\{y \in \Sigma_T: y_i=x_i, \forall 0 \leq i \leq n-1\}.\]
In a similar way we define cylinders \([I]\) for \(I \in \MCC_*\), where recall \(\MCC_*\) is the set of all admissible words. Since \(T\) is primitive, \((\Sigma_T,\sigma)\) is \textit{topologically mixing} in the sense that there exists \(N \in \N\) such for all \(i,j \in \{1,\ldots,q\}\) and all \(n \geq N\) there exists \(K \in \MCC_n\) such that \([i,K,j]\) is non-empty.

We endow \(\Sigma_T\) with the metric \(d_{\Sigma_T}:\Sigma_T \times \Sigma_T \rightarrow [0,\infty)\) defined by 
\[d_{\Sigma_T}(x,y)=
\begin{cases}
1 & \text{ if } x_0 \not= y_0 \\
2^{-k} & \text{ if } x_i = y_i \text{ for all } i=0,\ldots,k-1 \text{ and } x_{k} \not= y_{k} \\
0 & \text{ if } x=y.
\end{cases}.\]

\subsection{Superadditive thermodynamic formalism}\label{subsec:supadditivethermodynamicformalism}
A sequence \(\Phi=(\varphi_n)_{n \in \N}\) of real-valued potentials on \(\Sigma_T\) is said to be \textit{superadditive} if each \(\varphi_n\) is continuous and
\[\varphi_{n+m}(x) \geq \varphi_n(x)+\varphi_m \circ \sigma^n(x), \: \forall n,m \in \N, \: \forall x \in \Sigma_T.\]
By a superadditivity argument, for each \(\mu \in \MCM(\Sigma_T,\sigma)\) we can define
\[\MCF_*(\Phi,\mu):=\lim_{n \rightarrow \infty} \frac{1}{n} \int \varphi_n \diff \mu=\sup_{n \in \N} \frac{1}{n}\int \varphi_n \diff \mu.\]
It is possible that \(\MCF_*(\Phi,\mu)=\infty\). For a superadditive sequence of potentials \(\Phi=(\varphi_n)_{n \in \N}\), we define the \textit{measure-theoretic pressure} by 
\[P_{\text{meas}}(\Phi,\sigma)=\sup\{h(\mu,\sigma)+\MCF_*(\mu):\mu \in \MCM(\Sigma_T,\sigma) \}\]
and the \textit{topological pressure} by
\begin{equation}\label{eqn:pressuredef}
    \Ptop(\Phi,\sigma):=\lim_{n \rightarrow \infty}\frac{1}{n} \log \sum_{I \in \MCC_n} e^{\inf_{x \in [I]} \varphi_n(x)}.
\end{equation} 

\begin{lemma}
    The limit defining the topological pressure in (\ref{eqn:pressuredef}) exists, but may be \(\infty\).
\end{lemma}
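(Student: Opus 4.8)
The plan is to analyse the real sequence $a_n:=\log\sum_{I\in\MCC_n}e^{\inf_{x\in[I]}\varphi_n(x)}$: first to check it genuinely consists of finite numbers, then to show it is superadditive up to a bounded additive error and a bounded shift of the index, and finally to conclude by a Fekete‑type argument. That each $a_n$ lies in $\R$ is immediate: $\varphi_n$ is continuous and $[I]$ is a closed, hence compact, subset of $\Sigma_T$, so each $\inf_{x\in[I]}\varphi_n(x)$ is finite, and $\MCC_n$ is a finite set, so the sum is a finite positive number.

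The core step is the estimate that there exist $c_0\in\N$ and $D\in\R$, depending only on $T$ and $\Phi$, with $a_{n+c_0+m}\geq a_n+a_m+D$ for all $n,m\geq1$. To obtain it, let $N$ be as in the topological‑mixing property of $(\Sigma_T,\sigma)$ from Section~\ref{subsec:subshiftsoffinitetype}, so that for every ordered pair of symbols $(i,j)$ there is a word $w(i,j)\in\MCC_N$ with $i\,w(i,j)\,j$ admissible; set $c_0:=N$ and $D:=\min_{w\in\MCC_{c_0}}\inf_{x\in[w]}\varphi_{c_0}(x)\in\R$. Then for any $I\in\MCC_n$ ending in $i$ and any $J\in\MCC_m$ beginning with $j$ the concatenation $I\,w(i,j)\,J$ belongs to $\MCC_{n+c_0+m}$, and applying the superadditivity of $\Phi$ twice gives, for every $x\in[I\,w(i,j)\,J]$,
\[\varphi_{n+c_0+m}(x)\ \geq\ \varphi_n(x)+\varphi_{c_0}(\sigma^n x)+\varphi_m(\sigma^{n+c_0}x)\ \geq\ \inf_{x'\in[I]}\varphi_n(x')+\inf_{x'\in[J]}\varphi_m(x')+D.\]
Since the map $(I,J)\mapsto I\,w(i,j)\,J$ is injective on $\MCC_n\times\MCC_m$ (the first $n$ and last $m$ symbols of the image recover $I$ and $J$), summing the resulting inequality $e^{\inf_{[I\,w\,J]}\varphi_{n+c_0+m}}\geq e^{D}\,e^{\inf_{[I]}\varphi_n}\,e^{\inf_{[J]}\varphi_m}$ over all such pairs and taking logarithms yields $a_{n+c_0+m}\geq a_n+a_m+D$.

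To finish, I would pass to $\lim_n a_n/n$ by a standard Fekete argument adapted to the shift. Putting $b_n:=a_n+D$ one checks $b_{n+m+c_0}\geq b_n+b_m$ for all $n,m\geq1$, so $e_n:=b_{n-c_0}$, defined for $n\geq c_0+1$, is genuinely superadditive on that range, $e_{n+m}\geq e_n+e_m$ for $n,m\geq c_0+1$. Dividing $n$ by a fixed $p\geq c_0+1$ with remainder then gives $\lim_n e_n/n=\sup_{p\geq c_0+1}e_p/p$, a quantity $\geq e_{c_0+1}/(c_0+1)\in\R$, hence lying in $(-\infty,+\infty]$; since $a_n/n=\tfrac{n+c_0}{n}\cdot\tfrac{e_{n+c_0}}{n+c_0}-\tfrac{D}{n}$ has the same limit, the limit defining $\Ptop(\Phi,\sigma)$ exists in $(-\infty,+\infty]$. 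That the value $+\infty$ genuinely occurs is witnessed by the superadditive sequence $\varphi_n\equiv n^2$, for which $\tfrac1n\log\sum_{I\in\MCC_n}e^{n^2}=n+\tfrac1n\log\Card\,\MCC_n\to+\infty$.

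No step here is genuinely hard: the only points that need care are the bookkeeping of the connecting words $w(i,j)$ for a subshift that need not be the full shift, handled by primitivity of $T$, and the bounded index shift $c_0$ in the Fekete step, handled by the reindexing $e_n:=b_{n-c_0}$.
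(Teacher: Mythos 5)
Your proof is correct and follows essentially the same route as the paper: insert a fixed-length connecting word between any two admissible words (available by primitivity), use superadditivity of $\Phi$ twice to get $a_{n+c_0+m}\geq a_n+a_m+D$, and conclude by a shifted Fekete argument. The only differences are cosmetic (your constant $D$ versus the paper's $-\|\varphi_k\|_\infty$, and your explicit example $\varphi_n\equiv n^2$ witnessing the value $+\infty$).
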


\begin{proof}
    We adapt the proof of \cite[Lemma 2.2]{Fen09}. Let \(k \in \N\) be such that for all \(I, J \in \MCC_*\) there exists \(K \in \MCC_k\) such that \(IKJ\) is admissible (such a \(k\) exists because \(T\) is primitive). For each pair \(I, J \in \MCC_*\) choose such a \(K_{I,J} \in \MCC_k\). Let
    \[s_n:=\sum_{I \in \MCC_{n}} e^{\inf_{x \in [I]} \varphi_{n}(x)}. \]
    For any \(n, m \in \N\) we have
    \begin{align*}
        s_{n+k+m}&=\sum_{(IKJ) \in \MCC_{n+k+m}} e^{\inf_{x \in [IKJ]} \varphi_{n+k+m}(x)}\\
 &\geq \sum_{I \in \MCC_{n}} \sum_{J \in \MCC_m} e^{\inf_{x \in [IK_{I,J}J]} \varphi_{n+k+m}(x)} \\
        &\geq \sum_{I \in \MCC_{n}} \sum_{J \in \MCC_m} e^{\inf_{x \in [IK_{I,J}J]} \varphi_{n}(x)+\varphi_k( \sigma^n(x))+ \varphi_m( \sigma^{n+k}(x))} \\
        &\geq  \sum_{I \in \MCC_{n}} \sum_{J \in \MCC_m} e^{\inf_{x \in [I]} \varphi_{n}(x)+\inf_{y \in [J]} \varphi_m (y)- \|\varphi_k\|_{\infty}} \\
        &=e^{-\|\varphi_k\|_{\infty}} \sum_{I \in \MCC_{n}} e^{\inf_{x \in [I]} \varphi_{n}(x)} \sum_{J \in \MCC_m} e^{\inf_{y \in [J]} \varphi_m(y)} \\
        &= e^{-\|\varphi_k\|_{\infty}} s_n s_m.
    \end{align*}
    Thus, the sequence 
    \[a_{n}:=e^{-\|\varphi_k\|_{\infty}} s_{n-k}\]
    is supermultiplicative in the sense that \(a_{n+m} \geq a_n a_m\) for all \(n,m> k\). It follows by superadditivity that the limit \(\lim_{n \rightarrow \infty} \frac{1}{n} \log s_n=\lim_{n \rightarrow \infty} \frac{1}{n} \log a_n\) exists.
\end{proof}

Given a continuous potential \(\varphi:\Sigma_T \rightarrow \R\) we define \(\Ptop(\varphi,\sigma):=\Ptop((S_n \varphi)_{n \in \N},\sigma)\). Explicitly,
\[\Ptop(\varphi,\sigma):= \lim_{n \rightarrow \infty}\frac{1}{n} \log \sum_{I \in \MCC_{n}} e^{\inf_{x \in [I]} \sum_{i=0}^{n-1} \varphi(\sigma^{i} x)}.\]
A standard argument shows that this is equivalent to the definition of topological pressure defined in \cite[\S 9]{Wal81} (i.e. using cylinders to define both separated and spanning sets). For \(n \in \N\) note that \((\Sigma_T,\sigma^n)\) is a topologically mixing subshift of finite type on alphabet \(\MCC_n\), so we likewise define
\[\Ptop(\varphi,\sigma^n):= \lim_{l \rightarrow \infty}\frac{1}{l} \log \sum_{I \in \MCC_{nl}} e^{\inf_{x \in [I]} \sum_{i=0}^{l-1} \varphi(\sigma^{in} x)},\]
which is again equivalent to the definition in \cite{Wal81}. By Proposition 2.1 in \cite{CPZ19}, for any superadditive sequence of potentials \(\Phi=(\varphi_n)_{n \in \N}\) we have
\begin{equation}\label{eqn:Pmeasprop}
    P_{\text{meas}} (\Phi,\sigma)=\lim_{n \rightarrow \infty} \Ptop\left(\frac{\varphi_n}{n},\sigma \right)=\lim_{n \rightarrow \infty} \frac{1}{n} \Ptop(\varphi_n,\sigma^n).
\end{equation}

The following proposition generalises Theorem 1.1 in \cite{Wu21} which was proved using similar methods. In particular, Wu showed that the variational principle holds for any
superadditive potentials \((t \log \|\A_n(x)\|)_{n \in \N}\) where \(t < 0\) and \(\A : \Sigma \rightarrow M_d(\R)\) is a full-shift, one-step matrix cocycle such that \(\A_n
(x) \not= 0\) for all \(x\) and \(n\). Wu further proved a second variational result (\cite[Theorem 1.2]{Wu21}) which allows \(\A_n(x) = 0\) for full-shift, one-step matrix cocycles \(\A\) satisfying the irreducibility condition. 

\begin{prop}[Variational principle]\label{prop:supadditivevarprinciple}
    Let \((\Sigma_T,\sigma)\) be a subshift of finite type with primitive adjacency matrix \(T\), and let \(\Phi=(\varphi_n)_{n \in \N}\) be a superadditive sequence of potentials on \(\Sigma_T\). Then,
\[\Ptop(\Phi,\sigma)=P_{\mathrm{meas}}(\Phi,\sigma).\]
\end{prop}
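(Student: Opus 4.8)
The plan is to prove $\Pmeas(\Phi,\sigma)\le\Ptop(\Phi,\sigma)$ and $\Ptop(\Phi,\sigma)\le\Pmeas(\Phi,\sigma)$ separately; the second is where the work lies. I shall use freely the Cao--Pesin--Zhao identity (\ref{eqn:Pmeasprop}), that is $\Pmeas(\Phi,\sigma)=\lim_{n\to\infty}\tfrac1n\Ptop(\varphi_n,\sigma^n)$, together with the classical variational principle on the mixing subshifts of finite type $(\Sigma_T,\sigma^n)$ carrying the single continuous potentials $\varphi_n$.

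For $\Pmeas(\Phi,\sigma)\le\Ptop(\Phi,\sigma)$, fix $n\in\N$. Superadditivity gives $\varphi_{nl}(x)\ge\sum_{i=0}^{l-1}\varphi_n(\sigma^{in}x)$ for all $x\in\Sigma_T$ and $l\in\N$, hence $\inf_{x\in[I]}\varphi_{nl}(x)\ge\inf_{x\in[I]}\sum_{i=0}^{l-1}\varphi_n(\sigma^{in}x)$ for every $I\in\MCC_{nl}$. Summing over $I\in\MCC_{nl}$, taking $\tfrac1{nl}\log$, and letting $l\to\infty$ gives $\Ptop(\Phi,\sigma)\ge\tfrac1n\Ptop(\varphi_n,\sigma^n)$; since $n$ is arbitrary, (\ref{eqn:Pmeasprop}) yields $\Ptop(\Phi,\sigma)\ge\Pmeas(\Phi,\sigma)$.

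For the reverse inequality I would run a Misiurewicz-type construction adapted to the superadditive setting, working with $s_m:=\sum_{I\in\MCC_m}e^{\inf_{[I]}\varphi_m}$. For each $m$ and each $I\in\MCC_m$ pick, using continuity of $\varphi_m$ and compactness of $[I]$, a point $x_I\in[I]$ with $\varphi_m(x_I)=\inf_{[I]}\varphi_m$, and set $\nu_m:=\tfrac1{s_m}\sum_{I\in\MCC_m}e^{\varphi_m(x_I)}\delta_{x_I}$ and $\mu_m:=\tfrac1m\sum_{j=0}^{m-1}(\sigma^j)_*\nu_m\in\MCM(\Sigma_T,\sigma)$. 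Writing $\A$ for the partition of $\Sigma_T$ into $1$-cylinders and $\A_m:=\bigvee_{j=0}^{m-1}\sigma^{-j}\A$ for the partition into $m$-cylinders, one checks $\nu_m([I])=e^{\inf_{[I]}\varphi_m}/s_m$ and hence the identity $\log s_m=H_{\nu_m}(\A_m)+\int\varphi_m\,d\nu_m$. Passing to a subsequence $m_k\to\infty$ along which $\mu_{m_k}\to\mu$ weak\(^{*}\) (so $\mu\in\MCM(\Sigma_T,\sigma)$) and $\tfrac1{m_k}\log s_{m_k}\to\Ptop(\Phi,\sigma)$, the entropy term is handled just as in the classical (additive) proof: for fixed $N$, decomposing $\A_m$ into consecutive length-$N$ blocks with at most $2N$ leftover coordinates, averaging over the $N$ offsets, and using concavity of $\mu'\mapsto H_{\mu'}(\A_N)$ gives $\tfrac1m H_{\nu_m}(\A_m)\le\tfrac1N H_{\mu_m}(\A_N)+O(N^2/m)$; since $N$-cylinders are clopen this passes to the limit, and then $N\to\infty$ (using that $\A$ is generating) gives $\limsup_k\tfrac1{m_k}H_{\nu_{m_k}}(\A_{m_k})\le h(\mu,\sigma)$.

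The step I expect to be the main obstacle is the potential term: one must show $\limsup_k\tfrac1{m_k}\int\varphi_{m_k}\,d\nu_{m_k}\le\MCF_*(\Phi,\mu)$, after which the two estimates combine into $\Ptop(\Phi,\sigma)=\lim_k\tfrac1{m_k}\log s_{m_k}\le h(\mu,\sigma)+\MCF_*(\Phi,\mu)\le\Pmeas(\Phi,\sigma)$, completing the proof. This is precisely where the superadditive case is genuinely harder: in the additive or subadditive setting one bounds $\varphi_m$ above by a sum of shorter potentials along blocks, whereas superadditivity only furnishes the opposite inequality $\tfrac1m\int\varphi_m\,d\nu_m\ge\tfrac1l\int\varphi_l\,d\mu_m-O_l(1/m)$, which cannot be used to bound $\Ptop(\Phi,\sigma)$ from above. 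I would circumvent this by quantifying, uniformly in $m$, how much $\inf_{[I]}\varphi_m$ can exceed the length-$N$ block minimum $\inf_{[I]}\sum_{i=0}^{m/N-1}\varphi_N\circ\sigma^{iN}$ for $I\in\MCC_m$ (taking $m$ divisible by $N$), exploiting the primitivity of $T$ to splice near-optimal length-$N$ words together via connecting words of bounded length — whose cost is independent of $N$, hence negligible after dividing by $N$ — and then invoking (\ref{eqn:Pmeasprop}) once more to identify the resulting bound with $\Pmeas(\Phi,\sigma)$. Carrying this discrepancy estimate out uniformly is the heart of the matter.
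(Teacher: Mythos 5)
Your first inequality ($\Pmeas\le\Ptop$) is exactly the paper's argument and is fine. The second direction, however, has a genuine gap, and you have put your finger on it yourself: the Misiurewicz construction stalls at the potential term, and the "discrepancy estimate" you propose as a workaround is not carried out — and, in the form you state it, it is not available. Superadditivity gives no pointwise upper bound on $\varphi_m$ in terms of length-$N$ block sums of $\varphi_N$, and the excess $\inf_{[I]}\varphi_m-\inf_{[I]}\sum_{i=0}^{m/N-1}\varphi_N\circ\sigma^{iN}$ need not be bounded uniformly in $m$, nor even $o(m)$ for fixed $N$: take the (superadditive, locally constant) example $\varphi_n\equiv-\sqrt n$, where the excess equals $m/\sqrt N-\sqrt m$, which grows linearly in $m$. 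The most one can hope for is that the excess per coordinate tends, in the iterated limit $m\to\infty$ then $N\to\infty$, to $0$ — but that assertion is essentially equivalent to the inequality $\Ptop(\Phi,\sigma)\le\lim_N\frac1N\Ptop(\varphi_N,\sigma^N)$ you are trying to prove, so assuming it is circular. (Note also that if such an estimate were in hand, identity (\ref{eqn:Pmeasprop}) would finish the proof immediately and the entire Misiurewicz apparatus would be doing no work.)

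The paper's proof avoids this by running the splicing in the opposite direction: fix $n$, write $\bigl(\sum_{I\in\MCC_n}e^{\inf_{[I]}\varphi_n}\bigr)^l$ as a sum over $l$-tuples of length-$n$ words, inject each tuple into a single word of $\MCC_{(n+k)l}$ via connecting words of length $k$ (primitivity), and then use superadditivity in the one direction it does give, namely $\varphi_n\le\varphi_{n+k}-\varphi_k\circ\sigma^n\le\varphi_{n+k}+\|\varphi_k\|_\infty$, to dominate each block contribution by $\varphi_{n+k}$ evaluated along the $\sigma^{n+k}$-orbit. This yields $\frac1n\log\sum_{I\in\MCC_n}e^{\inf_{[I]}\varphi_n}\le\frac1n\Ptop(\varphi_{n+k},\sigma^{n+k})+\frac1n\|\varphi_k\|_\infty$, and letting $n\to\infty$ together with (\ref{eqn:Pmeasprop}) gives $\Ptop(\Phi,\sigma)\le\Pmeas(\Phi,\sigma)$. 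The key point is that one bounds the \emph{short} potential $\varphi_n$ by the \emph{longer} one $\varphi_{n+k}$ on each spliced block — never the long-time potential by short blocks, which is what your plan requires and what superadditivity refuses to provide.
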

\begin{proof}
By (\ref{eqn:Pmeasprop}) it suffices to show that 
\[\Ptop(\Phi,\sigma)=\lim_{n \rightarrow \infty} \frac{1}{n} \Ptop(\varphi_n ,\sigma^n).\] 
Let \(k \in \N\) be such that for all \(I, J \in \MCC_*\) there exists \(K \in \MCC_k\) such that \(IKJ\) is admissible. For any \(n,l \in \N\) we have
\begin{align*}
    \frac{1}{n} \log \sum_{I \in \MCC_n} e^{\inf_{x \in [I]} \varphi_{n}(x)} &= \frac{1}{nl} \log \sum_{(I_0,\ldots, I_{l-1}) \in \MCC_{n}^l} e^{\sum_{i=0}^{l-1}\inf_{x \in [I_i]}\varphi_{n}(x)} \\
    &\leq \frac{1}{nl} \log \sum_{J \in \MCC_{(n+k)l}}  e^{ \inf_{x \in [J]}  \sum_{i=0}^{l-1} \varphi_{n}(\sigma^{i(n+k)} x)} \\
    &\leq \frac{1}{nl} \log \left( \sum_{J \in \MCC_{(n+k)l}} e^{ \inf_{x \in [J]} \sum_{i=0}^{l-1} \varphi_{n+k}(\sigma^{i(n+k)} x) }\right)+ \frac{1}{n}\log\|\varphi_k\|_\infty,
\end{align*}
where the last inequality follows by the inequality \(\varphi_{n+k} \geq \varphi_n+\varphi_k \circ \sigma^n \). Letting \(l \rightarrow \infty\) gives 
\[ \frac{1}{n} \log \sum_{I \in \MCC_n} e^{\inf_{x \in [I]} \varphi_{n}(x)} \leq \frac{1}{n} \Ptop(\varphi_{n+k},\sigma^{n+k})+\frac{1}{n}  \log\|\varphi_k\|_{\infty}.\]
Thus,
\[\Ptop(\Phi,\sigma) \leq \lim_{n \rightarrow \infty} \frac{1}{n} \Ptop(\varphi_n,\sigma^n).\]

On the other hand, for any \(n,l \in \N\), 
\begin{align*}
    \frac{1}{nl} \log \sum_{I \in \MCC_{nl}} e^{\inf_{x \in [I]} \varphi_{nl}(x)} &\geq \frac{1}{nl} \log \sum_{I \in \MCC_{nl}} e^{\inf_{x \in [I]} \sum_{i=0}^{l-1} \varphi_{n}(\sigma^{in} x)}.
\end{align*}
Hence, letting \(l \rightarrow \infty\) gives 
\[\Ptop(\Phi,\sigma) \geq \frac{1}{n} \Ptop(\varphi_n,\sigma^n),\]
so
\[\Ptop(\Phi,\sigma) \geq \lim_{n \rightarrow \infty} \frac{1}{n} \Ptop(\varphi_n,\sigma^n).\]
\end{proof}

We denote 
\[\var_n(\Phi):= \sup\{|\varphi_n(x)-\varphi_n(y)|: x,y \in \Sigma_T, y \in [x]_n\}.\]
\begin{defn}\label{defn:boundedvariations}
We say a sequence of continuous potentials \(\Phi=(\varphi_n)_{n \in \N}\) on \(\Sigma_T\) has \textit{bounded variations} if 
    \[\sup_{n \in \N} \var_n(\Phi)<\infty.\]
Equivalently, this holds if there exists \(C>0\) such that for all \(n \in \N\) and all \(x,y \in \Sigma_T\) with \(y \in [x]_n\),
\[C^{-1} \leq \frac{e^{\varphi_n(x)}}{e^{\varphi_n(y)}} \leq C.\]
\end{defn}

\begin{defn}\label{def:Gibbstype}
We say a measure \( \mu \in \MCM(\Sigma_T,\sigma)\) is a \textit{Gibbs-type measure} for a superadditive sequence \(\Phi=(\varphi_n)_{n \in \N}\) if there exists \(C_1,C_2>0\) and \(P \in \R\) such that for all \(x \in \Sigma_T\) and \(n \in \N\)
\[C_1 \leq \frac{\mu([x]_n)}{e^{-Pn+\varphi_n(x)}} \leq C_2.\]
\end{defn}
Note that for a Gibbs-type measure to exist \(\Phi\) must have bounded variations.

\begin{lemma}\label{lem:PequalsPtop}
     Let \(\Phi=(\varphi_n)_{n \in \N}\) be a superadditive sequence of continuous potentials on \(\Sigma_T\) with bounded variations, and suppose that \(\mu \in \MCM(\Sigma_T,\sigma)\) is a Gibbs-type measure for \(\Phi\). Then, the constant \(P\) is equal to \(\Ptop(\Phi,\sigma)\). Moreover, the convergence in \(\Ptop(\Phi,\sigma)\) is \(O(1/n)\).
\end{lemma}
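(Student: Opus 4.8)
The plan is to exploit the Gibbs-type bounds directly in the definition of $\Ptop(\Phi,\sigma)$ as a limit over the partition sums $s_n := \sum_{I \in \MCC_n} e^{\inf_{x \in [I]} \varphi_n(x)}$. First I would observe that, because $\mu$ is a probability measure and the cylinders $\{[I] : I \in \MCC_n\}$ partition $\Sigma_T$, we have $\sum_{I \in \MCC_n} \mu([I]) = 1$ for every $n$. Now fix $n$ and, for each admissible word $I \in \MCC_n$, pick a point $x_I \in [I]$ achieving (or nearly achieving) $\inf_{x \in [I]} \varphi_n(x)$; since $\Phi$ has bounded variations with constant $C$, the Gibbs-type inequality $C_1 \le \mu([I]) \, e^{Pn - \varphi_n(x_I)} \le C_2$ together with $|\varphi_n(x) - \varphi_n(x_I)| \le \log C$ for all $x \in [I]$ gives
\[
C_1 C^{-1} e^{-Pn} e^{\inf_{x \in [I]}\varphi_n(x)} \le \mu([I]) \le C_2 C \, e^{-Pn} e^{\inf_{x \in [I]}\varphi_n(x)}.
\]
Summing over $I \in \MCC_n$ and using $\sum_I \mu([I]) = 1$ yields
\[
C_1 C^{-1} e^{-Pn} s_n \le 1 \le C_2 C \, e^{-Pn} s_n,
\]
hence $(C_2 C)^{-1} e^{Pn} \le s_n \le (C_1 C^{-1})^{-1} e^{Pn}$.

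Taking logarithms, dividing by $n$, and letting $n \to \infty$ gives $\lim_{n\to\infty}\frac1n \log s_n = P$; but this limit is by definition $\Ptop(\Phi,\sigma)$ (the limit is known to exist by the earlier lemma, though in fact the two-sided bound re-proves its existence and finiteness), so $P = \Ptop(\Phi,\sigma)$. For the rate-of-convergence claim, the same two-sided bound reads
\[
-\frac{\log(C_2 C)}{n} \le \frac1n \log s_n - P \le \frac{\log(C_1 C^{-1})^{-1}}{n},
\]
so $\left| \frac1n \log s_n - \Ptop(\Phi,\sigma) \right| \le \frac{K}{n}$ where $K := \max\{|\log(C_2 C)|,\, |\log(C_1 C^{-1})|\}$, which is exactly the asserted $O(1/n)$ bound.

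There is no real obstacle here — the argument is a routine "partition of unity" estimate. The only points requiring a modicum of care are: (i) noting that a point $x_I$ attaining the infimum of $\varphi_n$ on the compact set $[I]$ exists by continuity of $\varphi_n$ (alternatively work with near-infima and absorb an $\varepsilon$), and (ii) checking that $\MCC_n$ is finite so the sums are genuinely finite and the manipulations are legitimate, which holds since the alphabet is finite. No superadditivity of $\Phi$ beyond what is needed to know the limit exists is actually used — the bounded-variations Gibbs-type property does all the work, and in particular the argument shows $\Ptop(\Phi,\sigma)$ is finite under these hypotheses.
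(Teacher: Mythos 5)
Your proof is correct and is essentially the paper's argument: apply the two-sided Gibbs bound on each cylinder, sum over $\MCC_n$ using $\sum_{I\in\MCC_n}\mu([I])=1$, and read off $\frac{1}{n}\log s_n = P + O(1/n)$. The only difference is cosmetic — since the Gibbs inequality holds at \emph{every} point of every cylinder (in particular at points realising $\inf_{x\in[I]}\varphi_n(x)$), your extra detour through the bounded-variations constant $C$ is unnecessary, though harmless.
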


\begin{proof}
    By the Gibbs-type property,
    \begin{align*}
        \frac{1}{n} \log \sum_{I \in \MCC_{n}} e^{\inf_{x \in [I]} \varphi_n(x)} &\geq \frac{1}{n} \log \sum_{I \in \MCC_n} C_1 \mu(I) e^{Pn} \\
        &=P+ \frac{1}{n} \log C_1.
    \end{align*}
    Similarly we have 
    \[\frac{1}{n} \log \sum_{I \in \MCC_{n}} e^{\inf_{x \in [I]} \varphi_n(x)} \leq P+\frac{1}{n} \log C_2. \]
    Hence, the lemma follows.
\end{proof}

\begin{lemma}\label{lem:gibbsstatesareequilib}
    Let \(\Phi=(\varphi_n)_{n \in \N}\) be a superadditive sequence of continuous potentials on \(\Sigma_T\) with bounded variations and suppose there exists a Gibbs-type measure \(\mu \in \MCM(\Sigma_T,\sigma)\) for \(\Phi\), then it is an equilibrium state for \(\Phi\).
\end{lemma}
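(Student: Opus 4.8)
The plan is to verify directly that $\mu$ realises the supremum defining $\Pmeas(\Phi,\sigma)$, i.e.\ that $h(\mu,\sigma)+\MCF_*(\Phi,\mu)=\Pmeas(\Phi,\sigma)$. By the variational principle (Proposition~\ref{prop:supadditivevarprinciple}) together with Lemma~\ref{lem:PequalsPtop}, the right-hand side equals $\Ptop(\Phi,\sigma)$, which coincides with the Gibbs constant $P$ of Definition~\ref{def:Gibbstype}, so it suffices to show $h(\mu,\sigma)+\MCF_*(\Phi,\mu)=P$.

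Before computing I would record why the quantities involved are finite, so that the sum is unambiguous. Since $\mu([x]_n)\le 1$, the lower Gibbs-type bound gives $\varphi_n(x)\le Pn-\log C_1$ for every $x$ and $n$, hence $\frac1n\int\varphi_n\,d\mu\le P-\frac1n\log C_1$; on the other hand superadditivity yields $\varphi_n\ge S_n\varphi_1$, so $\frac1n\int\varphi_n\,d\mu\ge\int\varphi_1\,d\mu$, and $\varphi_1$ is bounded by compactness of $\Sigma_T$. Thus $\MCF_*(\Phi,\mu)$ is a finite number lying in $[\int\varphi_1\,d\mu,\,P]$, and since $0\le h(\mu,\sigma)\le\log q$ the sum $h(\mu,\sigma)+\MCF_*(\Phi,\mu)$ is well defined. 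Note also that the Gibbs bound forces $\mu([I])>0$ for every $I\in\MCC_*$.

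The entropy I would compute via the Kolmogorov--Sinai theorem applied to the partition $\MCP$ of $\Sigma_T$ into $1$-cylinders, which is a generator for $(\Sigma_T,\sigma)$; thus $h(\mu,\sigma)=\lim_{n\to\infty}\tfrac1n H_\mu\!\big(\bigvee_{i=0}^{n-1}\sigma^{-i}\MCP\big)$, and $\bigvee_{i=0}^{n-1}\sigma^{-i}\MCP$ is precisely the partition into $n$-cylinders $[I]$, $I\in\MCC_n$. For each such $I$ choose $x_I\in[I]$; the two-sided Gibbs-type inequality gives $-\log\mu([I])=Pn-\varphi_n(x_I)+r_I$ with $|r_I|\le\max\{|\log C_1|,|\log C_2|\}$, so
\[
H_\mu\!\Big(\bigvee_{i=0}^{n-1}\sigma^{-i}\MCP\Big)=Pn-\sum_{I\in\MCC_n}\mu([I])\varphi_n(x_I)+\sum_{I\in\MCC_n}\mu([I])r_I .
\]
The bounded-variations hypothesis lets me replace $\sum_I\mu([I])\varphi_n(x_I)$ by $\int\varphi_n\,d\mu$ at the cost of an error at most $\sup_m\var_m(\Phi)<\infty$, while $\big|\sum_I\mu([I])r_I\big|\le\max\{|\log C_1|,|\log C_2|\}$. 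Hence $H_\mu\!\big(\bigvee_{i=0}^{n-1}\sigma^{-i}\MCP\big)=Pn-\int\varphi_n\,d\mu+O(1)$ with the $O(1)$ bounded independently of $n$; dividing by $n$ and letting $n\to\infty$ gives $h(\mu,\sigma)=P-\MCF_*(\Phi,\mu)$, which is the desired identity.

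The argument is the classical one, and the only mild obstacle is bookkeeping: one must confirm that $\MCF_*(\Phi,\mu)$ is finite so that $h(\mu,\sigma)+\MCF_*(\Phi,\mu)$ is meaningful, and that the error terms in the entropy computation are uniform in $n$ — both of which rest precisely on the bounded-variations assumption and the two-sided Gibbs-type bound. With $h(\mu,\sigma)+\MCF_*(\Phi,\mu)=P=\Pmeas(\Phi,\sigma)$ established, $\mu$ is by definition an equilibrium state for $\Phi$.
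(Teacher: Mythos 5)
Your proof is correct and follows essentially the same route as the paper: compare $-\log\mu([I])$ with $\varphi_n$ via the Gibbs-type bounds, pass to entropy over $n$-cylinders, and invoke the variational principle together with Lemma~\ref{lem:PequalsPtop}. The only difference is that you establish the exact identity $h(\mu,\sigma)+\MCF_*(\Phi,\mu)=P$ (using both Gibbs constants and bounded variations), whereas the paper contents itself with the one-sided inequality $h(\mu,\sigma)+\MCF_*(\Phi,\mu)\geq\Ptop(\Phi,\sigma)$, which already suffices since the reverse inequality is automatic from the variational principle.
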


\begin{proof}
    Let \(C_2>0\) be as given by Definition \ref{def:Gibbstype}. Note that for any \(n \in \N\),
    \begin{align*}
        \MCF_*(\mu) &\geq \frac{1}{n} \int \varphi_{n}(x) \diff \mu \\
        & \geq  \frac{1}{n} \sum_{I \in \MCC_n} \mu([I]) \inf_{x \in [I]} \varphi_n(x) \diff \mu.
    \end{align*}
    Hence, using the Gibbs-type property and Lemma \ref{lem:PequalsPtop}, we have
    \begin{align*}
        h(\mu,\sigma)+ \MCF_*(\mu)  &\geq  \limsup_{n \rightarrow \infty} \frac{1}{n} \sum_{I \in \MCC_n} \mu([I])(-\log \mu([I])+ \inf_{x \in [I]} \varphi_n(x)) \\
        &\geq  \limsup_{n \rightarrow \infty} \frac{1}{n} \sum_{I \in \MCC_n} \mu([I])(n\Ptop(\Phi,\sigma)-\log C_2) \\
        &= \Ptop(\Phi,\sigma).
    \end{align*}
\end{proof}

\begin{lemma}\label{lem:otherequil}
    Let \(\Phi=(\varphi_n)_{n \in \N}\) be a superadditive sequence of continuous potentials on \(\Sigma_T\) with bounded variations, and suppose there exists an ergodic Gibbs-type measure \(\mu \in \MCM(\Sigma_T,\sigma)\) for \(\Phi\). If \(\mu' \in \MCM(\Sigma_T,\sigma)\) is an ergodic equilibrium state for \(\Phi\) and \(\mu' \not=\mu\), then 
    \[\inf_{n \in \N}\inf_{x \in \Sigma_T} \frac{\mu'([x]_n)}{e^{-n \Ptop(\Phi,\sigma)+\varphi_{n}(x)}} =-\infty, \:\: \sup_{n \in \N}\sup_{x \in \Sigma_T} \frac{\mu'([x]_n)}{e^{-n \Ptop(\Phi,\sigma)+\varphi_{n}(x)}} =\infty.\]
\end{lemma}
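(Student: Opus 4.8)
The plan is to reduce the statement to the fact that two distinct ergodic invariant measures are mutually singular. It suffices to prove that the ratio $R_n(x):=\mu'([x]_n)\big/e^{-n\Ptop(\Phi,\sigma)+\varphi_n(x)}$ satisfies $\inf_{n\in\N,\,x\in\Sigma_T}R_n(x)=0$ and $\sup_{n\in\N,\,x\in\Sigma_T}R_n(x)=+\infty$; the two displayed identities are exactly these after taking logarithms. The first move is to replace $e^{-n\Ptop(\Phi,\sigma)+\varphi_n(x)}$ by $\mu([x]_n)$. Since $\Phi$ has bounded variations (a prerequisite for a Gibbs-type measure to exist) and $\mu$ is Gibbs-type for it, Lemma~\ref{lem:PequalsPtop} identifies the Gibbs constant with $\Ptop(\Phi,\sigma)$, so there are $0<C_1\le C_2$ with $C_1\le\mu([x]_n)\big/e^{-n\Ptop(\Phi,\sigma)+\varphi_n(x)}\le C_2$ for all $x\in\Sigma_T$ and $n\in\N$; in particular every cylinder has positive $\mu$-measure. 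Hence $C_2^{-1}\mu'([x]_n)/\mu([x]_n)\le R_n(x)\le C_1^{-1}\mu'([x]_n)/\mu([x]_n)$, and it is enough to show $\inf_{n,x}\mu'([x]_n)/\mu([x]_n)=0$ and $\sup_{n,x}\mu'([x]_n)/\mu([x]_n)=+\infty$.

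Next I would argue by contradiction. Suppose the supremum were finite, equal to some $M$. Then $\mu'([I])\le M\mu([I])$ for every admissible word $I$, and extending this inequality from the algebra generated by cylinders to all Borel sets by outer regularity of $\mu'$ (every open subset of $\Sigma_T$ is an increasing union of finite unions of cylinders) yields $\mu'\le M\mu$ as Borel measures, so $\mu'\ll\mu$. Suppose instead the infimum were positive, equal to some $m$. Then $\mu'([I])\ge m\mu([I])$ for every $I$, and extending by inner regularity of $\mu$ — using that a compact subset of the Cantor set $\Sigma_T$ is a decreasing intersection of clopen sets — yields $\mu'\ge m\mu$, hence $\mu\le m^{-1}\mu'$ and $\mu\ll\mu'$. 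In either case one of $\mu,\mu'$ is absolutely continuous with respect to the other.

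This is impossible: $\mu$ and $\mu'$ are distinct ergodic $\sigma$-invariant probability measures and are therefore mutually singular. Indeed, if they had a common positive-measure set then, applying Birkhoff's ergodic theorem for $\sigma$ to a countable dense subset of $C(\Sigma_T)$, one would find a point at which the ergodic averages converge simultaneously to $\int\cdot\,d\mu$ and to $\int\cdot\,d\mu'$, forcing $\mu=\mu'$. Mutually singular measures cannot satisfy a one-sided absolute continuity unless they coincide, so we contradict $\mu'\ne\mu$. Hence both extremal values are as claimed, and the lemma follows.

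This lemma is essentially soft, and I do not expect a genuine obstacle; the only points that need a little care are the passage from the cylinder inequalities to the measure-level absolute continuity (routine topological measure theory, using the clopen basis of $\Sigma_T$) and the elementary fact that distinct ergodic measures are mutually singular. I would also note that the argument uses only that $\mu'$ is ergodic and distinct from $\mu$, not that $\mu'$ is an equilibrium state.
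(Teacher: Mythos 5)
Your proof is correct, but it takes a genuinely different route from the paper's. The paper invokes Lemma 5.4 of \cite{Fen11}, which says that for distinct ergodic measures the relative-entropy sums \(\sum_{I\in\MCC_n}\mu([I])\log\frac{\mu'([I])}{\mu([I])}\) and \(\sum_{I\in\MCC_n}\mu'([I])\log\frac{\mu([I])}{\mu'([I])}\) both tend to \(-\infty\); for each \(M\) this produces a cylinder on which \(\mu'([I])/\mu([I])<e^{-M}\) (resp.\ \(>e^{M}\)), and the Gibbs property of \(\mu\) converts this into the two stated identities. You instead argue softly: a finite supremum (resp.\ positive infimum) of the cylinder ratios would give \(\mu'\le M\mu\) (resp.\ \(\mu\le m^{-1}\mu'\)) as Borel measures, by passing from the clopen basis to open sets and then using regularity, hence a one-sided absolute continuity contradicting the mutual singularity of distinct ergodic measures. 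Both arguments use only that \(\mu,\mu'\) are distinct ergodic measures and that \(\mu\) is Gibbs-type; neither uses that \(\mu'\) is an equilibrium state. Your reading of the first displayed identity (the ratio has infimum \(0\), equivalently its logarithm has infimum \(-\infty\)) is exactly what the paper's proof establishes. Two cosmetic slips: the sandwich should read \(C_1\,\mu'([x]_n)/\mu([x]_n)\le R_n(x)\le C_2\,\mu'([x]_n)/\mu([x]_n)\) rather than with \(C_1^{-1},C_2^{-1}\) (harmless), and ``a common positive-measure set'' is not quite the negation of mutual singularity, though the Birkhoff argument you sketch does deliver singularity. What Feng's lemma buys over your argument is quantitative control of a \(\mu\)-weighted average of the log-ratios; for the lemma as stated, the two approaches are equally effective.
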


\begin{proof}
    By the Gibbs-type property we can let \(C>0\) be such that for all \(x \in \MCC_n\), 
    \[C^{-1} \leq \frac{\mu([x]_n)}{e^{-n \Ptop(\Phi,\sigma)+\varphi_{n}(x)}} \leq C.\]
    By Lemma 5.4 in \cite{Fen11} we have that 
    \[\lim_{n \rightarrow \infty} \sum_{I \in \MCC_n} \mu([I]) \log \frac{\mu'([I])}{\mu([I])}=-\infty.\]
    Thus, for any \(M>0\) there exists \(n \in \N\) such that 
    \[ \sum_{I \in \MCC_n} \mu([I]) \log \frac{\mu'([I])}{\mu([I])}<-M.\]
    This implies that for some \(I \in \MCC_n\)
    \[\frac{\mu'([I])}{\mu([I])}<e^{-M},\]
    so for any \(x \in [I]\)
    \[\frac{\mu'([x]_n)}{e^{-n \Ptop(\Phi,\sigma)+\varphi_{n}(x)}} <C e^{-M}.\]
    Hence, the first equality follows. The other equality can be argued similarly since \cite[Lemma 5.4]{Fen11} also gives that
    \[\lim_{n \rightarrow \infty} \sum_{I \in \MCC_n} \mu'([I]) \log \frac{\mu([I])}{\mu'([I])}=-\infty.\]
\end{proof}

\subsection{\(g\)-functions}\label{subsec:gfunctions}
We now recall the definition of a \(g\)-function introduced by Keane \cite{Kea71} and later used by Ledrappier in \cite{Led74}. Their usefulness to us is twofold. Firstly, as in \cite{PPi22} we will work with a \(g\)-function cohomologous to \(\psi\) rather than work with \(\psi\) directly, both because it helps to simplify the presentation of some of the proofs and for ease of applying the lemmas and propositions proved in \cite{PPi22}. Secondly, and more importantly, we will use the main result of \cite{Led74} in our proof of the uniqueness statement in Theorem \ref{theo:gibbsmeasuresexist}. In particular, we will show that the Gibbs-type measures given by Theorem \ref{theo:gibbsmeasuresexist} are also equilibrium states for \(g\)-functions. We can then adapt the method of proving uniqueness used, for example, in \cite{BS03}.

\begin{defn}
    Let \(Y \subseteq \Sigma_T\) be a Borel set satisfying \(\sigma^{-1}(Y)=Y\). We say a measurable function \(g:\Sigma_T \rightarrow \R\) is a \textit{g-function} on \(Y\) if it is strictly positive on \(Y\), \(0\) on \(\Sigma_T \setminus Y\), and 
    \[\sum_{y \in \sigma^{-1} x} g(y) =1\]
    for all \(x \in Y\). We denote by \(\MCG(Y)\) the set of all \(g\)-functions on \(Y\).  
\end{defn}

For \(g \in \MCG(Y)\) we define the transfer operator 
\[L_{\log g} f(x):= \sum_{y \in \sigma^{-1} x} g(y) f(y)\]
acting on the space of measurable functions on \(\Sigma_T\). We have the following theorem from \cite{Led74} (see also \cite[Theorem 2.1]{Wal78}; we note that the proof in \cite{Wal78} does not make use of the continuity of the \(g\)-functions assumed in the paper, and this is not assumed in \cite{Led74}). 

\begin{theo}\label{theo:led}
    Let \(g \in \MCG(Y)\). The following are equivalent:
    \begin{enumerate}[label=(\roman*)]
        \item \(L_{\log g}^* \mu=\mu\).
        \item \(\mu\) is \(\sigma\)-invariant and for all \(f \in \MCL^1(\mu)\), \(\mathbb{E}_{\mu}(f|\sigma^{-1}\MCB)=\sum_{y \in \sigma^{-1}\sigma x} g(y) f(y)\) for \(\mu\)-almost all \(x\), where \(\MCB\) is the Borel \(\sigma\)-algebra. 
        \item \(\mu\) is a \(\sigma\)-invariant equilibrium state for \(\log g\); that is, \(h(\mu,\sigma)+\int \log g \diff \mu=\Pmeas(\log g, \sigma)=0\).
    \end{enumerate}
\end{theo}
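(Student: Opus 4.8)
The plan is to prove the two equivalences (i)~$\Leftrightarrow$~(ii) and (ii)~$\Leftrightarrow$~(iii) separately: the first is a purely formal manipulation of the transfer operator $L_{\log g}$, while the second rests on the entropy formula $h(\nu,\sigma)=H_\nu(\MCP\mid\sigma^{-1}\MCB)$ for one-sided shifts (with $\MCP$ the time-zero partition) together with Jensen's (equivalently, Gibbs') inequality.

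For (i)~$\Leftrightarrow$~(ii), I would first extract two consequences of $L_{\log g}^*\mu=\mu$, i.e. of $\int L_{\log g}f\,\diff\mu=\int f\,\diff\mu$ for all bounded measurable $f$: testing against $\ind_{\Sigma_T\setminus Y}$ and using $g\equiv 0$ off $Y$ forces $\mu(Y)=1$, and testing against $f\circ\sigma$ together with $\sum_{y\in\sigma^{-1}x}g(y)=1$ on $Y$ forces $\sigma$-invariance. The key observation is then that the right-hand side of (ii) is exactly $(L_{\log g}f)\circ\sigma$, which is $\sigma^{-1}\MCB$-measurable; to identify it with $\mathbb{E}_\mu(f\mid\sigma^{-1}\MCB)$ it suffices to test against $h\circ\sigma$ for bounded measurable $h$, and the chain
\[\int (h\circ\sigma)\,f\,\diff\mu \;=\; \int L_{\log g}\big((h\circ\sigma)f\big)\,\diff\mu \;=\; \int h\cdot(L_{\log g}f)\,\diff\mu \;=\; \int (h\circ\sigma)\,\big((L_{\log g}f)\circ\sigma\big)\,\diff\mu\]
follows from (i), the pointwise identity $L_{\log g}((h\circ\sigma)f)=h\cdot L_{\log g}f$, and $\sigma$-invariance. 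The converse is immediate: integrating the identity in (ii) against $\mu$ and using $\sigma$-invariance recovers $\int f\,\diff\mu=\int L_{\log g}f\,\diff\mu$.

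For (ii)~$\Leftrightarrow$~(iii), with $\MCP$ the generating partition into $1$-cylinders, write $p_a(x):=\mathbb{E}_\nu(\ind_{[a]}\mid\sigma^{-1}\MCB)(x)$ and $g_a(\sigma x):=g(a\,x_1x_2\cdots)$ for admissible $a$. Computing $\int\log g\,\diff\nu$ via its conditional expectation onto $\sigma^{-1}\MCB$ and expanding $H_\nu(\MCP\mid\sigma^{-1}\MCB)=-\int\sum_a p_a\log p_a\,\diff\nu$ gives, for any $\nu\in\MCM(\Sigma_T,\sigma)$ (we may assume $\nu(Y)=1$, else the left side is $-\infty$),
\[h(\nu,\sigma)+\int\log g\,\diff\nu \;=\; \int\sum_a p_a(x)\,\log\frac{g_a(\sigma x)}{p_a(x)}\,\diff\nu(x).\]
For $\nu$-a.e. $x$ both $(p_a(x))_a$ and $(g_a(\sigma x))_a$ are probability vectors — the latter because $\sum_a g_a(\sigma x)=\sum_{y\in\sigma^{-1}\sigma x}g(y)=1$ as $\sigma x\in Y$ — so the integrand is $\leq 0$ by Gibbs' inequality, with equality iff $p_a(x)=g_a(\sigma x)$ for all $a$. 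This simultaneously yields $\Pmeas(\log g,\sigma)=0$ and shows $\nu$ attains it iff $\mathbb{E}_\nu(\ind_{[a]}\mid\sigma^{-1}\MCB)=g_a\circ\sigma$ for every $a$; since $\MCP$ generates, a standard approximation argument upgrades this to the identity in (ii) for all $f\in\MCL^1(\nu)$. Conversely, feeding a measure satisfying (ii) into the displayed formula makes the integrand vanish, so it is an equilibrium state, closing the loop.

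I expect the fiddliest part to be the (ii)~$\Leftrightarrow$~(iii) step: running the conditional-entropy computation cleanly for a merely measurable $g$ (checking that $x\mapsto g(a\,x_1x_2\cdots)$ is $\sigma^{-1}\MCB$-measurable and invoking $h(\nu,\sigma)=H_\nu(\MCP\mid\sigma^{-1}\MCB)$ in a form valid for non-invertible shifts with a generating partition), and then converting the pointwise equality case of Gibbs' inequality — which a priori only constrains the generating partition — into the full conditional-expectation identity in (ii) for all $f\in\MCL^1(\mu)$.
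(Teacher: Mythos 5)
This theorem is not proved in the paper at all: it is imported verbatim from Ledrappier \cite{Led74} (see also Walters \cite{Wal78}), so there is no in-paper argument to compare against. Your proposal is a correct reconstruction of the standard proof from those references: the (i)~$\Leftrightarrow$~(ii) step via testing against $h\circ\sigma$ and the pointwise identity $L_{\log g}((h\circ\sigma)f)=h\cdot L_{\log g}f$ is exactly right (including the preliminary observations that $L_{\log g}\ind_{\Sigma_T\setminus Y}\equiv 0$ forces $\mu(Y)=1$ and that testing against $f\circ\sigma$ gives invariance), and the (ii)~$\Leftrightarrow$~(iii) step via $h(\nu,\sigma)=H_\nu(\MCP\mid\sigma^{-1}\MCB)$ and the equality case of Gibbs' inequality is the classical route. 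The only phrasing to tighten: Gibbs' inequality by itself yields only $\Pmeas(\log g,\sigma)\leq 0$; the equality $\Pmeas(\log g,\sigma)=0$ asserted in (iii) is obtained, within the implication (ii)~$\Rightarrow$~(iii), from the fact that the given $\mu$ makes the integrand vanish and hence attains the supremum --- it is not an unconditional property of every $g\in\MCG(Y)$. Your closing remarks correctly identify the genuine technical points (measurability of $x\mapsto g(ax_1x_2\cdots)$ with respect to $\sigma^{-1}\MCB$, validity of the conditional-entropy formula for merely measurable $g$, and the monotone-class upgrade from the time-zero partition to all $f\in\MCL^1(\mu)$), and none of them is an obstruction.
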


\subsection{1-typical, one-step cocycles}\label{subsec:cocycles}
Let \(\A: \Sigma_T \rightarrow \GL_d(\R)\) be a map depending only the first digit. For each  \(n \in \N\) we set \[\A^n(x):=\A(\sigma^{n-1}x) \ldots \A(x).\]
This satisfies the cocycle relation
\[\A^{m+n}(x)=\A^m(\sigma^n x) \A^n(x), \: \forall x \in \Sigma_T,\: \forall n,m \in \N,\]
so defines a \textit{one-step cocycle} over \(\sigma\). We also define the \textit{adjoint inverse cocycle} \(\A_*^{-1}(x)\) by
\[\A_*^{-n}(x):=[\A^{n}(x)^{-1}]^*=[\A^{n}(x)^*]^{-1}, \]
where \(A^*\) denotes the adjoint (i.e. transpose) of a matrix \(A\). Again, \(\A_*^{-1}\) is a one-step cocycle over \(\sigma\).

We also define 
\[\A^{[n]}(x):=[\A^n(x)]^*=\A(x)^* \ldots \A(\sigma^{n-1}x)^*.\] Notice that \(\A^{[n]}(x)=\A_*^{-n}(x)^{-1}\); we will use this fact frequently in this paper. For \(n \in \N\) and \(I \in \MCC_n\) we will write \(\A^n(I):=\A^n(x)\) where \(x\) is any element of \([I]\), and similarly for \(\A_*^{-1}\) and \(\A^{[n]}\). 

We now briefly recall the definition of 1-typicality for one-step cocycles. For more information, including the more general definition for fiber-bunched cocycles, we refer the reader to \cite{BV04}.

\begin{defn}\label{defn:1typical}
    We say a one-step cocycle \(\A:  \Sigma_T \rightarrow \GL_d(\R)\) is \textit{1-typical} if there exists a periodic point \(p \in \Sigma_T\) of some period \(n\), a point \(z \in \Sigma_T\), and \(l \in \N\) such that \((p_{n-1},z_0)\) and \((z_{l-1},p_0)\) are admissible and 
    \begin{enumerate}[label={(\arabic*)}]
        \item \(P:=\A^n(p)\) has \(d\) distinct eigenvalues in absolute value, and
        \item for any \(I,J \subset \{1,\ldots, d\}\) with \(|I|+|J|\leq d\),
        \(\{\A^l(p)^{-1}\A^l(z) v_i:i \in I\} \cup \{v_j: j \in J\} \)
        are linearly independent, where \(\{v_i\}_{i=1}^d\) are the eigenvectors of \(P\). \label{item:1typicalitydeftwisting}
    \end{enumerate}
\end{defn}
One can think of 1-typicality as being an analogue of strong irreducibility and strong proximality for cocycles on subshifts of finite type. In fact, it is not difficult to show that one-step, full-shift \(\GL_2(\R)\)-cocycles which are proximal and strongly irreducible are 1-typical. 

\begin{defn}\label{def:xiwelldefined}
    For \(x \in \Sigma_T\), we say that \textit{the slowest Oseledets' subspace of the cocycle \(\A_*^{-1}\) at \(x\) is well defined}, or, more simply, \textit{\(\overline{\xi_*}(x)\) is well defined}, if 
\begin{enumerate}[label={(\arabic*)}]
    \item The limit \(\lim_{n \rightarrow \infty}  \frac{1}{n} \log \sigma_d(\A_*^{-n}(x))\) exists;
    \item There exists a subspace \(U \subseteq \R^d\) such that: \begin{enumerate} [label={(\roman*)}]
        \item for all \(u \in U \setminus\{0\}\), \(\lim_{n \rightarrow \infty} \frac{1}{n} \log \left\|\A_*^{-n}(x)\uou \right\|\) exists and is equal to \(\lim_{n \rightarrow \infty}  \frac{1}{n} \log \sigma_d(\A_*^{-n}(x))\),
        \item  for all \(u \notin U\), \(\liminf_{n \rightarrow \infty} \frac{1}{n} \log \left\|\A_*^{-n}(x)\uou \right\|>\lim_{n \rightarrow \infty}  \frac{1}{n} \log \sigma_d(\A_*^{-n}(x))\).
    \end{enumerate}
\end{enumerate}
When the above holds, we denote by \(\overline{\xi_*}(x)\) the projection of the subspace \(U\) onto \(\BP\). 
\end{defn}

By Oseledets' theorem \cite{Ose68}, \(\overline{\xi_*}(x)\) is well defined when \(x\) is a \(\mu\)-typical point of an ergodic measure \(\mu \in \MCM(\Sigma,\sigma)\); moreover,
\[\lim_{n \rightarrow \infty}  \frac{1}{n} \log \sigma_d(\A_*^{-n}(x))=\lambda_d(\A_*^{-1},\mu)=\lambda_1(\A,\mu)\]
for \(\mu\)-a.e. \(x \in \Sigma\). We remark that Oseledets' theorem applies since we can view \(\A_*^{-1}\) as a cocycle on the two-sided shift. 

It is easy to check that \(\overline{\xi_*}(\sigma x)\) is well defined whenever \(\overline{\xi_*}(x)\) is, has the same dimension, and satisfies \(\A_*^{-1}(x)\overline{\xi_*}(x)=\overline{\xi_*}(\sigma x)\). Similarly, for all \(x \in \Sigma_T\) such that \(\overline{\xi_*}(x)\) is well defined and any \(y \in \sigma^{-1} x\), \(\overline{\xi_*(y)}\) is well defined, has the same dimension, and satisfies \(\A_*^{-1}(y)\overline{\xi_*}(y)=\overline{\xi_*}(x)\). For \(x \in \Sigma_T\) such that \(\overline{\xi_*}(x)\)  is well defined and 0-dimensional (that is, the set \(U\) in Definition \ref{def:xiwelldefined} is 1-dimensional), we choose and fix \(\xi_*(x) \in \overline{\xi_*}(x)\) such that \(\|\xi_*(x)\|=1\). Note that, by the above discussion, the set 
\[Y:=\{x \in \Sigma_T:\overline{\xi_*}(x) \text{ is well defined and 0-dimensional} \} \]
satisfies \(\sigma^{-1}(Y)=Y\). In Section \ref{sec:proofoftheogibbs} we will use this fact in conjunction with Theorem \ref{theo:led} to prove the uniqueness statement in Theorem \ref{theo:gibbsmeasuresexist}.

\subsection{Complex analysis in Banach algebras}\label{subsec:complexanalysisbanach}
The theorems in \cite{PPi22} are proved using perturbation theory for analytic operators (see \cite{Kat95}). In this section we recall the definitions and results they use that we will need in this paper. For more details we refer the reader to \cite[\S6]{PPi22} and \cite[\S 3.14]{DS88}. Let \((X,\|\cdot\|)\) be a Banach algebra. 

\begin{defn}\label{def:analytic}
Suppose \(U \subseteq \C\) is open and \(f:U \rightarrow X\) is a function. We say that \(f\) is \textit{differentiable} at \(z_0\) if there exists an element \(f'(z_0) \in X\) such that 
\[\left\|\frac{f(z)-f(z_0)}{z-z_0} -f'(z_0) \right\| \xrightarrow[z \rightarrow z_0]{} 0.\]
We say \(f\) is \textit{analytic} on \(U\) if \(f\) is continuous and differentiable at each point in \(U\). 
\end{defn}

\begin{prop}{\cite[Theorem 6.2]{PPi22}}\label{prop:analyticchar}
Suppose \(U \subseteq \C\) is open and \(f:U \rightarrow X\). Then \(f\) is analytic if and only if \(\varphi(f(z)):U \rightarrow \C\) is analytic for all \(\varphi \in X^*\).
\end{prop}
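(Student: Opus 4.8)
The plan is to prove the two implications of the equivalence separately, with essentially all of the work going into the ``only if'' direction, namely that pointwise (weak) analyticity of $f$ forces norm (strong) analyticity in the sense of Definition~\ref{def:analytic}. This is the classical Dunford-type theorem on weak analyticity, and I would follow its standard proof, noting that only the Banach-space structure of $X$ is used. The easy direction is immediate: if $f$ is analytic with derivative $f'(z_0)$ at $z_0\in U$, then for any $\varphi\in X^*$ the linearity and continuity of $\varphi$ give
\[\frac{\varphi(f(z))-\varphi(f(z_0))}{z-z_0}=\varphi\!\left(\frac{f(z)-f(z_0)}{z-z_0}\right)\xrightarrow[z\to z_0]{}\varphi(f'(z_0)),\]
so $\varphi\circ f$ is complex-differentiable at every point of $U$, hence analytic.

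For the converse, assume $\varphi\circ f$ is analytic for every $\varphi\in X^*$. Fix $z_0\in U$ and $r>0$ with $\overline{D}(z_0,2r)\subseteq U$, and set $\Gamma=\{\zeta:|\zeta-z_0|=r\}$. First I would show $f$ is norm-bounded on $\Gamma$: for each $\varphi$, the scalar function $\zeta\mapsto\varphi(f(\zeta))$ is continuous, hence bounded, on the compact set $\Gamma$, so $\{f(\zeta):\zeta\in\Gamma\}$ is weakly bounded in $X$; by the uniform boundedness principle it is norm-bounded, say by $M$. Next, for each $\varphi\in X^*$ and each $z\in D(z_0,r/2)$, apply the scalar Cauchy integral formula to the analytic function $\varphi\circ f$ on $\Gamma$ to get $\varphi(f(z))=\frac{1}{2\pi i}\oint_\Gamma\frac{\varphi(f(\zeta))}{\zeta-z}\,d\zeta$, and hence for distinct $z,w\in D(z_0,r/2)$,
\[\varphi\!\left(\frac{f(z)-f(w)}{z-w}\right)=\frac{1}{2\pi i}\oint_\Gamma\frac{\varphi(f(\zeta))}{(\zeta-z)(\zeta-w)}\,d\zeta.\]
Subtracting two such identities and estimating the contour integral gives, for all $\varphi$ with $\|\varphi\|\le1$ and all $z,w,z',w'\in D(z_0,r/2)$ with $z\ne w$ and $z'\ne w'$,
\[\left|\varphi\!\left(\frac{f(z)-f(w)}{z-w}-\frac{f(z')-f(w')}{z'-w'}\right)\right|\le rM\sup_{\zeta\in\Gamma}\left|\frac{1}{(\zeta-z)(\zeta-w)}-\frac{1}{(\zeta-z')(\zeta-w')}\right|.\]
As $(z,w)$ and $(z',w')$ both tend to $(z_0,z_0)$, both kernels converge uniformly in $\zeta\in\Gamma$ to $(\zeta-z_0)^{-2}$, so the right-hand side tends to $0$; taking the supremum over $\|\varphi\|\le1$ and using $\|x\|=\sup_{\|\varphi\|\le1}|\varphi(x)|$ (Hahn--Banach), the difference quotients $\frac{f(z)-f(w)}{z-w}$ form a Cauchy net as $(z,w)\to(z_0,z_0)$. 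By completeness of $X$ they converge to some $f'(z_0)\in X$; specialising $w=z_0$ shows $f$ is differentiable at $z_0$ with derivative $f'(z_0)$. Since $z_0\in U$ was arbitrary, $f$ is differentiable on all of $U$, hence continuous, hence analytic.

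The main obstacle is not a deep one but a point requiring care: the passage from the pointwise (weak) hypothesis to norm statements. This needs the uniform boundedness principle (to upgrade weak boundedness of $f$ on $\Gamma$ to the norm bound $M$, which is precisely what makes the contour estimate uniform in $\varphi$) together with Hahn--Banach (to recover the norm of the difference-quotient increments by duality). Everything else reduces to the scalar Cauchy integral formula and the elementary uniform continuity of the Cauchy kernel on $\Gamma\times\overline{D}(z_0,r/2)^2$; I would also be careful to phrase the convergence as a Cauchy \emph{net} in the pair $(z,w)$ rather than along a mere sequence, so that the resulting limit genuinely functions as the derivative at $z_0$.
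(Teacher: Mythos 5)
Your proof is correct and complete: the easy direction via linearity and continuity of $\varphi$, and the converse via the classical Dunford argument (uniform boundedness to get the norm bound $M$ on the circle, the scalar Cauchy integral formula for the difference quotients, the uniform convergence of the Cauchy kernel, and Hahn--Banach to convert the uniform estimate over $\|\varphi\|\le 1$ into a norm estimate, yielding a Cauchy net in the complete space $X$). The paper does not prove this proposition itself --- it is imported verbatim as \cite[Theorem 6.2]{PPi22} --- and your argument is precisely the standard proof of that result, so there is nothing to fault and no genuine divergence of method to report.
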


For Banach algebras \(X_1,X_2\), let \(L(X_1,X_2)\) denote the space of bounded linear operators which map from from \(X_1\) to \(X_2\). The following corollary can be deduced from Proposition \ref{prop:analyticchar}.

\begin{cor}{\cite[Corollary 6.3]{PPi22}}\label{cor:analyticchar}
Suppose \(U \subseteq \C\) is open and let \(f: U \rightarrow L(X_1,X_2)\). Then \(f\) is analytic if and only if \(\langle x_2^*,  f(z) x_1 \rangle \) is analytic for all \(x_1 \in X_1\) and \(x_2^* \in X_2^*\). 
\end{cor}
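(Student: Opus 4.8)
The plan is to reduce the statement to the characterisation of analyticity for $X$-valued functions given in Proposition \ref{prop:analyticchar}, applied with the Banach algebra $X = L(X_1,X_2)$. The forward direction is immediate: if $f\colon U\to L(X_1,X_2)$ is analytic, then by Proposition \ref{prop:analyticchar} the scalar function $\varphi(f(z))$ is analytic for every $\varphi\in L(X_1,X_2)^*$; in particular, for fixed $x_1\in X_1$ and $x_2^*\in X_2^*$, the assignment $\varphi_{x_1,x_2^*}\colon A\mapsto \langle x_2^*, A x_1\rangle$ is a bounded linear functional on $L(X_1,X_2)$ (boundedness follows from $|\langle x_2^*, A x_1\rangle|\le \|x_2^*\|\,\|A\|\,\|x_1\|$), so $z\mapsto \langle x_2^*, f(z)x_1\rangle = \varphi_{x_1,x_2^*}(f(z))$ is analytic.

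For the converse, suppose $\langle x_2^*, f(z) x_1\rangle$ is analytic for all $x_1\in X_1$ and $x_2^*\in X_2^*$. By Proposition \ref{prop:analyticchar} it suffices to show $\varphi(f(z))$ is analytic for \emph{every} $\varphi\in L(X_1,X_2)^*$, so the task is to pass from the functionals of the special product form $\varphi_{x_1,x_2^*}$ to arbitrary functionals. First one checks that for each fixed $x_1$, the map $z\mapsto f(z)x_1 \in X_2$ is analytic: this is exactly Proposition \ref{prop:analyticchar} applied to the $X_2$-valued function $z\mapsto f(z)x_1$, whose composition with an arbitrary $x_2^*\in X_2^*$ is analytic by hypothesis. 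Thus $z\mapsto f(z)x_1$ is analytic for every $x_1$; equivalently $f$ is \emph{strongly} analytic. The remaining point is the standard upgrade from strong analyticity to (uniform/norm) analyticity of operator-valued functions, which proceeds via a uniform boundedness argument: using the weak analyticity on a compact disc, the Cauchy integral formula for the scalar functions, and the uniform boundedness principle applied twice (once on $X_2$, once on $X_1$), one bounds the difference quotients $\frac{f(z)-f(z_0)}{z-z_0}$ in operator norm on a neighbourhood of $z_0$, so that they form a Cauchy net as $z\to z_0$ and converge in $L(X_1,X_2)$. This is precisely the content referenced in \cite[\S 3.14]{DS88}, and it gives that $f$ is analytic in the sense of Definition \ref{def:analytic}; then $\varphi\circ f$ is analytic for every $\varphi\in L(X_1,X_2)^*$ and Proposition \ref{prop:analyticchar} concludes.

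Since the excerpt frames this as "the following corollary can be deduced from Proposition \ref{prop:analyticchar}," I expect the intended proof to be short, invoking Proposition \ref{prop:analyticchar} at the level of the $X_2$-valued function $z \mapsto f(z)x_1$ and then at the level of $L(X_1,X_2)$, and citing \cite{DS88} for the strong-to-uniform analyticity step rather than reproving it. The only genuine obstacle is that last step — that weak (or strong) analyticity of an operator-valued function implies norm analyticity — which relies on the uniform boundedness principle and is not purely formal; in a self-contained write-up this is where the real work sits, but here it is legitimately outsourced to the standard references already cited.
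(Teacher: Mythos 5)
Your proposal is correct. The paper itself gives no proof of this corollary --- it is quoted verbatim from \cite[Corollary 6.3]{PPi22} with only the remark that it ``can be deduced from Proposition \ref{prop:analyticchar}'' --- and your argument is exactly the intended deduction: the forward direction by applying Proposition \ref{prop:analyticchar} to the bounded functionals \(A \mapsto \langle x_2^*, A x_1\rangle\), and the converse by first upgrading to strong analyticity of \(z \mapsto f(z)x_1\) and then to norm analyticity via the double uniform-boundedness and Cauchy-estimate argument of \cite[\S 3.14]{DS88}. You also correctly flag the one genuinely non-formal point, namely that the product functionals do not exhaust \(L(X_1,X_2)^*\), so the converse cannot be obtained by citing Proposition \ref{prop:analyticchar} alone; outsourcing that step to the standard reference is consistent with what the paper does.
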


We will also need:

\begin{lemma}{\cite[Lemma 6.5]{PPi22}}\label{lem:Lzxzisanalytic}
Let \(z \mapsto L_z\) be a function from \(U \subseteq \C\) to the space of bounded linear operators on \(X\), and let \(z \mapsto x_z\) be a function from \(U \) to \(X\).
\begin{enumerate}[label=(\arabic*)]
    \item Suppose that \(z \mapsto L_z\) is analytic, then \(z \mapsto L^*_z\) is analytic. 
    \item Suppose that \(z \mapsto x_z\) is analytic and  \(z \mapsto L_z\) is analytic, then \(z \mapsto L_z x_z\) is analytic. 
\end{enumerate}
\end{lemma}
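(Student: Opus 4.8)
The plan is to establish both assertions directly from Definition \ref{def:analytic}, i.e. by exhibiting a candidate derivative at each base point \(z_0\) and checking that the corresponding difference quotient converges in the appropriate operator norm; continuity then comes along automatically (or can be checked by the same estimates). No appeal to Proposition \ref{prop:analyticchar} or Corollary \ref{cor:analyticchar} is really needed, although part (1) can alternatively be phrased through them. For part (1), the point is simply that the adjoint operation \(\Theta \colon L(X,X) \to L(X^*,X^*)\), \(\Theta(T) = T^*\), is a \(\C\)-linear isometry, since \(\|T^*\| = \|T\|\) for every bounded operator \(T\) (Hahn--Banach). Being bounded linear, \(\Theta\) is continuous and maps analytic functions to analytic functions: if \(z \mapsto L_z\) is analytic with derivative \(L_z'\), then \(z \mapsto L_z^* = \Theta(L_z)\) is continuous, and for fixed \(z_0 \in U\),
\[
\left\| \frac{L_z^* - L_{z_0}^*}{z - z_0} - (L_{z_0}')^* \right\|
= \left\| \left( \frac{L_z - L_{z_0}}{z - z_0} - L_{z_0}' \right)^{*} \right\|
= \left\| \frac{L_z - L_{z_0}}{z - z_0} - L_{z_0}' \right\| \xrightarrow[z \rightarrow z_0]{} 0 ,
\]
so \(z \mapsto L_z^*\) is differentiable at \(z_0\) with derivative \((L_{z_0}')^*\), hence analytic on \(U\).

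For part (2), assume \(z \mapsto x_z\) and \(z \mapsto L_z\) are analytic, with derivatives \(x_z'\) and \(L_z'\). First, \(z \mapsto L_z\) being continuous it is locally bounded; combined with continuity of \(z \mapsto x_z\) this gives continuity of \(z \mapsto L_z x_z\) via
\[
\|L_z x_z - L_{z_0} x_{z_0}\| \le \|L_z\|\,\|x_z - x_{z_0}\| + \|L_z - L_{z_0}\|\,\|x_{z_0}\| \xrightarrow[z \rightarrow z_0]{} 0 .
\]
For differentiability, I would decompose the difference quotient as
\[
\frac{L_z x_z - L_{z_0} x_{z_0}}{z - z_0}
= L_z \left( \frac{x_z - x_{z_0}}{z - z_0} \right) + \left( \frac{L_z - L_{z_0}}{z - z_0} \right) x_{z_0} .
\]
The second summand tends to \(L_{z_0}' x_{z_0}\) by analyticity of \(z \mapsto L_z\). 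For the first summand,
\[
\left\| L_z \left( \frac{x_z - x_{z_0}}{z - z_0} \right) - L_{z_0} x_{z_0}' \right\|
\le \|L_z\| \left\| \frac{x_z - x_{z_0}}{z - z_0} - x_{z_0}' \right\| + \|(L_z - L_{z_0}) x_{z_0}'\| ,
\]
and the right-hand side tends to \(0\) because \(\|L_z\|\) stays bounded near \(z_0\), \(\frac{x_z - x_{z_0}}{z - z_0} \rightarrow x_{z_0}'\) in \(X\), and \(L_z \rightarrow L_{z_0}\) in operator norm. Hence \(z \mapsto L_z x_z\) is differentiable at \(z_0\) with derivative \(L_{z_0} x_{z_0}' + L_{z_0}' x_{z_0}\), and therefore analytic.

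I do not expect a genuine obstacle here: both parts are routine Leibniz-type computations in the Banach algebra \(L(X,X)\). The only mild subtlety worth flagging is in part (2), where one must use the \emph{local boundedness} of \(\{L_z\}\) near \(z_0\) to control the term \(L_z\bigl(\tfrac{x_z - x_{z_0}}{z - z_0}\bigr)\); this is exactly why the continuity clause in Definition \ref{def:analytic} is built in, and without it one would only obtain weak differentiability and would then have to invoke Proposition \ref{prop:analyticchar} (resp. Corollary \ref{cor:analyticchar}) to upgrade. In part (1) the analogous point — that the bidual does not interfere — is what the isometry argument sidesteps, which is why I would present it that way rather than via the dual-pairing characterisation.
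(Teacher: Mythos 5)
Your proof is correct. The paper itself gives no argument for this lemma --- it is quoted verbatim from \cite[Lemma 6.5]{PPi22} --- so there is no in-paper proof to compare against; your direct difference-quotient verification is the standard one and both computations check out: in (1) the Banach-space adjoint $T\mapsto T^*$ is indeed a $\C$-linear isometry (no conjugation, unlike the Hilbert-space case), and in (2) the Leibniz decomposition together with local boundedness of $\|L_z\|$ closes the estimate. Your remark that testing $L_z^*$ against Corollary \ref{cor:analyticchar} would require pairing with elements of the bidual, which the isometry argument sidesteps, is exactly the right subtlety to flag.
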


\subsection{The transfer operator \(\Lt\)}\label{subsec:transferoperator}
Let \(\psi:\Sigma_T \rightarrow \R\) be a H\"older continuous function, \(\A:\Sigma_T \rightarrow \GL_d(\R)\) a 1-typical, one-step cocycle, and \(\Phi_t\) be as defined in (\ref{eqn:Phitdefinition}). We also let \(h_\psi\) be the positive-valued, H\"older continuous eigenfunction constructed in \cite{Bow75} corresponding to the spectral radius of the operator
\[ L_{\psi}f(x):=\sum_{y \in \sigma^{-1} x} e^{\psi(y)}f(y)\]
which is defined to be acting on a suitable space of H\"older continuous functions on \(\Sigma_T\). We define
\begin{equation*}\label{eqn:gfunction}
    g(x):=\frac{e^{\psi(x)}}{e^{\Ptop(\psi,\sigma)}} \cdot \frac{h_\psi(x)}{h_\psi(\sigma x)},
\end{equation*}
then \(g \in \MCG(\Sigma_T)\) is a H\"older continuous \(g\)-function cohomologous to \(\psi\). Denoting \(g^{(n)}(x) :=g(\sigma^{n-1}x)\ldots g(x)\), we have \( g^{(n)}(x) \asymp \mu_{\psi}([I]) \asymp e^{-n \Ptop(\psi,\sigma)+S_n \psi(x)}  \) for all \(I \in \MCC_n\) and \(x \in [I]\). Let \(\Phi^g_t:=(\varphi^g_{t,n})_{n \in \N}\) be defined by 
\[\varphi^g_{t,n}(x):=\log g^{(n)}(x) + t\log \|\A^n(x)\|, \: \forall n \in \N.\]
It follows that Gibbs-type states for \(\Phi^g_t\) and \(\Phi_t\) coincide and
\begin{equation}\label{eqn:PtopPhig}
\Ptop(\Phi^g_t, \sigma)=\Ptop(\Phi_t,\sigma)-\Ptop(\psi,\sigma).
\end{equation}
The next lemma is a consequence of \cite[Lemma 1.15]{Bow75}.

\begin{lemma}[Bounded distortion]\label{lem:boundeddistortion}
There exists \(C>0\) such that for all \(n \in \N\) and all \(x,y \in \Sigma_T\) with \(y \in [x]_n\), 
\[\frac{1}{C} \leq \frac{g^{(n)}(x)}{g^{(n)}(y)} \leq C.\]
\end{lemma}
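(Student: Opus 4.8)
The plan is to reduce the claim to the classical bounded-distortion estimate of \cite[Lemma 1.15]{Bow75} for H\"older Birkhoff sums, together with the fact that Bowen's eigenfunction $h_\psi$ is bounded above and below. First I would expand the product: since $g(x)=e^{\psi(x)-\Ptop(\psi,\sigma)}\,h_\psi(x)/h_\psi(\sigma x)$, the $h_\psi$ factors telescope, giving
\[
g^{(n)}(x)=e^{-n\Ptop(\psi,\sigma)}\;e^{S_n\psi(x)}\;\frac{h_\psi(x)}{h_\psi(\sigma^n x)}.
\]
Hence for $y\in[x]_n$ the $e^{-n\Ptop(\psi,\sigma)}$ terms cancel and
\[
\frac{g^{(n)}(x)}{g^{(n)}(y)}=e^{\,S_n\psi(x)-S_n\psi(y)}\cdot\frac{h_\psi(x)}{h_\psi(y)}\cdot\frac{h_\psi(\sigma^n y)}{h_\psi(\sigma^n x)}.
\]
Because $h_\psi$ is strictly positive and continuous on the compact space $\Sigma_T$, the quantities $m:=\min h_\psi>0$ and $M:=\max h_\psi<\infty$ are finite, so each of the last two ratios lies in $[m/M,\,M/m]$, a bound independent of $n$.

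Next I would control the Birkhoff-sum factor. As $y\in[x]_n$, the iterates $\sigma^i x$ and $\sigma^i y$ agree in their first $n-i$ symbols, so $d_{\Sigma_T}(\sigma^i x,\sigma^i y)\le 2^{-(n-i)}$ for $0\le i\le n-1$; writing $\alpha$ and $[\psi]_\alpha$ for the H\"older exponent and seminorm of $\psi$ with respect to $d_{\Sigma_T}$, this gives
\[
\bigl|S_n\psi(x)-S_n\psi(y)\bigr|\le\sum_{i=0}^{n-1}\bigl|\psi(\sigma^i x)-\psi(\sigma^i y)\bigr|\le [\psi]_\alpha\sum_{j=1}^{n}2^{-\alpha j}\le\frac{[\psi]_\alpha}{1-2^{-\alpha}}.
\]
This is exactly the estimate of \cite[Lemma 1.15]{Bow75}, and the essential point is that the bound is uniform in $n$ since the H\"older moduli sum to a convergent geometric series. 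Exponentiating and combining with the previous display yields the lemma with, for instance, $C=\exp\!\bigl([\psi]_\alpha/(1-2^{-\alpha})\bigr)\cdot (M/m)^2$.

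There is no genuine obstacle here — the statement is essentially a bookkeeping consequence of facts already recorded. The only point to be mildly careful about is that the ratio $h_\psi(x)/h_\psi(\sigma^n x)$ does \emph{not} tend to $1$ as $n\to\infty$ (the points $x$ and $\sigma^n x$ are typically far apart in $\Sigma_T$), so one must bound it crudely by $M/m$ rather than through continuity; since this still produces an $n$-independent constant, it suffices. Alternatively, one may simply invoke \cite[Lemma 1.15]{Bow75} directly for $S_n\psi$ and absorb the $h_\psi$-ratios into the constant, which is the shortest route and the one I would take.
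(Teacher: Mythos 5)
Your proof is correct and is exactly the argument the paper intends: it states the lemma as an immediate consequence of \cite[Lemma 1.15]{Bow75}, which is the H\"older Birkhoff-sum estimate you carry out, combined with the telescoping of the $h_\psi$ factors and the uniform bounds $0<\min h_\psi\le\max h_\psi<\infty$. No issues.
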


Let \(d_{\BP}\) be the metric on \(\BP\) given for \(\overline{u}=\R u, \overline{v}=\R v \in \BP \) by 
\begin{equation}\label{eqn:dpmetricdefinition}
    d_{\BP}(\overline{u},\overline{v})=\frac{\|u \wedge v\|}{\|u\| \|v\|}.
\end{equation}
We endow \(\Sigma_T \times \BP\) with the metric
\[d_{\Sigma_T \times \BP}((x,\overline{u}),(y,\overline{v}))= \max\{ d_{\Sigma_T}(x,y),d_{\BP}(\overline{u},\overline{v})\}.\]
For \(t \in \R\) we define the transfer operator \(\Lt\) by 
\begin{equation*}\label{eqn:transferoperatorg}
    \Lt f(x,\overline{u})=\sum_{y \in \sigma^{-1} x} g(y) \left\|\A(y)^*\uou \right\|^t f(y,\overline{\A(y)^*u})
\end{equation*}
for all measurable \(f:\Sigma_T \times \BP \rightarrow \R\). Its iterations are given by
\begin{equation}\label{eqn:Ltiterated}
    \Lt^n f(x,\overline{u})=\sum_{y \in \sigma^{-n} x} g^{(n)}(y) \left\|\A^{[n]}(y)\uou \right\|^t f(y,\overline{\A^{[n]}(y)u}),
\end{equation}
where, recall, \(\A^{[n]}(x):=[\A^n(x)]^*=\A(x)^* \ldots \A(\sigma^{n-1}x)^*\).

For \(0 < \alpha \leq 1\), let \(C^{\alpha}(\Sigma_T \times \BP)\) denote the set of real-valued \(\alpha\)-H\"older continuous maps \(f:\Sigma_T \times \BP \rightarrow \R\). We endow \(C^{\alpha}(\Sigma_T \times \BP)\) with the usual norm: for \(f \in C^{\alpha}(\Sigma_T \times \BP)\) let
\[ |f|_{\alpha}:=\sup_{(x,\overline{u})\not=(y,\overline{v})} \frac{|f(x,\overline{u})-f(y,\overline{v})|}{d_{\Sigma_T \times \BP}((x,\overline{u}),(y,\overline{v}))^\alpha},\]
then the norm is defined by
\[\|f\|_{\alpha}:=|f|_{\alpha}+\|f\|_{\infty}.\]

\begin{prop}{\cite[Proposition 5.4]{PPi22}}\label{prop:LtintermsofPandS}
Suppose that \(\A\) is 1-typical and \(\alpha>0\) is sufficiently small, then there exists an open set \(U \subset \R\) containing 0 such that for any \(t \in U\) the operator \(\Lt:C^\alpha(\Sigma_T \times \BP) \rightarrow C^{\alpha}(\Sigma_T \times \BP)\) can be written as
\[\Lt=\rt(P_t+S_t),\]
where \(\rt \in \R\) and the operators \(P_t, S_t:C^\alpha(\Sigma_T \times \BP) \rightarrow C^{\alpha}(\Sigma_T \times \BP)\) satisfy \(P_t S_t =S_t P_t=0\). The functions \(t \mapsto \rt,P_t,S_t \) are all analytic on \(U\). Moreover, there exists constants \(C>0\) and \(0<\beta<1\) such that for all \(t \in U\) and all \(n \in \N\)
\[\|S_t^n\| \leq C\beta^n. \]
\end{prop}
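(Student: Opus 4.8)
\medskip
\noindent\emph{Proof proposal.} This is \cite[Proposition 5.4]{PPi22}, so one may simply cite it; for completeness here is the shape of the argument I would give. The plan has three steps: (1) show that \(\Lt\) acts boundedly on \(C^\alpha(\Sigma_T\times\BP)\) for \(\alpha>0\) small and is quasi-compact; (2) use 1-typicality to show that the leading eigenvalue \(\rt\) is simple and strictly dominates the rest of the spectrum — this is the hard step; (3) pass to the analytic conclusions via perturbation theory.

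For step (1) one establishes a Lasota--Yorke inequality of the form \(|\Lt^n f|_\alpha \le a_n\|f\|_\alpha + b_n\|f\|_\infty\), uniformly for \(t\) in a fixed small neighbourhood \(U\) of \(0\), with \(\limsup_n a_n^{1/n}<\rt\). Its proof combines bounded distortion of the \(g\)-function (Lemma \ref{lem:boundeddistortion}), the expansion of \(\sigma\) on cylinders (so that the base part of the Hölder seminorm contracts at rate \(2^{-\alpha}\)), and the finiteness of the set of values of \(\A\) (so that the distortion of the projective maps \(\overline{u}\mapsto\overline{\A^{[n]}(y)u}\) and of the weights \(\|\A^{[n]}(y)u/\|u\|\|^t\) is controlled once \(\alpha\) and \(|t|\) are small — this is where the one-step property, hence fibre-bunching at small Hölder exponent, is used). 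A convenient simplification specific to one-step cocycles is that \(\A^{[n]}(y)\) depends only on the first \(n\) coordinates of \(y\), so moving the base point within an \(n\)-cylinder does not move the fibre at all. Combined with the compactness of the inclusion \(C^\alpha(\Sigma_T\times\BP)\hookrightarrow C^0(\Sigma_T\times\BP)\) and Hennion's theorem, this gives that \(\rt^{-1}\Lt\) is quasi-compact with essential spectral radius strictly less than \(1\), so its peripheral spectrum is a finite set of eigenvalues of finite multiplicity. (Alternatively one can run a Birkhoff-cone argument, showing \(\Lt^N\) maps a cone of positive, nearly fibre-constant Hölder functions strictly inside itself with finite projective diameter, which packages existence of the eigenfunction together with the gap.)

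Step (2) — showing \(\rt\) is simple with a strictly positive Hölder eigenfunction \(h_t\) (with \(\rho_0=1\) and \(h_0\equiv 1\)), and that it is the \emph{only} peripheral eigenvalue — is what I expect to be the main obstacle, and it is exactly where 1-typicality enters. One argues that any peripheral eigenfunction \(f\), after division by \(h_t\), would have to be invariant under the projective action \(\overline{u}\mapsto\overline{\A^{[n]}(y)u}\) appearing in \((\ref{eqn:Ltiterated})\), so the associated directions in \(\BP\) would all be identified; the pinching at the periodic point \(p\) (the \(d\) distinct eigenvalue moduli of \(P=\A^n(p)\), condition (1) of Definition \ref{defn:1typical}) together with the twisting point \(z\) (condition \ref{item:1typicalitydeftwisting}) make this impossible, forcing \(f\) to be a scalar multiple of \(h_t\). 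This is the transfer-operator version of the uniqueness of the Furstenberg measure under strong irreducibility and proximality; the delicate requirement is that the resulting spectral gap be uniform for \(t\) in a neighbourhood of \(0\), so that \(U\) can be fixed independently of the perturbation.

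Finally, for step (3): \(t\mapsto\Lt\) is analytic on \(U\) as a map into \(L(C^\alpha(\Sigma_T\times\BP),C^\alpha(\Sigma_T\times\BP))\) — by Corollary \ref{cor:analyticchar} it suffices that \(t\mapsto\langle\phi,\Lt f\rangle\) be analytic for every \(f\in C^\alpha(\Sigma_T\times\BP)\) and \(\phi\in C^\alpha(\Sigma_T\times\BP)^*\), which reduces to analyticity of \(t\mapsto\|\A(y)^*u/\|u\|\|^t=e^{t\log\|\A(y)^*u/\|u\|\|}\), clear since the exponent is bounded. By step (2) and the upper semi-continuity of the spectrum under analytic perturbation, after shrinking \(U\) the eigenvalue \(\rt\) stays simple and uniformly separated from the rest of the spectrum of \(\rt^{-1}\Lt\). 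Kato's analytic perturbation theory then gives that \(t\mapsto\rt\) and the rank-one Riesz projection \(P_t=\frac{1}{2\pi i}\oint(z-\rt^{-1}\Lt)^{-1}\,\diff z\) (the contour a fixed small circle about \(1\)) are analytic on \(U\). Setting \(S_t:=\rt^{-1}\Lt-P_t\), analyticity of \(t\mapsto S_t\) follows from Lemma \ref{lem:Lzxzisanalytic}, and \(P_tS_t=S_tP_t=0\) and \(S_t^n=(\rt^{-1}\Lt)^n-P_t\) (\(n\ge 1\)) hold because \(P_t\) is the spectral projection; writing \(S_t^n=\frac{1}{2\pi i}\oint_{|z|=\beta}z^n(z-S_t)^{-1}\,\diff z\) for a fixed \(\beta<1\) above the uniformly bounded spectral radii of the \(S_t\) (uniformity from the previous sentence and compactness of \(\overline U\)), and using that \((t,z)\mapsto(z-S_t)^{-1}\) is continuous on \(\overline U\times\{|z|=\beta\}\), yields \(\|S_t^n\|\le C\beta^n\) for all \(t\in U\) and \(n\in\N\), as claimed.
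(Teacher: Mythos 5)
This proposition is stated in the paper as an imported result --- the paper gives no proof of its own, only the citation to \cite[Proposition 5.4]{PPi22} together with the subsequent Remark explaining how the complex-parameter, complex-valued version proved there restricts to real \(t\) and real-valued functions. Your opening sentence (``one may simply cite it'') therefore matches exactly what the paper does, and your three-step sketch is a faithful reconstruction of the strategy in the cited source (Lasota--Yorke/cone contraction for quasi-compactness, 1-typicality for simplicity and dominance of the leading eigenvalue, Kato perturbation theory for analyticity of \(\rho_t\), \(P_t\), \(S_t\) and the uniform bound on \(\|S_t^n\|\)). The only point worth adding is the one the paper's Remark addresses: the perturbation theory is carried out over a complex neighbourhood of \(0\) acting on complex-valued H\"older functions, and one then observes that for real \(t\) the operators preserve real-valued functions, so the statement as given (with \(U\subset\R\)) follows; your sketch works directly over \(\R\), which is harmless but slightly obscures where the analyticity machinery of Section \ref{subsec:complexanalysisbanach} is actually applied.
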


\begin{remark}
In \cite[Proposition 5.4]{PPi22} they in fact show that there exists an open set \(U \subset \C\) containing 0 for which the above holds with operators instead acting on the space of complex-valued \(\alpha\)-H\"older continuous functions. For \(t \in \R\), \(\Lt\) maps real-valued functions to real-valued functions, hence the same must hold for \(P_t\) and \(S_t\). Thus, we can consider these operators to have domain \(U \subset \R\) and to be acting on the space of real-valued \(\alpha\)-H\"older continuous functions. Analyticity is then inherited from the fact that they are analytic in \(\C\). In particular, the results in \S \ref{subsec:complexanalysisbanach} all apply with \(U\) instead defined to be a subset of \(\R\). This is what we mean by analyticity in Proposition \ref{prop:LtintermsofPandS}. In this paper, \(t\) will always be real and functions will always be real valued.
\end{remark}

While not stated explicitly in the proposition, it is easy to see from the proof that \(\rt\) is the leading eigenvalue for \(\Lt\) and that \(\Lt\) has a spectral gap. Moreover, \(P_t\) is the projection onto the 1-dimensional subspace \(\ker (\rt I-\Lt)\) (see \cite[Lemma 4.4]{PPi22} and \cite[\S7.1.3]{Kat95}), so \(P_t^2=P_t\) and \(P_t \Lt=\Lt P_t=\rt P_t\). Letting \(h_t \in C^{\alpha}(\Sigma_T \times \BP)\) be an eigenfunction corresponding to \(\rt\), we have \(\Lt h_t=\rt h_t\) and \(P_t h_t=\rt h_t\). In Theorem 4.1 they further show that \(h_0(x,\overline{u}) \equiv 1\). Note also that \(\rho_0=1\).

We denote \(\MCM(\Sigma_T \times \BP)\) to be the set of Borel probability measures on \(\Sigma_T \times \BP\). Choose any \(m \in \MCM(\Sigma_T \times \BP)\), then \(\int 1 \diff m>0\). As \(t \mapsto P_t\) is analytic, \(\int P_t 1 \diff m>0\) for all \(t\) in a neighbourhood of 0. Since \(P_t\) is the projection onto \(\ker(\rt I -\Lt)\), for such \(t\) we can write \(P_t 1=\alpha_t h_t\) for some \(\alpha_t \in \R \setminus{0}\), so scaling \(h_t\) if necessary we may assume that \(P_t 1=h_t\). We define the measures \(\widetilde{\nu}_t \in \MCM(\Sigma_T \times \BP)\) by 
\begin{equation}\label{eqn:nutdef}
    \int f \diff \widetilde{\nu}_t=\left(\int h_t \diff m \right)^{-1} \int P_t f \diff m, \: \forall f \in C^{\alpha}(\Sigma_T \times \BP).
\end{equation}
Note that \(\widetilde{\nu}_t\) is well defined, as by an approximation argument it is sufficient to only specify the values of the integrals of the functions in \(C^{\alpha}(\Sigma_T \times \BP)\). For all \(f \in C^{\alpha}(\Sigma_T \times \BP)\),
\[\int \Lt f \diff \widetilde{\nu}_t 
    = \left(\int h_t \diff m \right)^{-1} \int P_t \Lt f \diff m 
    = \left(\int h_t \diff m \right)^{-1}\int \rt P_t f \diff m
    = \int f \diff \left(\rt \widetilde{\nu}_t \right).\]
Again by an approximation argument and by an application of Carath\'eodory's extension theorem, we have \(\Lt^* \widetilde{\nu}_t= \rt \widetilde{\nu}_t\). That is, \(\widetilde{\nu}_t\) is an eigenmeasure for \(\Lt^*\).  

Writing \(P_t f=\kappa_f h_t\) for \(f \in C^{\alpha}(\Sigma_T \times \BP)\), by (\ref{eqn:nutdef}) we have 
\[\int f \diff \widetilde{\nu}_t=\left(\int h_t \diff m \right)^{-1} \int P_t f \diff m=\left(\int h_t \diff m \right)^{-1} \int \kappa_f h_t \diff m= \kappa_f.\]
It follows that
\begin{equation}\label{lem:projectfontoh}
    P_t f=\left(\int f \diff \widetilde{\nu}_t \right) h_t.
\end{equation}
The following lemma is an immediate consequence of (\ref{lem:projectfontoh}) and Proposition \ref{prop:LtintermsofPandS}. 

\begin{lemma}\label{lem:iteratingLtdecay}
Let \(U \subset \R\), \(C>0\), and \(0<\beta<1\) be as in Proposition \ref{prop:LtintermsofPandS} (making \(U\) smaller if necessary so that \(\int P_t 1 \diff m>0\) on \(U\)). Then, for all \(f \in C^{\alpha}(\Sigma_T \times \BP)\),
\[\left\|\rt^{-n} \Lt^n f- \widetilde{\nu}_t(f) h_t \right\|_{\alpha} \leq C \|f\|_\alpha \beta^n. \]
\end{lemma}

\begin{lemma}\label{lem:htbound}
There exists an open set \(U \subset \R\) containing 0 such that for all \(t \in U\),
\[\inf_{t \in U} \inf_{(x,\overline{u}) \in \Sigma_T \times \BP} h_t(x,\overline{u})>0 \]
and 
\[\sup_{t \in U} \sup_{(x,\overline{u}) \in \Sigma_T \times \BP} h_t(x,\overline{u})<\infty .\]
\end{lemma}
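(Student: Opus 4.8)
The plan is to argue purely by continuity, using that $h_0$ is the constant function $1$. Recall from the discussion preceding the lemma that, after the chosen normalisation, $h_t = P_t 1$, that $\rho_0 = 1$, and that $h_0(x,\overline{u}) \equiv 1$; this is consistent with $P_0 1 = 1$, since $\MCL_0 1(x,\overline{u}) = \sum_{y \in \sigma^{-1} x} g(y) = 1$ because $g \in \MCG(\Sigma_T)$, so $1 \in \ker(\rho_0 I - \MCL_0)$ and hence $P_0 1 = h_0 = 1$.

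By Proposition \ref{prop:LtintermsofPandS}, after shrinking the open neighbourhood $U$ of $0$ so that $\int P_t 1 \diff m > 0$ on $U$ (as in Lemma \ref{lem:iteratingLtdecay}), the map $t \mapsto P_t$ is analytic, and in particular norm-continuous, as a map from $U$ into the Banach algebra of bounded operators on $C^{\alpha}(\Sigma_T \times \BP)$. Hence $t \mapsto h_t = P_t 1$ is continuous from $U$ into $(C^{\alpha}(\Sigma_T \times \BP), \|\cdot\|_{\alpha})$, and therefore, since $\|f\|_{\infty} \leq \|f\|_{\alpha}$, it is also continuous from $U$ into $(C(\Sigma_T \times \BP), \|\cdot\|_{\infty})$.

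Since $h_0 \equiv 1$, continuity in the supremum norm yields an open neighbourhood $U' \subseteq U$ of $0$ with $\|h_t - 1\|_{\infty} < \tfrac{1}{2}$ for every $t \in U'$. Consequently $\tfrac{1}{2} < h_t(x,\overline{u}) < \tfrac{3}{2}$ for all $t \in U'$ and all $(x,\overline{u}) \in \Sigma_T \times \BP$, which gives both inequalities in the statement with $U$ taken to be $U'$ (and incidentally reconfirms $h_t > 0$ on $U'$, so that the definitions of $\widetilde{\nu}_t$ and of the measures built from $h_t$ are legitimate there).

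There is no genuine obstacle here. The only points needing care are that the normalisation $h_t = P_t 1$ pins down $h_t$ unambiguously — a priori an eigenfunction is determined only up to a nonzero scalar, so one must use this fixed choice for "$t \mapsto h_t$ continuous" to make sense — and that the analyticity, hence norm-continuity, of $t \mapsto P_t$ is exactly what Proposition \ref{prop:LtintermsofPandS} supplies. Everything else is the elementary fact that a norm-continuous family of continuous functions which equals the constant $1$ at $t = 0$ stays uniformly bounded above and uniformly bounded away from $0$ on a small enough neighbourhood of $0$.
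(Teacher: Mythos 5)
Your proof is correct and rests on the same ingredients as the paper's: the normalisation \(h_t = P_t 1\), the analyticity (hence norm-continuity) of \(t \mapsto P_t\) from Proposition \ref{prop:LtintermsofPandS}, and the fact that \(h_0 \equiv 1\). The paper argues by contradiction (a sequence where the infimum degenerates would force \(\|h_{t_n}-h_0\|_{\alpha} \to \infty\)), whereas you draw the conclusion directly from \(\|h_t - 1\|_{\infty} \leq \|h_t - 1\|_{\alpha} \to 0\); your route is the cleaner packaging of the same idea.
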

\begin{proof}
Suppose the first inequality is not true, then there exists a sequence \(t_n \rightarrow 0\) and \((x_n,\overline{u_n}) \in \Sigma_T \times \BP\) such that \(h_{t_n}(x_n,\overline{u_n})<1/n\). By compactness, there exists a limit point \((x,\overline{u}) \in \Sigma_T \times \BP\). Without loss of generality we may assume that \((x_n,\overline{u_n}) \rightarrow (x,\overline{u})\). Let \(\delta_{(x,\overline{u})}\) be the Dirac measure on \((x,\overline{u})\). As \(t \mapsto h_t=P_t 1\) is analytic we have
\[h_t(x,\overline{u})= \int h_t \diff \delta_{(x,\overline{u})} \xrightarrow[t \rightarrow 0]{} \int h_0 \diff \delta_{(x,\overline{u})}=1.\]
It follows from this that \(\|h_{t_n}\|_{\alpha} \rightarrow \infty\). Thus, 
\[\|h_{t_n}-h_0\|_{\alpha}\xrightarrow[n \rightarrow \infty]{} \infty,\]
contradicting that \(h_t=P_t 1\) is analytic in an open neighbourhood of 0. The second inequality can be argued similarly.
\end{proof}

\section{A uniform LDP for Gibbs measures}\label{sec:uniformldp}
To prove Theorem \ref{theo:gibbsmeasuresexist} we require the following uniform LDP for Gibbs measures. This can be proved with only minor modifications to the proof of Theorem B in \cite{PPi22}, as we will show below. We state the proposition in the general setting of \cite{PPi22} (that is, for fiber-bunched, 1-typical cocycles), but in this paper we only require it holds for one-step, 1-typical cocycles.

\begin{prop}\label{theo:uniformLDPahat}
Let \((\Sigma_T,\sigma)\) be a (one or two-sided) subshift of finite type with primitive adjacency matrix \(T\). Let \(\psi:\Sigma_T \rightarrow \R\) be a H\"older continuous function and \(\mu_{\psi}\) the corresponding Gibbs measure. Also, let \(\A: \Sigma_T \rightarrow \GL_d(\R)\), \(d \in \N\), be a 1-typical, fiber-bunched cocycle. Then, for all \(\varepsilon>0\), 
\[\varlimsup_{n \rightarrow \infty}\frac{1}{n} \log \sup_{\overline{u}=\R u \in \BP} \mu_\psi \left \{x \in \Sigma_T: \left|\frac{1}{n}\log \left\|\A^{n}(x)\uou \right\|-\lambda_1(\A,\mu)\} \right|>\varepsilon \right\} < 0. \]
\end{prop}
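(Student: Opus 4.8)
The plan is to follow the proof of Theorem~B in \cite{PPi22}, strengthening every estimate so that it holds uniformly over $\overline u = \R u\in\BP$; the uniformity is almost automatic, since the spectral input we use, Lemma~\ref{lem:iteratingLtdecay}, gives convergence in the H\"older norm $\|\cdot\|_\alpha$ and hence in the sup norm. Write $\lambda_1 := \lambda_1(\A,\mu_\psi)$ and fix a neighbourhood $U\ni 0$ as in Lemmas~\ref{lem:iteratingLtdecay} and~\ref{lem:htbound}.

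The first step is a moment estimate uniform in $\overline u$. By the iteration formula (\ref{eqn:Ltiterated}) and the one-step hypothesis, $\Lt^n\mathbbm{1}(x,\overline u)$ is a sum over $y\in\sigma^{-n}x$ of $g^{(n)}(y)\,\|\A^{[n]}(y)\uou\|^{t}$; passing between forward and adjoint products exactly as in \cite{PPi22}, and using bounded distortion (Lemma~\ref{lem:boundeddistortion}, so that $g^{(n)}(y)\asymp\mu_\psi([I])$ for $y\in[I]$, $I\in\MCC_n$) together with primitivity of $T$, one obtains constants $c,C>0$ independent of $n$, $x$, $\overline u$, and $t\in U$ with
\[
c\int\left\|\A^{n}(y)\uou\right\|^{t}\diff\mu_\psi(y)\;\le\;\Lt^n\mathbbm{1}(x,\overline u)\;\le\;C\int\left\|\A^{n}(y)\uou\right\|^{t}\diff\mu_\psi(y).
\]
On the other hand, Lemma~\ref{lem:iteratingLtdecay} with $\widetilde\nu_t(\mathbbm{1})=1$, together with the uniform bounds $0<\inf h_t\le\sup h_t<\infty$ of Lemma~\ref{lem:htbound}, gives $\rt^{-n}\Lt^n\mathbbm{1}\to h_t$ uniformly on $\Sigma_T\times\BP$, and hence $\tfrac1n\log\Lt^n\mathbbm{1}(x,\overline u)\to\log\rt$ uniformly in $(x,\overline u)$. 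Combining the two displays, for each $t\in U$,
\[
\frac1n\log\,\sup_{\overline u\in\BP}\int\left\|\A^{n}(y)\uou\right\|^{t}\diff\mu_\psi(y)\ \xrightarrow[n\to\infty]{}\ \log\rt .
\]

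The second step is the upper-bound half of a G\"artner--Ellis argument, carried out uniformly in $\overline u$. Fix $\varepsilon>0$. For $t>0$ in $U$, Markov's inequality gives $\mu_\psi\{x:\tfrac1n\log\|\A^{n}(x)\uou\|>\lambda_1+\varepsilon\}\le e^{-nt(\lambda_1+\varepsilon)}\int\|\A^{n}(y)\uou\|^{t}\diff\mu_\psi(y)$, so by the last display
\[
\varlimsup_{n\to\infty}\frac1n\log\,\sup_{\overline u\in\BP}\mu_\psi\bigl\{x:\tfrac1n\log\|\A^{n}(x)\uou\|>\lambda_1+\varepsilon\bigr\}\;\le\;-t(\lambda_1+\varepsilon)+\log\rt ;
\]
applying Markov's inequality with the negative exponent $t<0$ in $U$ gives the analogous bound $-t(\lambda_1-\varepsilon)+\log\rt$ for the lower tail $\{x:\tfrac1n\log\|\A^{n}(x)\uou\|<\lambda_1-\varepsilon\}$. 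Now $\rho_0=1$ and $t\mapsto\log\rt$ is analytic on $U$ with $\tfrac{d}{dt}\big|_{t=0}\log\rt=\lambda_1$ --- the derivative identity already used in \cite[Theorem~B]{PPi22}, which follows from analytic perturbation theory for the eigendata $\rt,h_t,\widetilde\nu_t$ and the ergodic theorem for $\mu_\psi$ --- so $\log\rt=t\lambda_1+o(t)$ as $t\to0$. Hence $-t(\lambda_1+\varepsilon)+\log\rt=-t\varepsilon+o(t)<0$ for all small $t>0$, and $-t(\lambda_1-\varepsilon)+\log\rt=t\varepsilon+o(t)<0$ for all small $t<0$. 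Since $\{x:|\tfrac1n\log\|\A^{n}(x)\uou\|-\lambda_1|>\varepsilon\}$ is the union of these two tails, the proposition follows. (The upper tail alone is in fact immediate, as $\|\A^{n}(x)\uou\|\le\sigma_1(\A^{n}(x))$ and $\tfrac1n\log\sigma_1(\A^{n}(x))$ obeys an exponential large deviation bound at $\lambda_1$ by standard subadditive arguments.)

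The step I expect to require the most care, exactly as in \cite{PPi22}, is the passage from spectral data for $\Lt$ to probabilistic statements about $\mu_\psi$: one must check that the comparison between $\Lt^n\mathbbm{1}$ and $\int\|\A^{n}(\cdot)\uou\|^{t}\diff\mu_\psi$ holds with constants that do not depend on the direction $\overline u$ --- the distortion and primitivity constants involved are all $\overline u$-free, which is precisely what makes the scheme go through --- and one must identify $\tfrac{d}{dt}\big|_{t=0}\log\rt=\lambda_1$. Granting these, the argument above is routine, and the only substantive difference from \cite[Theorem~B]{PPi22} is that the supremum over $\overline u$ is retained inside the logarithm throughout, which is legitimate because every limit invoked is uniform in $(x,\overline u)$.
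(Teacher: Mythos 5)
Your proposal is correct and follows essentially the same route as the paper: uniform moment asymptotics $\tfrac1n\log\sup_{\overline u}\int\|\A^{[n]}(\cdot)\tfrac{u}{\|u\|}\|^t\diff\mu_\psi\to\log\rho_t$ via the spectral gap of $\MCL_t$, then a uniform Chernoff (local G\"artner--Ellis) bound combined with $\tfrac{d}{dt}\big|_{t=0}\log\rho_t=\lambda_1$ from \cite[Proposition 5.5]{PPi22}. The one point you compress --- the passage from the adjoint products $\A^{[n]}$ (for which the exact identity with $\MCL_t^n\mathbbm{1}$ holds) to the forward products $\A^n$ --- is handled in the paper by first proving the estimate for $\A^{[n]}$ and then applying it to the adjoint cocycle, using that 1-typicality is preserved under taking adjoints; this is needed because $\|\A^{[n]}(y)u\|\neq\|\A^n(y)u\|$ for a fixed direction $u$, so your two-sided pointwise comparison should be read as applying to the adjoint cocycle rather than directly to $\A^n$.
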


To prove this we will need the following version of the Local Gärtner-Ellis Theorem. 

\begin{lemma}\label{lem:localgtheorem2}
    For each \(n \in \N\), let \(\{X_{i,n}\}_{i \in \MCI}\) be a set of random variables parameterised by some (possibly uncountable) index set \(\MCI\) which is independent of \(n\). Suppose that there exists \(\delta>0\) and a function \(c(t)\), differentiable at 0 with \(c(0)=c'(0)=0\), such that
    \begin{equation*}\label{eqn:uniformlocalg}
        \lim_{n \rightarrow \infty} \frac{1}{n} \log \sup_{i \in \MCI} \mathbbm{E}(e^{t X_{i,n}})=c(t)
    \end{equation*}
for all \(t \in [-\delta,\delta]\). Then, for all \(\varepsilon>0\),
\[\limsup_{n \rightarrow \infty} \frac{1}{n} \log \sup_{i \in \MCI} \mathbbm{P}(X_{i,n}>n\varepsilon) \leq \inf_{0\leq t \leq \delta} \{c(t)-t \varepsilon \}<0.\]
\end{lemma}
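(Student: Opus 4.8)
The proof is the standard exponential Chebyshev (Chernoff) bound, carrying the two suprema over $\MCI$ along at each step. Fix $\varepsilon>0$. For any $t\in[0,\delta]$, any $i\in\MCI$, and any $n\in\N$, monotonicity of $x\mapsto e^{tx}$ and Markov's inequality give
\[\mathbbm{P}(X_{i,n}>n\varepsilon)=\mathbbm{P}\!\left(e^{tX_{i,n}}>e^{tn\varepsilon}\right)\le e^{-tn\varepsilon}\,\mathbbm{E}\!\left(e^{tX_{i,n}}\right).\]
Taking $\sup_{i\in\MCI}$ and then $\tfrac1n\log$ of both sides yields
\[\frac1n\log\sup_{i\in\MCI}\mathbbm{P}(X_{i,n}>n\varepsilon)\le -t\varepsilon+\frac1n\log\sup_{i\in\MCI}\mathbbm{E}\!\left(e^{tX_{i,n}}\right).\]
Passing to $\limsup_{n\to\infty}$, using $\limsup_n(a_n+b_n)\le\limsup_n a_n+\lim_n b_n$ when the latter limit exists, and invoking the hypothesis (which in particular forces $c(t)<\infty$ for $t\in[-\delta,\delta]$), we obtain, for every fixed $t\in[0,\delta]$,
\[\limsup_{n\to\infty}\frac1n\log\sup_{i\in\MCI}\mathbbm{P}(X_{i,n}>n\varepsilon)\le c(t)-t\varepsilon.\]

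Since the left-hand side is independent of $t$ and the bound holds for all $t\in[0,\delta]$, taking the infimum over $t$ on the right gives
\[\limsup_{n\to\infty}\frac1n\log\sup_{i\in\MCI}\mathbbm{P}(X_{i,n}>n\varepsilon)\le\inf_{0\le t\le\delta}\{c(t)-t\varepsilon\}.\]
For the strict inequality $\inf_{0\le t\le\delta}\{c(t)-t\varepsilon\}<0$: since $c$ is differentiable at $0$ with $c(0)=c'(0)=0$, we have $c(t)=o(t)$ as $t\to0^+$, so there is $t_0\in(0,\delta]$ with $|c(t_0)|<t_0\varepsilon/2$, whence $c(t_0)-t_0\varepsilon<-t_0\varepsilon/2<0$; the infimum is therefore bounded above by this negative number. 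This completes the proof.

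There is no substantive obstacle here; the only points requiring minor care are that the optimization over $t$ must be carried out \emph{after} passing to the $\limsup$ in $n$ (one cannot interchange $\inf_t$ with $\lim_n$ in general), and that the passage of the $\limsup$ through the sum uses precisely the existence — part of the hypothesis — of the limit defining $c(t)$.
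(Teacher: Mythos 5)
Your proof is correct and follows essentially the same route as the paper's: Markov's (exponential Chebyshev) inequality applied uniformly over \(i\in\MCI\), followed by \(\tfrac1n\log\), the \(\limsup\) in \(n\), and only then the infimum over \(t\), with strict negativity coming from \(c(0)=c'(0)=0\). The only cosmetic difference is that the paper first takes the infimum over \(t\in(0,\delta]\) and then extends it to \([0,\delta]\) using \(c(0)=0\), which sidesteps the degenerate (but harmless) case \(t=0\) in your displayed equality \(\mathbbm{P}(X_{i,n}>n\varepsilon)=\mathbbm{P}(e^{tX_{i,n}}>e^{tn\varepsilon})\).
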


\begin{proof}
We adapt the proof of \cite[Chapter V, Lemma 6.2]{BL85}. By Markov's inequality we have for any \(i \in \MCI\) and \(t> 0\)
\[\mathbbm{P}(X_{i,n} \geq n\varepsilon)=\mathbbm{P}(e^{t X_{i,n} } \geq e^{t n\varepsilon})  \leq e^{-t n \varepsilon} \mathbbm{E}(e^{t X_{i,n} }), \]
so that
\[\limsup_{n \rightarrow \infty} \frac{1}{n} \log \sup_{i \in \MCI} \mathbbm{P}(X_{i,n} \geq n\varepsilon) \leq \inf_{0<t \leq \delta} \{c(t)-t \varepsilon \}.\]
It is clear that the right hand side is negative since \(c'(0)=0\). Since \(c(0)=0\) we have further have 
\[\inf_{0<t \leq \delta} \{c(t)-t \varepsilon \}=\inf_{0 \leq t \leq \delta} \{c(t)-t \varepsilon \}.\]
\end{proof}

\begin{remark}\label{remark:varnotnecessary}
In Theorem B and Corollary 1.2 in \cite{PPi22} they have the assumption that \(\Var=\rho''_0>0\). We remark that the corollary holds without this assumption, as this was only necessary for the lower bound of Theorem B in their paper (to see this, compare our Lemma \ref{lem:localgtheorem2} to their Lemma 5.9). In particular, as we only need an upper bound for the exponential decay rates in Propositions \ref{theo:uniformLDPahat} and \ref{prop:LDPfornut}, we do not need to assume that \(\Var=\rho''_0>0\) either.
\end{remark}

Recall the definition of \(\A^{[n]}\) from Section \ref{subsec:cocycles}.

\begin{lemma}\label{lem:ldpforA[n]}
For all \(\varepsilon>0\), 
\[\varlimsup_{n \rightarrow \infty}\frac{1}{n} \log \sup_{\overline{u} \in \BP} \mu_\psi \left \{x \in \Sigma_T: \left|\frac{1}{n}\log \left\|\A^{[n]}(x)\uou \right\|-\lambda_1(\A,\mu_\psi) \right|>\varepsilon \right\} < 0. \]
\end{lemma}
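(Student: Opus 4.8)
The statement concerns $\A^{[n]}(x) = \A(x)^* \cdots \A(\sigma^{n-1}x)^*$, which is the transpose of $\A^n(x)$. My plan is to reduce this directly to Proposition \ref{theo:uniformLDPahat}, which is the corresponding statement for $\A^n(x)$. First I would observe the elementary norm identity: for any invertible matrix $A$ and any unit vector $u$, one has $\|A^* u\| = \|u^* A\|$ (operator action on the left), and more usefully that $\log\|\A^{[n]}(x)\tfrac{u}{\|u\|}\|$ can be compared to $\log\|\A^n(x)\|$ up to the largest singular value, since $\sigma_1(\A^{[n]}(x)) = \sigma_1(\A^n(x)) = \|\A^n(x)\|$. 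The quantity $\tfrac1n \log\|\A^{[n]}(x) u/\|u\|\|$ always lies between $\tfrac1n\log\sigma_d(\A^n(x))$ and $\tfrac1n\log\sigma_1(\A^n(x))$, and for $\mu_\psi$-typical $x$ the top Lyapunov exponent $\lambda_1(\A,\mu_\psi)$ is simple (by 1-typicality, via \cite{BV04}), so the generic growth rate of $\|\A^{[n]}(x)u/\|u\|\|$ is exactly $\lambda_1$ for all $u$ outside a subspace. The content of the lemma is the \emph{uniform} (over $\overline{u} \in \BP$) exponential decay of the exceptional set.

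The cleanest route is to relate $\A^{[n]}$ to $\A^n$ of a reversed cocycle, or alternatively to work directly. Concretely, I would proceed as follows. For the upper tail — i.e.\ $\tfrac1n\log\|\A^{[n]}(x) u/\|u\|\| > \lambda_1 + \varepsilon$ — this is controlled uniformly in $\overline{u}$ by noting $\|\A^{[n]}(x)u/\|u\|\| \le \|\A^{[n]}(x)\| = \|\A^n(x)\|$, so the exceptional set is contained in $\{x : \tfrac1n\log\|\A^n(x)\| > \lambda_1 + \varepsilon\}$, which has exponentially small $\mu_\psi$-measure by the (non-uniform part of the) large deviation principle — indeed this follows already from Proposition \ref{theo:uniformLDPahat} by dropping the supremum over $u$, or from the analyticity of the scalar transfer operator at $t=0$ (the Gärtner–Ellis argument of Lemma \ref{lem:localgtheorem2} applied to $X_n = \log\|\A^n(x)\|$). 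For the lower tail — $\tfrac1n\log\|\A^{[n]}(x)u/\|u\|\| < \lambda_1 - \varepsilon$ — the estimate is genuinely $u$-dependent and this is where the real work is. Here I would invoke Proposition \ref{theo:uniformLDPahat} applied to the cocycle $\A$ itself but phrased for $\A^*$: observe that $\A^{[n]}(x)$ is precisely $\widetilde{\A}^n$ evaluated along the \emph{reversed word}, where $\widetilde{\A}(i) := \A(i)^*$; since 1-typicality, fiber-bunching, and the Gibbs property are all preserved under transposition and under the word-reversal automorphism of $(\Sigma_T,\sigma)$ (the reversed shift has adjacency matrix $T^*$, still primitive, and $\mu_\psi$ pushes forward to the Gibbs measure of $\psi$ composed with the reversal), Proposition \ref{theo:uniformLDPahat} applies to $\widetilde{\A}$ and yields exactly the claimed uniform bound after translating back.

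The main obstacle I anticipate is bookkeeping the word-reversal correspondence carefully: one must check that $\sup_{\overline u} \mu_\psi\{x : |\tfrac1n\log\|\A^{[n]}(x)u/\|u\|\| - \lambda_1| > \varepsilon\}$ really does match the left-hand side of Proposition \ref{theo:uniformLDPahat} for the transposed/reversed cocycle, and in particular that $\lambda_1(\A,\mu_\psi) = \lambda_1(\widetilde\A, \widetilde{\mu_\psi})$ — this last equality holds because transposition preserves singular values, so $\sigma_i(\A^n(x)) = \sigma_i(\A^{[n]}(x))$ and the Lyapunov spectrum is unchanged. If one prefers to avoid the reversal trick entirely, an alternative is to mimic the proof of Proposition \ref{theo:uniformLDPahat} line by line with $\A^{[n]}$ in place of $\A^n$, using the iterated transfer operator $\Lt^n$ from \eqref{eqn:Ltiterated} — which already features $\A^{[n]}$ rather than $\A^n$ — together with Lemma \ref{lem:iteratingLtdecay} and the analyticity of $t \mapsto \rho_t$; the Gärtner–Ellis input is then Lemma \ref{lem:localgtheorem2} with $X_{\overline u, n}(x) = \log\|\A^{[n]}(x) u/\|u\|\|$ and $c(t) = \log\rho_t - \log\rho_0$, whose derivative at $0$ is $\lambda_1(\A,\mu_\psi)$ uniformly in $\overline u$. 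Either way the crux is confirming that all the structural hypotheses (primitivity, 1-typicality, fiber-bunching, the Gibbs property, and the identification of the Lyapunov exponent) transfer to the object to which we ultimately apply the uniform LDP.
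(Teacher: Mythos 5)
Your fallback route is exactly the paper's proof, but your primary route is circular within the paper's logical structure. The paper proves this lemma \emph{first} and then deduces Proposition \ref{theo:uniformLDPahat} from it (via the observation that a cocycle is 1-typical iff its adjoint is, following how Theorem B is deduced from Theorem 5.8 in \cite{PPi22}); there is no independent proof of Proposition \ref{theo:uniformLDPahat} available at this point, so invoking it for the transposed/reversed cocycle to establish the lower tail would use the proposition to prove the very lemma on which its proof rests. The word-reversal correspondence you describe is essentially the content of the paper's subsequent deduction, just run in the forbidden direction.

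Your alternative, however, is the intended argument and is complete in outline: Lemma 5.2.(3) of \cite{PPi22} gives \(\int \|\A^{[n]}(x)\frac{u}{\|u\|}\|^t \diff\mu_\psi = \int \Lt^n 1(x,\overline{u})\diff\mu_\psi\), the spectral decomposition \(\Lt^n = \rho_t^n(P_t + S_t^n)\) then yields \(\lim_n \frac1n \log \sup_{\overline{u}}\int\|\A^{[n]}(x)\frac{u}{\|u\|}\|^t\diff\mu_\psi = \log\rho_t\) uniformly in \(\overline{u}\), and Lemma \ref{lem:localgtheorem2} applied to \(X_{\overline{u},n} = \pm\log\|e^{-n\lambda_1}\A^{[n]}(x)\frac{u}{\|u\|}\|\) closes both tails at once (note you must centre by \(\lambda_1\) so that \(c'(0)=0\) as the hypothesis of that lemma requires; your \(c(t)=\log\rho_t\) has \(c'(0)=\lambda_1\neq 0\) in general). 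This also makes your upper-tail/lower-tail split unnecessary: the transfer-operator identity handles both tails uniformly in \(\overline{u}\) simultaneously. I would rewrite the proof with the direct transfer-operator argument as the main (and only) route.
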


\begin{proof}
   Lemma 5.2.(3) in \cite{PPi22} says that for any \(\overline{u}=\R u \in \BP\)
   \[\int \left\|\A^{[n]}(x)\uou \right\|^t \diff \mu_{\psi}(x)=\int \Lt^n 1(x,\overline{u}) \diff \mu_{\psi}(x).\]
   So, for all \(\overline{u} \in \BP\) and all \(t\) in an open neighbourhood of 0,
    \[\int \left\|\A^{[n]}(x)\uou \right\|^t \diff \mu_{\psi}(x)=\rho_t^n \left(\int P_t 1(x,\overline{u})+S_t^n 1(x,\overline{u}) \diff \mu_{\psi}(x) \right).\]
    Thus, 
    \[\left| \frac{1}{n} \log \sup_{\overline{u} \in \BP} \int \left\|\A^{[n]}(x)\uou \right\|^t \diff \mu_{\psi}(x)- \log \rho_t \right| \leq \frac{1}{n} \log \left(  \sup_{\overline{u} \in \BP} \int P_t 1(x,\overline{u})+S_t^n 1(x,\overline{u}) \diff \mu_{\psi}(x) \right).\]
     It therefore follows from Proposition \ref{prop:LtintermsofPandS} that for all \(t\) in an open neighbourhood of 0 
    \[\lim_{n \rightarrow \infty} \frac{1}{n} \log \sup_{\overline{u} \in \BP} \int \|\A^{[n]}(x)u\|^t \diff \mu_{\psi}(x) =\log \rho_t.\]
    
    By \cite[Proposition 5.5]{PPi22},  \(t \mapsto \log \rt\) has derivative \(\lambda_1(\A,\mu_{\psi})\) at 0. Hence, applying Lemma \ref{lem:localgtheorem2} with 
    \[X_{\overline{u},n}:=\log \left\|e^{-n\lambda_1(\A,\mu_{\psi})} \A^{[n]}(x) \uou \right\| \]
    gives 
    \[\varlimsup_{n \rightarrow \infty}\frac{1}{n} \log \sup_{\overline{u} \in \BP} \mu_\psi \left\{x \in \Sigma_T: \frac{1}{n}\log \left\|\A^{[n]}(x)\uou \right\|>\lambda_1(\A,\mu_{\psi})+\varepsilon \right\}<0. \]
    Similarly, applying Lemma \ref{lem:localgtheorem2} with 
    \[X_{\overline{u},n}:=-\log \left\|e^{-n\lambda_1(\A,\mu_{\psi})} \A^{[n]}(x) \uou \right\| \]
    gives 
    \[\varlimsup_{n \rightarrow \infty}\frac{1}{n} \log \sup_{\overline{u} \in \BP} \mu_\psi \left\{x \in \Sigma_T: \frac{1}{n}\log \left\|\A^{[n]}(x)\uou \right\|<\lambda_1(\A,\mu_{\psi})-\varepsilon \right\}<0. \] 
    \end{proof}

Proposition \ref{theo:uniformLDPahat} now follows from Lemma \ref{lem:ldpforA[n]} in the same way that Theorem B follows from Theorem 5.8 in \cite{PPi22} (this is justified at the beginning of \cite[Section 5]{PPi22}). In particular, using Lemma \ref{lem:ldpforA[n]} one can prove that Proposition \ref{theo:uniformLDPahat} holds for the adjoint cocycle of \(\A\), and this is sufficient to prove Proposition \ref{theo:uniformLDPahat} because a cocycle is 1-typical if and only if its adjoint is (see \cite[Lemma 7.2]{BV04}). As the argument is the same, we omit the details.

\section{Defining \(\sigma\)-invariant measures \(\mu_t\)}\label{sec:definingmut}
Let \(\nu_t \in \MCM(\Sigma_T \times \BP)\) be the measures which are absolutely continuous with respect to the eigenmeasures \(\widetilde{\nu}_t\) with density \(h_t\). Considering the equilibrium states constructed in the classical setting \cite[\S 1]{Bow75}, natural candidates for the Gibbs-type measures in Theorem \ref{theo:gibbsmeasuresexist} are the measures \(\mu_t \in \MCM(\Sigma_T)\) defined by 
\[\int f(x) \diff \mu_t(x):=\int f(x) h_t(x,\overline{u}) \diff \widetilde{\nu}_t(x,\overline{u})=\int f(x) \diff \nu_t(x,\overline{u}), \:\: \forall f \in C^{\alpha}(\Sigma_T).\] 
Letting \(L_{\log g} \) be the operator defined by
\[L_{\log g} f (x):=\sum_{y \in \sigma^{-1} x} g(y) f(y),\]
we observe that \(\widetilde{\nu}_0\) must project to to unique eigenmeasure for \(L_{\log g}\) under the natural projection \(\pi:\Sigma_T \times \BP \rightarrow \Sigma_T\). Hence, \(\mu_0\) is the Gibbs measure corresponding to both \(\log g\) and \(\psi\); that is, \(\mu_0=\mu_{\psi}\). 

In this section we prove that the measures \(\mu_t\) are \(\sigma\)-invariant, ergodic and converge weak* to \(\mu_0\) as \(t \rightarrow 0\). Note that we can view each function in \(f \in C(\Sigma_T)\) as a function on \(\Sigma_T \times \BP\) by setting \(f(x,\overline{u}):=f(x)\). When there is no confusion, we will not distinguish between the two functions. 

\begin{lemma}\label{lem:sigmainvariant}
The measures \(\mu_t\) are \(\sigma\)-invariant.
\end{lemma}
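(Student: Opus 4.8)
The plan is to show $\sigma$-invariance of $\mu_t$ directly from the eigenmeasure relation $\Lt^* \widetilde{\nu}_t = \rt \widetilde{\nu}_t$ together with the relation $\diff \nu_t = h_t \diff \widetilde{\nu}_t$. The key observation is that the transfer operator $\Lt$ is built so that summing over the fibre $\sigma^{-1}x$ of the base map, with the $g$-function weights, reproduces the invariance of the projected measure; this is the standard Ruelle–Perron–Frobenius argument adapted to the skew-product setting of \cite{PPi22}. Concretely, for $f \in C^{\alpha}(\Sigma_T)$ viewed as a function on $\Sigma_T \times \BP$ (constant in the second coordinate), I want to compute $\Lt(h_t \cdot (f\circ\sigma\circ\pi))(x,\overline{u})$, where $\pi:\Sigma_T\times\BP\to\Sigma_T$ is the projection, and compare it with $\Lt(h_t \cdot (f\circ\pi))(x,\overline{u})$.

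First I would note that since $f\circ\sigma$ evaluated at a preimage $y\in\sigma^{-1}x$ equals $f(x)$, we get
\[
\Lt\bigl(h_t\cdot(f\circ\sigma\circ\pi)\bigr)(x,\overline{u}) \;=\; \sum_{y\in\sigma^{-1}x} g(y)\,\bigl\|\A(y)^*\tfrac{u}{\|u\|}\bigr\|^t\, h_t(y,\overline{\A(y)^*u})\, f(x) \;=\; f(x)\,\Lt h_t(x,\overline{u}) \;=\; \rt\, f(x)\, h_t(x,\overline{u}),
\]
using $\Lt h_t = \rt h_t$. Then I would integrate against $\widetilde{\nu}_t$ and use $\Lt^*\widetilde{\nu}_t = \rt\widetilde{\nu}_t$:
\[
\int f\circ\sigma \, \diff\mu_t \;=\; \int h_t\cdot(f\circ\sigma\circ\pi)\,\diff\widetilde{\nu}_t \;=\; \rt^{-1}\!\int \Lt\bigl(h_t\cdot(f\circ\sigma\circ\pi)\bigr)\diff\widetilde{\nu}_t \;=\; \rt^{-1}\!\int \rt\, f\, h_t\,\diff\widetilde{\nu}_t \;=\; \int f\,\diff\mu_t.
\]
One small point to check along the way is that $h_t\cdot(f\circ\sigma\circ\pi)$ and $h_t\cdot(f\circ\pi)$ lie in $C^{\alpha}(\Sigma_T\times\BP)$ so that the eigenmeasure identity $\int \Lt F\,\diff\widetilde{\nu}_t = \rt\int F\,\diff\widetilde{\nu}_t$ applies; this is immediate since $h_t \in C^{\alpha}$, $f$ is (at worst) Hölder on $\Sigma_T$ after possibly lowering $\alpha$, and $\sigma$ is Lipschitz on $\Sigma_T$, and products/compositions of Hölder functions with a Lipschitz map are Hölder. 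Since $\sigma$-invariance of a Borel probability measure on $\Sigma_T$ is determined by its action on $C^{\alpha}(\Sigma_T)$ (a dense subalgebra of $C(\Sigma_T)$), this suffices.

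I do not anticipate a serious obstacle here; the only mild subtlety is bookkeeping the two roles of the variable $\overline{u}$ — the function $f$ must genuinely be independent of $\overline{u}$ for the factorisation $f(x)\cdot\Lt h_t$ to go through, which is why we restrict to $f\in C^{\alpha}(\Sigma_T)$ pulled back via $\pi$ — and confirming that the approximation/extension arguments already invoked in the construction of $\widetilde{\nu}_t$ (just after \eqref{eqn:nutdef}) legitimately give $\Lt^*\widetilde{\nu}_t = \rt\widetilde{\nu}_t$ as an identity of measures rather than merely on $C^{\alpha}$. Both of these are handled in the excerpt preceding the lemma.
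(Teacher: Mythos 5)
Your proposal is correct and is essentially the same argument as the paper's: both rest on the identity \(\Lt\bigl(h_t\cdot(f\circ\sigma)\bigr)=f\cdot\Lt h_t=\rt f h_t\) followed by an application of the eigenmeasure relation \(\Lt^*\widetilde{\nu}_t=\rt\widetilde{\nu}_t\), merely written in the opposite direction (the paper starts from \(\mu_t(f)\) and ends at \(\mu_t(f\circ\sigma)\)).
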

\begin{proof}
We adapt the proof of Lemma 1.13 in \cite{Bow75}. Notice that for \(f \in C(\Sigma_T)\),
\begin{align*}
    ((\Lt h_t ) \cdot f)(x,\overline{u}) &= \sum_{y \in \sigma^{-1} x} g(y) \left\|\A(y)^* \uou \right\|^t h_t(y, \overline{\A(y)^* u}) f(x) \\
    &=  \sum_{y \in \sigma^{-1} x} g(y) \left\|\A(y)^* \uou \right\|^t h_t(y, \overline{\A(y)^* u}) f(\sigma y) \\
    &= \Lt(h_t \cdot (f \circ \sigma))(x,\overline{u}).
\end{align*}
Thus,
\begin{align*}
    \mu_t(f)&=\widetilde{\nu}_t(h_t f) \\
        &=\widetilde{\nu}_t (\rt^{-1} \Lt h_t \cdot f) \\
        &= \rt^{-1} \widetilde{\nu}_t ( \Lt(h_t \cdot (f \circ \sigma) ) ) \\
        &= \widetilde{\nu}_t(h_t \cdot (f \circ \sigma)) \\
        &= \mu_t(f \circ \sigma).
\end{align*}
\end{proof}

\begin{lemma}\label{lem:ergodic}
The measures \(\mu_t\) are ergodic. 
\end{lemma}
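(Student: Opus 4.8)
The plan is to exploit the spectral gap of $\Lt$ from Proposition \ref{prop:LtintermsofPandS}, which is the standard route to ergodicity for Gibbs measures built from transfer operators. Concretely, I would first establish a mixing-type estimate: for $f, h \in C^\alpha(\Sigma_T \times \BP)$, using Lemma \ref{lem:iteratingLtdecay} together with the facts $\Lt^* \widetilde\nu_t = \rt \widetilde\nu_t$, $\Lt h_t = \rt h_t$, and $\widetilde\nu_t(h_t) = 1$ (after normalisation), one has
\[
\int f \cdot (h \circ \sigma^n)\, h_t \diff \widetilde\nu_t = \rt^{-n}\int \Lt^n\big( f\, (h\circ\sigma^n)\, h_t\big)\diff\widetilde\nu_t = \rt^{-n}\int \big(\Lt^n(f h_t)\big)\, h\, \diff\widetilde\nu_t,
\]
where the last equality uses the same "pull the $f\circ\sigma^n$ through $\Lt^n$" computation as in the proof of Lemma \ref{lem:sigmainvariant} (iterated $n$ times). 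By Lemma \ref{lem:iteratingLtdecay}, $\rt^{-n}\Lt^n(f h_t) \to \widetilde\nu_t(f h_t)\, h_t = \mu_t(f)\, h_t$ in $\|\cdot\|_\alpha$, so the right-hand side converges to $\mu_t(f)\int h\, h_t \diff\widetilde\nu_t = \mu_t(f)\mu_t(h)$. Hence $\mu_t(f\cdot (h\circ\sigma^n)) \to \mu_t(f)\mu_t(h)$ for all $f,h \in C^\alpha(\Sigma_T)$; that is, $\mu_t$ is mixing on a dense subalgebra of $C(\Sigma_T)$.

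From this mixing property on $C^\alpha(\Sigma_T)$ one deduces ergodicity in the usual way. Suppose $A \subseteq \Sigma_T$ is $\sigma$-invariant with $0 < \mu_t(A) < 1$. Approximate $\ind_A$ in $L^2(\mu_t)$ by $\alpha$-Hölder functions $f_k$; since $C^\alpha(\Sigma_T)$ is dense in $C(\Sigma_T)$ in the sup norm and $C(\Sigma_T)$ is dense in $L^2(\mu_t)$, this is possible. Using $\sigma$-invariance of $A$ and of $\mu_t$ (Lemma \ref{lem:sigmainvariant}), one gets $\mu_t(\ind_A \cdot (\ind_A \circ \sigma^n)) = \mu_t(A)$ for all $n$, while the mixing estimate applied to the approximants forces $\mu_t(\ind_A\cdot(\ind_A\circ\sigma^n)) \to \mu_t(A)^2$; comparing gives $\mu_t(A) = \mu_t(A)^2$, a contradiction. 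Alternatively, and perhaps more cleanly, one can invoke Theorem \ref{theo:led}: show $L_{\log g_t}^* \mu_t = \rho_t\mu_t$ for an appropriate (normalized) conditional-probability kernel, realising $\mu_t$ as the unique measure satisfying the disintegration in part (ii), whose uniqueness already yields ergodicity via the standard argument that a non-ergodic invariant measure with such a disintegration could be decomposed into two distinct ones with the same property; but the spectral-gap argument above is more self-contained given what is available in the excerpt.

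The main obstacle is the bookkeeping in the identity $\int \Lt^n(f h_t (h\circ\sigma^n))\diff\widetilde\nu_t = \int \Lt^n(f h_t)\cdot h\, \diff\widetilde\nu_t$: one must check carefully that the telescoping computation of Lemma \ref{lem:sigmainvariant}, which moves a factor $f\circ\sigma$ out of one application of $\Lt$, iterates correctly to move $h\circ\sigma^n$ out of $\Lt^n$ — this works because each preimage $y \in \sigma^{-n}x$ satisfies $\sigma^n y = x$, so $h(\sigma^n y) = h(x)$ factors out of the sum in \eqref{eqn:Ltiterated}. A secondary technical point is ensuring the $L^2$-approximation step is legitimate, i.e.\ that products and compositions of the approximants converge appropriately; this is routine since $\|h\circ\sigma^n\|_\infty = \|h\|_\infty$ uniformly in $n$ and $\mu_t$ is a probability measure, so dominated convergence applies. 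Everything else is a direct application of the spectral-gap estimate already granted by Proposition \ref{prop:LtintermsofPandS} and Lemma \ref{lem:iteratingLtdecay}.
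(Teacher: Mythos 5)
Your argument is correct and is essentially the paper's own proof: the paper also establishes mixing via the identity $\Lt^n(h_t f_1)\cdot f_2=\Lt^n(h_t f_1\cdot(f_2\circ\sigma^n))$, the eigenmeasure relation $\Lt^{*n}\widetilde{\nu}_t=\rt^n\widetilde{\nu}_t$, and the spectral-gap decay of Lemma \ref{lem:iteratingLtdecay}. The only cosmetic difference is that the paper verifies the mixing estimate directly on cylinder sets (whose indicators are $\alpha$-H\"older, so $h_t 1_E\in C^{\alpha}(\Sigma_T\times\BP)$) rather than on H\"older test functions followed by your $L^2$ approximation step; both routes are standard and equivalent here.
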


\begin{proof}
This follows similarly to the proof of Proposition 1.14 in \cite{Bow75}. We prove the stronger statement that \(\mu_t\) is mixing: this means that for any Borel sets \(E, F \subset \Sigma_T\),
\[\lim_{n \rightarrow \infty} \mu_t(E \cap \sigma^{-n} F) = \mu_t(E) \mu_t(F).\]
By a standard argument, we only need to check this holds for cylinders.

For any \(f_1, f_2 \in C(\Sigma_T)\) and \(n \in \N\),
\begin{align*}
    (\Lt^n (h_t f_1 ) \cdot f_2)(x,\overline{u}) &= \sum_{y \in \sigma^{-n} x} g(y) \left\|\A(y)^* \uou \right\|^t h_t(y, \overline{\A(y)^* u}) f_1(y) f_2(x) \\
    &=  \sum_{y \in \sigma^{-n} x} g(y) \left\|\A(y)^* \uou \right\|^t h_t(y, \overline{\A(y)^* u}) f_1(y) f_2(\sigma^n y) \\
    &= \Lt^n(h_t f_1 \cdot (f_2 \circ \sigma^n))(x,\overline{u}).
\end{align*}
Thus, for \(E=[x_0, \ldots, x_{k-1}]\) and \(F=[y_0, \ldots, y_{l-1}]\),
\begin{align*}
    \mu_t(E \cap \sigma^{-n} F) &= \mu_t(1_E \cdot 1_{\sigma^{-n}(F)}) \\
    &= \mu_t(1_E \cdot (1_F \circ \sigma^n)) \\
    &= \widetilde{\nu}_t(h_t 1_E \cdot (1_F \circ \sigma^n)) \\
    &= \rt^{-n} \Lt^{*n}\widetilde{\nu}_t (h_t  1_E \cdot (1_F \circ \sigma^n)) \\
    &= \widetilde{\nu}_t(\rt^{-n} \Lt^n(h_t 1_{E} \cdot  (1_F \circ \sigma^n)) \\
    &= \widetilde{\nu}_t( \rt^{-n} \Lt^n(h_t 1_{E}) \cdot 1_F).
\end{align*}
Therefore,
\begin{align*}
    |\mu_t(E) \cap \sigma^{-n}F)-\mu_t(E)\mu_t(F)|&= |\widetilde{\nu}_t( \rt^{-n} \Lt^n(h_t  1_{E}) \cdot 1_F)- \widetilde{\nu}_t(h_t 1_E) \widetilde{\nu}_t(h_t 1_F) | \\
    &=|\widetilde{\nu}_t \left((  \rt^{-n} \Lt^n(h_t  1_{E}) - \widetilde{\nu}_t(h_t 1_E) h_t) 1_F \right)| \\
    &\leq \|\rt^{-n} \Lt^n (h_t 1_E) -\widetilde{\nu}_t(h_t 1_E) h_t\|_\infty \widetilde{\nu}_t(1_F).
\end{align*}
Notice that \(h_t 1_E \in C^{\alpha}(\Sigma_T \times \BP)\), so by Proposition \ref{lem:iteratingLtdecay},
\[\|\rt^{-n} \Lt^n (h_t 1_E) -\widetilde{\nu}_t(h_t 1_E) h_t\|_\infty \leq \|\rt^{-n} \Lt^n (h_t 1_E) -\widetilde{\nu}_t(h_t 1_E) h_t\|_{\alpha} \rightarrow 0 \]
as \(n \rightarrow \infty\).
\end{proof}

\begin{lemma}\label{lem:mutweak*}
\(\mu_t \rightarrow \mu_0\) weak* as \(t \rightarrow 0\).
\end{lemma}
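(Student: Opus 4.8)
The plan is to prove weak* convergence by showing that $\int f\, d\mu_t \to \int f\, d\mu_0$ for every $f$ in a dense subclass of $C(\Sigma_T)$, for which it suffices to take $f \in C^{\alpha}(\Sigma_T)$ (Hölder functions are dense in $C(\Sigma_T)$, and the $\mu_t$ are probability measures, so an $\varepsilon/3$-argument upgrades pointwise convergence on a dense set to weak* convergence). So fix $f \in C^{\alpha}(\Sigma_T)$, regarded as an element of $C^{\alpha}(\Sigma_T \times \BP)$ constant in the second coordinate.

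First I would unwind the definition of $\mu_t$: by construction $\int f\, d\mu_t = \int f h_t\, d\widetilde{\nu}_t$, and using \eqref{lem:projectfontoh} together with the fact that $\widetilde{\nu}_t$ is a probability measure (so $\widetilde{\nu}_t(h_t) = \widetilde{\nu}_t(P_t 1) = 1$ by \eqref{eqn:nutdef}), I would rewrite everything in terms of the operator data. Concretely, $f h_t = f \cdot P_t 1$, and one can express $\int f h_t\, d\widetilde{\nu}_t$ via $\int P_t(\text{something})\, dm$ using \eqref{eqn:nutdef}; alternatively, and more cleanly, I would use Lemma \ref{lem:iteratingLtdecay}: $\rt^{-n}\Lt^n(f h_t) \to \widetilde{\nu}_t(f h_t) h_t = \mu_t(f)\, h_t$ in $\|\cdot\|_\alpha$, uniformly — but since the point is $t \to 0$, not $n \to \infty$, the real engine must be the analyticity of $t \mapsto P_t$ from Proposition \ref{prop:LtintermsofPandS}.

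The cleanest route: express $\mu_t(f)$ directly as a ratio of quantities analytic in $t$. Since $P_t f = \widetilde{\nu}_t(f) h_t$ and $P_t 1 = h_t$, for any fixed probability measure $m$ on $\Sigma_T \times \BP$ we have $\widetilde{\nu}_t(f) = \dfrac{\int P_t f\, dm}{\int P_t 1\, dm}$, valid for all $t$ near $0$ where the denominator is positive (Lemma \ref{lem:htbound} and analyticity guarantee this on a neighbourhood of $0$). Now $\mu_t(f) = \widetilde{\nu}_t(h_t f)$, and $h_t f = (P_t 1) f$; the subtlety is that $h_t f \notin \{P_t g : g\}$ in general, so I instead compute $\mu_t(f) = \widetilde{\nu}_t(h_t f)$ by noting $\widetilde{\nu}_t(h_t f) = \widetilde{\nu}_t$ applied to $h_t f$, and use that $\widetilde{\nu}_t(\cdot)$ itself, as a functional, equals $\left(\int h_t\, dm\right)^{-1}\int P_t(\cdot)\, dm$. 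Thus $\mu_t(f) = \left(\int h_t\, dm\right)^{-1} \int P_t(h_t f)\, dm$. Both $t \mapsto \int h_t\, dm = \int P_t 1\, dm$ and $t \mapsto \int P_t(h_t f)\, dm$ are analytic in $t$ near $0$: the first because $t \mapsto P_t$ is analytic (Corollary \ref{cor:analyticchar}) and $1$ is a fixed vector; the second because $t \mapsto h_t = P_t 1$ is analytic, hence $t \mapsto h_t f$ is analytic in $C^{\alpha}(\Sigma_T \times \BP)$ (multiplication by the fixed function $f$ is bounded linear), and then $t \mapsto P_t(h_t f)$ is analytic by Lemma \ref{lem:Lzxzisanalytic}(2), so integrating against the fixed measure $m$ preserves analyticity by Proposition \ref{prop:analyticchar}. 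Therefore $t \mapsto \mu_t(f)$ is analytic, in particular continuous, near $0$, so $\mu_t(f) \to \mu_0(f)$ as $t \to 0$.

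To finish: given arbitrary $f \in C(\Sigma_T)$ and $\varepsilon > 0$, pick $\tilde f \in C^{\alpha}(\Sigma_T)$ with $\|f - \tilde f\|_\infty < \varepsilon/3$; then $|\mu_t(f) - \mu_0(f)| \le |\mu_t(f) - \mu_t(\tilde f)| + |\mu_t(\tilde f) - \mu_0(\tilde f)| + |\mu_0(\tilde f) - \mu_0(f)| < 2\varepsilon/3 + |\mu_t(\tilde f) - \mu_0(\tilde f)|$, and the last term is $< \varepsilon/3$ for $t$ close enough to $0$ by the previous paragraph. I do not expect a serious obstacle here; the only point requiring care is the bookkeeping that makes $\mu_t(f)$ genuinely a ratio of two $t$-analytic scalars — in particular checking that $h_t f$ is an analytic $C^{\alpha}$-valued function of $t$ (which is where one uses that $f$ is Hölder, not merely continuous) and that the denominator $\int P_t 1\, dm$ stays bounded away from $0$ on a neighbourhood of $0$, which is exactly what Lemma \ref{lem:htbound} and the analyticity/positivity at $t = 0$ ($h_0 \equiv 1$) provide.
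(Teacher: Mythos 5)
Your proof is correct, and its analytic core --- writing \(\mu_t(f)=\bigl(\int h_t\,\diff m\bigr)^{-1}\int P_t(h_t f)\,\diff m\) and invoking the analyticity of \(t\mapsto P_t\) (via Lemma \ref{lem:Lzxzisanalytic}) together with Lemma \ref{lem:htbound} to control the denominator --- is exactly the paper's argument, applied there to \(f=1_C\) for cylinders \(C\). The only difference is the final approximation step: you upgrade from H\"older test functions to all of \(C(\Sigma_T)\) by density and an \(\varepsilon/3\) argument, whereas the paper uses the Portmanteau criterion and writes open sets as countable disjoint unions of cylinders; both are routine.
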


\begin{proof}
From \cite[Theorem 2.1] {Bi99} we have that for \(\mu_n, \mu \in \MCM(\Sigma_T)\), \(\mu_n \rightarrow \mu\) weak* if and only if
\(\mu(O) \leq \liminf_{n \rightarrow \infty} \mu_n(O)\) for every open set \(O \subset \Sigma_T\). We first show this holds for cylinders. Let \(C \subset \Sigma_T\) be a cylinder, then \(1_C \in C^{\alpha}(\Sigma_T \times \BP)\). By (\ref{eqn:nutdef}) and recalling that \(P_t 1=h_t\), we have
\begin{align*}
    \mu_{t}(E)&=\int 1_{C}(x) h_{t}(x,\overline{u}) \diff \widetilde{\nu}_t \\
    &= \frac{\int P_t(1_{C} h_{t}) \diff m }{\int h_t \diff m}.
\end{align*}
By Lemma \ref{lem:Lzxzisanalytic}, \(t \mapsto P_t(1_C h_t)\) is analytic. Moreover, \(h_t\) is analytic and bounded away from zero in an open neighbourhood of 0. It follows that for any \(t_n \in \R\) with \(t_n \rightarrow 0\),
\[\mu_{t_n}(C) \xrightarrow[n \rightarrow \infty]{} \frac{\int P_0(1_{C} h_{0}) \diff m }{\int h_0 \diff m} 
    = \mu_0(C). \]
Now observe that any open set in \(\Sigma_T\) can be written as a countable disjoint union of cylinders (here we are taking the convention that the empty set is a cylinder). Thus, for any open set \(E=\cup_{n \in \N} C_n\) and \(k \in \N\) we have 
\[\liminf_{n \rightarrow \infty} \mu_{t_n}(E) \geq \liminf_{n \rightarrow \infty} \mu_{t_n}(\cup_{i=1}^k C_k)= \mu_0(\cup_{i=1}^k C_k),\]
so taking the limit as \(k \rightarrow \infty\) finishes the proof.
\end{proof}

\subsection{Derivative of \(\log \rt\)}
We denote by \(\rt'\) the derivative of the map \(t \mapsto \rt\). In \cite[Proposition 5.5]{PPi22} they show that for any unit \(u \in \R^d\), 
\[\frac{\rho_0'}{\rho_0}= \lim_{n \rightarrow \infty} \frac{1}{n} \int  \log \|\A^{[n]}(x)u\| \diff \mu_0=\lambda_1(\A,{\mu_0}).\]
In the next section we will show that \(\rt'/\rt=\lambda_1(\A,\mu_t)\) for all \(t\) in an open neighbourhood of 0. For now we are only able to prove the following. 

\begin{lemma}\label{lem:rhotderivative}
For all \(n \in \N\),
\begin{align*}
    \frac{\rt'}{\rt}
    &= \int - \frac{1}{n} \log \left\|\A_*^{-n}(x) \uou \right\| h_t(x,\overline{u}) \diff \widetilde{\nu}_t \\
    &=  \int - \frac{1}{n} \log \left\|\A_*^{-n}(x) \uou \right\| \diff \nu_t.
\end{align*}
\end{lemma}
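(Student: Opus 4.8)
The plan is to run the standard first‑order eigenvalue perturbation argument for the iterate $\Lt^n$, after identifying the derivative of $t\mapsto\Lt^n$ explicitly.

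Iterating $\Lt h_t=\rt h_t$ and $\Lt^*\widetilde{\nu}_t=\rt\widetilde{\nu}_t$ gives $\Lt^n h_t=\rt^n h_t$ and $\widetilde{\nu}_t(\Lt^n f)=\rt^n\widetilde{\nu}_t(f)$ for all $f\in C^\alpha(\Sigma_T\times\BP)$; recall also that $\int h_t\diff\widetilde{\nu}_t=1$, since $\nu_t=h_t\diff\widetilde{\nu}_t$ is a probability measure. First I would compute $\tfrac{d}{dt}\Lt^n$. Since $t\mapsto\Lt$ is analytic (\cite[Lemma 5.3]{PPi22}), so is $t\mapsto\Lt^n$, and in formula (\ref{eqn:Ltiterated}) the only dependence on $t$ is through the factors $\|\A^{[n]}(y)\uou\|^{t}=e^{t\log\|\A^{[n]}(y)\uou\|}$, with the sum over $y\in\sigma^{-n}x$ finite (as $\A$ is one-step) and, for fixed $n$, the numbers $\|\A^{[n]}(y)\uou\|$ lying in a compact subset of $(0,\infty)$; differentiating (\ref{eqn:Ltiterated}) in $t$ term by term therefore identifies the derivative as
\[\Big(\tfrac{d}{dt}\Lt^n\Big)f(x,\overline{u})=\sum_{y\in\sigma^{-n}x}g^{(n)}(y)\,\|\A^{[n]}(y)\uou\|^{t}\,\log\|\A^{[n]}(y)\uou\|\;f\big(y,\overline{\A^{[n]}(y)u}\big).\]
Now put $\Psi_0(x,\overline{u}):=-\log\|\A_*^{-n}(x)\uou\|$; this lies in $C^\alpha(\Sigma_T\times\BP)$ because for fixed $n$ the matrix $\A_*^{-n}(x)$ takes finitely many invertible values and $\overline{u}\mapsto\log\|M\uou\|$ is smooth on $\BP$. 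Using $\A^{[n]}(y)=\A_*^{-n}(y)^{-1}$ one checks that, with $w=\A^{[n]}(y)u$, $\|\A^{[n]}(y)\uou\|=\|w\|/\|\A_*^{-n}(y)w\|$, so $\log\|\A^{[n]}(y)\uou\|=\Psi_0(y,\overline{\A^{[n]}(y)u})$; substituting this into the display above gives the operator identity $\big(\tfrac{d}{dt}\Lt^n\big)f=\Lt^n(\Psi_0 f)$ for all $f\in C^\alpha$. Obtaining this identity, and in particular getting its sign right, is the crux of the argument.

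Next I would differentiate $\Lt^n h_t=\rt^n h_t$ in $t$. Both sides are analytic as $C^\alpha$-valued functions of $t$ (by \cite[Lemma 5.3]{PPi22}, Proposition \ref{prop:LtintermsofPandS} and Lemma \ref{lem:Lzxzisanalytic}), so the Leibniz rule yields, writing $h_t'=\tfrac{d}{dt}h_t$,
\[\Lt^n(\Psi_0 h_t)+\Lt^n h_t'=n\,\rt^{n-1}\rt'\,h_t+\rt^n h_t'.\]
Applying the functional $\widetilde{\nu}_t$ and using $\widetilde{\nu}_t(\Lt^n g)=\rt^n\widetilde{\nu}_t(g)$, the two terms involving $h_t'$ cancel, and since $\int h_t\diff\widetilde{\nu}_t=1$ we are left with $\rt^n\widetilde{\nu}_t(\Psi_0 h_t)=n\,\rt^{n-1}\rt'$, i.e.
\[\frac{\rt'}{\rt}=\frac1n\widetilde{\nu}_t(\Psi_0 h_t)=\int-\frac1n\log\|\A_*^{-n}(x)\uou\|\;h_t(x,\overline{u})\diff\widetilde{\nu}_t.\]
Finally, since $\nu_t$ has density $h_t$ with respect to $\widetilde{\nu}_t$, the last integral equals $\int-\tfrac1n\log\|\A_*^{-n}(x)\uou\|\diff\nu_t$, which gives both asserted equalities and, in passing, the fact that the right-hand side is independent of $n$.

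The only points requiring genuine care are the justification that term-by-term differentiation of (\ref{eqn:Ltiterated}) indeed produces the derivative of $t\mapsto\Lt^n$ (which is clear once one uses the analyticity in $C^\alpha$ and the fact that the sum is finite with the relevant quantities bounded away from $0$ and $\infty$), and keeping the sign straight in the passage between $\A^{[n]}$ and $\A_*^{-n}$; once the operator identity $\big(\tfrac{d}{dt}\Lt^n\big)f=\Lt^n(\Psi_0 f)$ is established, the remainder is the standard eigenvalue-perturbation computation.
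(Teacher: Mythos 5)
Your proof is correct and follows essentially the same route as the paper's: a first-order perturbation of the eigenvalue identity for \(\Lt^n\), the explicit term-by-term formula for \((\Lt^n)'\) (the paper cites \cite[Lemma 5.3.(2)]{PPi22} for this), and the observation that \(\log\left\|\A^{[n]}(y)\uou\right\|=-\log\left\|\A_*^{-n}(y)\tfrac{w}{\|w\|}\right\|\) with \(w=\A^{[n]}(y)u\). The only immaterial difference is that you differentiate \(\Lt^n h_t=\rt^n h_t\) and pair with \(\widetilde{\nu}_t\) to cancel the \(h_t'\) terms, whereas the paper differentiates \(\Lt^n P_t=\rt^n P_t\) and applies \(P_t\) to cancel the \(P_t'\) terms; the mechanism is identical.
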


\begin{proof}
We adapt the proof of Proposition 5.7 in \cite{PPi22}. Differentiating the identity \(\MCL_t^n P_t=\rho_t^n P_t\), we obtain 
\[(\MCL_t^{n})' P_t+\Lt^n P_t' = n \rho_t^{n-1} \rho_t' P_t+\rho_t^n P_t'.\]
Since \(P_t \Lt^n =\rt^n P_t \) and \(P_t^2=P_t\), applying \(P_t\) to both sides gives
\[P_t(\MCL_t^{n})' P_t+\rt^n P_t P_t' = n \rho_t^{n-1} \rho_t' P_t+\rho_t^n P_t P_t'.\]
Cancelling \(\rt^n P_t P'_t\) from both sides further gives \(P_t(\MCL_t^{n})' P_t=n \rt^{n-1} \rt' P_t \). Recalling that \(P_t h_t=h_t\), \(\int h_t \diff \nu_t=1\), and \(P_t^* \widetilde{\nu_t}= \widetilde{\nu_t}\) (by (\ref{eqn:nutdef})), we have 
\[ \frac{\rt'}{\rho_t} = \frac{1}{n \rt^{n}} \int (\Lt^n)'h_t \diff \widetilde{\nu}_t.   \]
It follows from Lemma 5.3.(2) in \cite{PPi22} that 
\[(\Lt^n)' h_t= \sum_{y \in \sigma^{-n} x} g^{(n)}(y) \log \left\|\A^{[n]}(y) \frac{u}{\|u\|} \right\| \cdot \left\|\A^{[n]}(y) \uou \right\|^t h_t \left(y, \overline{\A^{[n]}(y)u } \right).\]
This shows that 
\begin{equation}\label{eqn:derivativelem}
    \frac{\rt'}{\rt} =\frac{1}{\rt^n} \int \sum_{y \in \sigma^{-n} x} g^{(n)}(y) \frac{1}{n} \log \left\|\A^{[n]}(y) \uou \right\| \cdot \left\|\A^{[n]}(y) \uou \right\|^t h_t \left(y, \overline{\A^{[n]}(y)u } \right) \diff \widetilde{\nu}_t(x,\overline{u}).
\end{equation}
Using the identity \(\A^{[n]}(x)^{-1}=\A_*^{-n}(x)\), the function
\[f(x,\overline{u}):=-\frac{1}{n}\log \left\|\A_*^{-n}(x) \uou \right\| \]
satisfies
\begin{equation*}
    f(y,\overline{\A^{[n]}(y)u}) = \frac{1}{n} \log \left\|\A^{[n]}(y) \uou \right\|
\end{equation*}
for all \((y,\overline{u}) \in \Sigma_T \times \BP\). Thus, by the formula for \(\Lt^n\) (see (\ref{eqn:Ltiterated})),
\begin{equation}\label{eqn:derivativelem2}
    \sum_{y \in \sigma^{-n} x} g^{(n)}(y) \frac{1}{n} \log \left\|\A^{[n]}(y) \uou \right\| \cdot \left\|\A^{[n]}(y) \uou \right\|^t h_t \left(y, \overline{\A^{[n]}(y)u } \right)=\Lt^n (f \cdot h_t)(x,\overline{u}). 
\end{equation}
Hence, combining (\ref{eqn:derivativelem}) and (\ref{eqn:derivativelem2}) and using that \((\Lt^{n})^* \widetilde{\nu}_t=\rt^n \widetilde{\nu}_t\), the lemma follows.
\end{proof}

\subsection{Lyapunov exponent gap}
\begin{lemma}\label{lem:lyapunovlowerbound}
\(\rt'/\rt \leq \lambda_1(\A,{\mu_t})\).
\end{lemma}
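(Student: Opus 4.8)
The plan is to use the identity from Lemma \ref{lem:rhotderivative}, namely
\[
\frac{\rt'}{\rt}= \int -\frac{1}{n}\log\left\|\A_*^{-n}(x)\uou\right\| \diff \nu_t,
\]
which holds for every $n\in\N$, and combine it with the $\sigma$-invariance of $\mu_t$ (Lemma \ref{lem:sigmainvariant}). First I would rewrite $\log\|\A_*^{-n}(x)\, u/\|u\|\|$ in terms of a Birkhoff-type sum along the orbit: using the cocycle relation for $\A_*^{-1}$, one has $\A_*^{-n}(x)= \A_*^{-1}(\sigma^{n-1}x)\cdots \A_*^{-1}(x)$, so telescoping gives
\[
\log\left\|\A_*^{-n}(x)\uou\right\| = \sum_{i=0}^{n-1} \log\left\| \A_*^{-1}(\sigma^i x)\, \overline{w_i} \right\|,
\]
where $\overline{w_0}=\uou$ and $\overline{w_{i+1}}$ is the direction obtained after applying the first $i$ steps of the cocycle. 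This is exactly the structure encoded by the operator $\Lt$: the pushforward direction appears as the second coordinate. So the key step is to recognise that the integrand $-\frac{1}{n}\log\|\A_*^{-n}(x)\, u/\|u\|\|$ integrated against the $\Lt$-conformal measure $\nu_t$ (equivalently $h_t\diff\widetilde\nu_t$) is an average of a single-step quantity.

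Concretely, I would argue: since $\widetilde\nu_t$ is an eigenmeasure for $\Lt^*$ and $\nu_t = h_t\,\widetilde\nu_t$, the measure $\nu_t$ behaves like an "$\A_*^{-1}$-stationary" measure on $\Sigma_T\times\BP$ after normalising. Writing $\phi(x,\overline{u}):=-\log\|\A_*^{-1}(x)\,u/\|u\|\| = \log\|\A^{[1]}(\cdot)\cdots\|$-type term, the $n$-fold identity of Lemma \ref{lem:rhotderivative} should rearrange into
\[
\frac{\rt'}{\rt} = \int \phi \diff \nu_t'
\]
for an appropriate invariant average, and letting $n\to\infty$ one compares this to the top Lyapunov exponent $\lambda_1(\A,\mu_t)$. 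The point is that $\lambda_1(\A,\mu_t) = \lim_n \frac1n\int \log\|\A^n(x)\|\diff\mu_t = \inf_n \frac1n\int\log\|\A^n\|\diff\mu_t$ by subadditivity, and for \emph{every} unit vector $u$ and every $n$,
\[
\log\left\|\A^{[n]}(x)\uou\right\| = \log\left\|\A^n(x)^* \uou\right\| \leq \log\|\A^n(x)^*\| = \log\|\A^n(x)\|.
\]
Hence $-\frac1n\log\|\A_*^{-n}(x)\,u/\|u\|\|$, which equals $\frac1n\log\|\A^{[n]}(y)\,v/\|v\|\|$ after the change of variables built into $\Lt^n$ (as in the proof of Lemma \ref{lem:rhotderivative}), is pointwise bounded above by $\frac1n\log\|\A^n(y)\|$. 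Integrating this bound against the relevant measure and using $\sigma$-invariance of $\mu_t$ to turn $\int \frac1n\log\|\A^n\|\diff\mu_t$ into something converging (decreasingly) to $\lambda_1(\A,\mu_t)$, we get $\rt'/\rt \le \frac1n\int \log\|\A^n(x)\|\diff\mu_t(x)$ for every $n$, and then take the infimum over $n$.

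The main obstacle I anticipate is bookkeeping the change of variables in $\Lt^n$ correctly: in the expression from Lemma \ref{lem:rhotderivative}, the norm $\|\A^{[n]}(y)\,u/\|u\|\|$ is evaluated at preimages $y\in\sigma^{-n}x$ with the direction $u$ coming from the base point $x$, and one must check that after applying $(\Lt^n)^*\widetilde\nu_t = \rt^n\widetilde\nu_t$ the resulting integral is genuinely $\int \frac1n\log\|\A^{[n]}(x)\,u/\|u\|\| \,\diff\nu_t(x,\overline u)$ — i.e. that pushing the direction forward is consistent with the measure. Once that identification is clean, the inequality $\log\|\A^{[n]}(x)\,u/\|u\|\| \le \log\|\A^n(x)\|$ and the fact that $\int\log\|\A^n(x)\|\diff\mu_t(x)$ depends only on the $\Sigma_T$-marginal $\mu_t$ (which is $\sigma$-invariant, so $\frac1n\int\log\|\A^n\|\diff\mu_t \searrow \lambda_1(\A,\mu_t)$) finish the argument immediately.
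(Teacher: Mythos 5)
Your proposal is correct and follows essentially the same route as the paper: start from Lemma \ref{lem:rhotderivative}, bound the vector norm by the operator norm, observe that the resulting integrand depends only on \(x\) so the integral against \(\nu_t\) reduces to one against \(\mu_t\), and let \(n\to\infty\) using subadditivity. The change-of-variables bookkeeping you worry about can be avoided entirely: the paper bounds the integrand of Lemma \ref{lem:rhotderivative} directly via \(\left\|\A_*^{-n}(x)\uou\right\|\geq \sigma_d(\A_*^{-n}(x))=\|\A^n(x)\|^{-1}\), which gives \(-\frac1n\log\left\|\A_*^{-n}(x)\uou\right\|\leq\frac1n\log\|\A^n(x)\|\) pointwise without returning to the preimage sum.
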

\begin{proof}
By Lemma \ref{lem:rhotderivative}, for any \(n \in \N\),
\begin{align*}
    \frac{\rt'}{\rt} &=\int - \frac{1}{n} \log \|\A_*^{-n}(x) u \| \diff \nu_t \\
    &\leq \int - \frac{1}{n} \log \sigma_d(\A_*^{-n}(x))  \diff \nu_t \\
    &= \int \frac{1}{n} \log \| \A^n(x)\| \diff \nu_t \\
    &=  \int \frac{1}{n} \log \| \A^n(x)\| \diff \mu_t.
\end{align*}
The lemma follows by letting \(n \rightarrow \infty\).
\end{proof}

\begin{lemma}\label{lem:lambda1geqlambda2}
For all \(t \in \R\) sufficiently close to 0, \(\rt'/\rt>\lambda_2(\A,\mu_t)\). 
\end{lemma}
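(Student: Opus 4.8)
The plan is to combine the continuity of $t \mapsto \rt'/\rt$ at $0$ with an upper semi-continuity argument for the second Lyapunov exponent. Since $t \mapsto \rt$ is analytic near $0$ and $\rho_0 = 1$, the map $t \mapsto \rt'/\rt$ is continuous at $0$, and by \cite[Proposition 5.5]{PPi22} its value there is $\rho_0'/\rho_0 = \lambda_1(\A,\mu_0)$. Moreover, since $\mu_0 = \mu_\psi$ is the Gibbs measure of the Hölder potential $\psi$ and $\A$ is 1-typical, the top Lyapunov exponent of $\mu_0$ is simple, i.e. $\lambda_1(\A,\mu_0) > \lambda_2(\A,\mu_0)$ (this is the theorem of \cite{BV04}, applicable to $\mu_\psi$ by \cite{Lep00}). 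Hence it suffices to prove that $\limsup_{t \to 0} \lambda_2(\A,\mu_t) \le \lambda_2(\A,\mu_0)$: combined with the convergence $\rt'/\rt \to \lambda_1(\A,\mu_0)$ this yields $\rt'/\rt > \lambda_2(\A,\mu_t)$ for all $t$ sufficiently close to $0$.

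To control $\lambda_2$ I would pass through the first two exponents together. Since $\|\wedge^2 A\| = \sigma_1(A)\sigma_2(A)$ and, as noted in the excerpt, $x \mapsto \log\!\big(\sigma_1(\A^n(x))\sigma_2(\A^n(x))\big)$ is subadditive, we have $(\lambda_1 + \lambda_2)(\A,\mu) = \inf_{n \in \N} \tfrac{1}{n}\int \log\|\wedge^2\A^n(x)\| \diff\mu$, and likewise $\lambda_1(\A,\mu) = \inf_{n \in \N} \tfrac1n\int \log\|\A^n(x)\|\diff\mu$. For the one-step cocycle $\A$ the integrands $x \mapsto \log\|\A^n(x)\|$ and $x \mapsto \log\|\wedge^2\A^n(x)\|$ are locally constant on cylinders of length $n$, hence continuous, so both $\mu \mapsto \lambda_1(\A,\mu)$ and $\mu \mapsto (\lambda_1+\lambda_2)(\A,\mu)$ are infima of weak* continuous functions and therefore upper semi-continuous. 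By Lemma \ref{lem:mutweak*}, $\mu_t \to \mu_0$ weak*, so $\limsup_{t\to0}(\lambda_1+\lambda_2)(\A,\mu_t) \le (\lambda_1+\lambda_2)(\A,\mu_0)$ and $\limsup_{t\to0}\lambda_1(\A,\mu_t) \le \lambda_1(\A,\mu_0)$. On the other hand, Lemma \ref{lem:lyapunovlowerbound} gives $\lambda_1(\A,\mu_t) \ge \rt'/\rt$, and the right-hand side tends to $\lambda_1(\A,\mu_0)$, so $\liminf_{t\to0}\lambda_1(\A,\mu_t) \ge \lambda_1(\A,\mu_0)$; hence $\lambda_1(\A,\mu_t) \to \lambda_1(\A,\mu_0)$ as $t \to 0$. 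Combining these, $\limsup_{t\to0}\lambda_2(\A,\mu_t) \le \limsup_{t\to0}(\lambda_1+\lambda_2)(\A,\mu_t) - \liminf_{t\to0}\lambda_1(\A,\mu_t) \le (\lambda_1+\lambda_2)(\A,\mu_0) - \lambda_1(\A,\mu_0) = \lambda_2(\A,\mu_0)$, as required.

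The main subtlety is that $\lambda_2(\A,\cdot)$, being a difference of two upper semi-continuous functions, need not itself be upper semi-continuous, so one genuinely needs the two-sided convergence $\lambda_1(\A,\mu_t) \to \lambda_1(\A,\mu_0)$ — obtained by sandwiching the u.s.c. upper bound against the lower bound $\rt'/\rt$ from Lemma \ref{lem:lyapunovlowerbound} — rather than merely an upper bound. The other essential input is the strict inequality $\lambda_1(\A,\mu_0) > \lambda_2(\A,\mu_0)$, i.e. simplicity of the top Lyapunov exponent of the Gibbs measure under 1-typicality from \cite{BV04}; everything else (weak* convergence of $\mu_t$, upper semi-continuity of subadditive quantities, analyticity of $\rt$) is soft.
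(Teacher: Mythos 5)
Your proof is correct and uses exactly the same ingredients as the paper's: the simplicity $\lambda_1(\A,\mu_0)>\lambda_2(\A,\mu_0)$ from \cite{BV04}, upper semi-continuity of $\mu\mapsto(\lambda_1+\lambda_2)(\A,\mu)$, weak* convergence $\mu_t\to\mu_0$, the lower bound $\lambda_1(\A,\mu_t)\ge\rt'/\rt$ from Lemma \ref{lem:lyapunovlowerbound}, and continuity of $\rt'/\rt$ at $0$. The paper packages the argument as a proof by contradiction (adding the two lower bounds $\lambda_1,\lambda_2\ge\rt'/\rt$ and comparing with the u.s.c.\ upper bound), whereas you argue directly via $\limsup_{t\to0}\lambda_2(\A,\mu_t)\le\lambda_2(\A,\mu_0)$, but this is only a cosmetic difference.
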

\begin{proof}
Suppose for a contradiction that the lemma is not true. Then, we can find a sequence \(t_n<0\) such that \(t_n \rightarrow 0\) and \(\lambda_2(\A,\mu_{t_n}) \geq \rho_{t_n}'/\rho_{t_n}\) for all \(n \in \N\). Recall that \(\lambda_1(\A,\mu_0)=\rho_0'/\rho_0\). As \(\mu_0\) is a Gibbs measure, by \cite[Theorem 1]{BV04} we have \(\lambda_1(\A,\mu_0)>\lambda_2(\A,\mu_0).\) Moreover, by \cite[Proposition A.1]{FH10} we have that \(\mu \mapsto 
\lambda_1(\A,\mu)+\lambda_2(\A,\mu)\) is upper semi-continuous in the weak* topology. Hence, using Lemma \ref{lem:mutweak*}, it follows that 
\[2\lambda(\A,\mu_0)= 2 \lim_{n \rightarrow \infty} \rho_{t_n}'/\rho_{t_n} \leq \varlimsup_{n \rightarrow \infty} \lambda_1(\A,\mu_{t_n})+\lambda_2(\A,\mu_{t_n}) \leq \lambda_1(\A,\mu_0)+\lambda_2(\A,\mu_0)<2\lambda(\A,\mu_0),\]
which is a contradiction.
\end{proof}

\section{A large deviation principle for \(\nu_t\)}\label{sec:LDPfornut}
In this section we prove the following large deviation principle and deduce from this some properties of the measures \(\nu_t\).

\begin{prop}\label{prop:LDPfornut}
There exists \(\delta>0\) such that for all \(t \in [-\delta,\delta]\) and for all \(\varepsilon>0\),
\begin{equation*}
    \varlimsup_{n \rightarrow \infty}\frac{1}{n} \log \nu_t \left\{(x,\overline{u}) \in \Sigma_T\times \BP: -\frac{1}{n}\log \|\A_*^{-n}(x) u\| > \frac{\rt'}{\rt}+\varepsilon \right\} \leq \inf_{0 \leq s \leq \delta} \left\{\log \frac{\rho_{t+s}}{\rt}-s \frac{\rt'}{\rt}-s\varepsilon \right\}<0
\end{equation*}
and 
\begin{equation*}
    \varlimsup_{n \rightarrow \infty}\frac{1}{n} \log \nu_t \left\{(x,\overline{u}) \in \Sigma_T\times \BP: -\frac{1}{n}\log \|\A_*^{-n}(x) u\|<\frac{\rt'}{\rt}-\varepsilon \right\} \leq \inf_{0 \leq s \leq \delta} \left\{\log \frac{\rho_{t-s}}{\rt}+s \frac{\rt'}{\rt}-s\varepsilon \right\}<0.
\end{equation*}
\end{prop}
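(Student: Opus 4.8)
The plan is to run a Chernoff-bound argument (a local G\"{a}rtner--Ellis estimate), in the spirit of Lemmas~\ref{lem:localgtheorem2} and~\ref{lem:ldpforA[n]}, for which the essential input is an exact formula, in terms of the operators $\MCL_{t\pm s}$, for the moment generating functions under $\nu_t$ of
\[
X_n(x,\overline u):=-\frac1n\log\left\|\A_*^{-n}(x)\uou\right\|
\]
(the quantity appearing in the events of the statement, with $u$ the unit representative of $\overline u$). I claim that for all $s$ in a neighbourhood of $0$,
\[
\int e^{snX_n}\,d\nu_t=\int\left\|\A_*^{-n}(x)\uou\right\|^{-s}d\nu_t=\rt^{-n}\int\MCL_{t+s}^n h_t\,d\widetilde{\nu}_t,
\]
and likewise $\int e^{-snX_n}\,d\nu_t=\rt^{-n}\int\MCL_{t-s}^n h_t\,d\widetilde{\nu}_t$.

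To establish the first identity, recall that $\nu_t$ has density $h_t$ with respect to $\widetilde{\nu}_t$ and that $\MCL_t^*\widetilde{\nu}_t=\rt\widetilde{\nu}_t$, so that for any continuous $G$ on $\Sigma_T\times\BP$,
\[
\int G\,d\nu_t=\int Gh_t\,d\widetilde{\nu}_t=\rt^{-n}\int\MCL_t^n(Gh_t)\,d\widetilde{\nu}_t.
\]
Now $X_n$ is exactly the function $f$ of the proof of Lemma~\ref{lem:rhotderivative}, for which that proof establishes — using $\A_*^{-n}(y)=[\A^{[n]}(y)]^{-1}$ — that $X_n(y,\overline{\A^{[n]}(y)u})=\frac1n\log\|\A^{[n]}(y)\uou\|$. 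Hence, taking $G=e^{snX_n}$ (which for each fixed $n$ is bounded and continuous, since $\A$ is $\GL_d(\R)$-valued) and expanding $\MCL_t^n(Gh_t)$ via~\eqref{eqn:Ltiterated}, the evaluation $G(y,\overline{\A^{[n]}(y)u})$ equals $\|\A^{[n]}(y)\uou\|^{s}$, which combines with the weight $\|\A^{[n]}(y)\uou\|^{t}$ already present in $\MCL_t^n$ to collapse the sum to $\MCL_{t+s}^n h_t(x,\overline u)$; taking $G=e^{-snX_n}$ gives the companion identity.

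Next I would fix $\delta>0$ small enough that $[-2\delta,2\delta]$ lies in the neighbourhood $U$ of Proposition~\ref{prop:LtintermsofPandS}, shrunk as in Lemma~\ref{lem:iteratingLtdecay} so that $\rt>0$ and $\int P_{t'}1\,dm>0$ there; then for $t\in[-\delta,\delta]$ and $0\le s\le\delta$ one has $t\pm s\in U$ and all of the above applies. Applying Lemma~\ref{lem:iteratingLtdecay} to the operator $\MCL_{t+s}$ and the function $h_t$ gives $\|\rho_{t+s}^{-n}\MCL_{t+s}^n h_t-\widetilde{\nu}_{t+s}(h_t)h_{t+s}\|_\alpha\le C\|h_t\|_\alpha\beta^n$; integrating against the probability measure $\widetilde{\nu}_t$ and using Lemma~\ref{lem:htbound} (so that $\widetilde{\nu}_{t+s}(h_t)$, $\widetilde{\nu}_t(h_{t+s})$ and $\|h_t\|_\alpha$ are bounded away from $0$ and $\infty$ for $t,s$ near $0$) yields
\[
\lim_{n\to\infty}\frac1n\log\int e^{snX_n}\,d\nu_t=\log\frac{\rho_{t+s}}{\rt},\qquad\lim_{n\to\infty}\frac1n\log\int e^{-snX_n}\,d\nu_t=\log\frac{\rho_{t-s}}{\rt}.
\]
Since $\frac{\rt'}{\rt}=\int X_n\,d\nu_t$ for every $n$ by Lemma~\ref{lem:rhotderivative}, Markov's inequality gives, for $0<s\le\delta$,
\[
\nu_t\!\left\{X_n>\tfrac{\rt'}{\rt}+\varepsilon\right\}\le e^{-sn(\rt'/\rt+\varepsilon)}\!\int e^{snX_n}\,d\nu_t,\qquad\nu_t\!\left\{X_n<\tfrac{\rt'}{\rt}-\varepsilon\right\}\le e^{sn(\rt'/\rt-\varepsilon)}\!\int e^{-snX_n}\,d\nu_t,
\]
and taking $\frac1n\log$, letting $n\to\infty$, and then taking the infimum over $s\in(0,\delta]$ (which agrees with the infimum over $[0,\delta]$, the value at $s=0$ being $0$) produces the two claimed upper bounds.

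Finally I would check that these infima are strictly negative. The map $\varphi(s):=\log\frac{\rho_{t+s}}{\rt}-s\frac{\rt'}{\rt}-s\varepsilon$ is analytic near $s=0$ (as $t\mapsto\rt$ is analytic and positive, Proposition~\ref{prop:LtintermsofPandS}), satisfies $\varphi(0)=0$, and has $\varphi'(0)=\frac{d}{ds}\log\rho_{t+s}\big|_{s=0}-\frac{\rt'}{\rt}-\varepsilon=\frac{\rt'}{\rt}-\frac{\rt'}{\rt}-\varepsilon=-\varepsilon<0$, so $\varphi<0$ on a right-neighbourhood of $0$ and hence $\inf_{0\le s\le\delta}\varphi<0$; the same computation, now with $\frac{d}{ds}\log\rho_{t-s}\big|_{s=0}=-\frac{\rt'}{\rt}$, handles the second infimum. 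The step I expect to need the most care — though it is bookkeeping rather than a genuine obstruction — is the transfer-operator identity for the moment generating functions (second paragraph): correctly pushing the projective coordinate $\overline u$ forward to $\overline{\A^{[n]}(y)u}$ through the adjoint/inverse cocycle relations so that the Chernoff exponential $e^{\pm snX_n}$ is absorbed into $\MCL_t^n$ as the parameter shift $t\mapsto t\pm s$; once that is in place, what remains is the standard G\"{a}rtner--Ellis estimate already used for Lemma~\ref{lem:ldpforA[n]}.
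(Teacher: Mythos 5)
Your proposal is correct and follows essentially the same route as the paper: the operator identity absorbing $\|\A_*^{-n}(x)\frac{u}{\|u\|}\|^{-s}$ into $\MCL_t^n$ as the shift $t\mapsto t+s$ is exactly the paper's Lemma \ref{lem:Lt+s}/\ref{lem:intfornutLDP}, the spectral-gap computation of $\lim_n\frac1n\log\int e^{\pm snX_n}\,d\nu_t=\log(\rho_{t\pm s}/\rt)$ matches the paper's, and your direct Markov bound plus the derivative check $\varphi'(0)=-\varepsilon$ is precisely the content of the paper's Lemma \ref{lem:localgtheorem2} applied to the centred variables.
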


This has the corollary:
\begin{cor}\label{cor:LDPfornut}
There exists \(\delta>0\) such that for all \(\varepsilon>0\) there is \(\Lambda=\Lambda(\varepsilon)<0\) such that for all \(t \in [-\delta,\delta]\),
\begin{equation*}
    \varlimsup_{n \rightarrow \infty}\frac{1}{n} \log \nu_t \left\{(x,\overline{u}) \in \Sigma_T\times \BP: -\frac{1}{n}\log \|\A_*^{-n}(x) u\| \not\in \left(\frac{\rt'}{\rt}-\varepsilon, \frac{\rt'}{\rt}+\varepsilon \right) \right\} \leq \Lambda.
\end{equation*}
\end{cor}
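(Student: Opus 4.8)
The plan is to deduce the corollary from Proposition \ref{prop:LDPfornut} by showing that the right-hand sides appearing there, namely the two infima
\[
\inf_{0 \leq s \leq \delta} \left\{\log \tfrac{\rho_{t+s}}{\rt}-s \tfrac{\rt'}{\rt}-s\varepsilon \right\}
\quad\text{and}\quad
\inf_{0 \leq s \leq \delta} \left\{\log \tfrac{\rho_{t-s}}{\rt}+s \tfrac{\rt'}{\rt}-s\varepsilon \right\},
\]
can be bounded above by a single negative constant $\Lambda(\varepsilon)$ uniformly over $t$ in a (possibly smaller) neighbourhood $[-\delta',\delta']$ of $0$. Once that uniform bound is in place, the corollary is immediate: the event in the corollary is the union of the two events in Proposition \ref{prop:LDPfornut}, so its $\nu_t$-measure is at most the sum of the two, and $\varlimsup_n \tfrac1n\log(a_n+b_n) = \max\{\varlimsup_n \tfrac1n\log a_n,\,\varlimsup_n \tfrac1n\log b_n\}$.

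First I would fix $\varepsilon>0$ and define, for the shrunk parameter domain $U$ of Proposition \ref{prop:LtintermsofPandS} on which $t\mapsto\rt$ is analytic and positive, the function $F(t,s):=\log\rho_{t+s}-\log\rt - s\,\rt'/\rt$. Since $t\mapsto\log\rt$ is analytic (hence $C^2$) on $U$, Taylor's theorem with the Lagrange remainder gives $F(t,s) = \tfrac12 s^2 \,(\log\rho)''(\tau)$ for some $\tau$ between $t$ and $t+s$; in particular $|F(t,s)| \leq \tfrac12 M s^2$ where $M:=\sup_{r\in\overline{U'}}|(\log\rho)''(r)|<\infty$ for any compact $\overline{U'}\subset U$. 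Choosing $s^* := \min\{\delta,\ \varepsilon/M\}$ and plugging $s=s^*$ into the first infimum gives
\[
\inf_{0\leq s\leq\delta}\{F(t,s)-s\varepsilon\} \leq F(t,s^*) - s^*\varepsilon \leq \tfrac12 M (s^*)^2 - s^*\varepsilon \leq -\tfrac12 s^*\varepsilon,
\]
using $M s^* \leq \varepsilon$. The same estimate applies to the second infimum, with $\rho_{t+s}$ replaced by $\rho_{t-s}$ and the sign of the linear term flipped — the second-order Taylor bound is symmetric, so the identical computation yields the same upper bound $-\tfrac12 s^*\varepsilon$. Hence both quantities are bounded above by $\Lambda(\varepsilon) := -\tfrac12 \min\{\delta,\varepsilon/M\}\,\varepsilon<0$, which depends only on $\varepsilon$ (through $M$ and $\delta$), not on $t$.

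The only mild technical point, and the step I expect to require the most care, is ensuring the constant $M$ can be chosen uniformly: this just means picking the compact neighbourhood $\overline{U'}$ large enough that $[t+s, t-s]$ stays inside $\overline{U'}$ for all $t\in[-\delta',\delta']$ and $s\in[0,\delta]$ — e.g. take $\delta'$ and $\delta$ small enough that $[-\delta'-\delta,\,\delta'+\delta]\subset U$ and set $\overline{U'}=[-\delta'-\delta,\,\delta'+\delta]$ — and that on this compact set $(\log\rho)''$ is continuous, hence bounded, which follows from analyticity of $t\mapsto\log\rt$ on $U$ (Proposition \ref{prop:LtintermsofPandS}). With $\delta$ replaced by $\min\{\delta,\delta'\}$ throughout, Proposition \ref{prop:LDPfornut} combined with the two estimates above yields the corollary.
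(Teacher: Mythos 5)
Your proposal is correct, and it reaches the same conclusion as the paper by a genuinely different (more quantitative) route. Both arguments reduce the corollary to Proposition \ref{prop:LDPfornut} plus the elementary facts that the event is a union of the two one-sided events and that \(\varlimsup_n \tfrac1n\log(a_n+b_n)=\max\{\varlimsup_n\tfrac1n\log a_n,\varlimsup_n\tfrac1n\log b_n\}\); the only substantive issue is making the two infima negative \emph{uniformly} in \(t\). The paper handles this softly: for fixed \(\varepsilon\) the map \((t,s)\mapsto \log(\rho_{t+s}/\rt)-s\rt'/\rt-s\varepsilon\) is continuous on the compact set \([-\delta,\delta]\times[0,\delta]\), hence \(t\mapsto\inf_{0\le s\le\delta}\{\cdots\}\) is continuous and negative on the compact interval \([-\delta,\delta]\), so its supremum is negative; no explicit constant is produced. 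You instead expand \(\log\rho_{t\pm s}\) to second order with a Lagrange remainder, bound \((\log\rho)''\) by \(M\) on a slightly enlarged compact subinterval of the domain of analyticity, and evaluate the infimum at the explicit point \(s^*=\min\{\delta,\varepsilon/M\}\) to get \(\Lambda(\varepsilon)=-\tfrac12 s^*\varepsilon\). Your computation is sound (including the sign-symmetric treatment of the \(\rho_{t-s}\) term), and it buys an explicit rate function in \(\varepsilon\), at the cost of slightly more bookkeeping (the shrinking of \(\delta\) so that \(t\pm s\) stays in the analyticity domain, and the degenerate case \(M=0\), which one can dispose of by replacing \(M\) with \(\max\{M,1\}\)). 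Either argument is acceptable here since the corollary only asserts the existence of some \(\Lambda(\varepsilon)<0\).
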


\begin{proof}
For \(\varepsilon>0\) note that \((t,s) \mapsto \log \rho_{t+s}/\rt-s\rt'/\rt-s\varepsilon \) is continuous on \([-\delta,\delta] \times [0,\delta]\), so 
\[t \mapsto \inf_{0 \leq s \leq \delta} \{\log \rho_{t+s}/\rt-s\rt'/\rt-s\varepsilon  \} \]
is continuous. Since \(\inf_{0 \leq s \leq \delta}\{\log \rho_{t+s}/\rt-s\rt'/\rt-s\varepsilon  \} <0\) for all \(t \in [-\delta,\delta]\) and \([-\delta,\delta] \) is compact, we have
\[\sup_{t \in [-\delta,\delta]} \inf_{0 \leq s \leq \delta}\{\log \rho_{t+s}/\rt-s\rt'/\rt-s\varepsilon  \}<0. \]
Similarly we can show 
\[\sup_{t \in [-\delta,\delta]} \inf_{0 \leq s \leq \delta}\{\log \rho_{t-s}/\rt+s\rt'/\rt-s\varepsilon  \}<0. \]
\end{proof}

To prove Proposition \ref{prop:LDPfornut} we will use Lemma \ref{lem:localgtheorem2} (with \(\MCI\) being a singleton set). We first require a few lemmas.

\begin{lemma}\label{lem:Lt+s}
    For all \(f \in C(\Sigma_T \times \BP)\), all \(n \in \N\), and all \(t,s \in \R\),
    \[\MCL_{t}^n\left (f  \left\|\A_*^{-n}(x)\uou \right\|^{-s} \right)=\MCL^n_{t+s}(f).\]
\end{lemma}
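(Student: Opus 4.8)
The plan is to verify the identity directly by unwinding the definition of the iterated transfer operator $\MCL_t^n$ given in \eqref{eqn:Ltiterated} and comparing the summands on the two sides. Recall that
\[
\MCL_t^n F(x,\overline{u})=\sum_{y \in \sigma^{-n} x} g^{(n)}(y) \left\|\A^{[n]}(y)\uou \right\|^t F\!\left(y,\overline{\A^{[n]}(y)u}\right).
\]
Applying this with $F(x,\overline u) = f(x,\overline u)\,\|\A_*^{-n}(x)\tfrac{u}{\|u\|}\|^{-s}$, the left-hand side becomes
\[
\sum_{y \in \sigma^{-n} x} g^{(n)}(y) \left\|\A^{[n]}(y)\uou \right\|^t \left\|\A_*^{-n}(y)\,\overline{\A^{[n]}(y)u}\;\middle/\;\|\cdot\|\right\|^{-s} f\!\left(y,\overline{\A^{[n]}(y)u}\right),
\]
so everything reduces to showing that the extra factor $\|\A_*^{-n}(y)(\A^{[n]}(y)u/\|\A^{[n]}(y)u\|)\|^{-s}$ combines with $\|\A^{[n]}(y)u/\|u\|\|^{t}$ to give exactly $\|\A^{[n]}(y)u/\|u\|\|^{t+s}$, which would match the $(t+s)$-th iterate applied to $f$ term by term.

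The key computation is the cancellation $\A_*^{-n}(y)\,\A^{[n]}(y) = \id$, which is exactly the identity $\A^{[n]}(x)=\A_*^{-n}(x)^{-1}$ recorded in Section~\ref{subsec:cocycles}. First I would fix $y\in\sigma^{-n}x$ and write $w:=\A^{[n]}(y)u$, so that $\A_*^{-n}(y)w = \A_*^{-n}(y)\A^{[n]}(y)u = u$. Then
\[
\left\|\A_*^{-n}(y)\frac{w}{\|w\|}\right\| = \frac{\|u\|}{\|\A^{[n]}(y)u\|} = \left\|\A^{[n]}(y)\frac{u}{\|u\|}\right\|^{-1},
\]
so its $(-s)$-th power is $\|\A^{[n]}(y)\tfrac{u}{\|u\|}\|^{s}$. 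Multiplying by the factor $\|\A^{[n]}(y)\tfrac{u}{\|u\|}\|^{t}$ already present yields $\|\A^{[n]}(y)\tfrac{u}{\|u\|}\|^{t+s}$, and the points $\overline{\A^{[n]}(y)u}$ and the weights $g^{(n)}(y)$ are untouched, so the resulting sum is precisely $\MCL_{t+s}^n f(x,\overline u)$.

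There is no serious obstacle here; the only point requiring a small amount of care is keeping the normalizations straight — that is, being careful that the argument fed into $f$ (and into the $\|\cdot\|^{-s}$ factor) is the projective class $\overline{\A^{[n]}(y)u}$, so that one is genuinely evaluating $\|\A_*^{-n}(y)(\cdot)\|$ on a representative and the scalar $\|w\|$ in the denominator is the norm of that representative rather than of $u$. Everything else is the one-line matrix-inverse cancellation above. I would present the proof as a direct chain of equalities from $\MCL_t^n(f\,\|\A_*^{-n}(x)\tfrac{u}{\|u\|}\|^{-s})(x,\overline u)$ to $\MCL_{t+s}^n f(x,\overline u)$, inserting the identity $\A_*^{-n}(y)\A^{[n]}(y)=\id$ at the crucial step.
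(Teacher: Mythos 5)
Your proposal is correct and is essentially identical to the paper's proof: both unwind the definition of \(\MCL_t^n\) from (\ref{eqn:Ltiterated}) and use the identity \(\A_*^{-n}(y)=\A^{[n]}(y)^{-1}\) to turn the extra factor \(\bigl\|\A_*^{-n}(y)\,\A^{[n]}(y)u/\|\A^{[n]}(y)u\|\bigr\|^{-s}\) into \(\bigl\|\A^{[n]}(y)u/\|u\|\bigr\|^{s}\), combining the exponents to \(t+s\). The paper simply compresses your explicit cancellation into a single line of the displayed chain of equalities.
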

\begin{proof}
By (\ref{eqn:Ltiterated}) and the identity \(\A_*^{-n}(x)=\A^{[n]}(x)^{-1}\),
\begin{align*}
    \MCL_{t}^n\left (f  \left\|\A_*^{-n}(x)\uou \right\|^{-s} \right)&=\sum_{y \in \sigma^{-n} x} g^{(n)}(y) \left\|\A^{[n]}(y)\uou \right\|^t f(y,\overline{\A^{[n]}(y)u}) \left\|\A_*^{-n}(y) \frac{\A^{[n]}(y)u}{\|\A^{[n]}(y)u\|}\right\|^{-s} \\
    &=\sum_{y \in \sigma^{-n} x} g^{(n)}(y) \left\|\A^{[n]}(y)\uou \right\|^{t+s} f(y,\overline{\A^{[n]}(y)u}) \\
    &= \MCL_{t+s}^n(f).
\end{align*}
\end{proof}

\begin{lemma}\label{lem:intfornutLDP}
For all \(t\) sufficiently close to 0 and all \(s \in \R\),
    \[\int \left\|\A_*^{-n}(x) \uou \right\|^{-s} h_t \diff \widetilde{\nu}_t=\frac{1}{\rt^n} \int \MCL_{t+s}^n h_t \diff \widetilde{\nu}_t.\]
\end{lemma}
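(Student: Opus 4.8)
The plan is to derive the identity directly from the eigenmeasure property of $\widetilde{\nu}_t$ together with the operator identity of Lemma \ref{lem:Lt+s}, so the argument is short.

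First I would recall that, for $t$ in the neighbourhood of $0$ on which $\widetilde{\nu}_t$ is defined, the computation following \eqref{eqn:nutdef} shows $\int \MCL_t g \diff \widetilde{\nu}_t = \rt \int g \diff \widetilde{\nu}_t$ for every $g \in C^\alpha(\Sigma_T \times \BP)$, and hence by iterating, $\int \MCL_t^n g \diff \widetilde{\nu}_t = \rt^n \int g \diff \widetilde{\nu}_t$ for all $n \in \N$. I would then apply this with the specific choice
\[g(x,\overline{u}) := h_t(x,\overline{u}) \left\|\A_*^{-n}(x)\uou \right\|^{-s}.\]
Because $\A$ is a one-step cocycle, $\A_*^{-n}(x)$ depends only on the coordinates $x_0,\dots,x_{n-1}$ and ranges over a finite set of invertible matrices; hence $x \mapsto \left\|\A_*^{-n}(x)\uou \right\|$ is locally constant on cylinders in $\MCC_n$ and is bounded above and below by positive constants on the compact space $\Sigma_T \times \BP$. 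Combined with $h_t \in C^\alpha(\Sigma_T \times \BP)$, this gives $g \in C^\alpha(\Sigma_T \times \BP)$, so the eigenmeasure identity indeed applies to $g$.

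Finally, Lemma \ref{lem:Lt+s} with $f = h_t$ yields $\MCL_t^n g = \MCL_t^n\!\left(h_t \left\|\A_*^{-n}(x)\uou \right\|^{-s}\right) = \MCL_{t+s}^n h_t$, so that
\[\rt^n \int \left\|\A_*^{-n}(x) \uou \right\|^{-s} h_t \diff \widetilde{\nu}_t = \int \MCL_t^n g \diff \widetilde{\nu}_t = \int \MCL_{t+s}^n h_t \diff \widetilde{\nu}_t,\]
and dividing through by $\rt^n$ gives the claimed formula. I do not anticipate any real obstacle; the only point meriting a line of justification is that the eigenmeasure relation $\MCL_t^*\widetilde{\nu}_t = \rt\widetilde{\nu}_t$ was recorded for $C^\alpha$ test functions, so one must observe (as above) that the integrand $g$ is itself $C^\alpha$ — or, alternatively, invoke the same approximation/Carath\'eodory argument used to establish $\MCL_t^*\widetilde{\nu}_t = \rt\widetilde{\nu}_t$ in the first place — after which the computation is purely formal.
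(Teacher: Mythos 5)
Your proposal is correct and follows essentially the same route as the paper, which proves the lemma in one line by combining Lemma \ref{lem:Lt+s} (applied with \(f=h_t\)) with the iterated eigenmeasure relation \((\MCL_t^n)^*\widetilde{\nu}_t=\rt^n\widetilde{\nu}_t\). Your additional remark on the regularity of the integrand is a reasonable extra check but not a new idea; as the paper notes, \(\MCL_t^*\widetilde{\nu}_t=\rt\widetilde{\nu}_t\) holds as an identity of measures (via the approximation and Carath\'eodory argument), so it applies to this integrand in any case.
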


\begin{proof}
    This follows immediately from Lemma \ref{lem:Lt+s} and the fact that \(\widetilde{\nu}_t\) is an eigenmeasure for \(\MCL_t\).
\end{proof}

Let \(\delta>0\) be small enough so that Proposition \ref{prop:analyticchar} holds on \((-3\delta,3\delta)\). Fix a \(t \in [-\delta,\delta]\). For each \(n \in \N\), let \(f_n:[-\delta, \delta] \rightarrow \R\) be defined by 
\begin{equation}\label{eqn:Fndef}
    f_n(s)= \int  \left\|\A_*^{-n}(x) \uou \right\|^{-s} h_t \diff \widetilde{\nu}_t.
\end{equation}

\begin{lemma}
For all \(s \in [-\delta,\delta]\),
\[\lim_{n \rightarrow \infty} \frac{1}{n} \log f_n(s)=\log \rho_{t+s}- \log \rt.\]
\end{lemma}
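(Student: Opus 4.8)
The plan is to compute the exponential growth rate of $f_n(s)$ by combining Lemma~\ref{lem:intfornutLDP} with the spectral decomposition of $\MCL_{t+s}$ from Proposition~\ref{prop:LtintermsofPandS}. First I would invoke Lemma~\ref{lem:intfornutLDP} to write
\[
f_n(s)=\frac{1}{\rt^n}\int \MCL_{t+s}^n h_t \diff\widetilde{\nu}_t,
\]
which is valid once $t$ (and hence $t+s$, after shrinking $\delta$) lies in the open set $U$ of Proposition~\ref{prop:LtintermsofPandS}. The key point is that $t+s$ ranges over $[-2\delta,2\delta]$, so after possibly shrinking $\delta$ we may assume $[-2\delta,2\delta]\subset U$ and the spectral decomposition $\MCL_{t+s}=\rho_{t+s}(P_{t+s}+S_{t+s})$ applies uniformly.

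Next I would apply Lemma~\ref{lem:iteratingLtdecay} (equivalently, $\rho_{t+s}^{-n}\MCL_{t+s}^n h_t \to \widetilde{\nu}_{t+s}(h_t)\, h_{t+s}$ in $\|\cdot\|_\alpha$, with geometric rate $\beta^n$) to get
\[
\rho_{t+s}^{-n}\int \MCL_{t+s}^n h_t \diff\widetilde{\nu}_t
= \widetilde{\nu}_{t+s}(h_t)\,\widetilde{\nu}_t(h_{t+s}) + O(\|h_t\|_\alpha\,\beta^n).
\]
Thus
\[
f_n(s)=\Big(\frac{\rho_{t+s}}{\rt}\Big)^n\Big(\widetilde{\nu}_{t+s}(h_t)\,\widetilde{\nu}_t(h_{t+s}) + O(\beta^n)\Big).
\]
To conclude $\frac1n\log f_n(s)\to \log\rho_{t+s}-\log\rt$, I need the leading coefficient $\widetilde{\nu}_{t+s}(h_t)\,\widetilde{\nu}_t(h_{t+s})$ to be bounded away from $0$ and $\infty$ uniformly in the relevant parameter range. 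This is where Lemma~\ref{lem:htbound} (the eigenfunctions $h_\cdot$ are uniformly bounded above and below on a neighbourhood of $0$) together with the fact that $\widetilde{\nu}_{t+s}$ and $\widetilde{\nu}_t$ are probability measures gives $0 < c \le \widetilde{\nu}_{t+s}(h_t)\,\widetilde{\nu}_t(h_{t+s}) \le C < \infty$, so its contribution to $\frac1n\log f_n(s)$ vanishes, as does the $O(\beta^n)$ error term.

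The main obstacle, and the only genuinely delicate point, is handling the error term and the leading coefficient uniformly enough that taking logs and dividing by $n$ is legitimate: one must ensure the constant $\widetilde{\nu}_{t+s}(h_t)\,\widetilde{\nu}_t(h_{t+s})$ does not degenerate (which is exactly the role of Lemma~\ref{lem:htbound}) and that $f_n(s)>0$ for all $n$ (immediate, since the integrand $\|\A_*^{-n}(x)u/\|u\|\|^{-s}h_t$ is strictly positive by Lemma~\ref{lem:htbound}). A minor technical check is that $t\mapsto h_t$ being analytic and $h_0\equiv 1$ forces $\widetilde{\nu}_t(h_{t+s})$ and $\widetilde{\nu}_{t+s}(h_t)$ to depend continuously on parameters, but since we only need the crude bounds above this is not essential. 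Everything else is bookkeeping with the spectral gap estimate.
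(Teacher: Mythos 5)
Your proposal is correct and follows essentially the same route as the paper: both apply Lemma \ref{lem:intfornutLDP}, expand \(\MCL_{t+s}^n=\rho_{t+s}^n(P_{t+s}+S_{t+s}^n)\) via Proposition \ref{prop:LtintermsofPandS}, identify the leading coefficient \(\widetilde{\nu}_{t+s}(h_t)\,\widetilde{\nu}_t(h_{t+s})\), and use Lemma \ref{lem:htbound} together with the geometric decay of \(S_{t+s}^n\) to conclude. The only difference is cosmetic: you route the spectral estimate through Lemma \ref{lem:iteratingLtdecay} rather than writing out \(P_{t+s}h_t\) directly, which amounts to the same computation.
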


\begin{proof}
By Lemma \ref{lem:intfornutLDP} and Proposition \ref{prop:analyticchar},
\begin{align*}
    \frac{1}{n} \log f_n(s) 
    &=  \frac{1}{n} \log \int \MCL_{t+s}^n h_t \diff \widetilde{\nu}_t - \log \rt\\
    &= \frac{1}{n} \log \int \rho_{t+s}^n (P_{t+s}+S_{t+s}^n) h_t \diff \widetilde{\nu}_t - \log \rt\\
    &= \frac{1}{n} \log \left( \rho_{t+s}^n \left( \int h_{t+s} \diff \widetilde{\nu}_t \int h_t  \diff \widetilde{\nu}_{t+s}+ \int S_{t+s}^n h_t \diff \widetilde{\nu}_t \right) \right) - \log \rt \\
    & \xrightarrow[n \rightarrow \infty]{} \log \rho_{t+s} - \log \rt.
\end{align*}
\end{proof}

\begin{proof}[Proof of Proposition \ref{prop:LDPfornut}]
Fix \(t \in [-\delta,\delta]\). We apply Lemma \ref{lem:localgtheorem2} with \(X_n:=-\log \left\|e^{n\rt'/\rt} \A_*^{-n}(x) \uou \right\|\). We have for all \(s \in [-\delta,\delta]\),
\begin{align*}
    \lim_{n \rightarrow \infty} \frac{1}{n} \log \int \left\|e^{n\rt'/\rt} \A_*^{-n}(x) \uou \right\|^{-s}\diff \nu_t &= -s \frac{\rt'}{\rt}  +\lim_{n \rightarrow \infty} \frac{1}{n}  \log \int \left\| \A_*^{-n}(x) \uou \right\|^{-s} \diff \nu_t \\
    &=-s \frac{\rt'}{\rt} +\log \frac{\rho_{t+s}}{\rt}.
\end{align*}
This has derivative equal to 0 at \(s=0\). Thus, by Lemma \ref{lem:localgtheorem2},
\[\varlimsup_{n \rightarrow \infty}\frac{1}{n} \log \nu_t \left\{(x,\overline{u}) \in \Sigma_T\times \BP: -\log \left\|e^{n\rt'/\rt} \A_*^{-n}(x) \uou \right\|>n\varepsilon \right\} \leq \inf_{0 \leq s \leq \delta} \left\{\log \frac{\rho_{t+s}}{\rt}-s \frac{\rt'}{\rt}-s\varepsilon \right\} <0,\]
so
\[\varlimsup_{n \rightarrow \infty}\frac{1}{n} \log \nu_t \left\{(x,\overline{u}) \in \Sigma_T\times \BP: -\frac{1}{n}\log \left\| \A_*^{-n}(x) \uou \right\|> \frac{\rt'}{\rt}+\varepsilon \right\}\leq \inf_{0 \leq s \leq \delta} \left\{\log \frac{\rho_{t+s}}{\rt}-s \frac{\rt'}{\rt}-s\varepsilon \right\}.\]
Applying a similar argument with \(X_n:=\log \left\|e^{n\rt'/\rt} \A_*^{-n}(x) \uou \right\|\) gives
\[\varlimsup_{n \rightarrow \infty}\frac{1}{n} \log \nu_t \left\{(x,\overline{u}) \in \Sigma_T\times \BP: -\frac{1}{n}\log \left\| \A_*^{-n}(x) \uou \right\|<\frac{ \rt'}{\rt}-\varepsilon \right\} \leq  \inf_{0 \leq s \leq \delta} \left\{\log \frac{\rho_{t-s}}{\rt}+s \frac{\rt'}{\rt}-s\varepsilon \right\} <0.\]
\end{proof}

We are now able to prove the following.
\begin{prop}\label{prop:derivativeislyapunov}
For all \(t\) in an open neighbourhood of 0, \(\lambda_1(\A,\mu_t)=\rt'/\rt.\)
\end{prop}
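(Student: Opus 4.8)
The plan is to combine the lower bound $\rho_t'/\rho_t \leq \lambda_1(\A,\mu_t)$ from Lemma \ref{lem:lyapunovlowerbound} with a matching upper bound obtained from the large deviation principle for $\nu_t$ in Proposition \ref{prop:LDPfornut}. The key observation is that $\lambda_1(\A,\mu_t)$ can be computed as an integral against $\mu_t$ (equivalently against $\nu_t$, since $\mu_t$ is the projection of $\nu_t$), namely $\lambda_1(\A,\mu_t) = \lim_{n\to\infty}\frac{1}{n}\int \log\|\A^n(x)\|\,\diff\mu_t$, and that $\log\|\A^n(x)\| = \log\sigma_1(\A^n(x)) = -\log\sigma_d(\A_*^{-n}(x))$. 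The quantity $-\frac{1}{n}\log\|\A_*^{-n}(x)u\|$ appearing in Proposition \ref{prop:LDPfornut} is, for $\nu_t$-typical $(x,\overline{u})$, a lower bound for $-\frac{1}{n}\log\sigma_d(\A_*^{-n}(x)) = \frac{1}{n}\log\|\A^n(x)\|$; the LDP says it concentrates near $\rho_t'/\rho_t$. So the strategy is to show the concentration forces the $\mu_t$-average of $\frac{1}{n}\log\|\A^n(x)\|$ down to $\rho_t'/\rho_t$ as well.

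Concretely, I would fix $t$ in the neighbourhood where Proposition \ref{prop:LDPfornut} (and Corollary \ref{cor:LDPfornut}) applies. First I would note that by Lemma \ref{lem:rhotderivative}, $\frac{\rho_t'}{\rho_t} = \int -\frac{1}{n}\log\|\A_*^{-n}(x)u\|\,\diff\nu_t$ for \emph{every} $n$. Meanwhile $\lambda_1(\A,\mu_t) = \lim_{n\to\infty}\int \frac{1}{n}\log\|\A^n(x)\|\,\diff\mu_t = \lim_{n\to\infty}\int -\frac{1}{n}\log\sigma_d(\A_*^{-n}(x))\,\diff\nu_t$, and the integrand here dominates $-\frac{1}{n}\log\|\A_*^{-n}(x)u\|$ pointwise (this is exactly the inequality used in Lemma \ref{lem:lyapunovlowerbound}). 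To get the reverse inequality $\lambda_1(\A,\mu_t) \leq \rho_t'/\rho_t$, I would use the LDP to control how much the integrand $-\frac{1}{n}\log\sigma_d(\A_*^{-n}(x))$ can exceed $-\frac{1}{n}\log\|\A_*^{-n}(x)u\|$ on a set of non-negligible $\nu_t$-measure. The point is that for $\nu_t$-a.e.\ $(x,\overline{u})$ we will eventually see (via Proposition \ref{prop:LDPfornut} and Borel--Cantelli) that $-\frac{1}{n}\log\|\A_*^{-n}(x)u\| \in (\rho_t'/\rho_t - \varepsilon, \rho_t'/\rho_t + \varepsilon)$ for all large $n$; combined with the fact that for $\nu_t$-typical points $\overline{u}$ should be aligned with the slowest direction so that $\frac{1}{n}\log\|\A_*^{-n}(x)u\|$ and $\frac{1}{n}\log\sigma_d(\A_*^{-n}(x))$ agree in the limit — but this last fact is precisely what later sections establish, so instead I would argue directly with integrals. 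Specifically, writing $a_n(x,\overline u) := -\frac{1}{n}\log\|\A_*^{-n}(x)u\|$ and $b_n(x) := -\frac{1}{n}\log\sigma_d(\A_*^{-n}(x)) \geq a_n$, we have $\int a_n\,\diff\nu_t = \rho_t'/\rho_t$ and want $\lim\int b_n\,\diff\nu_t \leq \rho_t'/\rho_t$. By Corollary \ref{cor:LDPfornut}, outside a set $E_n$ with $\nu_t(E_n)$ decaying exponentially we have $a_n \leq \rho_t'/\rho_t + \varepsilon$, hence on that good set $b_n \le a_n + (b_n - a_n)$; the difference $b_n - a_n$ is bounded by $-\frac1n\log d_{\BP}(\overline u, \overline{\xi_*}(x))$ up to $o(1)$, which is integrable and whose contribution I can bound using the exponential decay of the bad set and uniform bounds. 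This is essentially the ``average $\Rightarrow$ concentration'' argument: an integrand whose average is exactly the value it concentrates around (from below) cannot have a large average of a pointwise-larger quantity without violating the LDP.

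Actually, the cleanest route avoids the pointwise comparison entirely: I would exploit that $\lambda_1(\A,\mu_t)$ is itself a limit of averages $\int \frac1n\log\|\A^n(x)\|\,\diff\mu_t$ which by subadditivity equals $\inf_n$, while $\rho_t'/\rho_t = \int a_n\,\diff\nu_t$ for all $n$. Since $\frac1n\log\|\A^n(x)\| = b_n(x) \geq a_n(x,\overline u)$ we immediately get $\lambda_1(\A,\mu_t)\geq \rho_t'/\rho_t$ (re-deriving Lemma \ref{lem:lyapunovlowerbound}), and for the other direction I integrate the elementary bound $b_n(x) \leq a_n(x,\overline u) - \frac1n\log d_{\BP}(\overline u,\overline{\xi_*}(x)) + o(1)$ valid whenever $\overline{\xi_*}(x)$ is well-defined and $0$-dimensional (which holds $\nu_t$-a.e.\ once we know $\overline u = \overline{\xi_*}(x)$ a.e., or more carefully using that $\overline u$ is not in a $\nu_t$-typical bad position). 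Here I would invoke that for $\nu_t$-a.e.\ $(x,\overline u)$, $\log d_{\BP}(\overline u, \overline{\xi_*}(x))$ is finite, and use the LDP to show $\frac1n\int (-\log d_{\BP}(\overline u,\overline{\xi_*}(x)))\,\diff\nu_t \to 0$ (the LDP gives exponential concentration of $a_n$, and $b_n - a_n$ is controlled by this log-distance which cannot blow up linearly on a set of positive measure). Taking $n\to\infty$ then yields $\lambda_1(\A,\mu_t) \leq \rho_t'/\rho_t$.

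The main obstacle I anticipate is making rigorous the control of the ``gap'' $b_n - a_n = -\frac1n\log\sigma_d(\A_*^{-n}(x)) + \frac1n\log\|\A_*^{-n}(x)u\|$ uniformly enough to integrate it and send it to zero; this is where one genuinely needs that $\nu_t$-typical $\overline u$ aligns with the slowest Oseledets direction $\overline{\xi_*}(x)$, a fact the paper proves \emph{after} this proposition (or in tandem with the LDP of Proposition \ref{prop:LDPfornut}). I expect the actual proof either establishes the alignment $\overline u = \overline{\xi_*}(x)$ for $\nu_t$-a.e.\ $(x,\overline u)$ first — using the LDP plus the identity for $\rho_t'/\rho_t$ and a uniqueness-of-typical-direction argument analogous to \cite[Proposition 3.10]{PPi22} — and then reads off $\lambda_1(\A,\mu_t) = \rho_t'/\rho_t$ from Lemma \ref{lem:rhotderivative} directly, since on the set $\overline u = \overline{\xi_*}(x)$ the quantities $-\frac1n\log\|\A_*^{-n}(x)u\|$ and $-\frac1n\log\sigma_d(\A_*^{-n}(x)) = \frac1n\log\|\A^n(x)\|$ have the same limit. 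So concretely I would: (i) use Proposition \ref{prop:LDPfornut} to show $\frac1n\log\|\A_*^{-n}(x)u\| \to -\rho_t'/\rho_t$ for $\nu_t$-a.e.\ $(x,\overline u)$ (Borel--Cantelli on the exponentially small exceptional sets); (ii) deduce $\overline u = \overline{\xi_*}(x)$ $\nu_t$-a.e.\ by comparing with the Oseledets splitting (the limit being the top exponent $\lambda_1(\A,\mu_t)$ of the adjoint-inverse cocycle forces alignment with its slowest subspace, using that $\overline{\xi_*}(x)$ is $0$-dimensional $\mu_t$-a.e.); (iii) conclude $\lambda_1(\A,\mu_t) = \lim_n \int -\frac1n\log\sigma_d(\A_*^{-n}(x))\,\diff\nu_t = \lim_n\int -\frac1n\log\|\A_*^{-n}(x)u\|\,\diff\nu_t = \rho_t'/\rho_t$, the middle equality justified by dominated convergence on the alignment set together with Lemma \ref{lem:rhotderivative}. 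The hard part is step (ii), matching the concentration value to the Oseledets data and handling the passage from almost-sure convergence of $\frac1n\log\|\A_*^{-n}(x)u\|$ to convergence in $L^1(\nu_t)$.
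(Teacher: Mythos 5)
Your step (i) --- Borel--Cantelli applied to Proposition \ref{prop:LDPfornut} to get \(\frac{1}{n}\log\left\|\A_*^{-n}(x)\uou\right\|\to-\rho_t'/\rho_t\) for \(\nu_t\)-a.e.\ \((x,\overline{u})\) --- is exactly how the paper's proof begins, and the lower bound from Lemma \ref{lem:lyapunovlowerbound} is used verbatim. But your route to the matching upper bound has a genuine gap: step (ii) is circular as stated. To deduce \(\overline{u}=\overline{\xi_*}(x)\) from the a.e.\ limit you would need to know that the limiting value \(-\rho_t'/\rho_t\) equals the slowest exponent \(\lambda_d(\A_*^{-1},\mu_t)=-\lambda_1(\A,\mu_t)\), which is precisely the identity being proved; indeed the alignment statement is Proposition \ref{prop:nutsupportonxi*}, which the paper proves \emph{after}, and as a consequence of, this proposition. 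A priori Oseledets only tells you that for a.e.\ \(x\) the limit \(\lim_n\frac{1}{n}\log\left\|\A_*^{-n}(x)\uou\right\|\) is \emph{some} exponent \(\lambda_i(\A_*^{-1},\mu_t)\), and nothing in your argument rules out that it is \(\lambda_{d-1}(\A_*^{-1},\mu_t)=-\lambda_2(\A,\mu_t)\) with \(\overline{u}\) transverse to \(\overline{\xi_*}(x)\), in which case \(\lambda_1(\A,\mu_t)\) could genuinely exceed \(\rho_t'/\rho_t\).

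The missing ingredient is the Lyapunov exponent gap, Lemma \ref{lem:lambda1geqlambda2}: \(\rho_t'/\rho_t>\lambda_2(\A,\mu_t)\) for all \(t\) near \(0\) (itself obtained from \cite[Theorem 1]{BV04} for \(\mu_0\) together with upper semi-continuity and the weak* convergence \(\mu_t\to\mu_0\)). With it the paper closes the argument in two lines by contradiction: if \(\lambda_1(\A,\mu_t)>\rho_t'/\rho_t\), then \(-\rho_t'/\rho_t>\lambda_d(\A_*^{-1},\mu_t)\), so a \(\nu_t\)-typical \(\overline{u}\) cannot lie in \(\overline{\xi_*}(x)\); Oseledets then forces \(-\rho_t'/\rho_t\geq\lambda_{d-1}(\A_*^{-1},\mu_t)=-\lambda_2(\A,\mu_t)\), i.e.\ \(\lambda_2(\A,\mu_t)\geq\rho_t'/\rho_t\), contradicting the gap. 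Your alternative ``integral'' route has a separate defect: the proposed bound \(b_n\leq a_n-\frac{1}{n}\log d_{\BP}(\overline{u},\overline{\xi_*}(x))+o(1)\) is vacuous on the support of \(\nu_t\), since (as it turns out) \(\overline{u}=\overline{\xi_*}(x)\) a.e., so \(d_{\BP}(\overline{u},\overline{\xi_*}(x))=0\) and the right-hand side is \(+\infty\); and the \(L^1\)/dominated-convergence passage in step (iii) is not needed once the a.e.\ limit is identified with the bottom exponent of \(\A_*^{-1}\).
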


\begin{proof}
By Lemma \ref{lem:lyapunovlowerbound} it suffices to show that \(\lambda_1(\A,\mu_t) \leq \rt'/\rt\) for all \(t\) in an open neighbourhood of 0. Suppose for a contradiction that \(\lambda_1(\A,\mu_t)>\rt'/\rt\). For \(\nu_t\)-a.e. \((x,\overline{u}) \in \Sigma_T \times \BP\) we have
\begin{equation}\label{eqn:showingderivativeislyapunov1}
    \lim_{n \rightarrow \infty} \frac{1}{n} \log \left\|\A_*^{-n}(x) \uou \right\| = -\frac{\rt'}{\rt}
\end{equation}
and
\begin{equation}\label{eqn:showingderivativeislyapunov2}
    \lim_{n \rightarrow \infty} \frac{1}{n}  \log \sigma_i(\A_*^{-n}(x)) = \lambda_i(\A_*^{-1},\mu_t)=-\lambda_{d-i+1}(\A,\mu_t), \: \: \forall i \in \{1,\ldots, d\}.
\end{equation}
Let \((x,\overline{u}) \in \Sigma_T\times \BP\) be \(\nu_t\)-typical. Since \(-\rt'/\rt>-\lambda_1(\A,\mu_t)=\lambda_d(\A_*^{-1}, \mu_t)\), (\ref{eqn:showingderivativeislyapunov1}) and (\ref{eqn:showingderivativeislyapunov2}) imply that \(\overline{u} \not=  \overline{\xi_*} (x)\), where recall \(\overline{\xi_*} (x)\) is the slowest Oseledets' subspace of \(\A_*^{-1}\) at \(x\). Thus, by Lemma \ref{lem:ergodic} and Oseledets' theorem \cite{Ose68}, 
\[\lim_{n \rightarrow \infty} \frac{1}{n} \log \left\|\A_*^{-n}(x)\uou \right\| \geq \lambda_{d-1}(\A_*^{-1},\mu_t)=-\lambda_2(\A,\mu_t), \]
so \(\lambda_2(\A,\mu_t) \geq \rt'/\rt\). However, this contradicts Lemma \ref{lem:lambda1geqlambda2} which says that \(\lambda_2(\A,\mu_t)<\rt'/\rt \) for all \(t\) in a neighbourhood of 0.
\end{proof}

In \cite[Proposition 3.10]{PPi22} they prove that \((x,\overline{u})=(x,\overline{\xi_*}(x))\) for \(\nu_0\)-a.e. \((x,\overline{u}) \in \Sigma_T \times \BP\). We can extend this to show that this further holds for \(t\) in an open neighbourhood of 0. 

\begin{prop}\label{prop:nutsupportonxi*}
For all \(t\) in an open neighbourhood of 0, \((x,\overline{u})=(x,\overline{\xi_*}(x))\) for \(\nu_t\)-a.e. \((x,\overline{u}) \in \Sigma_T \times \BP\).
\end{prop}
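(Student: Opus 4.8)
The plan is to upgrade the large deviation bound of Proposition~\ref{prop:LDPfornut} to an almost-sure statement and feed it into Oseledets' theorem, essentially recycling the mechanism assembled in the proof of Proposition~\ref{prop:derivativeislyapunov} (and, in passing, reproving the $\nu_0$-case of \cite[Proposition~3.10]{PPi22}). First I would shrink the neighbourhood of $0$ so that Propositions~\ref{prop:LDPfornut} and \ref{prop:derivativeislyapunov} and Lemma~\ref{lem:lambda1geqlambda2} all hold, and fix $t$ in it. Applying Proposition~\ref{prop:LDPfornut} with $\varepsilon = 1/k$ for each $k \in \N$, the $\nu_t$-measure of the set of $(x,\overline u)$ for which $-\tfrac{1}{n}\log\|\A_*^{-n}(x)u\|$ lies outside $(\tfrac{\rt'}{\rt}-\tfrac{1}{k},\tfrac{\rt'}{\rt}+\tfrac{1}{k})$ decays exponentially in $n$, hence is summable; so by the Borel--Cantelli lemma, for $\nu_t$-a.e.\ $(x,\overline u)$ one has $\lim_{n\to\infty}\tfrac{1}{n}\log\|\A_*^{-n}(x)u\| = -\tfrac{\rt'}{\rt}$. (Dividing by $n$ kills the $\log\|u\|$ term, so this is the same as (\ref{eqn:showingderivativeislyapunov1}).) By Proposition~\ref{prop:derivativeislyapunov} this common value equals $-\lambda_1(\A,\mu_t) = \lambda_d(\A_*^{-1},\mu_t)$.

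Next I would use that $\mu_t$, the projection of $\nu_t$ onto $\Sigma_T$, is ergodic (Lemma~\ref{lem:ergodic}): by Oseledets' theorem applied to $\A_*^{-1}$, for $\mu_t$-a.e.\ $x$ the subspace $\overline{\xi_*}(x)$ of Definition~\ref{def:xiwelldefined} is well defined, and since $\lambda_1(\A,\mu_t) > \lambda_2(\A,\mu_t)$ on our neighbourhood (Lemma~\ref{lem:lambda1geqlambda2} together with Proposition~\ref{prop:derivativeislyapunov}), the slowest Lyapunov exponent $\lambda_d(\A_*^{-1},\mu_t) = -\lambda_1(\A,\mu_t) < -\lambda_2(\A,\mu_t) = \lambda_{d-1}(\A_*^{-1},\mu_t)$ of $\A_*^{-1}$ is simple; hence the subspace $U(x)$ appearing in Definition~\ref{def:xiwelldefined} is one-dimensional, i.e.\ $\overline{\xi_*}(x)$ is a single point of $\BP$. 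As all of this depends on $x$ alone, it holds for $\nu_t$-a.e.\ $(x,\overline u) \in \Sigma_T \times \BP$.

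For $\nu_t$-a.e.\ $(x,\overline u)$ both conclusions above hold at once. If we had $\overline u \neq \overline{\xi_*}(x)$, then $u \notin U(x)$, so part~(2)(ii) of Definition~\ref{def:xiwelldefined} would give $\liminf_{n\to\infty}\tfrac{1}{n}\log\|\A_*^{-n}(x)\tfrac{u}{\|u\|}\| > \lambda_d(\A_*^{-1},\mu_t)$, which contradicts the limit established in the first paragraph. Hence $\overline u = \overline{\xi_*}(x)$ for $\nu_t$-a.e.\ $(x,\overline u)$, as claimed. I expect the only step requiring genuine care is the Borel--Cantelli passage: one must check that the LDP bound of Proposition~\ref{prop:LDPfornut} is summable (it is, being a $\varlimsup$ that is strictly negative) and that ``$\nu_t$-a.e.\ on $\Sigma_T \times \BP$'' correctly inherits ``$\mu_t$-a.e.\ on $\Sigma_T$'' under the projection. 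Everything else is a direct application of Oseledets' theorem, so I do not anticipate a serious obstacle.
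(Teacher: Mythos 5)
Your proposal is correct and follows essentially the same route as the paper's proof: the LDP of Proposition \ref{prop:LDPfornut} plus Borel--Cantelli gives the $\nu_t$-a.s.\ convergence of $-\frac{1}{n}\log\|\A_*^{-n}(x)\frac{u}{\|u\|}\|$ to $\rt'/\rt=\lambda_1(\A,\mu_t)$, the gap $\lambda_d(\A_*^{-1},\mu_t)<\lambda_{d-1}(\A_*^{-1},\mu_t)$ comes from Lemma \ref{lem:lambda1geqlambda2} and Proposition \ref{prop:derivativeislyapunov}, and Oseledets' theorem together with ergodicity of $\mu_t$ forces $\overline{u}=\overline{\xi_*}(x)$. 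The only cosmetic difference is that you spell out the Borel--Cantelli step that the paper leaves implicit and invoke Oseledets directly where the paper also cites Kingman for the singular-value asymptotics.
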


\begin{proof}
By Propositions \ref{prop:LDPfornut} and \ref{prop:derivativeislyapunov} and Kingman's subadditive ergodic theorem, we have that for \(\nu_t\)-a.e. \((x,\overline{u}) \in \Sigma_T \times \BP\),
\[\frac{1}{n} \log \left\|\A_*^{-n}(x) \uou \right\| \rightarrow -\lambda_1(\A,\mu_t)=\lambda_d(\A_*^{-1},\mu_t)\]
and 
\[\frac{1}{n} \log \sigma_i(\A_*^{-n}(x)) \rightarrow \lambda_i(\A_*^{-1},\mu_t), \:  \forall i\in \{1,\ldots, d\}.\]
Moreover, \(\lambda_d(\A_*^{-1},\mu_t)<\lambda_{d-1}(\A_*^{-1},\mu_t)\) by Proposition \ref{prop:derivativeislyapunov} and Lemma \ref{lem:lambda1geqlambda2}. Using again Lemma \ref{lem:ergodic} and Oseledets' theorem, it follows that \(\overline{u}= \overline{\xi_*}(x)\).
\end{proof}

Related to the cocycle \(\A_*^{-1}\) is the skew product \(F_{\A_*^{-1}}: \Sigma_T \times \BP \rightarrow \Sigma_T \times \BP\) defined by 
\[F_{\A_*^{-1}}(x,\overline{u})=\left(\sigma x, \overline{\A_*^{-1}(x) u}\right).\]
We remark that it is an immediate consequence of Proposition \ref{prop:nutsupportonxi*} that \(\nu_t\) is \(F_{\A_*^{-1}}\)-invariant for those \(t\) such that the proposition holds. In particular, this is true by \(\sigma\)-invariance of \(\mu_t\) and the fact that \(\A_*^{-1}(x)\overline{\xi_*}(x)=\overline{\xi_*}(\sigma x)\). Another immediate consequence of the proposition is that for all \(f \in C(\Sigma_T\times \BP)\) 
\begin{equation}\label{eqn:integratingnut}
    \int f(x,\overline{u}) \diff \nu_t(x,\overline{u})= \int f(x,\overline{\xi_*}(x)) \diff \mu_t(x).
\end{equation}

\begin{remark}
The transfer operators defined in this paper and in \cite{PPi22} are based on work by Le Page \cite{Pag82} which is in the strongly irreducible, proximal, i.i.d. setting under a finite exponential moment condition (see also \cite[Theorem 4.3]{BL85}). Accordingly, their family of transfer operators are instead defined to be acting on the space of continuous functions \(C(\BP)\). They likewise show that their family of transfers operators have a spectral gap and analytic spectral radius in a neighbourhood of 0.

Note that Proposition \ref{prop:LDPfornut} relies on the transfer operators being defined to be acting on the space of continuous functions on the product of the shift space and projective space. By instead using the transfer operators defined in \cite{WS20} combined with the methods in this paper, we expect that when \(\Sigma\) is the full shift and \(\psi\) depends only on the first digit (so that \(\mu_{\psi}\) is Bernoulli), it should be possible to prove the equivalent of Theorems \ref{theo:gibbsmeasuresexist} and \ref{theo:Ptopanalytic} assuming only proximality and strong irreducibility. 
\end{remark}

\section{Analysis of the projective measures \(\eta_t\)}\label{sec:analysisofprojmeasures}
\subsection{A sufficient condition for \(\mu_t\) to be Gibbs-type}
Recall that the metric \(d_{\BP}\) on \(\BP\) is given for \(\overline{u}=\R u, \overline{v}=\R v \in \BP \) by 
\[d_{\BP}(\overline{u},\overline{v})=\frac{\|u \wedge v\|}{\|u\| \|v\|}.\]
We also define
\[\delta_{\BP}(\overline{u},\overline{v})=\frac{|\innerproduct{u}{v}|}{\|u\| \|v\|},\]
where \(\innerproduct{\cdot}{\cdot}\) is the dot product. Denoting by \(v^\perp \subset \BP\) the projection of the subspace orthogonal to \(v\), this quantity can equivalently be written as
 \[\delta_{\BP}(\overline{u},\overline{v})=d_{\BP}(\overline{u},v^\perp):= \min_{\overline{w} \in v^\perp} d_{\BP}(\overline{u},\overline{w}).\]
We define the dimension of a measure \(\nu \in \MCM(\BP)\) to be
\[\dim \eta:=\sup \left\{s \geq 0: \sup_{\overline{v} \in \BP} \int \delta_{\BP}(\overline{u},\overline{v})^{-s} \diff \eta(\overline{u})<\infty \right\}. \]
It is straightforward to show that this is equivalent to the definition given in the introduction. It is this second characterisation that is more convenient to use in our proofs. 

For every element \(A \in \GL_d(\R)\) we can choose a composition \(A=K D L\), where \(D \in M_d(\R)\) is a diagonal matrix with positive entries \(D_{ii}\) in descending order and \(K,L \in O_d(\R)\). We denote by \(\overline{\upsilon_+}(A)\) the element in \(\BP\) corresponding to the largest singular value in the image of \(A\); that is,
\[\overline{\upsilon_+}(A):=\R K e_1.\]
Observe that for any \(u \in \overline{\upsilon_+}(A)\),
\begin{equation*}\label{eqn:weirdequality}
    \left\|A^* \uou\right\|=\|L^* D K^* K e_1\|=\|A^*\|.
\end{equation*}
We also denote by \(\gamma_{1,2}(A)\) the \textit{gap} of \(A\); that is, 
\[\gamma_{1,2}(A):=\frac{\sigma_2(A)}{\sigma_1(A)}.\]
Note that \(\gamma_{1,2}(A^*)=\gamma_{1,2}(A)\). We have the following lemma from \cite{BQ16} (Lemma 14.2).

\begin{lemma}\label{lem:BQlemma}
    For every \(A \in \GL_d(\R)\), \(\overline{u}=\R u, \overline{v}=\R v \in \BP\) one has 
    \begin{enumerate}[label=(\roman*)]
    \item \(\delta_{\BP}(\overline{\upsilon_+}(A^*),\overline{v}) \leq \frac{\|Av\|}{\|A\|\|v\|} \leq \delta_{\BP}(\overline{\upsilon_+}(A^*),\overline{v}) +\gamma_{1,2}(A)\)
    \item \(\delta_{\BP}(\overline{u},\overline{\upsilon_+}(A)) \leq \frac{\|A^* u\|}{\|A\|\|u\|} \leq \delta_{\BP}(\overline{u},\overline{\upsilon_+}(A)) +\gamma_{1,2}(A)\)
    \item \(d_{\BP}( \overline{A^* u}, \overline{\upsilon_+}(A^*)) \delta_{\BP}(\overline{u},\overline{\upsilon_+}(A)) \leq \gamma_{1,2}(A). \)
    \end{enumerate}
\end{lemma}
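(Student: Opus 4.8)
The plan is to reduce all three inequalities to elementary computations with the singular value decomposition. First I would fix a Cartan decomposition $A=KDL$ with $K,L\in O_d(\R)$ and $D=\mathrm{diag}(\sigma_1,\dots,\sigma_d)$, $\sigma_1\geq\cdots\geq\sigma_d>0$, and record the basic identities $\|A\|=\sigma_1$, $\gamma_{1,2}(A)=\sigma_2/\sigma_1$, $\overline{\upsilon_+}(A)=\R Ke_1$, and --- since $A^*=L^*DK^*$ is a Cartan decomposition of $A^*$ with the roles of $K$ and $L$ interchanged --- $\|A^*\|=\|A\|$, $\gamma_{1,2}(A^*)=\gamma_{1,2}(A)$, $\overline{\upsilon_+}(A^*)=\R L^*e_1$. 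I would also recall that for unit vectors $\delta_{\BP}(\overline u,\overline v)=|\innerproduct{u}{v}|$ and $d_{\BP}(\overline u,\overline v)=\|u\wedge v\|$, that orthogonal maps of $\R^d$ preserve inner products, wedge-norms, and hence both $\delta_{\BP}$ and $d_{\BP}$, and the identity $\big\|(\sum_ia_ie_i)\wedge e_1\big\|^2=\sum_{i\geq2}a_i^2$, which holds because the bivectors $e_i\wedge e_1$ $(i\geq2)$ are orthonormal in $\Lambda^2\R^d$.

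For (i), I would normalise $\|v\|=1$, write $v=\sum_ic_iL^*e_i$ in the orthonormal basis of right singular vectors of $A$ (so $\sum_ic_i^2=1$), and compute $Av=\sum_ic_i\sigma_iKe_i$, so that $\|Av\|^2/\|A\|^2=\sum_ic_i^2(\sigma_i/\sigma_1)^2$ whereas $\delta_{\BP}(\overline{\upsilon_+}(A^*),\overline v)=|\innerproduct{L^*e_1}{v}|=|c_1|$. Keeping only the $i=1$ term, for which $\sigma_i/\sigma_1=1$, gives the lower bound; bounding $\sigma_i\leq\sigma_2$ for $i\geq2$ and $\sum_{i\geq2}c_i^2\leq1$ gives $\sum_ic_i^2(\sigma_i/\sigma_1)^2\leq c_1^2+(\sigma_2/\sigma_1)^2$, and then $\sqrt{c_1^2+(\sigma_2/\sigma_1)^2}\leq|c_1|+\sigma_2/\sigma_1$ gives the upper bound. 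Part (ii) then follows by applying (i) to $A^*$ in place of $A$, using $\overline{\upsilon_+}((A^*)^*)=\overline{\upsilon_+}(A)$, $\|A^*\|=\|A\|$, $\gamma_{1,2}(A^*)=\gamma_{1,2}(A)$, and the symmetry of $\delta_{\BP}$.

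For (iii), I would normalise $\|u\|=1$ and expand $u=\sum_ib_iKe_i$ in the orthonormal basis of left singular vectors of $A$ (equivalently, the right singular vectors of $A^*$), so that $\sum_ib_i^2=1$, $A^*u=\sum_ib_i\sigma_iL^*e_i$, and $\delta_{\BP}(\overline u,\overline{\upsilon_+}(A))=|\innerproduct{u}{Ke_1}|=|b_1|$. Since $L^*$ is orthogonal, the $\Lambda^2$ identity above gives
\[
d_{\BP}\big(\overline{A^*u},\overline{\upsilon_+}(A^*)\big)=\frac{\big(\sum_{i\geq2}b_i^2\sigma_i^2\big)^{1/2}}{\big(\sum_jb_j^2\sigma_j^2\big)^{1/2}}.
\]
If $b_1=0$ the left-hand side of (iii) is $0$; otherwise I would bound the numerator above by $\sigma_2(1-b_1^2)^{1/2}$ (using $\sigma_i\leq\sigma_2$ for $i\geq2$ and $\sum_{i\geq2}b_i^2=1-b_1^2$) and the denominator below by $\sigma_1|b_1|$, so that the product in (iii) is at most $(\sigma_2/\sigma_1)(1-b_1^2)^{1/2}\leq\gamma_{1,2}(A)$.

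The only thing requiring care is the bookkeeping of conventions: which of $K,L$ carries the left and which the right singular directions, the resulting formula $\overline{\upsilon_+}(A^*)=\R L^*e_1$, and the fact that $\overline{\upsilon_+}(\cdot)$ need only be fixed relative to a chosen decomposition --- harmless here, since in our applications $\sigma_1>\sigma_2$ makes it canonical. Stating these at the outset reduces the three assertions to the one-line inequalities above, so I do not anticipate a genuine obstacle.
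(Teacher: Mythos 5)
Your proof is correct: all three parts follow from the singular value decomposition computations exactly as you lay them out, and the edge case $b_1=0$ in (iii) is handled. Note that the paper itself gives no proof of this lemma — it is quoted verbatim from Benoist--Quint \cite{BQ16} (Lemma 14.2) — and their argument is likewise a direct computation in the Cartan decomposition $A=KDL$, so your write-up is essentially the standard proof of the cited result rather than a new route.
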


The following lemma will be used to show that the upper bound of the Gibbs-property holds for the measures \(\mu_t\).

\begin{lemma}\label{lem:intergralupperboundtoshowGibbs}
Let \(\eta \in \MCM(\BP)\) and suppose that \(0 \leq s < \dim \eta\). Then, there exists \(C>0\) such that for all \(A \in \GL_d(\R)\),
\[\int \left\|A^* \frac{u}{\|u\|} \right\|^{-s} \diff \eta(\overline{u}) \leq C \|A\|^{-s}. \]
\end{lemma}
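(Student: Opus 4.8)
The plan is to reduce the integral $\int \|A^* u/\|u\|\|^{-s}\,\diff\eta(\overline u)$, normalized by $\|A\|^{-s}$, to an integral of the form $\int \delta_{\BP}(\overline u,\overline v)^{-s}\,\diff\eta(\overline u)$ with $\overline v=\overline{\upsilon_+}(A)$, and then invoke the definition of $\dim\eta$ together with the hypothesis $s<\dim\eta$. First I would write $A=KDL$ as in the discussion preceding the lemma, so that $\overline{\upsilon_+}(A)=\R K e_1$ and $\|A\|=\sigma_1(A)=D_{11}$. The key input is Lemma \ref{lem:BQlemma}(ii), which gives the two-sided estimate
\[
\delta_{\BP}(\overline u,\overline{\upsilon_+}(A)) \leq \frac{\|A^* u\|}{\|A\|\,\|u\|} \leq \delta_{\BP}(\overline u,\overline{\upsilon_+}(A)) + \gamma_{1,2}(A).
\]
Discarding the (nonnegative) gap term $\gamma_{1,2}(A)$, the left-hand inequality alone yields the pointwise bound
\[
\left\|A^* \frac{u}{\|u\|}\right\|^{-s} \leq \|A\|^{-s}\, \delta_{\BP}(\overline u,\overline{\upsilon_+}(A))^{-s}.
\]

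Integrating this inequality against $\eta$ gives
\[
\int \left\|A^* \frac{u}{\|u\|}\right\|^{-s} \diff\eta(\overline u) \leq \|A\|^{-s} \int \delta_{\BP}\bigl(\overline u,\overline{\upsilon_+}(A)\bigr)^{-s}\,\diff\eta(\overline u) \leq \|A\|^{-s}\,\sup_{\overline v\in\BP}\int \delta_{\BP}(\overline u,\overline v)^{-s}\,\diff\eta(\overline u).
\]
Since $s<\dim\eta$, by the very definition of $\dim\eta$ there is a value $s'$ with $s<s'<\dim\eta$ for which $\sup_{\overline v}\int\delta_{\BP}(\overline u,\overline v)^{-s'}\,\diff\eta(\overline u)<\infty$; because $\delta_{\BP}\le 1$ on $\BP$ we have $\delta_{\BP}^{-s}\le\delta_{\BP}^{-s'}$, so the supremum at exponent $s$ is also finite. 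Calling this finite quantity $C$ completes the proof, as $C$ depends only on $\eta$ and $s$, not on $A$.

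The argument is essentially routine once Lemma \ref{lem:BQlemma}(ii) is in hand; the only point requiring a little care is the passage from the hypothesis $s<\dim\eta$ to the finiteness of $\sup_{\overline v}\int\delta_{\BP}(\overline u,\overline v)^{-s}\,\diff\eta$. The definition of $\dim\eta$ is phrased as a supremum of exponents for which this supremum-integral is finite, so $s<\dim\eta$ does not literally assert finiteness at exponent $s$ itself; one extracts an intermediate exponent $s'$ and uses monotonicity of $r\mapsto r^{-s}$ in $s$ together with $\delta_{\BP}\le 1$. This is the main (mild) obstacle. I would also remark that the discarded gap term is exactly what one would need to keep for a matching lower bound, but for the stated upper bound it only helps.
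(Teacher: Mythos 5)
Your proof is correct and follows essentially the same route as the paper: apply the left-hand inequality of Lemma \ref{lem:BQlemma}(ii), integrate, and take $C:=\sup_{\overline{v}}\int\delta_{\BP}(\overline{u},\overline{v})^{-s}\,\diff\eta$. The paper simply asserts $C<\infty$; your extra remark that this follows from $s<\dim\eta$ via an intermediate exponent $s'$ and $\delta_{\BP}\le 1$ is a correct (and welcome) filling-in of that small step.
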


\begin{proof}
Lemma \ref{lem:BQlemma}.\((ii)\) gives
\begin{align*}
    \int \left\|A^* \frac{u}{\|u\|} \right\|^{-s} \diff \eta(\overline{u}) &\leq  \left\|A \right\|^{-s} \int \delta_{\BP}(\overline{u},\overline{\upsilon_+}(A))^{-s} \diff \eta(\overline{u}) \\
    &\leq C \|A\|^{-s},
\end{align*}
where 
\[C:=\sup_{\overline{v} \in \BP} \int \delta_{\BP}(\overline{u},\overline{v})^{-s} \diff \eta(\overline{u})< \infty. \]
\end{proof}

\begin{lemma}\label{lem:mutJequality}
    For any \(I \in \MCC_n\)
    \[\mu_t([I])= \frac{1}{\rt^n}  \int g^{(n)}(Ix) \left\|\A^{[n]}(I)\uou \right\|^t 1_{\{x  \in \Sigma: (I,x_0) \textup{ is admissible}\}}(x) h_t(Ix,\overline{\A^{[n]}(I)u}) \diff \widetilde{\nu}_t(x,\overline{u}),\]
    where \(Ix\) denotes the element in \(\Sigma_T\) obtained by appending \(I\) onto the beginning of \(x\).
\end{lemma}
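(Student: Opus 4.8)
The plan is to combine three ingredients already available: the defining relation $\mu_t(f)=\widetilde{\nu}_t(h_t f)$ for $f\in C^{\alpha}(\Sigma_T)$, the eigenmeasure identity $\Lt^*\widetilde{\nu}_t=\rt\widetilde{\nu}_t$, and the explicit iterated formula \eqref{eqn:Ltiterated} for $\Lt^n$. First I would observe that since cylinders are clopen, $1_{[I]}$ is locally constant and hence lies in $C^{\alpha}(\Sigma_T)$; consequently $h_t\cdot 1_{[I]}\in C^{\alpha}(\Sigma_T\times\BP)$ (this is the same observation used in the proof of Lemma \ref{lem:ergodic}). Thus $\mu_t([I])=\mu_t(1_{[I]})=\widetilde{\nu}_t(h_t 1_{[I]})$, and iterating the eigenmeasure relation $n$ times gives
\[
\mu_t([I])=\frac{1}{\rt^{n}}\,\widetilde{\nu}_t\!\left(\Lt^{n}(h_t 1_{[I]})\right).
\]

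The next step is to expand $\Lt^{n}(h_t 1_{[I]})(x,\overline{u})$ using \eqref{eqn:Ltiterated}. The sum is over $y\in\sigma^{-n}x$; every such $y$ has the form $y=Jx$ for a unique $J\in\MCC_n$ with $(J,x_0)$ admissible, and $1_{[I]}(y)=1$ exactly when $J=I$. Hence the sum collapses to at most a single term, namely the one with $y=Ix$, and that term is present precisely when $x$ lies in $\{x\in\Sigma_T:(I,x_0)\text{ is admissible}\}$. Since $\A$ is a one-step cocycle, $\A^{[n]}(Ix)=\A^{[n]}(I)$ does not depend on the tail $x$, so
\[
\Lt^{n}(h_t 1_{[I]})(x,\overline{u})=g^{(n)}(Ix)\left\|\A^{[n]}(I)\uou\right\|^{t}1_{\{x\,:\,(I,x_0)\text{ admissible}\}}(x)\,h_t\!\left(Ix,\overline{\A^{[n]}(I)u}\right).
\]
Substituting this into the displayed expression for $\mu_t([I])$ and integrating against $\widetilde{\nu}_t$ yields exactly the claimed formula.

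There is no substantive obstacle in this argument; the only points requiring care are verifying that $h_t 1_{[I]}$ is an admissible argument for the transfer operator (locally constant indicator times a Hölder eigenfunction), checking that the preimage sum in $\Lt^n$ reduces to the single term $y=Ix$ subject to the admissibility condition, and noting that the one-step hypothesis makes $\A^{[n]}(Ix)$ independent of $x$ so that it may be written $\A^{[n]}(I)$.
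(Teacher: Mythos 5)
Your proposal is correct and follows essentially the same route as the paper: write \(\mu_t([I])=\widetilde{\nu}_t(h_t 1_{[I]})\), use \(\Lt^{*n}\widetilde{\nu}_t=\rt^n\widetilde{\nu}_t\) to insert \(\rt^{-n}\Lt^n\), and collapse the preimage sum in (\ref{eqn:Ltiterated}) to the single term \(y=Ix\) subject to admissibility of \((I,x_0)\), using the one-step hypothesis to replace \(\A^{[n]}(Ix)\) by \(\A^{[n]}(I)\). The only difference is that you spell out the regularity check on \(h_t 1_{[I]}\), which the paper leaves implicit.
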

\begin{proof}
    Recalling that \(\widetilde{\nu}_t\) is the eigenmeasure for \(\Lt\), for \(I \in \MCC_n\) we have
\begin{align*}
    \mu_t([I])& = \int 1_{[I]}(x) h_t(x,\overline{u}) \diff \widetilde{\nu}_t(x,\overline{u}) \\
    &= \frac{1}{\rt^n} \int \sum_{y \in \sigma^{-n} x} g^{(n)}(y) \left\|\A^{[n]}(y)\uou \right\|^t 1_{[I]}(y) h_t(y,\overline{\A^{[n]}(y)u}) \diff \widetilde{\nu}_t(x,\overline{u})  \\
    &= \frac{1}{\rt^n}  \int g^{(n)}(Ix) \left\|\A^{[n]}(I)\uou \right\|^t 1_{\{x  \in \Sigma: (I,x_0) \text{ is admissible}\}}(x) h_t(Ix,\overline{\A^{[n]}(I)u}) \diff \widetilde{\nu}_t(x,\overline{u}).
\end{align*}
\end{proof}

To show the lower bound of the Gibbs-type property we will use that the measures \(\mu_t\) are fully supported, in particular that \(\min_{1 \leq i \leq q}\{\mu_t([i])\}>0\). For later use, we also prove an upper bound for the measures of cylinders as well.

\begin{lemma}\label{lem:upperboundformutJ}
    For all \(t\) in an open neighbourhood of 0, there exists \(C_t, R_t>0\), with \(R_t \rightarrow 1\) as \(t \rightarrow 0\), such that for all \(n \in \N\) and all \(I \in \Sigma_n\),
    \[C_t^{-1} R_t^{-n} \mu_0([I]) \leq \mu_t([I])\leq C_t R_t^n \mu_0([I])   \]
\end{lemma}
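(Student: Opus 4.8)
The plan is to estimate $\mu_t([I])$ using the formula from Lemma \ref{lem:mutJequality} and compare it term-by-term with $\mu_0([I])$, which by the bounded distortion property of the $g$-function (Lemma \ref{lem:boundeddistortion}) satisfies $\mu_0([I]) \asymp g^{(n)}(I)$ (recall $\mu_0 = \mu_\psi$ and $g$ is cohomologous to $\psi$). By Lemma \ref{lem:mutJequality},
\[
\mu_t([I]) = \frac{1}{\rho_t^n} \int g^{(n)}(Ix)\, \big\|\A^{[n]}(I)\tfrac{u}{\|u\|}\big\|^t\, \ind_{\{(I,x_0)\text{ admissible}\}}(x)\, h_t(Ix, \overline{\A^{[n]}(I)u})\, \diff \widetilde{\nu}_t(x,\overline{u}).
\]
First I would use Lemma \ref{lem:htbound} to bound $h_t$ above and below by positive constants uniform in $t$ near $0$, and Lemma \ref{lem:boundeddistortion} to replace $g^{(n)}(Ix)$ by $g^{(n)}(I)$ up to a uniform multiplicative constant $C$. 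The factor $\rho_t^{-n}$ contributes $(\rho_0/\rho_t)^n \cdot \rho_0^{-n} = (\rho_0/\rho_t)^n$ since $\rho_0 = 1$; since $t \mapsto \rho_t$ is analytic and positive with $\rho_0 = 1$, on a small neighbourhood of $0$ we have $\rho_t^{-1} \leq R_t^{(1)}$ with $R_t^{(1)} \to 1$. This reduces matters to controlling
\[
\int \big\|\A^{[n]}(I)\tfrac{u}{\|u\|}\big\|^t \diff \widetilde{\nu}_t(x,\overline{u}) = \int \big\|\A^n(I)^*\tfrac{u}{\|u\|}\big\|^t \diff \widetilde{\nu}_t(x,\overline{u}),
\]
which is a purely projective quantity bounded by $\|\A^n(I)^*\|^{|t|} = \|\A^n(I)\|^{|t|}$ for $|t|$ small (using that $\sigma_d(\A^n(I)) \geq c^n$ for some $c > 0$ by compactness, the contribution of negative exponents is also at most $C^n$ for a constant $C$).

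The main obstacle is that $g^{(n)}(I)$ records the Birkhoff sum but $\mu_0([I])$ is genuinely $\asymp g^{(n)}(I)$, whereas the surviving matrix factor $\|\A^n(I)\|^{\pm|t|}$ is \emph{not} comparable to $1$: its logarithm grows linearly in $n$ with rate up to $\max_i \log\|\A(i)\|$. So one cannot hope for $\mu_t([I]) \asymp \mu_0([I])$; instead the matrix factor is absorbed into the geometric term $R_t^n$. Concretely, set $M := \max_{1\le i \le q} \|\A(i)\|$ and $m := \min_{1 \le i \le q} \sigma_d(\A(i)) > 0$ (finite and positive by $\A \in \GL_d$ and finiteness of the alphabet), so that $m^n \leq \sigma_d(\A^n(I)) \leq \|\A^n(I)\| \leq M^n$. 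Then for $t$ in a neighbourhood of $0$,
\[
\big(M^{|t|} \vee m^{-|t|}\big)^{-n} \;\lesssim\; \frac{\int \|\A^n(I)^*\tfrac{u}{\|u\|}\|^t \diff\widetilde{\nu}_t}{1} \;\lesssim\; \big(M^{|t|} \vee m^{-|t|}\big)^{n},
\]
and $R_t := R_t^{(1)} \cdot (M^{|t|} \vee m^{-|t|})$ satisfies $R_t \to 1$ as $t \to 0$. Combining with the $h_t$ bounds, the distortion constant, and $\mu_0([I]) \asymp g^{(n)}(I)$, we obtain $C_t^{-1} R_t^{-n} \mu_0([I]) \leq \mu_t([I]) \leq C_t R_t^n \mu_0([I])$ for a constant $C_t$ collecting all the uniform multiplicative errors (which can also be taken uniform in a neighbourhood of $0$, though the statement only needs it for fixed $t$).

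One technical point to handle carefully: the integrand in Lemma \ref{lem:mutJequality} contains the indicator $\ind_{\{(I,x_0)\text{ admissible}\}}$, so the integral is over a sub-probability mass $\le 1$; this only helps for the upper bound. For the lower bound I would instead note that this indicator has $\widetilde{\nu}_t$-measure bounded below uniformly — indeed $\widetilde\nu_t$ projects (via $h_t$) to $\mu_t$ which is fully supported with $\min_i \mu_t([i]) > 0$, and by the weak* convergence $\mu_t \to \mu_0$ (Lemma \ref{lem:mutweak*}) together with $\mu_0$ fully supported, $\inf_{t} \min_i \mu_t([i]) > 0$ on a neighbourhood of $0$; alternatively one argues directly that $\widetilde\nu_t(\{x : (I,x_0)\text{ admissible}\}) \geq \widetilde\nu_t(\bigcup_{j : (I,j)\text{ admissible}} ([j]\times\BP))$ is bounded below since for each admissible $I$ there is at least one continuation $j$ and $\widetilde\nu_t([j]\times \BP) \asymp \mu_t([j])$ is uniformly positive. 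This gives the matching lower bound and completes the proof.
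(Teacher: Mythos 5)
Your proposal is correct, and the core estimates coincide with the paper's: for the upper bound you bound the projective factor $\bigl\|\A^{[n]}(I)\tfrac{u}{\|u\|}\bigr\|^{t}$ pointwise between $\sigma_d(\A^{[n]}(I))^{|t|\cdot(\pm1)}$ and $\|\A^{[n]}(I)\|^{|t|\cdot(\pm1)}$, hence between $m^{-|t|n}$ and $M^{|t|n}$, absorb $\rho_t^{\pm n}$ into $R_t$, and use the $h_t$ bounds, bounded distortion, and the Gibbs property $g^{(n)}(I)\asymp\mu_0([I])$ — exactly as in the paper. The only genuine divergence is in the lower bound: the paper sidesteps the admissibility indicator by applying $\Lt^{n+k}$ to $1_{[I]}h_t$ with a connecting word of length $k$ coming from primitivity, selecting one preimage $z\in[I]$ and paying a fixed multiplicative cost $e^{-k\|\log g\|_\infty}\rho_t^{-k}$; you instead keep the formula of Lemma \ref{lem:mutJequality} and bound $\widetilde{\nu}_t\bigl(\{x:(I,x_0)\text{ admissible}\}\times\BP\bigr)$ from below via $\min_i\mu_t([i])>0$, justified by the cylinder convergence in the proof of Lemma \ref{lem:mutweak*} (or directly by analyticity of $t\mapsto\mu_t([i])$). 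This is not circular, since those facts are established before the present lemma, and it yields the same conclusion with a constant of the same quality; the paper's route has the mild advantage of not invoking the weak* convergence at this point, which is why it later \emph{derives} $\min_i\mu_t([i])>0$ from this lemma rather than the other way around.
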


\begin{proof}
We show that this holds for \(t<0\); the proof for \(t>0\) can be argued similarly. Let \(C_{\mu_0}\) be the constant given by the Gibbs property for \(\mu_0\). By Lemma \ref{lem:mutJequality} and the fact that \(\left\|A \uou \right\| \geq \sigma_d(A)\) for every \(A \in \mathrm{GL}_d(\R)\) and \(u \in \R^d \setminus \{0\}\), we have 
\begin{align*} 
    \mu_t([I]) &= \frac{1}{\rt^n}  \int g^{(n)}(Ix) 
    \left\| \A^{[n]}(I) \uou \right\|^t 1_{\{x  \in \Sigma: (I,x_0) \text{ is admissible}\}}(x) h_t(Ix,\overline{\A^{[n]}(I)u}) \diff \widetilde{\nu}_t \\
    & \leq \frac{1}{\rt^n}  \int g^{(n)}(Ix) \sigma_d(\A^{[n]}(I))^t 1_{\{x  \in \Sigma: (I,x_0) \text{ is admissible}\}}(x) h_t(Ix,\overline{\A^{[n]}(I)u}) \diff \widetilde{\nu}_t \\
    &\leq \frac{1}{\rt^n} \sup(h_t)  \left(\min_{x \in \Sigma_T}\{\sigma_d(\A(x))\}\right)^{tn} \int g^{(n)}(Ix) \diff \widetilde{\nu}_t  \\
     &\leq \frac{1}{\rt^n}  C_{\mu_0} \sup(h_t) \left(\min_{x \in \Sigma_T}\{\sigma_d(\A(x))\}\right)^{tn} \mu_{0}([I]),
\end{align*}
where the last inequality follows since \(\mu_0\) is the Gibbs measure for \(\log g\).

For the other direction,  let \(k \in \N\) be such that for all \(I, J \in \MCC_*\) there exists \(K \in \MCC_k\) such that \(IKJ\) is admissible. Fix \(n \in \N\) and \(I \in \MCC_n\). Note that for each \(x \in \Sigma_T\) there is at least one \(z \in \sigma^{-(n+k)}(x)\) with \(z \in [I]\). For all \(\overline{u} \in \BP\) we have
\begin{align*}
    \Lt^{n+k} 1_{[I]}(x,\overline{u})&=\sum_{y \in \sigma^{-(n+k)} x} g^{(n+k)}(y) \left\|\A^{[n]}(y)\uou \right\|^t 1_{[I]}(y) h_t(y,\overline{\A^{[n]}(y)u}) \\
    &\geq   g^{(n+k)}(z) \left\|\A^{[n]}(z)\uou \right\|^t h_t(z,\overline{\A^{[n]}(z)u}) \\
    &\geq C_{\mu_0}^{-1} e^{-k\|\log g\|_{\infty}} \inf(h_t) \left(\max_{x \in \Sigma_T} \|\A(x) \| \right)^{tn} \mu_0([I]).
\end{align*}
Hence,
\[\mu_t([I]) \geq  \frac{1}{\rt^{n+k}} C_{\mu_0}^{-1} e^{-k\|\log g\|_{\infty}} \inf(h_t) \left(\max_{x \in \Sigma_T} \|\A(x) \| \right)^{tn} \mu_0([I]).\]
Thus, we can set \(C_t=C_{\mu_0} \max\{\inf(h_t)^{-1} e^{k\|\log g\|_{\infty}} \rt^k,\sup(h_t)\}\) and
\[R_t:= \max\left \{ \rt^{-1} \left(\min_{x \in \Sigma_T}\{\sigma_d(\A(x))\}\right)^{t},  \rt \left(\max_{x \in \Sigma_T} \|\A(x)\| \right)^{-t} \right\}.\]
Since \(\rt \rightarrow \rho_0= 1\) as \(t \rightarrow 0\), it is clear that \(R_t \rightarrow 1\) as \(t \uparrow 0\).
\end{proof}

Recall that \(\eta_t \in \MCM(\BP)\) is the projection of \(\nu_t\) onto \(\BP\).

\begin{lemma}\label{lem:sufficientconditionformut}
    If \(t<0\) and \(\dim \eta_t>-t\), then \(\mu_t\) is a Gibbs-type measure for \(\Phi_t\) and \(\Ptop(\Phi_t, \sigma)=\log \rt+ \Ptop(\psi,\sigma)\).
\end{lemma}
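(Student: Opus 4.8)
The plan is to establish the Gibbs-type bounds for the cohomologous sequence $\Phi^g_t=(\varphi^g_{t,n})_{n\in\N}$ and then transfer them to $\Phi_t$. For $t<0$ the sequence $\Phi^g_t$ is superadditive (because $\log g^{(n)}$ is additive and $t\log\|\A^n(x)\|$ is superadditive) and has bounded variations (by the bounded distortion of Lemma~\ref{lem:boundeddistortion} and the fact that $\A$ is one-step, so $\log\|\A^n(x)\|$ is constant on each cylinder of $\MCC_n$). Since the Gibbs-type measures of $\Phi^g_t$ and $\Phi_t$ coincide and their pressures are related by~(\ref{eqn:PtopPhig}), it suffices by Definition~\ref{def:Gibbstype} and Lemma~\ref{lem:PequalsPtop} to produce constants $C_1,C_2>0$ (depending on $t$) with
\[C_1 \;\le\; \frac{\mu_t([I])}{\rt^{-n}\,g^{(n)}(I)\,\|\A^n(I)\|^{t}} \;\le\; C_2 \qquad \text{for all } n\in\N,\ I\in\MCC_n .\]
Once this holds, Lemma~\ref{lem:PequalsPtop} identifies the Gibbs constant $\log\rt$ with $\Ptop(\Phi^g_t,\sigma)$, and then~(\ref{eqn:PtopPhig}) gives $\Ptop(\Phi_t,\sigma)=\log\rt+\Ptop(\psi,\sigma)$.

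The starting point is the formula for $\mu_t([I])$ in Lemma~\ref{lem:mutJequality}. Using bounded distortion (Lemma~\ref{lem:boundeddistortion}) to replace $g^{(n)}(Ix)$ by $g^{(n)}(I)$ up to a uniform multiplicative constant, and Lemma~\ref{lem:htbound} to bound $h_t(Ix,\overline{\A^{[n]}(I)u})$ above and below by positive constants, the problem reduces to showing
\[\int \left\|\A^{[n]}(I)\uou\right\|^{t}\, 1_{\{x:\,(I,x_0)\text{ admissible}\}}(x)\,\diff\widetilde\nu_t(x,\overline u) \;\asymp\; \|\A^n(I)\|^{t},\]
with implied constants independent of $I$ and $n$. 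Throughout I will use that $\A^{[n]}(I)=(\A^n(I))^{*}$, hence $\|\A^{[n]}(I)\|=\|\A^n(I)\|=\|\A^n(x)\|$ for any $x\in[I]$.

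For the upper bound I would discard the indicator, pass from $\widetilde\nu_t$ to $\nu_t=h_t\widetilde\nu_t$ (using $\inf h_t>0$), and observe that the integrand then depends only on $\overline u$, so the integral equals $\int\big\|(\A^n(I))^{*}\uou\big\|^{-(-t)}\diff\eta_t(\overline u)$. Since $-t<\dim\eta_t$ by hypothesis, Lemma~\ref{lem:intergralupperboundtoshowGibbs} applied with $A=\A^n(I)$ and $s=-t$ bounds this by a constant times $\|\A^n(I)\|^{-(-t)}=\|\A^n(I)\|^{t}$, uniformly in $I$. This is the only place the hypothesis $\dim\eta_t>-t$ enters, and it is precisely why one must work through the projective measure $\eta_t$ rather than directly on $\Sigma_T$. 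For the lower bound, since $t<0$ one has the pointwise inequality $\|\A^{[n]}(I)u/\|u\|\|^{t}\ge\|\A^{[n]}(I)\|^{t}=\|\A^n(I)\|^{t}$, so the integral is at least $\|\A^n(I)\|^{t}\,\widetilde\nu_t\big(\{x:(I,x_0)\text{ admissible}\}\times\BP\big)$; the set in question is a nonempty union of one-cylinders, so passing again to $\nu_t=h_t\widetilde\nu_t$ (now using $\sup h_t<\infty$) its $\widetilde\nu_t$-measure is at least $(\sup h_t)^{-1}\min_{1\le j\le q}\mu_t([j])$, which is strictly positive because $\mu_t$ is fully supported — this last fact follows from the lower bound in Lemma~\ref{lem:upperboundformutJ} together with the full support of the Gibbs measure $\mu_0$.

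Combining the two bounds yields the displayed Gibbs-type estimate, and hence the claim via Lemma~\ref{lem:PequalsPtop} and~(\ref{eqn:PtopPhig}). The proof is essentially an assembly of the preceding lemmas; the step requiring the most care is keeping every multiplicative constant uniform over $I$ and $n$ (they may, and do, depend on $t$), while the substantive ingredient is the use of the dimension hypothesis $\dim\eta_t>-t$ through Lemma~\ref{lem:intergralupperboundtoshowGibbs} in the upper bound.
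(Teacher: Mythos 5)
Your proposal is correct and follows essentially the same route as the paper's proof: the same reduction via Lemma \ref{lem:mutJequality}, bounded distortion and the bounds on \(h_t\), the same use of Lemma \ref{lem:intergralupperboundtoshowGibbs} with \(s=-t\) for the upper bound, and the same lower bound via \(\|A\,u/\|u\|\|^t\ge\|A\|^t\) together with \(\min_i\mu_t([i])>0\) from Lemma \ref{lem:upperboundformutJ}. No gaps.
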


\begin{proof}
By Lemmas \ref{lem:boundeddistortion}, \ref{lem:htbound}, and  \ref{lem:mutJequality}, we have that for any \(n \in \N\), \(I \in \MCC_n\), and \(y \in [I]\),
\begin{align*}
    \frac{\mu_t([I])}{\rt^{-n}}
    &=\int g^{(n)}(Ix) \left\|\A^{[n]}(I)\uou \right\|^t  1_{\{ x: (I,x_0) \text{ admissible}\}}(x) h_t(Ix, \overline{\A^{[n]}(I)u}) \diff \widetilde{\nu}_t(x,\overline{u}) \\
    &\leq C_{b.d.} \sup(h_t) \inf(h_t)^{-1}  g^{(n)}(y)  \int\left\|\A^{[n]}(y)\uou \right\|^t h_t(x,\overline{u}) \diff \widetilde{\nu}_t(x,\overline{u}) \\
    &= C_{b.d.} \sup(h_t)\inf(h_t)^{-1}  g^{(n)}(y)  \int\left\|\A^{[n]}(y)\uou \right\|^t \diff \eta_t(\overline{u}) 
\end{align*}
where \(C_{b.d.}\) is the constant given by bounded distortion (Lemma \ref{lem:boundeddistortion}). Hence, applying Lemma \ref{lem:intergralupperboundtoshowGibbs}, for some \(C_2=C_2(t)>0\) we have
\begin{align*}
    \frac{\mu_t([I])}{\rt^{-n}} &\leq C_2 g^{(n)}(y) \|\A^{n}(y)\|^t.
\end{align*}

For the other direction we can instead use the trivial inequality \(\left\|A \uou \right\| \leq \|A\|\) for any \(A \in \GL_d(\R)\) and \(u \in \R^d\). Using again Lemmas \ref{lem:boundeddistortion}, \ref{lem:htbound}, and \ref{lem:mutJequality}, this gives that for some \(C=C(t)>0\) and any \(n \in \N\), \(I \in \MCC_n\) and \(y \in [I]\),
\begin{align}
    \frac{\mu_t([I])}{\rt^{-n}} &= \int g^{(n)}(Ix) \left\|\A^{[n]}(I)\uou \right\|^t  1_{\{ x: (I,x_0) \text{ admissible}\}}(x) h_t(Ix, \overline{\A^{[n]}(I)u}) \diff \widetilde{\nu}_t(x,\overline{u}) \nonumber\\
    &\geq C g^{(n)}(y) \|\A^{[n]}(y)\|^t \int 1_{\{ x: (I,x_0) \text{ admissible}\}}(x) \diff \nu_t(x,\overline{u}) \nonumber \\
    &\geq C g^{(n)}(y) \|\A^{[n]}(y)\|^t \min\{\mu_t([i]): 1 \leq i \leq q \} \nonumber \\
    &= C g^{(n)}(y) \|\A^{n}(y)\|^t \min\{\mu_t([i]): 1 \leq i \leq q \}. \nonumber
\end{align}
By Lemma \ref{lem:upperboundformutJ} and the fact the Gibbs measure \(\mu_0\) has full support, we have
\[\min\{\mu_t([i]): 1 \leq i \leq q\}>0.\]
We define \(C_1=C_1(t)>0\) to be 
\[C_1:=C \cdot  \min\{\mu_t([i]): 1 \leq i \leq q \}>0.\]

Thus, we have shown that for all \(n \in \N\), \(I \in \MCC_n\), and \(y \in [I]\),
\[C_1 \leq \frac{\mu_t([I])}{\rt^{-n} g^{(n)}(y) \|\A^{n}(y)\|^t} \leq C_2.\]
Hence, \(\mu_t\) is the Gibbs-type measure for \(\Phi_t^g\) and consequently for \(\Phi_t\). By Lemma \ref{lem:PequalsPtop} we have that \(\Ptop(\Phi_t^g)=\log \rho_t\), so by (\ref{eqn:PtopPhig}) we further have that \(\Ptop(\Phi_t,\sigma)=\log \rt+\Ptop(\psi,\sigma)\). This concludes the proof of Lemma \ref{lem:sufficientconditionformut}.
\end{proof}

\subsection{Lower bound for \(\dim \eta_t\)}
We now adapt the argument in \cite{BQ16} to prove the following proposition. The existence of the Gibbs-type measures in Theorem \ref{theo:gibbsmeasuresexist} will be a straightforward consequence of this combined with Lemma \ref{lem:sufficientconditionformut}.

\begin{prop}\label{prop:dimensionbound}
    There exists \(\delta>0\) such that \(\inf_{t \in [-\delta,\delta]} \dim \eta_t >0\).
\end{prop}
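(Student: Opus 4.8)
The plan is to adapt the proof of \cite[Theorem 14.1]{BQ16}, feeding in the large deviation estimates available in our setting and taking care to make every bound uniform over $t$ near $0$. The first step is a reduction: by Proposition~\ref{prop:nutsupportonxi*} (equivalently, by~(\ref{eqn:integratingnut}) applied to monotone limits of continuous functions), for all $t$ in a neighbourhood of $0$, all $s\ge 0$ and all $\bar v\in\BP$,
\[
\int_{\BP}\delta_{\BP}(\bar u,\bar v)^{-s}\,\diff\eta_t(\bar u)=\int_{\Sigma_T}\delta_{\BP}\big(\overline{\xi_*}(x),\bar v\big)^{-s}\,\diff\mu_t(x).
\]
Hence it suffices to produce $\delta>0$ and $s_0>0$ with
\[
\sup_{t\in[-\delta,\delta]}\;\sup_{\bar v\in\BP}\int_{\Sigma_T}\delta_{\BP}\big(\overline{\xi_*}(x),\bar v\big)^{-s_0}\,\diff\mu_t(x)<\infty,
\]
since by the $\sup$-over-$s$ characterisation of $\dim$ recalled above this gives $\inf_{t\in[-\delta,\delta]}\dim\eta_t\ge s_0>0$.

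The second step supplies the large deviation input in a form uniform in $t$. Using Lemma~\ref{lem:upperboundformutJ} — which gives $\mu_t([I])\le C_tR_t^{n}\mu_0([I])$ for $I\in\MCC_n$ with $R_t\to1$ as $t\to0$ — I would transfer the uniform LDP of Proposition~\ref{theo:uniformLDPahat} from $\mu_0$ to the whole family $\{\mu_t\}$: for every $\varepsilon>0$ there are $\delta>0$ and $\Lambda(\varepsilon)<0$ such that for all $t\in[-\delta,\delta]$,
\[
\varlimsup_{n\to\infty}\frac1n\log\sup_{\bar u=\R u\in\BP}\mu_t\Big\{x:\big|\tfrac1n\log\big\|\A^{n}(x)\tfrac{u}{\|u\|}\big\|-\lambda_1(\A,\mu_t)\big|>\varepsilon\Big\}\le\Lambda(\varepsilon),
\]
the convergence $R_t\to1$ being exactly what keeps the rate negative uniformly. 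Combined with the analogous (uniform in $t$) large deviation bound for the gap $\gamma_{1,2}(\A^{n}(x))=\sigma_2(\A^{n}(x))/\sigma_1(\A^{n}(x))$ — which decays exponentially off an exponentially small set because $\lambda_1(\A,\mu_t)>\lambda_2(\A,\mu_t)$ near $0$ by Lemma~\ref{lem:lambda1geqlambda2}, and which again follows from the analyticity of the transfer operator, now applied to the second exterior power $\wedge^{2}\A$ — and with Lemma~\ref{lem:BQlemma}(i), this yields: for every $\varepsilon>0$ there are $\delta>0$ and $\Lambda_1<0$ with
\[
\varlimsup_{n\to\infty}\frac1n\log\mu_t\Big\{x:\delta_{\BP}\big(\overline{\upsilon_+}(\A^{[n]}(x)),\bar v\big)<e^{-\varepsilon n}\Big\}\le\Lambda_1\qquad\text{uniformly in }t\in[-\delta,\delta],\ \bar v\in\BP.
\]

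With these estimates I would run the Benoist--Quint scheme. The geometric facts used are that $\overline{\xi_*}(x)=\A^{[n]}(x)\,\overline{\xi_*}(\sigma^{n}x)$, that $\overline{\upsilon_+}(\A^{[n]}(x))$ is constant on each $n$-cylinder, and that Lemma~\ref{lem:BQlemma}(i),(iii) let one compare $\delta_{\BP}(\overline{\xi_*}(x),\bar v)$ with $\delta_{\BP}(\overline{\upsilon_+}(\A^{[n]}(x)),\bar v)$ up to an error governed by $\gamma_{1,2}(\A^{n}(x))$. Working with $\beta_t(s):=\sup_{\bar v}\int\min\{\delta_{\BP}(\overline{\xi_*}(x),\bar v)^{-s},M\}\,\diff\mu_t(x)$ (the truncation $M$ avoiding any a priori finiteness assumption), I would split each integral along the good and bad sets of the previous step, bound the good part by $2^{s}e^{s\varepsilon n}$, and re-express the bad part — after one application of the eigen-identity $\rt^{n}\int f\,\diff\eta_t=\int\Lt^{n}(fh_t)\,\diff\widetilde\nu_t$ together with Lemmas~\ref{lem:mutJequality}, \ref{lem:boundeddistortion} and \ref{lem:htbound} — in the form $A_n(t,s)\,\beta_t(s)+B_n(t,s)$, where for $s$ small enough $A_n(t,s)\to0$ as $n\to\infty$ uniformly in $t\in[-\delta,\delta]$ (by the estimates above) while $B_n(t,s)$ stays bounded, all constants depending on $n$ and $t$ only through the uniformly controlled quantities $\rt$, $\inf h_t$, $\sup h_t$ and $R_t$. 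Choosing $n$ so that $\sup_{t\in[-\delta,\delta]}A_n(t,s)<\tfrac12$ gives $\beta_t(s)\le2\sup_tB_n(t,s)$ uniformly in $M$; letting $M\to\infty$ then yields the bound required in the reduction step with $s_0=s$. (Equivalently, one can replace the self-improving step by the dyadic decomposition $\int\delta_{\BP}^{-s}\,\diff\mu_t\le1+\sum_{k\ge0}e^{s(k+1)}\mu_t\{\delta_{\BP}(\overline{\xi_*}(\cdot),\bar v)<e^{-k}\}$ and apply the second-step estimate with $n\asymp k/\varepsilon$.)

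The main obstacle — with no counterpart in \cite{BQ16}, which treats a single measure — is the uniformity over $t$ near $0$; it is secured by three facts already established: $R_t\to1$ in Lemma~\ref{lem:upperboundformutJ}, keeping the transferred deviation rates uniformly negative; analyticity, hence continuity, of $t\mapsto\rt,h_t$ together with Lemma~\ref{lem:htbound}, bounding all multiplicative constants uniformly; and the Lyapunov gap $\lambda_1(\A,\mu_t)>\lambda_2(\A,\mu_t)$ near $0$ from Lemma~\ref{lem:lambda1geqlambda2}. A second, more technical difficulty — already present in \cite{BQ16} — is to control, within the same scheme, the denominator terms $\delta_{\BP}(\xi_*(\sigma^{n}x),\overline{\upsilon_+}(\A^{n}(x)))$ that arise when $\overline{\xi_*}$ is replaced by its finite-cylinder proxy $\overline{\upsilon_+}(\A^{[n]}(\cdot))$; these must be absorbed back into $\beta_t(\cdot)$ at a slightly smaller exponent, which is precisely why the self-improving formulation (or the truncation $M$) is needed rather than a one-shot estimate.
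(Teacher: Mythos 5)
Your proposal is essentially correct and follows the same Benoist--Quint scheme as the paper: the reduction via (\ref{eqn:integratingnut}), the transfer of the $\mu_0$-LDPs to $\mu_t$ through Lemma \ref{lem:upperboundformutJ} (with $R_t\to1$ keeping the rates uniformly negative), the gap estimate via $\wedge^2\A$ and Lemma \ref{lem:lambda1geqlambda2}, and the uniform control of constants via Lemma \ref{lem:htbound} are all exactly the ingredients of Lemmas \ref{lem:LDPSformut} and \ref{lem:xi*doesntaccumulateonsubspaces}. Where you genuinely diverge is at the step converting the estimate on $\delta_{\BP}(\overline{\upsilon_+}(\A^{[n]}(x)),\overline{v})$ into one on $\delta_{\BP}(\overline{\xi_*}(x),\overline{v})$: you assert that the denominator $\delta_{\BP}(\overline{\xi_*}(\sigma^n x),\overline{\upsilon_+}(\A^n(x)))$ forces a self-improving bootstrap on the truncated moment $\beta_t(s)$, whereas the paper obtains a one-shot pointwise lower bound on this denominator via Lemma \ref{lem:BQlemma}(ii) together with the large deviation estimate for $\|\A^{[n]}(x)\xi_*(\sigma^n x)\|=\|\A_*^{-n}(x)\xi_*(x)\|^{-1}$, which is Lemma \ref{lem:LDPSformut}(i) and comes from the LDP for the measures $\nu_t$ (Corollary \ref{cor:LDPfornut}) --- an input your proposal never invokes for this step. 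This matters for your parenthetical claim that the dyadic decomposition can be run directly off the ``second-step estimate'': that estimate only controls $\overline{\upsilon_+}(\A^{[n]}(x))$, and one cannot substitute the uniform-in-$\overline{v}$ LDP of Proposition \ref{theo:uniformLDPahat} because the relevant direction $\overline{\xi_*}(\sigma^n x)$ varies with $x$; either you use the $\nu_t$-LDP as the paper does (after which the annular summation over $E_n=\{e^{-\varepsilon(n+1)}\le\delta_{\BP}(\overline{\xi_*}(x),\overline{v})<e^{-\varepsilon n}\}$ closes the proof with $s<-\Lambda/\varepsilon$), or you carry out your bootstrap in full, which requires decoupling the $n$-cylinder from $\sigma^n x$ via Lemma \ref{lem:mutJequality} and bounded distortion so that the absorbed term really reappears as a small multiple of $\beta_t(s)$. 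Both routes work; the paper's is shorter because the $\nu_t$-LDP is already available from Section \ref{sec:LDPfornut}, while yours would be self-contained at this point but leaves the recursion $\beta_t(s)\le A_n\beta_t(s)+B_n$ only sketched.
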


To ease the notation, we will denote \(\lambda_1(\A,\mu_0)\) by \(\lambda_1\) and \(\lambda_2(\A,\mu_0)\) by \(\lambda_2\). 

\begin{lemma}\label{lem:LDPSformut}
    For all \(\varepsilon>0\) there exists \(\delta>0\) and \(\Lambda<0\) such that for all \(t \in [-\delta,\delta]\):
    \begin{enumerate}[label=(\roman*)]
    \item \(\varlimsup_{n \rightarrow \infty}\frac{1}{n} \log \mu_t\{x \in \Sigma_T: |\frac{1}{n}\log \|\A^{[n]}(x) \xi_*( \sigma^n x) \|-\lambda_1|>\varepsilon\} \leq \Lambda\),
    \item \(\varlimsup_{n \rightarrow \infty}\frac{1}{n} \log \mu_t\{x \in \Sigma_T: |\frac{1}{n}\log \|\A^{[n]}(x)\|-\lambda_1|>\varepsilon\} \leq \Lambda\),
    \item \(\varlimsup_{n \rightarrow \infty}\frac{1}{n} \log \mu_t\{x \in \Sigma_T: |\frac{1}{n}\log \gamma_{1,2}(\A^{[n]}(x))-(\lambda_2-\lambda_1 )|>\varepsilon\} \leq \Lambda\),
    \item \(\varlimsup_{n \rightarrow \infty}\frac{1}{n} \log \sup_{\overline{v} \in \BP}\mu_t \left\{x \in \Sigma_T: \left|\frac{1}{n}\log \left\|\A^{n}(x)\frac{v}{\|v\|} \right\|-\lambda_1 \right|>\varepsilon \right\} \leq \Lambda\).
    \end{enumerate}
\end{lemma}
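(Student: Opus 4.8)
The plan is to reduce each of (i)--(iv) to a large deviation estimate for the Gibbs measure $\mu_0=\mu_\psi$ and then push it forward to $\mu_t$ by means of the cylinder comparison $C_t^{-1}R_t^{-n}\mu_0([I])\le\mu_t([I])\le C_tR_t^n\mu_0([I])$ of Lemma \ref{lem:upperboundformutJ}, using crucially that the target values $\lambda_1$ and $\lambda_2-\lambda_1$ do not depend on $t$. For (ii), (iii) and (iv), the relevant random variable is constant on each cylinder $[I]$ with $I\in\MCC_n$: since $\A$ is one-step, $\A^n(x)=\A(x_{n-1})\cdots\A(x_0)$ and $\A^{[n]}(x)=\A(x_0)^*\cdots\A(x_{n-1})^*$ depend only on $x_0,\ldots,x_{n-1}$, and hence so do $\|\A^{[n]}(x)\|=\|\A^n(x)\|$, $\gamma_{1,2}(\A^{[n]}(x))=\gamma_{1,2}(\A^n(x))$, and, for each fixed $\overline v$, $\|\A^n(x)\tfrac{v}{\|v\|}\|$. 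Consequently, if $E_n$ denotes the corresponding ``bad'' set,
\[\mu_t(E_n)=\sum_{[I]\subseteq E_n}\mu_t([I])\le C_tR_t^n\sum_{[I]\subseteq E_n}\mu_0([I])=C_tR_t^n\,\mu_0(E_n),\]
so $\varlimsup_{n\to\infty}\tfrac1n\log\mu_t(E_n)\le\log R_t+\varlimsup_{n\to\infty}\tfrac1n\log\mu_0(E_n)$ (for (iv) one takes $\sup_{\overline v}$ of both sides before passing to the limit). Since $R_t\to1$ as $t\to0$, if the $\mu_0$-estimate has exponential decay rate at most $\Lambda_0<0$, then shrinking $\delta$ so that $\log R_t\le|\Lambda_0|/2$ on $[-\delta,\delta]$ yields the claim uniformly in $t$ with $\Lambda:=\Lambda_0/2$. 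The required $\mu_0$-estimates are: for (iv), exactly Proposition \ref{theo:uniformLDPahat}; for (ii), the large deviation principle for $\tfrac1n\log\|\A^n(x)\|$ under $\mu_0$, i.e. \cite[Theorem B]{PPi22}; for (iii), the large deviation principle for $\tfrac1n\log\gamma_{1,2}(\A^n(x))$ under $\mu_0$ around $\lambda_2-\lambda_1$, which follows from \cite[Theorem 1.5]{GS19} and \cite[Theorem B]{PPi22} (or by combining the large deviation principles for $\tfrac1n\log\|\A^n(x)\|$ and $\tfrac1n\log\|\wedge^2\A^n(x)\|$ via $\log\gamma_{1,2}(A)=\log\|\wedge^2A\|-2\log\|A\|$, noting $\lambda_1>\lambda_2$ by \cite[Theorem 1]{BV04}).

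The estimate (i) requires a different argument, since $\|\A^{[n]}(x)\xi_*(\sigma^n x)\|$ depends on the whole tail of $x$ through $\xi_*(\sigma^n x)$, so the cylinder comparison is of no use. Instead, iterating $\A_*^{-1}(x)\overline{\xi_*}(x)=\overline{\xi_*}(\sigma x)$ gives $\xi_*(\sigma^n x)=\A_*^{-n}(x)\xi_*(x)/\|\A_*^{-n}(x)\xi_*(x)\|$, and since $\A^{[n]}(x)=[\A_*^{-n}(x)]^{-1}$ and $\|\xi_*(x)\|=1$ we obtain
\[\tfrac1n\log\bigl\|\A^{[n]}(x)\xi_*(\sigma^n x)\bigr\|=-\tfrac1n\log\bigl\|\A_*^{-n}(x)\xi_*(x)\bigr\|\]
for $\mu_t$-a.e.\ $x$ (here $\xi_*(\sigma^n x)$ is $\mu_t$-a.e.\ defined because $\mu_t(Y)=1$ and $\sigma^{-1}Y=Y$, as already used in Proposition \ref{prop:nutsupportonxi*}). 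Since $\nu_t$ is supported on $\{(x,\overline{\xi_*}(x))\}$ by Proposition \ref{prop:nutsupportonxi*} and projects to $\mu_t$, the $\mu_t$-probability in (i) equals $\nu_t\{(x,\overline u):|{-\tfrac1n\log\|\A_*^{-n}(x)u\|}-\lambda_1|>\varepsilon\}$. By Proposition \ref{prop:derivativeislyapunov}, $\rt'/\rt=\lambda_1(\A,\mu_t)$, which is analytic in $t$ and tends to $\lambda_1$ as $t\to0$; shrinking $\delta$ so that $|\rt'/\rt-\lambda_1|<\varepsilon/2$ on $[-\delta,\delta]$, the set above is contained in $\{(x,\overline u):|{-\tfrac1n\log\|\A_*^{-n}(x)u\|}-\rt'/\rt|>\varepsilon/2\}$, and Corollary \ref{cor:LDPfornut} gives that its $\nu_t$-measure decays at a rate $\le\Lambda<0$ uniformly in $t\in[-\delta,\delta]$. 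This establishes (i).

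Finally, taking $\delta$ to be the smallest and $\Lambda$ the largest of the values produced for (i)--(iv) (and $\delta$ small enough that Lemma \ref{lem:upperboundformutJ}, Proposition \ref{prop:derivativeislyapunov}, Proposition \ref{prop:nutsupportonxi*} and Corollary \ref{cor:LDPfornut} all apply), a single $\delta,\Lambda$ serve all four statements. I expect (i) to be the main obstacle: the per-cylinder comparison of Lemma \ref{lem:upperboundformutJ} is powerless for a random variable that is not a function of finitely many coordinates, so one must reinterpret $\|\A^{[n]}(x)\xi_*(\sigma^n x)\|^{-1}=\|\A_*^{-n}(x)\xi_*(x)\|$, invoke the $\nu_t$-large deviation principle on the graph of $\overline{\xi_*}$, and absorb the $O(\varepsilon)$ gap between $\lambda_1$ and $\rt'/\rt=\lambda_1(\A,\mu_t)$ by shrinking $\delta$.
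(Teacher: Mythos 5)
Your proposal is correct and follows essentially the same route as the paper: parts (ii)--(iv) are obtained by transferring the known large deviation estimates for the Gibbs measure $\mu_0$ (from \cite[Theorem 1.5]{GS19} applied to $\A$ and $\wedge^2\A$, and from Proposition \ref{theo:uniformLDPahat}) to $\mu_t$ via the cylinder comparison of Lemma \ref{lem:upperboundformutJ} and the convergence $R_t\to1$, while part (i) is handled separately through the identity $\|\A^{[n]}(x)\xi_*(\sigma^n x)\|=\|\A_*^{-n}(x)\xi_*(x)\|^{-1}$, Proposition \ref{prop:nutsupportonxi*}, Proposition \ref{prop:derivativeislyapunov}, and the uniform estimate of Corollary \ref{cor:LDPfornut}, absorbing the gap $|\lambda_1(\A,\mu_t)-\lambda_1|<\varepsilon/2$ by shrinking $\delta$. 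Your explicit remark that the one-step structure makes the relevant quantities constant on $n$-cylinders, and your identification of (i) as the part where the cylinder comparison fails, accurately reflect the structure of the paper's argument.
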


We will show \((i)\) is a consequence of Corollary \ref{cor:LDPfornut}. Parts \((ii)\), \((iii)\), and \((iv)\) will follow from the LDPs which have been proved for the Gibbs measure \(\mu_0\) and the upper bounds in Lemma \ref{lem:upperboundformutJ}.

\begin{proof}
Let us first prove \((i)\). By Corollary \ref{cor:LDPfornut} and Propositions \ref{prop:derivativeislyapunov} and \ref{prop:nutsupportonxi*}, for any \(\varepsilon>0\) there exists \(\delta>0\) and \(\Lambda<0 \) such that for all \(t \in [-\delta,\delta]\),
\[ \varlimsup_{n \rightarrow \infty} \mu_t \left\{x \in \Sigma_T: \left|-\frac{1}{n} \log \| \A_*^{-n}(x) \xi_*(x)\|-\lambda_1(\A,\mu_t)\right|>\frac{\varepsilon}{2} \right\} \leq \Lambda.\]
Making \(\delta\) smaller if necessary, we may assume that \(|\lambda_1(\A,\mu_t)-\lambda_1|<\frac{\varepsilon}{2}\). Thus, for all \(t \in [-\delta,\delta]\),
\[ \varlimsup_{n \rightarrow \infty} \mu_t \left\{x \in \Sigma_T: \left|-\frac{1}{n} \log \| \A_*^{-n}(x) \xi_*(x)\|-\lambda_1\right|>\varepsilon \right\} \leq \Lambda.\]
Statement \((i)\) then follows simply by noting that 
\begin{equation}\label{eqn:inverserelation}
    \|\A^{[n]}(x) \xi_*(\sigma^ n x)\|=\left\| \A^{[n]}(x) \frac{\A_*^{-n}(x)\xi_*(x)}{\|\A_*^{-n}(x)\xi_*(x)\|} \right\|=\|\A_*^{-n}(x) \xi_*(x)\|^{-1}.
\end{equation}

We now demonstrate the proof of \((iii)\); part \((ii)\) is similar and hence is omitted. For \(\varepsilon>0\) and \(n \in \N\), let 
\[\MCB_n(\varepsilon):= \left\{x \in \Sigma_T: \left|\frac{1}{n}\log \gamma_{1,2}(\A^{[n]}(x))-(\lambda_2-\lambda_1 ) \right|>\varepsilon \right\}.\]
Applying the large deviation principle \cite[Theorem 1.5]{GS19} to both \(\A\) and the exterior product cocycle \(\wedge^2 \A\) (see \cite[\S 2.1]{GS19}) gives
\[\Lambda_0:=\varlimsup_{n \rightarrow \infty}\frac{1}{n} \log \mu_0(\MCB_n(\varepsilon))<0. \]
We can choose \(\delta>0\) small enough so that 
\[\Lambda:=\sup_{t \in [-\delta,\delta]} \left\{\log R_t+\Lambda_0 \right\}<0.\] 
By Lemma \ref{lem:upperboundformutJ}, it therefore follows that for all \(t \in [-\delta,\delta]\)
\[\varlimsup_{n \rightarrow \infty}\frac{1}{n} \log \mu_t(\MCB_n(\varepsilon)) \leq \Lambda. \]
This proves \((iii)\).

Part \((iv)\) follows similarly. For \(\overline{v} \in \BP\), \(\varepsilon>0\), and \(n \in \N\), let
\[\MCB_n(\overline{v},\varepsilon):= \left\{x \in \Sigma_T: \left|\frac{1}{n}\log \left\|\A^{n}(x)\vov \right\|-\lambda_1 \right|>\varepsilon \right\}.\]
By Proposition \ref{theo:uniformLDPahat} one has 
\[\Lambda_0:=\varlimsup_{n \rightarrow \infty}\frac{1}{n} \log \sup_{\overline{v} \in \BP} \mu_0(\MCB_n(\overline{v},\varepsilon))<0. \]
Hence, with \(\Lambda\) and \(\delta\) as before,
\[\varlimsup_{n \rightarrow \infty}\frac{1}{n} \log \sup_{\overline{v} \in \BP} \mu_t(\MCB_n(\overline{v},\varepsilon)) \leq \Lambda. \]
\end{proof}

Proposition \ref{prop:dimensionbound} will be proved using part (iii) of the following lemma. Parts (i) and (ii) are only needed as intermediate steps towards proving (iii). 

\begin{lemma}\label{lem:xi*doesntaccumulateonsubspaces}
    For all \(\varepsilon>0\) there exists \(\delta>0\) and \(\Lambda<0\) such that for all \(t \in [-\delta,\delta]\): 
    \begin{enumerate}[label=(\roman*)]
        \item \(\varlimsup_{n \rightarrow \infty} \frac{1}{n} \log \mu_t \{x \in \Sigma_T: d_{\BP}(\overline{\xi_*}(x),\overline{\upsilon_+}(\A^{[n]}(x)))> e^{-(\lambda_1-\lambda_2-\varepsilon)n}  \}  \leq \Lambda \),
        \item \(\varlimsup_{n \rightarrow \infty} \frac{1}{n} \log \sup_{\overline{v} \in \BP} \mu_t \{x \in \Sigma_T: \delta_{\BP}(\overline{\upsilon_+}(\A^{[n]}(x)),\overline{v})< e^{-\frac{\varepsilon}{2} n}  \} \leq \Lambda \), 
        \item \(\varlimsup_{n \rightarrow \infty} \frac{1}{n} \log \sup_{\overline{v} \in \BP} \mu_t \{x \in \Sigma_T: \delta_{\BP}(\overline{\xi_*}(x),\overline{v})< e^{-\varepsilon n}  \}  \leq \Lambda \).
    \end{enumerate}
\end{lemma}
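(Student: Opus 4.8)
The plan is to follow the scheme of the proof of \cite[Theorem 14.1]{BQ16}, combining the three estimates of Lemma~\ref{lem:BQlemma} with the large deviation bounds of Lemma~\ref{lem:LDPSformut}. Write $M_n(x):=\A^{[n]}(x)=\A^n(x)^*$, so that $\|M_n(x)\|=\|\A^n(x)\|$ and, since singular values are transpose-invariant, $\gamma_{1,2}(M_n(x))=\gamma_{1,2}(\A^n(x))$. From $\A_*^{-n}(x)\overline{\xi_*}(x)=\overline{\xi_*}(\sigma^n x)$ and $\A^{[n]}=(\A_*^{-n})^{-1}$ (Section~\ref{subsec:cocycles}), together with (\ref{eqn:inverserelation}), one gets $\overline{\xi_*}(x)=\overline{M_n(x)\,\xi_*(\sigma^n x)}$ and $\|M_n(x)\,\xi_*(\sigma^n x)\|=\|\A_*^{-n}(x)\,\xi_*(x)\|^{-1}$; by Propositions~\ref{prop:derivativeislyapunov} and~\ref{prop:nutsupportonxi*} the measure $\mu_t$ is carried by the set $Y$ on which $\overline{\xi_*}$ is defined and $0$-dimensional, so all these expressions make sense $\mu_t$-a.e. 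Given $\varepsilon>0$ I may assume $\lambda_1-\lambda_2>2\varepsilon$ (legitimate since $\lambda_1>\lambda_2$, and enlarging $\varepsilon$ only shrinks each of the three events), set $\varepsilon':=\varepsilon/8$, and take $\delta>0$ and $\Lambda<0$ to be the constants furnished by Lemma~\ref{lem:LDPSformut} for the parameter $\varepsilon'$ (shrinking $\delta$ and enlarging $\Lambda$ finitely often as the various parts of that lemma are invoked); uniformity in $t\in[-\delta,\delta]$ is then inherited from Lemma~\ref{lem:LDPSformut}.

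For (i), I would apply Lemma~\ref{lem:BQlemma}(iii) with $A=\A^n(x)$ and $u=\xi_*(\sigma^n x)$, giving
\[d_{\BP}\!\left(\overline{\xi_*}(x),\overline{\upsilon_+}(\A^{[n]}(x))\right)\cdot\delta_{\BP}\!\left(\overline{\xi_*}(\sigma^n x),\overline{\upsilon_+}(\A^n(x))\right)\le\gamma_{1,2}(\A^{[n]}(x)),\]
and then bound the second factor from below by Lemma~\ref{lem:BQlemma}(ii) (same $A,u$) by $\|M_n(x)\xi_*(\sigma^n x)\|/\|M_n(x)\|-\gamma_{1,2}(M_n(x))$. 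Off the union of the exceptional sets of Lemma~\ref{lem:LDPSformut}(i),(ii),(iii) for the parameter $\varepsilon'$, the numerator is $\ge e^{(\lambda_1-\varepsilon')n}$, the denominator $\le e^{(\lambda_1+\varepsilon')n}$, and $\gamma_{1,2}(M_n(x))\le e^{(\lambda_2-\lambda_1+\varepsilon')n}$, so for $n$ large the second factor is $\ge\tfrac12 e^{-2\varepsilon'n}$; substituting back yields $d_{\BP}(\overline{\xi_*}(x),\overline{\upsilon_+}(\A^{[n]}(x)))\le 2e^{(\lambda_2-\lambda_1+3\varepsilon')n}\le e^{-(\lambda_1-\lambda_2-\varepsilon)n}$ for $n$ large, since $3\varepsilon'<\varepsilon$. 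Hence for large $n$ the event in (i) lies in that union of exceptional sets, proving (i).

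For (ii), Lemma~\ref{lem:BQlemma}(i) with $A=\A^n(x)$ (so $A^*=\A^{[n]}(x)$) gives, for each $\overline v=\R v$,
\[\delta_{\BP}\!\left(\overline{\upsilon_+}(\A^{[n]}(x)),\overline v\right)\ \ge\ \frac{\|\A^n(x)v\|}{\|\A^n(x)\|\,\|v\|}-\gamma_{1,2}(\A^{[n]}(x)).\]
Off the union of the exceptional sets of Lemma~\ref{lem:LDPSformut}(ii),(iii) and of (iv) \emph{at that $\overline v$} --- whose $\mu_t$-measures are bounded uniformly in $\overline v$ and $t$ by the stated uniform LDP --- the right-hand side is $\ge e^{-2\varepsilon'n}-e^{(\lambda_2-\lambda_1+\varepsilon')n}\ge\tfrac12 e^{-2\varepsilon'n}\ge e^{-\varepsilon n/2}$ for $n$ large; taking $\sup_{\overline v}$ of the measures proves (ii). Finally (iii) is immediate: since for fixed $\overline v$ the map $\overline u\mapsto\delta_{\BP}(\overline u,\overline v)=d_{\BP}(\overline u,v^\perp)$ is $1$-Lipschitz in the $d_{\BP}$-metric, off the union of the exceptional sets of (i) and (ii) one has, for $n$ large,
\[\delta_{\BP}(\overline{\xi_*}(x),\overline v)\ \ge\ \delta_{\BP}(\overline{\upsilon_+}(\A^{[n]}(x)),\overline v)-d_{\BP}(\overline{\xi_*}(x),\overline{\upsilon_+}(\A^{[n]}(x)))\ \ge\ e^{-\varepsilon n/2}-e^{-(\lambda_1-\lambda_2-\varepsilon)n}\ \ge\ e^{-\varepsilon n},\]
using $\lambda_1-\lambda_2-\varepsilon>\varepsilon/2$, and $\sup_{\overline v}$ of the measures is again controlled by the uniform bounds from (i) and (ii).

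The main obstacle is the estimate underpinning (i): one must extract a lower bound on the alignment factor $\delta_{\BP}(\overline{\xi_*}(\sigma^n x),\overline{\upsilon_+}(\A^n(x)))$ that decays strictly more slowly than the gap $\gamma_{1,2}(\A^n(x))$ closes, and this forces the use of Lemma~\ref{lem:LDPSformut}(i) --- the large deviation estimate along the Oseledets direction $\xi_*$ itself, which is precisely the new ingredient supplied by the $\nu_t$-large deviation principle of Section~\ref{sec:LDPfornut}, rather than a crude bound through $\sigma_d$. Once that is available, the remainder is bookkeeping: choosing $\varepsilon'$ small relative to $\varepsilon$ and collecting finitely many exponentially small, $t$-uniform exceptional sets.
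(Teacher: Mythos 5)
Your proposal is correct and follows essentially the same route as the paper's proof: the same reduction to $\varepsilon<\tfrac12(\lambda_1-\lambda_2)$, the same combination of parts (i)--(iii) of Lemma \ref{lem:BQlemma} with the $\varepsilon/8$-level exceptional sets from Lemma \ref{lem:LDPSformut}, and the same chain of estimates deducing (iii) from (i) and (ii). The only differences are cosmetic (e.g.\ you justify the triangle-type inequality for $\delta_{\BP}$ explicitly).
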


\begin{proof}
We adapt the proof of \cite[Proposition 14.3]{BQ16}. We may assume \(0<\varepsilon<\frac{1}{2}(\lambda_1-\lambda_2)\). We let \(\delta>0\) and \(0>\Lambda>\Lambda'\), where \(\delta\) and \(\Lambda'\) are as given by Lemma \ref{lem:LDPSformut}. Let \(t \in [-\delta,\delta]\). By Lemma \ref{lem:LDPSformut} there exists \(N \in \N\) such that for all \(n \geq N\) and all \(\overline{v} \in \BP\) there exists a subset \(\Omega_n=\Omega_n(t,\overline{v},\varepsilon) \subset \Sigma_T\) with \(\mu_t(\Omega_n)>1-e^{\Lambda n}\) such that for all \(x \in \Omega_n\) the quantities
\[\left|\frac{1}{n} \log \|\A^{[n]}(x)\|-\lambda_1 \right|,\]
\[\left|\frac{1}{n} \log \left\|\A^{n}(x)\vov \right\|-\lambda_1 \right|,\]
\[\left |\frac{1}{n}\log \|\A^{[n]}(x) \xi_*( \sigma^n x) \|-\lambda_1 \right|,\]
\[\left|\frac{1}{n}\log \gamma_{1,2}(\A^{[n]}(x))-(\lambda_2-\lambda_1) \right|\]
are all bounded by \(\varepsilon/8\). 

We check that, provided \(N\) is chosen large enough, for any \(x \in \Omega_n\) one has 
\[d_{\BP}(\overline{\xi_*}(x),\overline{\upsilon_+}(\A^{[n]}(x))) \leq e^{-(\lambda_1-\lambda_2-\varepsilon)n},\]
\[\delta_{\BP}(\overline{\upsilon_+}(\A^{[n]}(x)),\overline{v}) \geq e^{-\frac{\varepsilon}{2} n},\]
\[\delta_{\BP}(\overline{\xi_*}(x),\overline{v}) \geq e^{-\varepsilon n}.\]
For the first, using parts (ii) and (iii) of Lemma \ref{lem:BQlemma}, 
\begin{align*}
    d_{\BP}(\overline{\xi_*}(x),\overline{\upsilon_+}(\A^{[n]}(x)) &= d_{\BP}(\A^{[n]}(x) \overline{\xi_*}(\sigma^n x),\overline{\upsilon_+}(\A^{[n]}(x))) \\
    &\leq \frac{\gamma_{1,2}(\A^{[n]}(x))}{\delta_{\BP}(\overline{\xi_*}(\sigma^n x),\overline{\upsilon_+}(\A^n(x)))} \\
    &\leq \frac{\gamma_{1,2}(\A^{[n]}(x))}{\frac{\|\A^{[n]}(x)\xi_*(\sigma^n x) \|}{\|\A^{[n]}(x) \|}-\gamma_{1,2}(\A^{[n]}(x))} \\
    &\leq \frac{e^{-n(\lambda_1-\lambda_2-\frac{\varepsilon}{8} )}}{e^{-\frac{\varepsilon}{4} n}-e^{-n(\lambda_1-\lambda_2-\frac{\varepsilon}{8})}}.
\end{align*}
Hence, provided \(N\) was chosen large enough, 
\[d_{\BP}(\overline{\xi_*}(x),\overline{\upsilon_+}(\A^{[n]}(x))) \leq e^{-n(\lambda_1-\lambda_2-\varepsilon )}. \]
For the second, by Lemma \ref{lem:BQlemma}.\((i)\) we have
\[\delta_{\BP}(\overline{\upsilon_+}(\A^{[n]}(x)),\overline{v}) \geq e^{-\frac{\varepsilon}{4} n} - e^{-n(\lambda_1-\lambda_2-\frac{\varepsilon}{8})}.\]
So, provided \(N\) was chosen large enough,
\[\delta_{\BP}(\overline{\upsilon_+}(\A^{[n]}(x)),\overline{v}) \geq e^{-\frac{\varepsilon}{2}n}. \]
The third then follows as 
\begin{align*}
    \delta_{\BP}(\overline{\xi_*}(x),\overline{v}) &\geq \delta_{\BP}(\overline{\upsilon_+}(\A^{[n]}(x)),\overline{v})-d_{\BP}(\overline{\xi_*}(x),\overline{\upsilon_+}(\A^{[n]}(x)) \\
    &\geq e^{-\frac{\varepsilon}{2}n}-e^{-n(\lambda_1-\lambda_2-\varepsilon )}.
\end{align*}
Thus, provided \(N\) was chosen large enough, 
\[\delta_{\BP}(\overline{\xi_*}(x),\overline{v}) \geq e^{-\varepsilon n}. \]
\end{proof}

\begin{proof}[Proof of Proposition \ref{prop:dimensionbound}]
    This follows similarly to the proof of Theorem 14.1 in \cite{BQ16}. For any \(\varepsilon>0\), let \(\delta>0\) and \(\Lambda'<0\) be as given by Lemma \ref{lem:xi*doesntaccumulateonsubspaces} and let \(\Lambda'<\Lambda<0\). Let \(0<s<-\Lambda/\varepsilon\) and fix \(t \in [-\delta,\delta]\). By Lemma \ref{lem:xi*doesntaccumulateonsubspaces} there exists \(N \in \N\) such that for all \(n \geq N\) 
    \[\sup_{\overline{v} \in \BP} \mu_t(\{x \in \Sigma_T: \delta_{\BP}(\overline{\xi_*}(x),\overline{v}) < e^{-\varepsilon n}  \}) \leq e^{n \Lambda}.\]
    Let \(\overline{v} \in \BP\) and consider the sets 
    \[E_{n}:=\{x \in \Sigma_T:e^{-\varepsilon(n+1)} \leq \delta_{\BP}(\overline{\xi_*}(x),\overline{v})< e^{-\varepsilon n}\}.\]
     By (\ref{eqn:integratingnut}) we have
    \begin{align*}
        \int  \delta_{\BP}(\overline{u},\overline{v})^{-s} \diff \eta_t(\overline{u})&= \int  \delta_{\BP}(\overline{\xi_*}(x),\overline{v})^{-s} \diff \mu_t(x) \\
        &\leq e^{s\varepsilon N}+ \sum_{n \geq N} \int_{E_n}   \delta_{\BP}(\overline{\xi_*}(x),\overline{v})^{-s} \diff \mu_t(x) \\
        &\leq e^{s\varepsilon N}+\sum_{n \geq N} e^{\varepsilon s} e^{(\Lambda+s\varepsilon)n}<\infty.
    \end{align*}
    The bound does not depend on \(\overline{v}\) or \(t\), so 
    \[\inf_{t \in [-\delta,\delta]} \dim \eta_t \geq s>0.\]
\end{proof}

\section{Proof of Theorem \ref{theo:gibbsmeasuresexist}}\label{sec:proofoftheogibbs}
We are now ready to prove the existence statement in Theorem \ref{theo:gibbsmeasuresexist}. 

\begin{proof}[Proof of Theorem \ref{theo:gibbsmeasuresexist} (existence)]
By Proposition \ref{prop:dimensionbound} there exist \(\delta>0\) such that 
\[\inf_{s \in [-\delta,\delta]} \dim \eta_{s} > 0.\]
Hence, for all \(t<0\) sufficiently close to 0,
\[\dim \eta_t \geq \inf_{s \in [-\delta,\delta]} \dim \eta_{s} > -t. \]
For these \(t\) it follows from Lemma \ref{lem:sufficientconditionformut} that \(\mu_t\) is a Gibbs-type measure for \(\Phi_t\). By Lemmas \ref{lem:gibbsstatesareequilib}, \ref{lem:sigmainvariant}, and \ref{lem:ergodic}, the measures \(\mu_t\) are moreover ergodic equilibrium states for \(\Phi_t\). We note that by Lemma \ref{lem:sufficientconditionformut} for these \(t\) we also have
\begin{equation}\label{eqn:pressurerelationinproof}
    \Ptop(\Phi_t,\sigma)=\log \rt+\Ptop(\psi,\sigma).
\end{equation}
\end{proof}

The rest of this section is devoted to proving the uniqueness statement. Our proof of uniqueness is indirect in the following sense: we able to show that the constructed Gibbs-type measures \(\mu_t\) are unique equilibrium states for all \(t<0\) sufficiently close to 0, but not necessarily for all \(t<0\) for which the Gibbs-type measures \(\mu_t\) exist.

We will use Theorem \ref{theo:led}. Recalling Definition \ref{def:xiwelldefined}, we let
\[Y:=\{x \in \Sigma_T:\overline{\xi_*}(x) \text{ is well defined and 0-dimensional} \} \]
and note that \(\sigma^{-1}(Y)=Y\). Consider the potential \(\phi_{\A}:\Sigma_T \rightarrow \R \cup \{-\infty\}\) defined by 
\[\phi_\A(x):=\begin{cases}
    -\log \left\|\A_*^{-1}(x) \xi_*(x) \right\|, & x \in Y \\
    -\infty, & x \not\in Y.
\end{cases}\]

\begin{lemma}\label{lem:lyapunovisintegral}
    For every ergodic \(\mu \in \MCM(\Sigma_T,\sigma)\) such that \(\lambda_{1}(\A,\mu)>\lambda_2(\A,\mu)\) we have
    \[\int \phi_{\A} \diff \mu= \lambda_1(\A,\mu).\]
\end{lemma}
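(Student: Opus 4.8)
The plan is to write $\log\|\A_*^{-n}(x)\xi_*(x)\|$ as a Birkhoff sum of $-\phi_{\A}$ along the orbit of $x$, identify its almost-sure growth rate with $-\lambda_1(\A,\mu)$ using Oseledets' theorem, and then apply Birkhoff's ergodic theorem to conclude.

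First I would observe that, since $\mu$ is ergodic and $\lambda_1(\A,\mu)>\lambda_2(\A,\mu)$, Oseledets' theorem guarantees that $\overline{\xi_*}(x)$ is well defined and $0$-dimensional for $\mu$-a.e.\ $x$; equivalently $\mu(Y)=1$, where $Y$ is the set from Definition~\ref{def:xiwelldefined}. In particular $\phi_{\A}$ is finite $\mu$-a.e. Moreover $\A$ is one-step and the alphabet is finite, so $\A$ takes only finitely many values, all in $\GL_d(\R)$; hence $\sigma_d(\A_*^{-1}(x))$ and $\|\A_*^{-1}(x)\|=\sigma_1(\A_*^{-1}(x))$ are bounded away from $0$ and $\infty$ uniformly in $x$, and since $\sigma_d(\A_*^{-1}(x))\le\|\A_*^{-1}(x)\xi_*(x)\|\le\|\A_*^{-1}(x)\|$, the function $\phi_{\A}$ is bounded on $Y$. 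Thus $\phi_{\A}\in L^\infty(\mu)\subset L^1(\mu)$ and $\int\phi_{\A}\diff\mu$ is well defined and finite.

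Next I would use the relation $\A_*^{-1}(x)\overline{\xi_*}(x)=\overline{\xi_*}(\sigma x)$ recorded after Definition~\ref{def:xiwelldefined}: since $\xi_*(x)$ and $\xi_*(\sigma x)$ are unit vectors spanning the respective $1$-dimensional subspaces, $\A_*^{-1}(x)\xi_*(x)=\pm\|\A_*^{-1}(x)\xi_*(x)\|\,\xi_*(\sigma x)$. Iterating the cocycle identity $\A_*^{-n}(x)=\A_*^{-1}(\sigma^{n-1}x)\cdots\A_*^{-1}(x)$ along a $\mu$-typical orbit (so that $\sigma^ix\in Y$ for all $i\ge 0$, using $\sigma^{-1}Y=Y$ and $\sigma$-invariance of $\mu$) gives
\[\log\|\A_*^{-n}(x)\xi_*(x)\|=\sum_{i=0}^{n-1}\log\|\A_*^{-1}(\sigma^ix)\xi_*(\sigma^ix)\|=-\,S_n\phi_{\A}(x).\]
By Birkhoff's ergodic theorem, $\tfrac1n\log\|\A_*^{-n}(x)\xi_*(x)\|\to-\int\phi_{\A}\diff\mu$ for $\mu$-a.e.\ $x$. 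On the other hand $\xi_*(x)$ is a unit vector in the slowest Oseledets' subspace of $\A_*^{-1}$ at $x$, so by Definition~\ref{def:xiwelldefined}(2)(i) together with the Furstenberg--Kesten/Kingman identity $\tfrac1n\log\sigma_d(\A_*^{-n}(x))\to\lambda_d(\A_*^{-1},\mu)=-\lambda_1(\A,\mu)$ (valid $\mu$-a.e., using $\sigma_d(\A_*^{-n}(x))=\|\A^n(x)\|^{-1}$), the same limit equals $-\lambda_1(\A,\mu)$. Comparing yields $\int\phi_{\A}\diff\mu=\lambda_1(\A,\mu)$.

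I do not anticipate a serious obstacle; the argument is essentially a telescoping identity plus Birkhoff and Oseledets. The only points needing a little care are the measurability of $x\mapsto\xi_*(x)$ on $Y$ (part of the Oseledets package, and used implicitly elsewhere in the paper), ensuring the orbit stays in $Y$ $\mu$-a.e., and keeping the sign conventions consistent when identifying $\lambda_d(\A_*^{-1},\mu)$ with $-\lambda_1(\A,\mu)$.
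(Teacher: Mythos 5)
Your proof is correct and follows essentially the same route as the paper: the telescoping identity $S_n\phi_{\A}(x)=-\log\|\A_*^{-n}(x)\xi_*(x)\|$ combined with the Oseledets identification of the a.e.\ limit with $-\lambda_d(\A_*^{-1},\mu)=\lambda_1(\A,\mu)$. The only cosmetic difference is that you close with Birkhoff's ergodic theorem where the paper uses $\sigma$-invariance plus dominated convergence; both rest on the boundedness of $\phi_{\A}$ on $Y$, which you verify explicitly.
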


\begin{proof}
    Observe that for all \(x \in Y\)
\begin{equation*}\label{eqn:sumphiA}
    S_n \phi_{\A}(x) =-\log \left\|\A_*^{-n}(x) \xi_*(x) \right\|.
\end{equation*}
Thus, for each ergodic measure \(\mu \in \MCM(\Sigma_T,\sigma)\) with \(\lambda_{1}(\A,\mu)>\lambda_2(\A,\mu)\) we have 
    \(\frac{1}{n}S_n\phi_{\A}(x) \rightarrow -\lambda_d(\A_*^{-1},\mu)=\lambda_1(\A,\mu)\) for \(\mu\)-almost every \(x \in \Sigma_T\). Hence, by \(\sigma\)-invariance and the dominated convergence theorem,
    \[\int \phi_{\A} \diff \mu=\lim_{n \rightarrow \infty} \int  \frac{1}{n} S_n \phi_{\A} \diff \mu = \lambda_1(\A,\mu). \] 
\end{proof}

For \(t\) sufficiently close to 0, we can define the functions
\[g_t(x):=\frac{g(x) e^{t \phi_{\A}(x)}}{\rt } \frac{h_t(x,\overline{\xi_*}(x))}{h_t(\sigma x,\overline{\xi_*}(\sigma x))}\]
when \(x \in Y\) and 0 otherwise. 

\begin{lemma}\label{lem:gtisgfunction}
    \(g_t \in \MCG(Y)\).
\end{lemma}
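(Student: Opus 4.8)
The goal is to show $g_t \in \MCG(Y)$, i.e.\ that $g_t$ is strictly positive on $Y$, vanishes on $\Sigma_T\setminus Y$, and that $\sum_{y\in\sigma^{-1}x} g_t(y) = 1$ for every $x\in Y$. Positivity on $Y$ is immediate from the definition once we know $h_t$ is strictly positive and bounded (Lemma~\ref{lem:htbound}), that $\rho_t>0$ near $0$, and that $g>0$ on $\Sigma_T=Y\cup(\Sigma_T\setminus Y)$ where $g\in\MCG(\Sigma_T)$ is a genuine $g$-function; the vanishing off $Y$ is by construction. So the entire content is the normalisation identity.

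\smallskip
\noindent\textbf{Plan for the normalisation.} The plan is to apply the transfer operator $\Lt$ to the section $(x,\overline u)\mapsto h_t(x,\overline{\xi_*}(x))$ and use the eigenfunction equation $\Lt h_t = \rt h_t$. First I would recall that for $x\in Y$ and any $y\in\sigma^{-1}x$ we have $y\in Y$ as well (since $\sigma^{-1}(Y)=Y$), with $\overline{\xi_*}(y)$ $0$-dimensional and $\A_*^{-1}(y)\,\overline{\xi_*}(y)=\overline{\xi_*}(x)$; hence $\overline{\A(y)^*\xi_*(y)}$, which is $\overline{\A_*^{-1}(y)^{-1}\xi_*(y)}$... wait — more directly, since $\A^{[1]}(y)=\A(y)^*=\A_*^{-1}(y)^{-1}$, applying $\A(y)^*$ to $\xi_*(y)$ and projectivising gives exactly the image under the inverse-adjoint-inverse, so one checks $\overline{\A(y)^*\xi_*(y)} = \overline{\xi_*(\sigma y)}=\overline{\xi_*}(x)$. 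Let me instead use the cleaner relation that is already in the excerpt: $\A_*^{-1}(y)\overline{\xi_*}(y)=\overline{\xi_*}(x)$, equivalently $\overline{\xi_*}(y) = \overline{\A_*^{-1}(y)^{-1}\,\xi_*(x)} = \overline{\A(y)^*\xi_*(x)}$. Thus along preimages the projective coordinate $\overline{\xi_*}$ is transported by $\overline{u}\mapsto\overline{\A(y)^*u}$ starting from $\overline u=\overline{\xi_*}(x)$, which is precisely the transport appearing in the definition of $\Lt$.

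\smallskip
\noindent\textbf{Key computation.} Evaluate $\Lt\big(h_t\big)$ at the point $(x,\overline{\xi_*}(x))$ for $x\in Y$:
\begin{align*}
\rt\, h_t(x,\overline{\xi_*}(x)) = \Lt h_t(x,\overline{\xi_*}(x)) = \sum_{y\in\sigma^{-1}x} g(y)\,\big\|\A(y)^*\xi_*(x)\big\|^{t}\, h_t\big(y,\overline{\A(y)^*\xi_*(x)}\big).
\end{align*}
Here I used $\|\xi_*(x)\|=1$. Now $\overline{\A(y)^*\xi_*(x)}=\overline{\xi_*}(y)$ by the transport relation above, so $h_t\big(y,\overline{\A(y)^*\xi_*(x)}\big)=h_t(y,\overline{\xi_*}(y))$. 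Also, writing $\phi_\A(y)=-\log\|\A_*^{-1}(y)\xi_*(y)\|$ and using $\|\A(y)^*\xi_*(x)\| = \|\A(y)^*\A_*^{-1}(y)\xi_*(y)\|\cdot\|\xi_*(x)\|/\|\A_*^{-1}(y)\xi_*(y)\|$... — more simply, $\A(y)^*=\A_*^{-1}(y)^{-1}$ acting on $\xi_*(x)=\A_*^{-1}(y)\xi_*(y)\cdot\|\A_*^{-1}(y)\xi_*(y)\|^{-1}$ (as a unit vector) gives $\A(y)^*\xi_*(x) = \xi_*(y)\,\|\A_*^{-1}(y)\xi_*(y)\|^{-1}$, hence $\|\A(y)^*\xi_*(x)\| = \|\A_*^{-1}(y)\xi_*(y)\|^{-1} = e^{\phi_\A(y)}$, so $\|\A(y)^*\xi_*(x)\|^{t} = e^{t\phi_\A(y)}$. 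Substituting,
\begin{align*}
\rt\, h_t(x,\overline{\xi_*}(x)) = \sum_{y\in\sigma^{-1}x} g(y)\,e^{t\phi_\A(y)}\,h_t(y,\overline{\xi_*}(y)).
\end{align*}
Dividing both sides by $\rt\, h_t(\sigma\cdot)$ appropriately: for each $y\in\sigma^{-1}x$ we have $\sigma y = x$, so $h_t(\sigma y,\overline{\xi_*}(\sigma y)) = h_t(x,\overline{\xi_*}(x))$. Dividing the displayed identity through by $\rt\, h_t(x,\overline{\xi_*}(x))$ gives
\begin{align*}
1 = \sum_{y\in\sigma^{-1}x} \frac{g(y)\,e^{t\phi_\A(y)}}{\rt}\cdot\frac{h_t(y,\overline{\xi_*}(y))}{h_t(\sigma y,\overline{\xi_*}(\sigma y))} = \sum_{y\in\sigma^{-1}x} g_t(y),
\end{align*}
which is exactly the $g$-function normalisation. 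Note every $y\in\sigma^{-1}x$ with $x\in Y$ lies in $Y$, so no term is spuriously zeroed out, and terms with $y\notin Y$ (there are none here) would contribute $0$ consistently.

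\smallskip
\noindent\textbf{Main obstacle.} The only genuinely delicate point is the bookkeeping of the projective transport: verifying carefully that $\overline{\A(y)^*\xi_*(x)} = \overline{\xi_*}(y)$ and that the norm $\|\A(y)^*\xi_*(x)\|$ equals $e^{\phi_\A(y)}$, using the chain of identities $\A^{[1]}=\A(\cdot)^*$, $\A_*^{-1}(y)=(\A(y)^*)^{-1}$, and the relation $\A_*^{-1}(y)\overline{\xi_*}(y)=\overline{\xi_*}(\sigma y)$ together with the fixed choice of unit representatives $\xi_*(\cdot)$. One must also record that $\overline{\xi_*}$ being well defined and $0$-dimensional is preserved under $\sigma^{-1}$ (already noted in Section~\ref{subsec:cocycles}), which guarantees $g_t$ is well defined and strictly positive on all of $Y$ and the sum ranges over points of $Y$. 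Everything else — positivity, measurability, vanishing off $Y$ — is routine given Lemma~\ref{lem:htbound} and the fact that $g\in\MCG(\Sigma_T)$.
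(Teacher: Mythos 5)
Your proposal is correct and follows essentially the same route as the paper: both reduce the normalisation $\sum_{y\in\sigma^{-1}x}g_t(y)=1$ to the eigenfunction identity $\Lt h_t(x,\overline{\xi_*}(x))=\rt h_t(x,\overline{\xi_*}(x))$ via the transport relation $\overline{\xi_*}(y)=\overline{\A(y)^*\xi_*(\sigma y)}$ and the norm identity $\|\A(y)^*\xi_*(x)\|=\|\A_*^{-1}(y)\xi_*(y)\|^{-1}=e^{\phi_\A(y)}$ (the paper's equation (\ref{eqn:inverserelation})), merely reading the chain of equalities in the opposite direction. The extra remarks on positivity, vanishing off $Y$, and $\sigma^{-1}(Y)=Y$ are correct and harmless.
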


\begin{proof}
For all \(x \in Y\) we have
\begin{align*}
    L_{\log g_t} 1(x) &=\sum_{y \in \sigma^{-1} x} \frac{g(y) e^{t \phi_{\A}(y)}}{\rt } \frac{ h_t(y,\overline{\xi_*}(y))}{ h_t(x,\overline{\xi_*}(x))} \\
    &= \left(h_t( x,\overline{\xi_*}(x))\right)^{-1} \rt^{-1} \sum_{y \in \sigma^{-1} x} g(y) \|\A_*^{-1}(y) \xi_*(y)\|^{-t} h_t(y,\overline{\xi_*}(y)) \\
    &= \left(h_t( x,\overline{\xi_*}(x))\right)^{-1} \rt^{-1} \sum_{y \in \sigma^{-1} x} g(y) \|\A(y)^* \xi_*(x)\|^{t} h_t(y,\overline{\A(y)^* \xi_*(x)}) \\
    &= \left(h_t( x,\overline{\xi_*}(x))\right)^{-1} \rt^{-1} \Lt h_t(x, \overline{\xi_*}(x)) \\
    &=1,
\end{align*}
where the third equality is by (\ref{eqn:inverserelation}) and the relation \(\overline{\xi_*}(y)=\overline{\A(y)^*\xi_*(\sigma y)}\). Thus, \(g_t \in \MCG(Y)\).
\end{proof}

\begin{proof}[Proof of Theorem \ref{theo:gibbsmeasuresexist} (uniqueness)]
We first show that there must exist \(\delta>0\) such that for all \(t \in (-\delta,0]\) any equilibrium state \(\mu\) of \(\Phi_t\) must have \(\lambda_1(\A,\mu)>\lambda_2(\A,\mu)\). Suppose for a contradiction that this does not hold, then we can find \(t_n \uparrow 0\) and \(\mu'_{t_n}\) such that \(\lambda_1(\A,\mu'_{t_n})=\lambda_2(\A,\mu'_{t_n})\) and
\[h(\mu_{t_n}',\sigma)+\int \psi \diff \mu'_{t_n}+t_n \lambda_1(\A,\mu'_{t_n})=\Ptop(\Phi_{t_n},\sigma).\]
By (\ref{eqn:pressurerelationinproof}), Proposition \ref{prop:derivativeislyapunov}, and the variational principle, we must have \(\lambda_1(\A,\mu'_{t_n})=\lambda_1(\A,\mu_{t_n})\) for all \(n\) sufficiently large, otherwise neither the pressure function nor \(t \mapsto \log \rt\) would be differentiable at \(t_n\).

Let \(\mu' \in \MCM(\Sigma,\sigma)\) be a weak* limit point of the sequence \((\mu'_{t_n})_{n \in \N}\). By upper semi-continuity of the entropy map, 
\begin{align*}
    \Ptop(\psi,\sigma)&=\lim_{n \rightarrow \infty}\Ptop(\Phi_{t_n},\sigma) \\
    &=\lim_{n \rightarrow \infty} h(\mu_{t_n}',\sigma)+\int \psi \diff \mu'_{t_n}+t_n \lambda_1(\A,\mu'_{t_n})\\
    &\leq h(\mu',\sigma)+\int \psi \diff \mu'.
\end{align*}
Hence, \(\mu'=\mu_{\psi}\), where \(\mu_{\psi}\) is the Gibbs measure for \(\psi\). Passing to a subsequence if necessary, we may assume that \(\mu'_{t_n} \rightarrow \mu_{\psi}\) weak*. By \cite[Proposition A.1.(2)]{FH10}, we have that \(\mu \mapsto 
\lambda_1(\A,\mu)+\lambda_2(\A,\mu)\) is upper semi-continuous in the weak* topology, which implies 
\[\lambda_1(\A,\mu_{\psi})+\lambda_2(\A,\mu_{\psi}) \geq \limsup_{n \rightarrow \infty} \lambda_1(\A,\mu'_{t_n})+\lambda_2(\A,\mu'_{t_n}) = \limsup_{n \rightarrow \infty} 2\lambda_1(\A,\mu'_{t_n})= 2\lambda_1(\A,\mu_{\psi}),\]
so \(\lambda_1(\A,\mu_{\psi})=\lambda_2(\A,\mu_{\psi})\). However, this contradicts \cite[Theorem 1]{BV04}.

Thus, by considering \(t\) to be closer to 0 if necessary, we may assume that any equilibrium state of \(\Phi_t\) must have \(\lambda_1(\A,\mu)>\lambda_2(\A,\mu)\). For \(t<0\) sufficiently close to 0, we now show that the Gibbs measures \(\mu_t\) constructed before are unique equilibrium states.

Fix \(t<0\) sufficiently close to 0 and let \(\mu\) be an ergodic equilibrium measure for \(\Phi_t\). We will show that \(\mu=\mu_t\). Note that it then follows immediately that \(\mu_t\) is the unique equilibrium state, since each equilibrium measure is a barycentre of the collection of ergodic equilibrium measures by \cite[Proposition A.1.(3)]{FH10} and \cite[Theorem 8.4]{Wal81}. 

From the definition of \(g_t\) and by Lemma \ref{lem:lyapunovisintegral} we have
\[h(\mu,\sigma)+\int \log g_t \diff \mu=h(\mu,\sigma)+\int \log g \diff \mu+t \lambda_1(\A,\mu)-\Ptop(\Phi^g_t,\sigma)= 0.\]
Hence, by Theorem \ref{theo:led} we have that 
\[L_{\log g_t}^* \mu=\mu.\]

Let \(k \in \N\) be such that for all \(I,J \in \MCC_*\) there exists \(K \in \MCC_k\) such that \(IKJ\) is admissible. Let \(n \in \N\) and \(I \in \MCC_n\). For each \(x \in \Sigma_T\) there is at least one \(z \in \sigma^{-(n+k)}(x)\) with \(z \in [I]\). Hence,
\begin{align*}
    L_{\log g_t}^{n+k} 1_{[I]}(x) &= \rt^{-(n+k)} \sum_{y \in \sigma^{-(n+k)} x} g^{(n+k)}(y) \|\A_*^{-(n+k)}(y)\xi_*(y)\|^{-t} 1_{[I]}(y) \frac{h_t(y,\overline{\xi_*}(y))}{h_t( x,\overline{\xi_*}(x))} \\
    &\geq \rt^{-(n+k)} (\inf h_t )(\sup h_t)^{-1} g^{(n+k)}(z)  \|\A_*^{-(n+k)}(z)\xi_*(z)\|^{-t} \\
    &=  \rt^{-(n+k)} (\inf h_t )(\sup h_t)^{-1} g^{(n+k)}(z)  \|\A^{[n+k]}(z)\xi_*(x)\|^{t} \\
    &\geq  \rt^{-(n+k)} (\inf h_t )(\sup h_t)^{-1}g^{(n+k)}(z)  \|\A^{[n+k]}(z)\|^{t} \\
    &\geq \left(\rt^{-{k}} e^{-k \|\log g\|_{\infty}} \left(\max_{y \in \Sigma_T}\|\A^k(y)\| \right)^t(\inf h_t )(\sup h_t)^{-1} \right) \rt^{-n} g^{(n)}(z)  \|\A^{[n]}( z)\|^{t} \\
    &= \left(\rt^{-{k}} e^{-k \|\log g\|_{\infty}}  \left(\max_{y \in \Sigma_T}\|\A^k(y)\| \right)^t (\inf h_t )(\sup h_t)^{-1} \right) e^{-n \Ptop(\Phi^g_t,\sigma)} g^{(n)}(z) \|\A^{n}(I)\|^{t},
\end{align*}
where the second equality is by (\ref{eqn:inverserelation}). Thus, by bounded distortion there exists \(C=C(t)>0\) such that for all \(n \in \N\) and \(I \in \MCC_n\),
\[\mu([I]) \geq C e^{-n \Ptop(\Phi^g_t,\sigma)} \|\A^{n}(I)\|^{t} \inf_{x \in [I]} g^{(n)}(x).\]
Since \(\mu_t\) is an ergodic Gibbs-type measure for \(\Phi_t^g\), by Lemma \ref{lem:otherequil} we must have \(\mu=\mu_t\).
\end{proof}

\section{Proof of Theorem \ref{theo:Ptopanalytic}}\label{sec:proofofanalytictheorem}    
By (\ref{eqn:pressurerelationinproof}) we have that \(\Ptop(\Phi_t,\sigma)=\log \rho_t+\Ptop(\psi,\sigma)\) for all \(t<0\) sufficiently small. Since \(t \mapsto \log \rho_t\) is analytic in a neighbourhood of 0 with derivative \(\lambda_1(\A,\mu_t)\) at \(t\), to prove Theorem \ref{theo:Ptopanalytic} it suffices to show that \(\Ptop(\Phi_t,\sigma)=\log \rho_t+\Ptop(\psi,\sigma)\) for \(t> 0\) sufficiently small. Hence, to finish the proof of Theorem \ref{theo:Ptopanalytic}, we only need to prove the following lemma.

\begin{lemma}\label{lem:mutgibbst>0}
    For all \(t> 0\) sufficiently close to 0, \(\mu_t\) is Gibbs-type for \(\Phi_t\) and \(\Ptop(\Phi_t,\sigma)=\log \rho_t+\Ptop(\psi,\sigma)\).  
\end{lemma}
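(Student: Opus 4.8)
The plan is to mirror the $t<0$ argument but using the weaker sufficient condition that suffices when $t>0$: instead of needing $\dim\eta_t>-t$, we only need that $\eta_t$ is not supported on a proper projective subspace, i.e. that $\sup_{\overline v\in\BP}\eta_t(v^\perp)<1$, or equivalently (by the arguments in the introduction, cf. Lemma \ref{lem:mutgibbst>0}'s reference to \cite[Proposition 3.4]{Pir20}) that $\sup_{\overline v}\int\delta_{\BP}(\overline u,\overline v)^{-s}\diff\eta_t(\overline u)<\infty$ for some $s>0$. But this last statement is precisely $\dim\eta_t>0$, which we have already proved uniformly for $t\in[-\delta,\delta]$ in Proposition \ref{prop:dimensionbound}. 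So the positivity of $\dim\eta_t$ on a full neighbourhood of $0$ does the work for both signs of $t$.

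Concretely, I would argue as follows. Fix $t>0$ small enough that $t\in[-\delta,\delta]$ with $\delta$ from Proposition \ref{prop:dimensionbound}, so $\dim\eta_t>0$; pick $0<s<\dim\eta_t$. Reproducing the computation in the proof of Lemma \ref{lem:sufficientconditionformut}, the identity of Lemma \ref{lem:mutJequality} together with bounded distortion (Lemma \ref{lem:boundeddistortion}) and the bounds on $h_t$ (Lemma \ref{lem:htbound}) gives, for $I\in\MCC_n$ and $y\in[I]$,
\[
\frac{\mu_t([I])}{\rt^{-n}}\asymp g^{(n)}(y)\int\left\|\A^{[n]}(y)\uou\right\|^t\diff\eta_t(\overline u),
\]
up to multiplicative constants depending only on $t$ (the admissibility indicator causes no trouble since $\mu_t$ has full support, exactly as in the $t<0$ case). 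Now for the upper bound I use the trivial inequality $\|A^*u/\|u\|\|\le\|A\|$, which for $t>0$ gives $\int\|\A^{[n]}(y)u/\|u\|\|^t\diff\eta_t\le\|\A^n(y)\|^t$, yielding $\mu_t([I])\le C_2\,\rt^{-n}g^{(n)}(y)\|\A^n(y)\|^t$. For the lower bound I need $\int\|\A^{[n]}(y)u/\|u\|\|^t\diff\eta_t(\overline u)\ge c\,\|\A^n(y)\|^t$; by Lemma \ref{lem:BQlemma}(ii) this integral is at least $\|A\|^t\int\delta_{\BP}(\overline u,\overline\upsilon_+(A))^t\diff\eta_t(\overline u)$ with $A=\A^n(y)$, so it suffices that $\inf_{\overline v\in\BP}\int\delta_{\BP}(\overline u,\overline v)^t\diff\eta_t(\overline u)>0$. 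This infimum is positive because $t\mapsto\delta_{\BP}(\overline u,\overline v)^t$ is continuous, $\delta_{\BP}\le1$, and $\dim\eta_t>0$ forbids $\eta_t$ from charging any hyperplane $v^\perp$ with full mass --- more precisely $\int\delta_{\BP}(\overline u,\overline v)^{-s}\diff\eta_t<\infty$ uniformly in $\overline v$ forces $\eta_t(\{\overline u:\delta_{\BP}(\overline u,\overline v)<r\})\le C r^s$, so $\int\delta_{\BP}(\overline u,\overline v)^t\diff\eta_t$ is bounded below by a positive constant independent of $\overline v$ by a routine layer-cake estimate. Combining, $C_1\le\mu_t([I])/(\rt^{-n}g^{(n)}(y)\|\A^n(y)\|^t)\le C_2$, so $\mu_t$ is Gibbs-type for $\Phi_t^g$ and hence for $\Phi_t$; Lemma \ref{lem:PequalsPtop} then gives $\Ptop(\Phi_t^g,\sigma)=\log\rt$, and (\ref{eqn:PtopPhig}) gives $\Ptop(\Phi_t,\sigma)=\log\rt+\Ptop(\psi,\sigma)$.

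The only genuinely new point compared to the $t<0$ case is the lower bound on $\mu_t([I])$: there one exploited $\|A^*u/\|u\|\|\ge\sigma_d(A)$ (fine because $t<0$), whereas here I instead need the lower bound on $\int\delta_{\BP}(\overline u,\overline v)^t\diff\eta_t$, i.e. the non-degeneracy of the projective measure $\eta_t$. I expect this to be the main (though still mild) obstacle, and it is handled precisely by Proposition \ref{prop:dimensionbound}. An alternative, essentially equivalent route is to note that for $t>0$ this lemma is already contained in \cite{Par20} (quasi-additivity of $\Phi_t$) combined with Lemma \ref{lem:PequalsPtop} and the identification of the spectral radius, but since we want the explicit relation $\Ptop(\Phi_t,\sigma)=\log\rt+\Ptop(\psi,\sigma)$ and uniformity with the $t<0$ construction, the self-contained argument above is preferable.
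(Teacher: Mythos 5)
Your overall strategy is the paper's: invoke Proposition \ref{prop:dimensionbound} to get non-degeneracy of the projective measure, use Lemma \ref{lem:mutJequality} plus bounded distortion and the bounds on \(h_t\), take the trivial inequality \(\|A^*u\|\le\|A\|\|u\|\) for the upper bound, and reduce the lower bound to an inequality of the form \(\int\|A^*\uou\|^t\diff\eta\ge c\|A\|^t\). Your route to that last inequality (Lemma \ref{lem:BQlemma}(ii) plus a layer-cake estimate from the uniform moment bound \(\sup_{\overline v}\int\delta_{\BP}(\overline u,\overline v)^{-s}\diff\eta_t<\infty\)) differs from the paper's, which instead observes that \(A\mapsto\int\|A\uou\|^t\diff\eta\) is continuous and strictly positive on the compact set of norm-one matrices in \(M_d(\R)\) once \(\eta\) charges no projective hyperplane; both arguments are valid, and yours is the more quantitative of the two.

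There is, however, one genuine gap: your claim that ``the admissibility indicator causes no trouble \ldots\ exactly as in the \(t<0\) case,'' which is used to justify the two-sided comparison \(\mu_t([I])/\rt^{-n}\asymp g^{(n)}(y)\int\|\A^{[n]}(y)\uou\|^t\diff\eta_t\). In the \(t<0\) proof the indicator is harmless only because \(\|\A^{[n]}(I)\uou\|^t\ge\|\A^{[n]}(I)\|^t\) holds \emph{pointwise} when \(t<0\), so the norm factor is pulled out of the integral before full support is invoked. For \(t>0\) no such pointwise lower bound comparable to \(\|\A^{n}(y)\|^t\) exists, so you cannot discard the indicator \(1_{\{x:(I,x_0)\text{ admissible}\}}\) in the lower-bound direction: you must bound
\[
\int\left\|\A^{[n]}(I)\uou\right\|^t 1_{\{x:(I,x_0)\text{ admissible}\}}(x)\diff\nu_t(x,\overline u)
\ \ge\ \min_{1\le i\le q}\int\left\|\A^{[n]}(I)\uou\right\|^t 1_{[i]}(x)\diff\nu_t(x,\overline u)
\]
from below, which requires the non-degeneracy estimate for each letter-restricted projective measure \(\eta_t^{(i)}(\cdot):=\nu_t([i]\times\cdot)\), not just for \(\eta_t\). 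This is exactly why the paper works with the measures \(\eta_t^{(i)}\). The repair is short --- \(\eta_t^{(i)}\le\eta_t\) inherits the moment bound, and \(\eta_t^{(i)}(\BP)=\mu_t([i])>0\) by Lemma \ref{lem:upperboundformutJ}, so your layer-cake argument applies to each \(\eta_t^{(i)}\) and the minimum over \(i\) is positive --- but as written the displayed \(\asymp\) is unjustified in the direction you actually need.
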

\begin{proof}
 We adapt the proof of Proposition 3.4 in \cite{Pir20}. It is a consequence of Proposition \ref{prop:dimensionbound} that, for all \(t> 0\) small enough and all \(i\) in the alphabet \(\{1,\ldots, q\}\), the measures \(\eta_t^{(i)}\) defined by 
    \[\int f(\overline{u}) \diff \eta_t^{(i)}= \int 1_{[i]}(x) f(\overline{u}) \diff \nu_t\]
    are not supported on a projective subspace. By Lemma \ref{lem:upperboundformutJ} they are not the zero measure either. It follows that for each \(1\leq i \leq q\),
    \[A \mapsto \int \left\|A \uou \right\|^t \diff \eta_t^{(i)}\]
    is strictly positive on the set of norm one \(d \times d\) matrices with entries in \(\R\). Since these maps are also continuous, there exists \(C>0\) such that for all \(A \in M_d(\R)\) and all \(1 \leq i \leq q\),
    \[\int \left\|A \uou \right\|^t \diff \eta_t^{(i)} \geq C \|A\|^t. \]
    By Lemmas \ref{lem:boundeddistortion}, \ref{lem:htbound}, and  \ref{lem:mutJequality}, we have that for any \(n \in \N\), \(I \in \MCC_n\), and \(y \in [I]\),
\begin{align*}
    \frac{\mu_t([I])}{\rt^{-n}}
    &=\int g^{(n)}(Ix) \left\|\A^{[n]}(I)\uou \right\|^t  1_{\{ x: (I,x_0) \text{ admissible}\}}(x) h_t(Ix, \overline{\A^{[n]}(I)u}) \diff \widetilde{\nu}_t(x,\overline{u}) \\ 
    &\geq C_{b.d} g^{(n)}(y) \inf(h_t) \sup(h_t)^{-1} \int  \|\A^{[n]}(y)u\|^t 1_{\{ x: (I,x_0) \text{ admissible}\}}(x) \diff \nu_t(x,\overline{u}) \\
    &\geq C_{b.d} g^{(n)}(y) \inf(h_t) \sup(h_t)^{-1} \min_{1\leq i \leq q} \left\{ \int  \left\|\A^{[n]}(y) \uou \right\|^t 1_{[i]}(x) \diff \nu_t(x,\overline{u}) \right\} \\
    &\geq C C_{b.d} g^{(n)}(y) \inf(h_t) \sup(h_t)^{-1} \|\A^{[n]}(y)\| \\
    &= C C_{b.d} g^{(n)}(y) \inf(h_t) \sup(h_t)^{-1} \|\A^{n}(y)\|.
\end{align*}
Thus, there exists \(C_1=C_1(t)>0\) such that for all \(n \in \N\), \(I \in \MCC_n\), and \(y \in [I]\),
\[C_1 \leq \frac{\mu_t([I])}{\rt^{-n} g^{(n)}(y) \|\A^{n}(y)\|^t}.\]

The other direction can be shown straightforwardly using the trivial inequality \(\|A u\| \leq \|A\| \) for all \(A \in \GL_d(\R)\) and all unit \(u \in \R^d\). Thus, for \(t >0\) sufficiently small, \(\mu_t\) is a Gibbs-type measure for \(\Phi^g_t\) and hence for \(\Phi_t\).  By Lemma \ref{lem:PequalsPtop} (noting that the proof also holds for subadditive sequences) and equation (\ref{eqn:PtopPhig}), we further have that \(\Ptop(\Phi_t,\sigma)=\log \rt+\Ptop(\psi,\sigma)\). This concludes the proof of the lemma and, as argued above, of Theorem \ref{theo:Ptopanalytic}.
\end{proof}

\begin{remark}
    For both positive and negative \(t\) close to 0, rather than showing that the measures \(\mu_t\) have the Gibbs-type property, an alternative way of showing they are equilibrium states for \(\Phi_t\) is to instead use Theorem \ref{theo:led} in conjunction with Proposition \ref{prop:nutsupportonxi*} and Lemmas \ref{lem:lyapunovisintegral} and \ref{lem:gtisgfunction}. In this way, we expect it should be possible to prove Theorem \ref{theo:Ptopanalytic} under the more general fiber-bunching condition.
\end{remark}

\section{Convexity of the pressure}\label{sec:convexity}
It is an easy consequence of the variational principle that the pressure function \(t \mapsto \Ptop(\Phi,t)\) is convex on \(\R\). We have the following application of Theorem \ref{theo:Ptopanalytic} regarding strict convexity. Let \(\mu_{\psi} \in \MCM(\Sigma_T,\sigma)\) be the Gibbs measure corresponding to the H\"older continuous potential \(\psi\). 

\begin{theo}\label{theo:convexity}
    In the setting of Theorems \ref{theo:gibbsmeasuresexist} and \ref{theo:Ptopanalytic}, the following are equivalent:
    \begin{enumerate}[label={(\arabic*)}]
        \item \(t \mapsto \Ptop(\Phi_t,\sigma)\) is not strictly convex in a neighbourhood of 0,
        \item there exists \(C>0\) such that for all \(x \in \Sigma_T\) and \(n \in \N\)
        \[\left|\frac{1}{n} \log \|\A^n(x)\|-\lambda_1(\A,\mu_{\psi}) \right|<C/n, \]
        \item \(\lim_{n \rightarrow \infty} \frac{1}{n}\log \|\A^n(x)\|=\lambda_1(\A,\mu_{\psi})\) for all \(x \in \Sigma_T\),
        \item \(\lambda_1(\A,\mu)=\lambda_1(\A,\mu_{\psi})\) for all \(\mu \in \MCM(\Sigma_T,\sigma)\),
        \item \(\Ptop(\Phi_t,\sigma)=\Ptop(\psi,\sigma)+t \lambda_1(\A,\mu_{\psi})\) for all \(t \in \R\).
    \end{enumerate}
\end{theo}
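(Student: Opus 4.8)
The plan is to prove the cyclic chain of implications \((1)\Rightarrow(2)\Rightarrow(3)\Rightarrow(4)\Rightarrow(5)\Rightarrow(1)\), of which four links are routine and only \((1)\Rightarrow(2)\) carries real content. For \((2)\Rightarrow(3)\) there is nothing to do, as \(C/n\to 0\). For \((3)\Rightarrow(4)\), recall that \(\lambda_1(\A,\mu)=\lim_{n\to\infty}\frac1n\int\log\|\A^n(x)\|\diff\mu(x)\) for every \(\mu\in\MCM(\Sigma_T,\sigma)\), and that the integrands are bounded uniformly in \(n\) and \(x\) since \(\A\) takes only finitely many values in \(\GL_d(\R)\) (so \(-\log\max_i\|\A(i)^{-1}\|\le\frac1n\log\|\A^n(x)\|\le\log\max_i\|\A(i)\|\)); if they converge pointwise to the constant \(\lambda_1(\A,\mu_\psi)\), dominated convergence gives \(\lambda_1(\A,\mu)=\lambda_1(\A,\mu_\psi)\). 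For \((4)\Rightarrow(5)\), combine the variational principle (Proposition \ref{prop:supadditivevarprinciple} when \(t<0\), the classical one when \(t\ge0\)) with the identity \(\Pmeas(\Phi_t,\sigma)=\sup_\mu\{h(\mu,\sigma)+\int\psi\diff\mu+t\lambda_1(\A,\mu)\}\): under \((4)\) the last term is the constant \(t\lambda_1(\A,\mu_\psi)\), so the supremum splits as \(t\lambda_1(\A,\mu_\psi)+\sup_\mu\{h(\mu,\sigma)+\int\psi\diff\mu\}=t\lambda_1(\A,\mu_\psi)+\Ptop(\psi,\sigma)\). Finally \((5)\Rightarrow(1)\) is immediate, an affine function being nowhere strictly convex.

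It remains to prove \((1)\Rightarrow(2)\). By Theorem \ref{theo:Ptopanalytic} the function \(t\mapsto\Ptop(\Phi_t,\sigma)\) is analytic on an open neighbourhood \(U\ni 0\); it is also convex on \(\R\), so the failure of strict convexity near \(0\) means it is affine on some nondegenerate subinterval of \(U\), whence \(\Ptop''\equiv 0\) on \(U\) by analyticity. Thus \(t\mapsto\Ptop(\Phi_t,\sigma)\) is affine on \(U\) with slope \(\Ptop'(0)=\lambda_1(\A,\mu_0)=\lambda_1(\A,\mu_\psi)\) (Theorem \ref{theo:Ptopanalytic} again, and \(\mu_0=\mu_\psi\) from Section \ref{sec:definingmut}); since \(\Ptop(\Phi_0,\sigma)=\Ptop(\psi,\sigma)\) this yields
\[\Ptop(\Phi_t,\sigma)=\Ptop(\psi,\sigma)+t\lambda_1(\A,\mu_\psi)\qquad\text{for all }t\in U.\]
Now pick \(t>0\) small enough that \(t\in U\) and Lemma \ref{lem:mutgibbst>0} applies, so \(\mu_t\) is Gibbs-type for \(\Phi_t\), hence an equilibrium state for it. Since \(\lambda_1(\A,\mu_t)=\Ptop'(t)=\lambda_1(\A,\mu_\psi)\), the equilibrium identity \(h(\mu_t,\sigma)+\int\psi\diff\mu_t+t\lambda_1(\A,\mu_t)=\Ptop(\Phi_t,\sigma)\) forces \(h(\mu_t,\sigma)+\int\psi\diff\mu_t=\Ptop(\psi,\sigma)\); hence \(\mu_t\) is an equilibrium state for the Hölder potential \(\psi\), and therefore \(\mu_t=\mu_\psi\) by uniqueness \cite{Bow75}.

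To finish, compare the two descriptions of \(\mu_\psi=\mu_t\) on cylinders. As the Gibbs measure for \(\psi\) there is \(C\ge 1\) with \(C^{-1}\le\mu_\psi([I])\,e^{n\Ptop(\psi,\sigma)-S_n\psi(x)}\le C\) for all \(n\), \(I\in\MCC_n\), \(x\in[I]\); as a Gibbs-type measure for \(\Phi_t\), whose constant equals \(\Ptop(\Phi_t,\sigma)=\Ptop(\psi,\sigma)+t\lambda_1(\A,\mu_\psi)\) by Lemma \ref{lem:PequalsPtop}, there is \(C'\ge 1\) with \((C')^{-1}\le\mu_\psi([I])\,e^{n\Ptop(\Phi_t,\sigma)-S_n\psi(x)-t\log\|\A^n(x)\|}\le C'\) for the same \(x\). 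Dividing the two bounds, the \(S_n\psi(x)\) terms cancel and \(e^{t(n\lambda_1(\A,\mu_\psi)-\log\|\A^n(x)\|)}\) lies between two positive constants independent of \(n\) and \(x\); taking logarithms and dividing by \(t>0\) gives \(|\log\|\A^n(x)\|-n\lambda_1(\A,\mu_\psi)|\le C''\) for all \(n\in\N\) and \(x\in\Sigma_T\), which is \((2)\). The main obstacle is precisely this implication: condition \((1)\) is purely qualitative, and extracting from it the uniform cylinder estimate \((2)\) requires first analyticity of the pressure to pin down the exact linear formula near \(0\), then uniqueness of equilibrium states to identify \(\mu_t\) with \(\mu_\psi\), and only then is the comparison of Gibbs properties available; the remaining four links are soft by comparison.
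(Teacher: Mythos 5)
Your proof is correct and follows the same overall strategy as the paper: the four ``soft'' implications are handled identically (the paper simply declares them trivial where you spell out dominated convergence for \((3)\Rightarrow(4)\)), and for \((1)\Rightarrow(2)\) both arguments first use analyticity of \(t\mapsto\Ptop(\Phi_t,\sigma)\) to upgrade failure of strict convexity to the exact affine formula \(\Ptop(\Phi_t,\sigma)=\Ptop(\psi,\sigma)+t\lambda_1(\A,\mu_\psi)\) near \(0\), then exhibit \(\mu_\psi\) as a Gibbs-type measure for \(\Phi_t\) for some small \(t>0\), and finally divide the two Gibbs inequalities to get the \(O(1/n)\) bound. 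The one genuine difference is the identification step: the paper observes that the affine formula makes \(\mu_\psi\) an equilibrium state for the subadditive \(\Phi_t\) and then invokes uniqueness of equilibrium states for quasi-additive potentials (\cite[Theorem 5.5]{Fen11} together with \cite{Par20}) to conclude \(\mu_\psi\) is the Gibbs-type state; you instead start from the measure \(\mu_t\) already shown to be Gibbs-type in Lemma \ref{lem:mutgibbst>0}, use \(\Ptop'(t)=\lambda_1(\A,\mu_t)=\lambda_1(\A,\mu_\psi)\) to see that \(\mu_t\) is also an equilibrium state for the H\"older potential \(\psi\), and conclude \(\mu_t=\mu_\psi\) from Bowen's classical uniqueness. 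Your route stays entirely within results proved or quoted in the paper's own development (Theorem \ref{theo:Ptopanalytic} and Lemma \ref{lem:mutgibbst>0}) and avoids the external quasi-additivity machinery, at the mild cost of needing the extra observation that the affine slope pins down \(\lambda_1(\A,\mu_t)\); both are sound.
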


\begin{proof}
    We first show \((1) \implies (2)\). Since \(t \mapsto \Ptop(\Phi_t,\sigma)\) is analytic in a neighbourhood of 0 with derivative \(\lambda_1(\A,\mu_{\psi})\) at 0, if it is not strictly convex in a neighbourhood of 0 then \begin{equation}\label{eqn:tfaelemma}
    \Ptop(\Phi_t,\sigma)=\Ptop(\psi,\sigma)+t \lambda_1(\A,\mu_{\psi})
    \end{equation}
    for all \(t \in [-\delta,\delta]\) and some \(\delta>0\). By the variational principle, this implies that \(\mu_{\psi}\) is an equilibrium state for \(\Ptop(\Phi_t,\sigma)\) for these \(t \in [-\delta,\delta]\). Moreover, the equilibrium states for quasi-additive subadditive potentials with bounded variations are unique and are Gibbs-type states (see \cite[Theorem 5.5]{Fen11}), so \(\mu_{\psi}\) is the Gibbs-type state for \(\Ptop(\Phi_t,\sigma)\) for all \(t \in [0,\delta]\) by \cite{Par20}. Thus, there exists \(C>1\) such that for all \(x \in \Sigma_T\) and \(n \in \N\),
    \[\frac{1}{C} \leq \frac{\mu_{\psi}([x_0,\ldots,x_{n-1}])}{e^{-n \Ptop(\psi,\sigma)+S_n \psi(x)} } \leq C \]
    and 
    \[\frac{1}{C} \leq \frac{\mu_{\psi}([x_0,\ldots,x_{n-1}])}{e^{-n \Ptop(\Phi_\delta,\sigma)+S_n \psi(x)}\|\A^n(x)\|^\delta } \leq C.\]
    By (\ref{eqn:tfaelemma}) this implies that for all \(x \in \Sigma_T\) and \(n \in \N\), 
    \[\frac{1}{C^2} \leq \frac{e^{n \delta \lambda_1(\A,\mu_{\psi})}}{\|\A^n(x)\|^\delta} \leq C^2,\]
    so 
    \[\left|\frac{1}{n} \log \|\A^n(x)\|-\lambda_1(\A,\mu_{\psi}) \right|<\frac{2\log C}{\delta n}.\]
    In particular, \((1) \implies (2)\). Notice that \((2) \implies (3)\), \((3) \implies (4)\) and \((5) \implies (1)\) are trivial. Furthermore, \((4) \implies (5)\) by the variational principle for \(\Phi_t\) which holds for all \(t \in \R\).
    \end{proof}

    Theorem \ref{theo:convexity} has applications towards large deviations. Theorem B in \cite{PPi22} assumes that \(\rho''_0=\Var(\A,\mu_{\psi})>0\), but the proof only uses that \(\log \rt\) is strictly convex in a neighbourhood of 0. Thus, when \(\A\) is one-step, the assumption \(\Var(\A,\mu_{\psi})>0\) can be relaxed to instead assume that (2) in Theorem \ref{theo:convexity} is not satisfied. We also remark that the rate function in their theorem is given by the Legendre transform of \(\Lambda(\varepsilon):=\log \rt-\varepsilon \lambda_1(\A,\mu_{\psi})\), which by the proof in our paper can be written in terms of the topological pressure in a neighbourhood of 0 (again in the one-step setting).

\section{Examples with a phase transition when \(t<0\)}\label{sec:proofoftheocounter}
Using results proved in \cite{DGR19,DGR22}, we now show that Theorems \ref{theo:gibbsmeasuresexist} and \ref{theo:Ptopanalytic} cannot be extended to all \(t<0\). We do this by showing that there are examples in the setting of this paper where the pressure function has a phase transition (a point of non-analyticity) at some point \(t<0\). Examples of this phenomenon have been shown to occur previously in \cite{Fen09} (Example 6.6) under a different irreducibility condition which allows non-invertible matrices. In their setting they show the pressure function can fail to be differentiable, but it is not known whether this is possible in our setting. 

In the papers \cite{DGR19,DGR22} they consider a subset of one-step \(\SL(2,\R)\)-cocycles which they call \textit{elliptic cocycles with some hyperbolicity} -- for the definition see \cite[\S 11.7]{DGR19}. For simplicity we focus on a specific example but remark that the arguments apply to their general setting. Note that it is immediate from the definition that the cocycles in the following proposition are elliptic cocycles with some hyperbolicity. It is also easy to check that they are 1-typical. 

\begin{prop}\label{prop:Ptopnotanalytic}
    For \(\lambda>1\) and \(\theta \in \R \setminus \Q\), let
\[A_1:=\begin{pmatrix}
        \lambda & 0 \\
        0& 1/\lambda \end{pmatrix}, A_2:= \begin{pmatrix}
        \cos (2 \pi \theta) & -\sin (2 \pi \theta) \\
         \sin (2 \pi \theta) & \cos (2 \pi \theta)
    \end{pmatrix}\]
Also, let \((\Sigma,\sigma)\) be the full shift on alphabet \(\{1,2\}\). For \(t \in \R\) and \(n \in \N\) define \(\varphi_{t,n}:\Sigma \rightarrow \R\) by
    \[\varphi_{t,n}(x):=t\log \|\A^n(x)\|,\]
where \(\A^n(x):=A_{x_{n-1}} \ldots A_{x_0}\), and let \(\Phi_t:=(\varphi_{t,n})_{n \in \N}\). Then, there exists \(C>0\) such that
\begin{equation}\label{eqn:tlessthan-Cinprop}
    \Ptop(\Phi_t,\sigma)=\sup_{\mu \in \MCM(\Sigma,\sigma)}\{h(\mu,\sigma): \lambda_1(\A, \mu)=0\}, \: \: \forall t \leq -C,
\end{equation}
    and 
\begin{equation}\label{eqn:tgreaterthan-Cinprop}
    \Ptop(\Phi_t,\sigma)>\sup_{\mu \in \MCM(\Sigma,\sigma)} \{h(\mu, \sigma): \lambda_1(\A,\mu)=0\}, \: \: \forall t>-C.
\end{equation}
In particular, \(t \mapsto \Ptop(\Phi_t,\sigma)\) is not analytic.
\end{prop}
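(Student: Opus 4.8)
The plan is to identify the pressure function with the Legendre-type boundary quantity $P_\infty := \sup\{h(\mu,\sigma) : \lambda_1(\A,\mu) = 0\}$ for all sufficiently negative $t$, and then show strict inequality for $t$ close to $0$, which forces a point of non-analyticity somewhere in between. First I would record the variational principle (Proposition \ref{prop:supadditivevarprinciple}), which gives
\[
\Ptop(\Phi_t,\sigma) = \sup_{\mu \in \MCM(\Sigma,\sigma)} \bigl\{ h(\mu,\sigma) + t\,\lambda_1(\A,\mu) \bigr\}, \qquad \forall t \in \R,
\]
since $\psi \equiv 0$ here. Because $A_1, A_2 \in \SL(2,\R)$ we have $\lambda_1(\A,\mu) \geq 0$ for every $\mu$, with $\lambda_1(\A,\mu) = 0$ attainable (for instance by the measure supported on the fixed point $\overline{2}$, using that $A_2$ is a rotation). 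Hence for $t \leq 0$ the supremum is bounded below by $P_\infty$ and bounded above by $h_{\mathrm{top}}(\sigma) = \log 2$; the function $t \mapsto \Ptop(\Phi_t,\sigma)$ is convex, nonincreasing on $(-\infty,0]$, and bounded below by $P_\infty$, so it has a limit as $t \to -\infty$ which is $\geq P_\infty$.

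The heart of the argument is that this limit equals $P_\infty$ and, more importantly, that it is \emph{attained} at some finite $t = -C$; this is exactly what \cite{DGR19,DGR22} establish for elliptic cocycles with some hyperbolicity. The key mechanism is that $\lambda_1(\A,\mu)$ is bounded below by a positive constant on the set of measures $\mu$ that are not ``close'' to charging only the rotation block --- more precisely, there is a gap: either $\lambda_1(\A,\mu) = 0$, or $\lambda_1(\A,\mu)$ is bounded away from $0$ uniformly, is false in general, so instead one uses that for measures with $\lambda_1(\A,\mu)$ small but positive the entropy deficit $\log 2 - h(\mu,\sigma)$ cannot be too small — this is the content of the large-deviations / multifractal analysis in \cite{DGR19}. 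Granting their result, there is a finite $C > 0$ with
\[
\Ptop(\Phi_t,\sigma) = P_\infty \quad \text{for all } t \leq -C, \qquad \Ptop(\Phi_t,\sigma) > P_\infty \quad \text{for all } t > -C,
\]
which is (\ref{eqn:tlessthan-Cinprop}) and (\ref{eqn:tgreaterthan-Cinprop}). Finally, a convex function that is constant on $(-\infty,-C]$ and strictly greater than that constant on $(-C,\infty)$ cannot be real-analytic at $t = -C$: analyticity would force the function, which agrees with the constant $P_\infty$ on a set with a limit point, to be identically $P_\infty$ on a neighbourhood of $-C$, contradicting (\ref{eqn:tgreaterthan-Cinprop}). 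Hence $t \mapsto \Ptop(\Phi_t,\sigma)$ is not analytic on $\R$.

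I would also need to verify the two side remarks asserted in the excerpt: that the cocycle $\A = (A_1,A_2)$ is an elliptic cocycle with some hyperbolicity (immediate from \cite[\S 11.7]{DGR19}, since $A_2$ is elliptic with irrational rotation number and $A_1$ is hyperbolic), and that it is $1$-typical — for the latter one exhibits the periodic point $p = \overline{1}$, so $P = A_1$ has distinct eigenvalues $\lambda, 1/\lambda$ with eigenvectors $e_1, e_2$, and chooses $z$ containing the digit $2$ so that $\A^l(p)^{-1}\A^l(z) = A_1^{-l} A_2 A_1^{l-1}$ (or a similar word) applied to $e_1, e_2$ produces vectors in general position relative to $e_1, e_2$; since $A_2$ is an irrational rotation this genericity is easy to arrange by choosing $l$ appropriately. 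The main obstacle is honestly just citing \cite{DGR19,DGR22} correctly: the finiteness of $C$ (i.e.\ that the ``freezing'' happens at a finite parameter rather than only in the limit $t \to -\infty$) is the substantive input, and I would devote most of the written proof to translating their hypotheses and conclusions into the present notation rather than reproving the multifractal estimate.
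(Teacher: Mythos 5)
Your overall strategy is the same as the paper's: reduce to the variational principle, invoke the D\'iaz--Gelfert--Rams results for elliptic cocycles with some hyperbolicity, obtain a finite freezing parameter $-C$, and conclude non-analyticity from the identity theorem (the paper leaves that last step implicit; your spelling it out is fine). The verification that the cocycle is $1$-typical and elliptic-with-some-hyperbolicity is also handled the same way.

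However, there is a genuine gap at the centre of your argument: you assert that the existence of a finite $C$ with both (\ref{eqn:tlessthan-Cinprop}) and (\ref{eqn:tgreaterthan-Cinprop}) ``is exactly what \cite{DGR19,DGR22} establish,'' but those papers do not prove a freezing phase transition for the pressure. What they prove are properties of the entropy spectrum of Lyapunov exponents: writing $X(\alpha)$ for the set of points with $\lambda_1(x)=\alpha$, they establish the restricted variational principle $\htop(X(\alpha),\sigma)=\sup\{h(\mu,\sigma):\lambda_1(\A,\mu)=\alpha\}$ (including, nontrivially, at $\alpha=0$, which is \cite{DGR22}), the convexity of $\alpha\mapsto \htop(X(\alpha),\sigma)$ on $[0,\alpha_{\max}]$, and the finiteness and positivity of its right derivative at $\alpha=0$. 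The content of the proof you are omitting is the translation of these facts into the two displayed statements: one must take $C$ to be precisely that right derivative, use convexity to get the global linear bound $h(\mu,\sigma)\leq \htop(X(0),\sigma)+C\,\lambda_1(\A,\mu)$ (which yields (\ref{eqn:tlessthan-Cinprop}) for $t\leq -C$), and use the fact that $C$ is the \emph{exact} right derivative --- so that $\htop(X(\alpha),\sigma)\geq \htop(X(0),\sigma)+(C-\varepsilon)\alpha$ for small $\alpha$ --- to produce, for each $t>-C$, a measure with small positive exponent beating $\htop(X(0),\sigma)$, which yields (\ref{eqn:tgreaterthan-Cinprop}). Your description of the ``key mechanism'' (a dichotomy on $\lambda_1$, then a bound on the deficit $\log 2-h(\mu,\sigma)$) points at the wrong quantity: what is needed is an upper bound on the entropy \emph{excess} over $\htop(X(0),\sigma)$ that is linear in $\lambda_1(\A,\mu)$, not a bound involving the full topological entropy $\log 2$. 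Without identifying $C$ and carrying out this two-sided estimate, the statement that the transition occurs at a finite parameter, and that the same constant works in both displayed inequalities, is not justified. (A minor slip besides: $t\mapsto \Ptop(\Phi_t,\sigma)$ is non\emph{decreasing} in $t$, since $\lambda_1(\A,\mu)\geq 0$ for every $\mu$; this does not affect your use of it.)
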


Before we prove Proposition \ref{prop:Ptopnotanalytic}, let us first introduce some notation. We fix an \(\A\) as in Proposition \ref{prop:Ptopnotanalytic}. For a point \(x \in \Sigma\) we denote
\[\lambda_1(x):=\lim_{n \rightarrow \infty} \frac{1}{n} \log \|\A^n(x)\|,\]
when the limit exists. It is clear that \(\lambda_1(x) \geq 0\) for all \(x \in \Sigma\) for which this is well defined. For \(\alpha \geq 0\), let 
\[X(\alpha):=\{x \in \Sigma: \lambda_1(x)=\alpha\}\]
and define 
\[\alpha_{\max} := \sup \{\alpha \in \R : X(\alpha)\not=\emptyset\}.\]
The maximum is attained, justifying the notation. Recall the definition of topological entropy \(\htop\) defined by Bowen in \cite{Bow73} which also applies to non-compact sets (see also \cite[Appendix B]{DGR19}). By Theorem B in \cite{DGR19}, for all \(\alpha \in (0, \alpha_{\max}]\),
\begin{equation}\label{eqn:DGRresult}
    \htop(X(\alpha),\sigma)= \sup_{\mu \in \MCM_{\text{erg}}(\Sigma,\sigma)} \{h(\mu,\sigma): \lambda_1(\A,\mu)=\alpha \},
\end{equation}
where \(\MCM_{\text{erg}}(\Sigma,\sigma)\) is the subset of \(\MCM(\Sigma,\sigma)\) consisting of the ergodic measures. Theorem B in \cite{DGR22} says that this further holds for \(\alpha=0\). That is, 
\begin{align}
        \htop(X(0),\sigma)&= \sup_{\mu \in \MCM_{\text{erg}}(\Sigma,\sigma)} \{h(\mu, \sigma): \lambda_1(\A,\mu)=0 \}\nonumber \\
        &=\sup_{\mu \in \MCM(\Sigma,\sigma)} \{h(\mu, \sigma): \lambda_1(\A,\mu)=0 \}, \label{eqn:DGRresult0}
\end{align}
where the second equality follows from \cite[Proposition A.1.(3)]{FH10} and \cite[Theorem 8.4]{Wal81}.

\begin{proof}[Proof of Proposition \ref{prop:Ptopnotanalytic}]
Combining Theorem 4.(i) and Theorem 5 in \cite{DGR19}, the right derivative at 0 of \(\alpha \mapsto \htop(X(\alpha),\sigma)\) is equal to some \(0<C<\infty\). Also, Theorem 4.(g) and Theorem 5 in \cite{DGR19} imply that \(\alpha \mapsto \htop(X(\alpha),\sigma)\) is convex on \([0,\alpha_{\max}]\), so it follows that for all \(\alpha \in [0,\alpha_{\max}]\),
    \[\htop(X(\alpha),\sigma) \leq \htop(X(0),\sigma) +C \alpha.\]

Fix some \(t \leq -C\). We first show (\ref{eqn:tlessthan-Cinprop}). By (\ref{eqn:DGRresult}), for all \(\mu \in \MCM_{\text{erg}}(\Sigma,\sigma)\),
\begin{align*}
    h(\mu,\sigma)+t \lambda_1(\A,\mu) &\leq \htop(X(\lambda_1(\A,\mu)),\sigma)+ t\lambda_1(\A,\mu) \\ 
    &\leq \htop(X(0),\sigma) +C \lambda_1(\A,\mu) + t\lambda_1(\A,\mu) \\
    &\leq \htop(X(0),\sigma).
\end{align*}
This further holds for all \(\mu \in \MCM(\Sigma,\sigma)\) by \cite[Proposition A.1.(3)]{FH10} and \cite[Theorem 8.4]{Wal81}, so 
\[\Ptop(\Phi_t,\sigma)=\Pmeas(\Phi_t,\sigma) \leq \htop(X(0),\sigma).\]

Now observe that, by (\ref{eqn:DGRresult0}), for all \(\varepsilon>0\) there exists \(\mu \in \MCM(\Sigma,\sigma)\) with \(\lambda_1(\A,\mu)=0\) such that
\[h(\mu, \sigma)+t \lambda_1(\A,\mu) =h(\mu,\sigma)> \htop(X(0),\sigma)-\varepsilon.\]
It therefore follows that 
\begin{equation}\label{eqn:ptopislinear}
    \Ptop(\Phi_t,\sigma)=\Pmeas(\Phi_t,\sigma) \geq \htop(X(0),\sigma).
\end{equation}
Hence, we have shown that 
\[\Ptop(\Phi_t,\sigma)=\htop(X(0),\sigma),\]
so, by (\ref{eqn:DGRresult0}), statement (\ref{eqn:tlessthan-Cinprop}) follows.

We now prove (\ref{eqn:tgreaterthan-Cinprop}). Fix \(t>-C\) and let \(0<\varepsilon<t+C\). Since the right derivative of \(\alpha \mapsto \htop(X(\alpha),\sigma)\) at 0 is equal to \(C\), for all \(\alpha>0\) sufficiently small we have that
\[\htop(X(\alpha),\sigma) \geq \htop(X(0),\sigma) +(C-\varepsilon) \alpha.\]
For \(\alpha>0\) sufficiently small, by (\ref{eqn:DGRresult}) we can let \(\mu \in \MCM_{\text{erg}}(\Sigma,\sigma)\) be such that \(\lambda_1(\A,\mu)=\alpha\) and
\[h(\mu,\sigma) \geq \htop(X(\alpha),\sigma)-\frac{(C+t-\varepsilon)\alpha}{2}.\]
It follows that 
\[h(\mu,\sigma)+t\lambda_1(\A,\mu) \geq \htop(X(0),\sigma)+\frac{(C+t-\varepsilon)\alpha}{2}> \htop(X(0),\sigma),  \]
so \(\Ptop(\Phi_t,\sigma)>\htop(X(0),\sigma)\). Hence, by (\ref{eqn:DGRresult0}), statement (\ref{eqn:tgreaterthan-Cinprop}) follows.
\end{proof}

\begin{remark}
    In \cite[Example 6.6]{Fen09} they show that the pressure function in their setting can fail to be differentiable, whereas it only follows from Proposition \ref{prop:Ptopnotanalytic} that analyticity can fail. We remark that the pressure not being differentiable at \(t=-C\) above would imply that \(\alpha \mapsto \htop(X(\alpha),\sigma)\) is linear on \([0,\epsilon]\) for some \(\epsilon>0\).
\end{remark}

To show that the existence of Gibbs-type measures proved in Theorem \ref{theo:gibbsmeasuresexist} cannot be extended to all \(t<0\), one could use Proposition \ref{prop:Ptopnotanalytic} and an argument similar to the one used in Theorem \ref{theo:convexity}. Instead, we finish by giving an example which provides an elementary proof of this, without using the results proved in \cite{DGR19,DGR22}. It is straightforward to check that the cocycle defined in the following proposition is 1-typical (it is also an elliptic cocycle with some hyperbolicity).

\begin{prop}\label{theo:counterexample}
    For \(\lambda>1\) and \(\theta \in \R \setminus \Q\), let
    \[A_1:=\begin{pmatrix}
        \lambda & 0 \\
        0& 1/\lambda
    \end{pmatrix}, \: A_2:=\begin{pmatrix}
        0 & 1 \\
        1& 0
    \end{pmatrix}, \: A_3:= \begin{pmatrix}
        \cos (2 \pi \theta) & -\sin (2 \pi \theta) \\
         \sin (2 \pi \theta) & \cos (2 \pi \theta)
    \end{pmatrix}.\]
    Also, let \((\Sigma,\sigma)\) be the full shift on alphabet \(\{1,2,3\}\). For \(t \in \R\) and \(n \in \N\) define \(\varphi_{t,n}:\Sigma \rightarrow \R\) by
    \[\varphi_{t,n}(x):=t\log \|\A^n(x)\|,\]
    where \(\A^n(x):=A_{x_{n-1}} \ldots A_{x_0}\). Then, for all \(t< \frac{-\log(9/2)}{\log \lambda}\) there does not exist a Gibbs-type measure for \(\Phi_t:=(\varphi_{t,n})_{n \in \N}\).
\end{prop}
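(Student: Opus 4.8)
The plan is to derive a contradiction from the two‑sided Gibbs bound by evaluating it on two \emph{nested} cylinders whose associated cocycle norms are wildly different. The elementary observation that drives everything is that, for \(N\in\N\), the word \(1^N2\) (of length \(N+1\)) has \(\A^{N+1}(1^N2)=A_2A_1^N=\bigl(\begin{smallmatrix}0&\lambda^{-N}\\ \lambda^N&0\end{smallmatrix}\bigr)\), hence \(\|\A^{N+1}(1^N2)\|=\lambda^N\), whereas appending another block \(1^N\) of hyperbolic matrices brings the norm all the way back down: \(\A^{2N+1}(1^N21^N)=A_1^NA_2A_1^N=A_2\), so \(\|\A^{2N+1}(1^N21^N)\|=1\). (Indeed one more \(2\) even gives the identity.) I would also record that, since \(|\det A_i|=1\) for \(i\in\{1,2,3\}\), every product \(\A^n(I)\) has singular values \(\sigma_1\ge 1\ge\sigma_2=\sigma_1^{-1}\), so \(\|\A^n(I)\|=\sigma_1(\A^n(I))\ge 1\).

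First I would bound the pressure from above. As \(\A\) is a one‑step cocycle, \(\varphi_{t,n}\) is constant on each cylinder \([I]\), \(I\in\MCC_n=\{1,2,3\}^n\), and equals \(t\log\|\A^n(I)\|\) there; hence, using \(t<0\) and \(\|\A^n(I)\|\ge 1\),
\[
\Ptop(\Phi_t,\sigma)=\lim_{n\to\infty}\frac1n\log\sum_{I\in\{1,2,3\}^n}\|\A^n(I)\|^{t}\ \le\ \lim_{n\to\infty}\frac1n\log\bigl(3^n\bigr)=\log 3 .
\]
In particular \(\Ptop(\Phi_t,\sigma)\) is finite.

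Now suppose, for contradiction, that \(\mu\) is a Gibbs‑type measure for \(\Phi_t\) with constants \(C_1,C_2>0\) and exponent \(P\). Since \(\Phi_t\) is superadditive (for \(t<0\)), continuous, and of bounded variations (it is locally constant on \(n\)‑cylinders), Lemma~\ref{lem:PequalsPtop} gives \(P=\Ptop(\Phi_t,\sigma)\). Applying Definition~\ref{def:Gibbstype} to \([1^N2]\) and \([1^N21^N]\), and using \([1^N21^N]\subseteq[1^N2]\) together with the two norm computations above,
\[
C_1\,e^{-(2N+1)P}\ \le\ \mu([1^N21^N])\ \le\ \mu([1^N2])\ \le\ C_2\,e^{-(N+1)P}\lambda^{Nt}\qquad(N\in\N).
\]
Dividing, \((e^{P}\lambda^{t})^{N}\ge C_1/C_2>0\) for all \(N\), which forces \(e^{P}\lambda^{t}\ge 1\), i.e. \(\Ptop(\Phi_t,\sigma)=P\ge -t\log\lambda\). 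Combining with \(\Ptop(\Phi_t,\sigma)\le\log 3\) yields \(\lambda^{-t}\le 3\). But \(t<-\log(9/2)/\log\lambda\) means exactly \(\lambda^{-t}>9/2>3\), a contradiction; hence no Gibbs‑type measure exists.

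The only genuinely non‑routine step is the first: spotting that the word \(1^N21^N\) has orthogonal cocycle value even though its prefix \(1^N2\) has cocycle norm \(\lambda^{N}\), so that the nesting \([1^N21^N]\subseteq[1^N2]\) pins \(e^{P}\lambda^{t}\ge 1\) uniformly in \(N\). Everything else — the pressure upper bound, the appeal to Lemma~\ref{lem:PequalsPtop}, and the arithmetic — is immediate. (Note the proof uses only the matrices \(A_1,A_2\), the fact that all three matrices have determinant \(\pm1\), and that the alphabet has three letters; it does not use irrationality of \(\theta\), and the gap between \(3\) and \(9/2\) in the hypothesis is harmless slack.)
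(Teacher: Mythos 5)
Your proof is correct and follows essentially the same route as the paper's: both exploit the identity \(A_1^N A_2 A_1^N = A_2\) to play the Gibbs bound on the cylinder \([1^N 2 1^N]\) against the bound on a nested cylinder whose cocycle norm is \(\lambda^N\). The one (minor but real) difference is that you divide the two nested bounds directly, so the factors of \(e^{-P}\) partially cancel and you only need the upper bound \(\Ptop(\Phi_t,\sigma)\le\log 3\), whereas the paper also invokes the lower bound \(\Ptop(\Phi_t,\sigma)\ge\log 2\) to control \(\mu([1^n])\); as you note, this is why your argument actually yields the conclusion under the weaker hypothesis \(\lambda^{-t}>3\).
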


\begin{proof}[Proof of Proposition \ref{theo:counterexample}]
Since \(A_2,A_3 \in O_2(\R)\), the Bernoulli measure with weights \((0,1/2,1/2)\) has top Lyapunov exponent equal to 0. Hence, by the variational principle \(\Ptop(\Phi_t,\sigma) \geq \log 2\) for all \(t<0\). On the other hand, because \(A_1,A_2,A_3\) are all elements of \(\mathrm{SL}_2(\R)\) we have that \(\lambda_1(\A,\mu) \geq 0\) for every \(\mu \in \MCM(\Sigma,\sigma)\). So, again by the variational principle, \( \Ptop(\Phi_t,\sigma) \leq \log 3\) for all \(t<0\). We remark that the inequality 
\[\log 2 \leq \Ptop(\Phi_t,\sigma) \leq \log 3, \: \forall t<0\]
can also be easily seen directly from the definition of topological pressure.

Fix \(t<\frac{-\log(9/2)}{\log \lambda}\) and suppose for a contradiction that there exists a Gibbs-type measure \(\mu\) for \(\Phi_t\). By the Gibbs-type property there exists \(C>0\) such that for all \(n \in \N\) and all \(I \in \{1,2,3\}^n\), 
\[\frac{1}{C} \leq \frac{\mu([I])}{e^{-n\Ptop(\Phi_t)} \|\A^n(I)\|^t}\leq C.\]
Choose \(n \in \N\) large enough so that 
\[\left(\frac{2 \lambda^{-t}}{9} \right)^n >3C^2,   \]
and let \[I:=(\underbrace{1,\ldots, 1}_{n \text{ many 1s} }), \: \: J:=(\underbrace{1,\ldots, 1}_{n \text{ many 1s} },2, \underbrace{1,\ldots, 1}_{n \text{ many 1s} }).\]
We have
\begin{align*}
\mu([I]) &\leq C e^{-n \Ptop(\Phi_t,\sigma)} \|\A^n(I)\|^t \\
&= C e^{-n \Ptop(\Phi_t,\sigma)} \lambda^{nt} \\
&\leq C 2^{-n} \lambda^{nt}
\end{align*}
and
\begin{align*}
    \mu([J]) &\geq C^{-1} e^{-(2n+1) \Ptop(\Phi_t,\sigma)} \|\A^{2n+1}(J)\|^t \\
    &= C^{-1} e^{-(2n+1) \Ptop(\Phi_t,\sigma)} \\
    & \geq C^{-1} 3^{-(2n+1)}.
\end{align*}
Thus, by our choice of \(n\)
\[\mu([J]) \geq C^{-1}3^{-(2n+1)} >  C 2^{-n} \lambda^{nt} \geq \mu([I]),\]
which is a contradiction since \([J] \subseteq [I]\).
\end{proof}

\subsection*{Acknowledgements}
I am very grateful to Thomas Jordan, my PhD supervisor, for numerous discussions on this topic, for reading several drafts of this paper, and for his subsequent advice. I would like to thank Henna Koivusalo, Jonathon Fraser, Mark Pollicott, and the anonymous referee for their many helpful comments and suggestions. I would also like to thank Alex Rutar for some informative discussions early on in this project. 

\section*{Declarations}
\subsection*{Funding}
This work was supported by an EPSRC DTP at the University of Bristol, studentship 2278542. 

\subsection*{Conflicts of interest}
The author has no conflicts of interest to declare that are relevant to the content of this article.

\end{document}